%%%%%%%%%%%%%%%%%%%%%%% file template.tex %%%%%%%%%%%%%%%%%%%%%%%%%
%
% This is a general template file for the LaTeX package SVJour3
% for Springer journals.          Springer Heidelberg 2010/09/16
%
% Copy it to a new file with a new name and use it as the basis
% for your article. Delete % signs as needed.
%
% This template includes a few options for different layouts and
% content for various journals. Please consult a previous issue of
% your journal as needed.
%
%%%%%%%%%%%%%%%%%%%%%%%%%%%%%%%%%%%%%%%%%%%%%%%%%%%%%%%%%%%%%%%%%%%
%
% First comes an example EPS file -- just ignore it and
% proceed on the \documentclass line
% your LaTeX will extract the file if required
% [arxiv_v2: filecontents example.eps stripped, 188 chars]
\RequirePackage{fix-cm}
\documentclass[smallcondensed,leqno,numbook,envcountsame]{svjour3}     % onecolumn (ditto)
\smartqed  % flush right qed marks, e.g. at end of proof
 \usepackage{mathptmx}      % use Times fonts if available on your TeX system
%
% insert here the call for the packages your document requires
%\usepackage{latexsym}
\usepackage{amssymb}
\usepackage{mathtools}
\usepackage{enumerate}
\usepackage[abbrev]{amsrefs}

%\usepackage{refcheck}

% etc.
%
% please place your own definitions here and don't use \def but
% \newcommand{}{}

\newcommand{\N}{\mathbb{N}}

\newcommand{\R}{\mathbb{R}}

\newcommand{\cA}{\mathcal{E}}
\newcommand{\cE}{\mathcal{E}^0}

\newcommand{\gm}{\gamma}\newcommand{\dl}{\delta}
\newcommand{\eps}{\varepsilon}

\newcommand{\sg}{\sigma}

\newcommand{\bone}{\mathbf{1}}

\newcommand{\wg}{\wedge}

\newcommand{\qad}{\phantom{\le{}}}
\newcommand{\Capa}{{\text{\rm Cap}}}
\newcommand{\ob}[1]{\mkern 1.5mu\overline{\mkern-1.5mu#1\mkern-1.5mu}\mkern 1.5mu}
%(added)

%\newcommand{\Gam}{\Gamma}
\newcommand{\relmiddle}[1]{\mathrel{}\middle#1\mathrel{}}
\DeclareMathOperator*{\essinf}{ess\,inf}
\DeclareMathOperator*{\esssup}{ess\,sup}
\newcommand{\myqedhere}{\tag*{\rlap{\hbox to\displaywidth{\hfill\qed}}}}
%
% Insert the name of "your journal" with
%\journalname{myjournal}
%
% provide arXiv number if available:
%\arxiv{arXiv:1611.02954}
\allowdisplaybreaks[4]
% for preprint
%\mag1230\let\mymaketitle\maketitle\renewcommand{\maketitle}{\renewcommand{\makeheadbox}{}\date{}\mymaketitle}

\begin{document}
\spnewtheorem{condition}[theorem]{Condition}{\it}{\rm}
\spnewtheorem*{xproof}{}{\itshape}{\rmfamily}% the label is assigned later
\newcommand\xprooftitle{}
\renewenvironment{proof}[1][\proofname]
 {\renewcommand\xproofname{#1}\xproof}
 {\endxproof}

\title{Doubly Feller property of Brownian motions with Robin boundary condition
}
%\subtitle{Do you have a subtitle?\\ If so, write it here}

\titlerunning{Robin boundary condition}        % if too long for running head

\author{
        Kouhei Matsuura %etc.
}

\authorrunning{K. Matsuura} % if too long for running head

\institute{
          K. Matsuura\at
              Mathematical Institute, Tohoku University, Aoba, Sendai 980--8578, Japan\\
         \email{kouhei.matsuura.r3@dc.tohoku.ac.jp} 
}

\date{Received: date / Accepted: date}
% The correct dates will be entered by the editor

\maketitle
\begin{abstract}
In this paper, we consider first order Sobolev spaces with Robin boundary condition on unbounded Lipschitz domains. Hunt processes  are associated with these spaces. We prove that the semigroup of these processes are doubly Feller. As a corollary, we provide a condition for semigroups generated by these processes being compact.
\keywords{ boundary local time\and Dirichlet form\and extension domain \and Robin boundary condition }
\subclass{ 31C15\and31C25\and 60J60\and 47D08  }
\end{abstract}

%% main text
\section{Introduction}
Let $D$ be a connected open subset of $\R^d$. Let us denote by $H_{0}^{1}(D)$ and $H^{1}(D)$ the first order $L^2$ Sobolev space on $D$ with Dirichlet and Neumann boundary condition, respectively. For $p \ge 1$, we denote by $L^{p}(D)$ the set of functions on $D$ which are $p$-th integrable with respect to the $d$-dimensional Lebesgue measure $m$. Let us assume $D$ is {\it thin at infinity}, i.e. $\lim_{\,x \in D,\,|x| \to \infty}m(B(x,1) \cap D)=0$.
Here, $B(x,1)$ is the open ball of $\R^d$ centered at $x \in \R^d$ with radius $1$.
Then, we can show that the embedding $H_{0}^{1}(D) \subset L^{2}(D)$ is compact (\cite[Chapter~V. Remark~5.18~(4)]{EE}). On the other hand, even if $D$ is thin at infinty, the embedding $H^{1}(D) \subset L^{2}(D)$ is not necessarily compact. In fact, if the Lebesgue measure of $D$ is infinite, it is known that the embedding is not compact (\cite[Theorem~6.42]{Ad}). 

Let us consider the Sobolev space with the following boundary condition: $$\partial f/\partial \mathcal{N}+\beta f=0 \text{ on } \partial D,$$ where $\mathcal{N}$ is the outward unit normal vector on the boundary $\partial D$ and $\beta$ is a nonnegative measurable function on $\partial D$. This type of boundary condition is called Robin boundary condition. The associated Dirichlet space on $L^{2}(D)$ is expressed as
\begin{align*}
\cA(f,g)&=\frac{1}{2}\int_{D}(\nabla f, \nabla g)\,dx+\int_{\partial D}\tilde{f} \tilde{g}\,\beta d\sg,\quad f,g \in \mathcal{D}(\cA), \\
\mathcal{D}(\cA)&=\left\{ f \in H^{1}(D) \relmiddle| \int_{\partial D}\tilde{f}^2\,\beta d\sg<\infty \right\},
\end{align*}
where $\sg$ is the $(d-1)$-dimensional Hausdorff measure on $\R^d$ restricted to $\partial D$, the topological boundary of $D$. $\tilde{f}$ is a suitable version of $f \in H^{1}(D)$. See Theorem~\ref{thm:bound} below for the definition of $\tilde{f}$. The domain $\mathcal{D}(\cA)$ is a superset and subset of $H_{0}^{1}(D)$ and $H^{1}(D)$, respectively. Therefore, for a domain $D$ with $m(D)=\infty$, it is not clear whether $\mathcal{D}(\cA) \subset L^{2}(D)$ is compactly embedded in $L^2(D)$ or not.
 
 Arendt and Warma \cite{AW} invesitigate the compact embedding $\mathcal{D}(\cA) \subset L^{2}(D)$ for open sets with finite volume.
In this paper, we check a probabilistic condition for the compact embedding $\mathcal{D}(\cA) \subset L^{2}(D)$ when $D$ is thin at infinity and $\partial D$ is sufficiently smooth and $\beta$ is non-degenerate and locally bounded. Non-degenerate means $\sigma\text{-}\essinf_{z \in \partial D}\beta(z)>0$. Here, $\sg\text{-}\essinf$ denotes the essential infimum with respect to $\sg$.
Let $E$ be a locally compact separable metric space and $\mu$ a Radon measure with topological full support. Takeda \cite{T} prove that if the Hunt process on $E$ generated by a regular Dirichlet form is {\it irreducible}, {\it resolvent strong Feller}, in addition, has a {\it tightness property}, then the domain of Dirichlet form is compactly embedded in $L^2(E,\mu)$. This is equivalent to the semigroup of this Hunt process becomes a compact operator on $L^2(E,\mu)$.

We briefly explain how to check these properties. First, we construct a Hunt process $X^0=(\{X_{t}^0\}_{t \ge 0}, \{P_x\}_{x \in \ob{D}})$ on $\ob{D}$ with a kind of resolvent strong Feller property: 
\begin{align}
R_{\alpha}^0(L^{1}(\ob{D},m) \cap L^{\infty}(\ob{D},m))\subset C_{b}(\ob{D}) \label{eq:wrsf}
\end{align}
 (Theorem~\ref{thm:1}, Theorem~\ref{thm:wrsf}). Here, $\{ R^0_{\alpha}\}_{\alpha>0}$ denotes the resolvent of $X^0$. This is Hunt process corresponding to $H^{1}(D)$ which is a regular Dirichlet form on $L^{2}(\ob{D},m)$. We should call $X^0$ a reflecting Brownian motion on $\ob{D}$. For the proof of \eqref{eq:wrsf}, we employ a PDE methods due to Stampacchia and Moser, as used in \cite{FT0, FT}. Next, we prove the smoothness of $\sg$ (Proposition~\ref{prop:1}).  This allows us to define the positive continuous additive functional $\{L_t\}_{t \ge 0}$ with Revuz measure $\sg$.
Since $\{L_t\}_{t \ge 0}$ increases only when $X^0$ hits the boundary $\partial D$, $\{ L_t\}_{t \ge0}$ is said to be the {\it boundary local time} of $X^0$. We define the subprocess $Y$ of $X^0$ by the multiplicative functional $\{ \exp(-\int_{0}^{t}\beta(X_t^0)\,dL_t) \}_{t \ge 0}$. The Dirichlet form of $Y$ is identified with $(\cA,\mathcal{D}(\cA))$. Hence, for the proof of compactness, it suffices to prove $Y$ has the three properties stated above.
 
We explain how to check the resolvent strong Feller property of $Y$. Note that, for any $f \in L^{1}(\ob{D},m) \cap L^{\infty}(\ob{D},m)$, we can obtain the following inequality:
\begin{align*}
&|R_{\alpha}f(x)-\gamma R_{\gamma+\alpha}^{0}R_{\alpha}f(x)| \\
&\le \| f\|_{\infty} \times (\gamma+\alpha)^{-1}+\|f\|_{\infty}\times (1/\alpha) \times \int_{0}^{1}E_{x}\left[1-\exp \left(-\int_{0}^{-\frac{\log s}{\gamma}}\beta (X_u^0 ) \,dL_{u} \right) \right]\,ds.
\end{align*}
Here, $\{R_{\alpha}\}_{\alpha>0}$ denotes the resolvent of $Y$. Therefore, if for any compact subset $K$ of $\ob{D}$,  
\begin{equation}
\lim_{t \to 0}\sup_{x \in K}E_{x} \left[1-\exp \left(-\int_{0}^{t}\beta(X_t^0)\,dL_t \right) \right]=0 \label{eq:eqblt},
\end{equation}
we can see $R_{\alpha}(L^{1}(\ob{D},m) \cap L^{\infty}(\ob{D},m))\subset C_{b}(\ob{D})$ because $\{ R_{\alpha}^0\}_{\alpha>0}$ posseses the property \eqref{eq:wrsf}. For a bounded Lipschitz domain $D$, using an upper Gaussian estimate of the heat kernel of $X^0$, we can prove 
\begin{equation}
\sup_{x \in \ob{D}}E_{x}[L_t] =O(\sqrt{t}) \text{ as } t \to 0\label{eq:eqblt2}.
\end{equation}
If $\beta$ is locally bounded, it follows from Jensen's inequality that \eqref{eq:eqblt2} implies \eqref{eq:eqblt}. However, for an unbounded domain  thin at infinity, the heat kernel of $X^0$ does not satisfy the Gaussian estimate in general. For this reason, we need to make a different approach.

We take bounded increasing open subsets $\{U_n\}_{n=1}^{\infty}$  of $\R^d$ such that $\ob{D}=\bigcup_{n=1}^{\infty} \ob{D} \cap U_n$. Then, the left hand side of \eqref{eq:eqblt} is estimated as follows:
\begin{align*}
\sup_{x \in K}E_{x}\left[1-\exp \left(-\int_{0}^{t}\beta(X_s^0)\,dL_s \right) \right]
&\le \sup_{x \in K}E_{x}\left[\int_{0}^{t \wg \tau_n} \beta(X_s^0) \,dL_{s} \right]+\sup_{x \in K}P_{x}[ t \ge \tau_n],
\end{align*}
where $\tau_n$ is the first leaving time of $X^0$ from $\ob{D} \cap U_n$.
Let us denote by $X^n$ the part of $X^0$ on $\ob{D} \cap U_n$. Then, $\{ L_{t \wg \tau_n} \}_{t \ge 0}$ is regarded as a boundary local time of  $X^n$ and thus $\{ L_{t \wg \tau_n} \}_{t \ge 0}$ increases only when $X^n_{t} \in \partial D \cap U_n$. Therefore, if the boundary of $U_n \cap D$ is smooth enough and $\beta$ is locally bounded, as in the case when $D$ is bounded Lipschitz domain, the estimate $$\sup_{x \in K}E_{x}\left[\int_{0}^{t \wg \tau_n} \beta(X_s^0) \,dL_{s} \right] \le c_n \sqrt{t}$$ is expected for any $0<t \le 1$. Here $c_n$ denotes a positive constant depending on $D \cap U_n$ and $\beta$. In fact, this is true (Lemma~\ref{blt2}, \ref{blt3}). On the other hand, it is difficult to obtain a quantative estimate of $P_{x}[ t \ge \tau_n]$ because the upper bound of the heat kernel of $X^0$ is unknown. To overcome this difficulty we prove the semigroup strong Feller property of part processes $\{X^n\}$, which yields $\lim_{n \to \infty}\sup_{x \in K}P_{x}[ t \ge \tau_n]=0$ (Lemma~\ref{exit1}) and completes the proof of \eqref{eq:eqblt} (Proposition~\ref{blt1}). To show the semigroup strong Feller property of $\{X^n\}$, we employ the theory of extension domains (Lemma~\ref{lem:identification}). By applying this result, we can strengthen the statement in \eqref{eq:wrsf} as follows: The semigroups of $X^0$ and $Y$ have the semigroup strong Feller property (Theorem~\ref{thm:doubf}~(i), Theorem~\ref{cor:corstf}). Chung's method for proving the strong Feller property of part processes is well known (see \cite[Theorem~1, 2, 3, and Corollary]{C} for details). However, we do not use his result because $X^0$ does not generally satisfy $R_{\alpha}^0(C_{\infty}(\ob{D})) \subset C_{\infty}(\ob{D})$, required in \cite{C}. Here, $C_{\infty}(\ob{D})$ denotes the continuous functions on $\ob{D}$ vanishing at infinity.
 
We explain how to prove tightness property of $Y$.
First, we derive the heat kernel estimate of $Y$ by using  Maz'ya's inequality (Lemma~\ref{lem:mazja}, Theorem~\ref{thm:hke}) when $\sg\text{-}\essinf_{z \in \partial D} \beta(z)>0$. Using this estimate, we can prove the Feller property of $Y$ (Theorem~\ref{thm:doubf}~(ii)). Then, tightness property is equivalent to $\lim_{|x| \to \infty} R_{\alpha}\bone_{\ob{D}}(x)=0$. Because the semigroups of $Y$ is ultracontractive, it is not too difficult to prove $\lim_{|x| \to \infty} R_{\alpha}\bone_{\ob{D}}(x)=0$ under the thin at infinity condition. The irreducible property of $Y$ is clear in our setting.
 
 This paper organized as follows. In Section~2, we introduce a framework and state the main theorems. 
Section~3 provides the proof of a kind of resolvent strong Feller property of $X^0$ (Theorem~\ref{thm:1}, Theorem~\ref{thm:wrsf}). 
In Section~4, we introduce the concept of relative capacity and results on it. We also discuss the smoothness of the boundary measure $\sg$ (Proposition~\ref{prop:1}). Section~3 provides the proof of doubly semigroup strong Feller property of $Y$ (Theorem~\ref{thm:doubf}~(ii)). We also prove the compactness of the embedding  $\mathcal{D}(\cA) \subset L^{2}(D)$ (Corollary~\ref{thm:3}).
In the last section, we prove some auxiliary propositions  and discuss the conditions imposed on the theorems.
\\

\noindent
{\bf Notation.} Throughout this paper, we use the following notations.
\begin{itemize}
\item[(1)] Given a topological space $E$, the Borel $\sg$-algebra on $E$ is denoted by $\mathcal{B}(E)$. Let $\mu$ be a positive measure on the measurable space $(E,\mathcal{B}(E))$. For $p \in [1,\infty]$, the real $L^{p}$ space on the measure space $(E,\mathcal{B}(E),\mu)$ is denoted by $L^{p}(E,\mu)$, and its norm by $\|\cdot\|_{L^{p}(E,\mu)}$. The standard inner product on $L^{2}(E,\mu)$ is denoted by $(\cdot,\cdot)_{L^{2}(E,\mu)}$. For $f: E \to \R$, we write
$\|f\|_{\infty}=\sup_{x \in E}|f(x)|$. We also write 
\begin{align*}
&\mathcal{B}_{b}(E):=\{f:E \to \R \mid f \text{ is } \mathcal{B}(E)\text{ measurable, } \|f\|_{\infty}<\infty\},\\
&\mathcal{B}_{+}(E):=\{f:E \to \R \mid f \text{ is } \mathcal{B}(E)\text{ measurable and }f \ge 0 \text{ on }E\},\\
&C(E):=\{f:E \to \R \mid f \text{ is continuous}\},\,C_{b}(E):=C(E) \cap \mathcal{B}_{b}(E), \\ 
&C_{\infty}(E):=\{f\in C(E)\mid  \{x \in E \mid f(x) \ge \dl \} \text{ is compact for any } \dl \in (0,\infty)  \},\\
&C_{c}(E):=\{ f \in C(E) \mid \text{ support of }f\text{ is compact} \}.
\end{align*} 
\item[(2)]
 $d \ge 2$ is a positive integer. $B(x,R)$ denotes open ball of $\R^d$ centered at $x \in \R^d$ with radius $R>0$. If $x$ is the origin  of $\R^{d}$, we write $B(R)$ for $B(x,R)$ and $B_{+}(R)$  for $\{x=(x_{1},\ldots,x_d) \in \R^d \mid x \in B(R) \text{ and } x_d>0 \}$. For an open subset $E \subset \R^d$, $\ob{E}$ and $\partial E$ denote closure of $E$ in $\R^d$ and $\ob{E} \setminus E$, respectively. $\ob{E}$, $\partial E$ are regarded as topological subspaces of $\R^d$. The $d$-dimensional Lebesgue measure is denoted by $m$ or simply by $dx$. 
For an open subset $E \subset \R^d$, we denote by $W^{1,2}(E)$ the first order $L^2$-Sobolev space on $E$, that is,
\begin{align*}
W^{1,2}(E)&=\left\{ f \in L^{2}(E,m) \relmiddle|  \frac{\partial f}{\partial x_i}\in L^{2}(E,m), 1 \le i \le d\right\}, 
\end{align*}
where $\partial f/\partial x_i$ is the distributional derivative of $f$ on $E$. For $f\in W^{1,2}(E)$, we denote $\nabla f=(\partial f/\partial x_1,\ldots,\partial f/\partial x_d)$.
For  $f\in W^{1,2}(E)$, $ \| f\|_{W^{1,2}(E)} := \left( \int_{E}|\nabla f|^{2}\,dx+\int_{E}|f|^{2}\,dx \right)^{1/2}$ defines a norm on $W^{1,2}(E)$. $W^{1,2}(E)$ is sometimes written as $H^{1}(E)$. $H_{0}^{1}(E)$ and $\widetilde{H}^{1}(E)$ denote the closure of $C_{0}^{\infty}(E)$ in $H^{1}(E)$ and $H^{1}(E) \cap C(\ob{E})$ in $H^{1}(E)$, respectively. Here,  $C_{0}^{\infty}(E)$ denote the set of infinitely differentiable functions on $E$ with compact support. 
\end{itemize}

\section{Framework and main results}
Throughout this paper $D$ is a domain, that is, connected open subset of $\R^d$ $(d\ge 2)$. Then, we can define the following bilinear form on $L^{2}(D,m)$:
\begin{equation*}
\cE(f,g)=\frac{1}{2}\int_{D}(\nabla f,\nabla g)\,dx, \quad f,g \in H^{1}(D),
\end{equation*}
where 
$(\cdot,\cdot)$ is the standard inner product on $\R^d$.
$(\cE, H^{1}(D))$ becomes a strong local Dirichlet form on $L^{2}(D,m)$. 
Let $\{G_{\alpha}^{0}\}_{\alpha>0}$ be the $L^2$-resolvent associated with $(\cE,H^{1}(D))$. This section is devoted to the proof of the next theorem:
To construct a Hunt process associated with $(\cE, H^{1}(D))$ starting from every point of $\ob{D}$, we impose the following conditions on $D$: 
\begin{condition}\label{as:1}
\begin{enumerate}[({A}.1)]
\item[(A.1)] For any $a \in \partial D$, there are its neighbourhood $W_a$ in $\R^d$ and a bi-Lipschitz mapping  $\Psi_a: B(1) \to W_a$ such that $\Psi_{a}(0)=a$ and $\Psi_a(B_{+}(1))=W_a \cap D$. 
\item[(A.2)]  For any compact subset $K $ of $\partial D$, we have 
$$\sup_{a \in K}\max\{ \text{Lip}(\Psi_a), \text{Lip}(\Psi_{a}^{-1})\} \in (0,\infty),$$
where we define $\text{Lip}(\Psi_a)=\inf \{ L\ge 0 \mid |\Psi_{a}(x)-\Psi_{a}(y)| \le L|x-y|, x,y \in B(1) \}$. $\text{Lip} (\Psi_a^{-1})$ is defined in the same manner. 
\item[(A.3)] $(\cE,H^{1}(D))$ is a regular Dirichlet form on $L^{2}(\ob{D},m)$; that is, $H^{1}(D) \cap C_{c}(\ob{D})$ is both dense in $(H^{1}(D), \|\cdot \|_{H^{1}(D)})$ and in $(C_{c}(\ob{D}),\|\cdot\|_{\infty})$.
\end{enumerate}
\end{condition}
\begin{remark}
\begin{enumerate}
\item[(i)] It follows from  (A.1) that $m(\partial D)=0$.
\item[(ii)] If the boundary of $D$ is locally expressible as a graph of a continuous function of $d-1$ variables, we can check (A.3) (see \cite[Chapter~V, Theorem~4.7]{EE} for details). However, we don't know (A.1) and (A.2) imply (A.3).
\end{enumerate}
\end{remark}

\begin{theorem}\label{thm:wrsf}
Suppose Condition~(A.1) and (A.2). Let $\alpha>0$, $p>d$, $n \in \mathbb{N}$, and  $f \in L^{p}(\ob{D},m) \cap L^{2}(\ob{D},m)$. Then, $G_{\alpha}^{0}f $ is uniformly continuous on $D \cap B(n)$. In paticular, $G_{\alpha}^{0}f $ is continuous on $\ob{D}$.
\end{theorem}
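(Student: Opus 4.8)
The plan is to identify $G_{\alpha}^{0}f$ with the weak solution of a Neumann problem for $-\tfrac12\Delta+\alpha$ and to prove local H\"older continuity of that solution up to $\partial D$ by the De Giorgi--Nash--Moser method (in the spirit of \cite{FT0,FT}), with constants depending only on $d$, $p$, $\|f\|_{L^{p}}$ and the local Lipschitz data of the charts; uniform continuity on $D\cap B(n)$ then follows from compactness of $\ob{D}\cap\ob{B(n)}$. Write $u:=G_{\alpha}^{0}f\in H^{1}(D)$; by definition of the $L^{2}$-resolvent,
\[
\tfrac12\int_{D}(\nabla u,\nabla\varphi)\,dx+\alpha\int_{D}u\varphi\,dx=\int_{D}f\varphi\,dx,\qquad \varphi\in H^{1}(D),
\]
so $u$ is a weak solution of $-\tfrac12\Delta u+\alpha u=f$ in $D$ with the Neumann condition $\partial u/\partial\mathcal{N}=0$ on $\partial D$. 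Since $m(\partial D)=0$ and $f\in L^{p}(\ob{D},m)$ with $p>d>d/2$, the inhomogeneity $f-\alpha u$ lies in $L^{p}_{\loc}(D)$. On any ball with $B(x_{0},2r)\Subset D$, $u$ then solves $-\tfrac12\Delta u=f-\alpha u$ with right-hand side in $L^{p}(B(x_{0},2r))$, and the classical De Giorgi--Nash--Moser theory (local boundedness by Moser iteration, then oscillation decay) gives a representative of $u$ in $C^{0,\gamma}_{\loc}(D)$ with $\gamma=\gamma(d,p)\in(0,1)$ and $\|u\|_{C^{0,\gamma}(B(x_{0},r))}\le C(d,p,r)\bigl(\|u\|_{L^{2}(B(x_{0},2r))}+\|f-\alpha u\|_{L^{p}(B(x_{0},2r))}\bigr)$.

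For the behaviour near a boundary point $a\in\partial D$, I would flatten the boundary with the bi-Lipschitz chart $\Psi_{a}\colon B(1)\to W_{a}$ from (A.1), so that $\Psi_{a}(0)=a$ and $\Psi_{a}(B_{+}(1))=W_{a}\cap D$. Setting $v:=u\circ\Psi_{a}\in H^{1}(B_{+}(1))$ and changing variables in the weak equation (tested with functions supported in $W_{a}$), one finds that $v$ is a weak solution of $\dv(A\nabla v)=\rho(\alpha v-g)$ in $B_{+}(1)$ with the conormal condition $A\nabla v\cdot e_{d}=0$ on $\{x_{d}=0\}$, where $A=\tfrac12|\det D\Psi_{a}|\,[(D\Psi_{a})^{\mathsf T}D\Psi_{a}]^{-1}$, $\rho=|\det D\Psi_{a}|$ and $g=f\circ\Psi_{a}$. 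Since $\Psi_{a}$ is bi-Lipschitz, $A$ is bounded, symmetric, measurable and uniformly elliptic, $\rho$ is bounded above and below by positive constants, and $g\in L^{p}(B_{+}(1))$, with all these bounds (and $\|g\|_{L^{p}}$) controlled by $\max\{\mathrm{Lip}(\Psi_{a}),\mathrm{Lip}(\Psi_{a}^{-1})\}$ and $\|f\|_{L^{p}}$ --- hence, by (A.2), uniformly for $a$ in a compact subset of $\partial D$. I would then reflect: extend $v$, the entries $A_{ij}$ with $i,j<d$ or $i=j=d$, $\rho$, and $g$ evenly in $x_{d}$, and the mixed entries $A_{id}=A_{di}$ ($i<d$) oddly. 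Because the conormal condition makes the normal flux $A\nabla v\cdot e_{d}$ vanish on $\{x_{d}=0\}$, the interface carries no singular contribution, so the reflected $\bar v$ is a genuine weak solution of $\dv(\bar A\nabla\bar v)=\bar\rho(\alpha\bar v-\bar g)$ on $B(1)$; a direct computation shows $\bar A$ is still bounded, measurable and uniformly elliptic with the same constants. Applying the interior estimate above to $\bar v$ on $B(1)$ and pulling back through $\Psi_{a}$, $u$ has a representative that is $C^{0,\gamma}$ on $\ob{W_{a}'\cap D}$ for a smaller neighbourhood $W_{a}'\ni a$, with a H\"older seminorm bounded in terms of $\|u\|_{L^{2}}$, $\|f\|_{L^{p}}$ and the Lipschitz constants of $\Psi_{a}$; by (A.2) the bound is uniform for $a$ in a compact subset of $\partial D$.

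To finish, fix $n$. The set $\ob{D}\cap\ob{B(n)}$ is compact and is covered by the family consisting of the balls $B(x_{0},r)$ with $B(x_{0},2r)\Subset D$ (for $x_{0}\in D$) together with the boundary neighbourhoods $W_{a}'$ (for $a\in\partial D$); extract a finite subcover. On each member $u$ is $\gamma$-H\"older with a seminorm bounded by a single constant $C$ (finiteness of the cover and (A.2)). If $\delta>0$ is a Lebesgue number of this finite cover, then any $x,y\in D\cap B(n)$ with $|x-y|<\delta$ lie in a common member, whence $|u(x)-u(y)|\le C|x-y|^{\gamma}$; thus $u=G_{\alpha}^{0}f$ is uniformly continuous on $D\cap B(n)$. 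Since $\ob{D}=\bigcup_{n\in\N}(\ob{D}\cap B(n))$, this yields continuity of $G_{\alpha}^{0}f$ on all of $\ob{D}$.

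The main difficulty is the boundary regularity: after flattening, the coefficients are merely bounded and measurable, so no Schauder theory is available and one must obtain regularity up to $\partial D$ from the De Giorgi--Nash--Moser circle of ideas alone. I handle this by the even/odd reflection, which turns the conormal problem into an interior one; the delicate points are that the off-diagonal coefficients must be reflected \emph{oddly} (so that the flux is continuous across $\{x_{d}=0\}$ and no surface measure appears), that uniform ellipticity must be checked to survive the reflection, and that every constant must be kept dependent only on $d$, $p$, $\|f\|_{L^{p}}$ and the Lipschitz bounds of the charts --- this last point being exactly what lets (A.2) provide the uniformity needed by the covering argument. The interior estimate and the covering step are routine.
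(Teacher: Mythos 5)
Your argument is correct but follows a genuinely different technical route from the paper. The paper never reflects: it works directly on the Lipschitz half-ball images $B_a^{\ast}(r)=\Psi_a(B_+(r))$, establishing Moser-type Sobolev inequalities on these domains (Lemmas~\ref{lem:moser}--\ref{lem:localsobolev}) and on the mixed-boundary-condition space $\widehat{H}(B_a^{\ast}(r))$, then running the Moser iteration (local boundedness and oscillation decay, Lemmas~\ref{lem:localbound}--\ref{lem:st1}) for the Laplacian on the half-ball with free boundary on $\partial D$ and zero Dirichlet data on $\partial B_a^{\ast}(r)\cap D$; Stampacchia's interior estimate \cite[Lemma~7.3]{ST} then handles the interior, and a covering argument finishes. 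You instead flatten by $\Psi_a$, producing a divergence-form operator with bounded measurable coefficients $A=\tfrac12\lvert\det D\Psi_a\rvert\,[(D\Psi_a)^{\mathsf T}D\Psi_a]^{-1}$, and reflect evenly/oddly across $\{x_d=0\}$ to convert the conormal problem into a pure interior one on $B(1)$, after which you invoke the classical De~Giorgi--Nash interior H\"older estimate for operators with rough coefficients. Both roads lead to the same local H\"older bound with constants controlled by $d$, $p$, $\|f\|_{L^p}$, and the local Lipschitz data, which (A.2) makes uniform on compacta; the covering step and the passage from uniform continuity on $D\cap B(n)$ to continuity on $\ob{D}$ are the same in both. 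The trade-off is clean: the paper keeps the constant-coefficient Laplacian but must redo the Sobolev/Moser machinery on the curved half-ball and in the space $\widehat{H}(\cdot)$ with mixed boundary conditions, whereas you keep the geometry flat at the cost of tracking a bounded measurable coefficient matrix and of verifying the reflection bookkeeping (odd extension of the mixed entries $A_{id}$, persistence of ellipticity under conjugation by $\mathrm{diag}(1,\dots,1,-1)$, absence of a surface term because the conormal flux vanishes on $\{x_d=0\}$). You identify these delicate points correctly, so your proposal is a valid alternative proof; it is, however, no shorter once those checks are spelled out, and it leans on the general rough-coefficient De~Giorgi--Nash theory, whereas the paper's self-contained iteration tailored to the Laplacian is what feeds directly into the later quantitative lemmas (e.g.\ Lemma~\ref{lem:bounded}, Lemma~\ref{lem:osc}) used in the rest of the article.
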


Let $\Capa_{\ob{D}}$ be the capacity corresponding to the regular Dirichlet form $(\cE,H^{1}(D))$ on $L^{2}(\ob{D},m)$. We define the measure $\sg$ on $(\partial D, \mathcal{B}(\partial D))$  by $\sigma=\bone_{\partial D}\cdot  \mathcal{H}^{d-1}$, where $\mathcal{H}^{d-1}$ is the $(d-1)$-dimensional Hausdorff measure on $\R^d$. From Proposition~\ref{prop:1} below, we can check $\sg$ is a {\it smooth} Radon measure  on $(\partial D, \mathcal{B}(\partial D))$. Therefore, we can define a positive continuous additive functional $\{L_t\}_{t \ge 0}$ in the Revuz correspondence to  $\sg$.

\begin{proposition}\label{prop:1}
Suppose (A.1) and (A.3). Then, $\sg$ is a Radon measure on $(\partial{D}, \mathcal{B}(\partial{ D}) )$. Moreover, 
if $A \subset \partial{ D}$ satisfies $\Capa_{\ob{D}}(A)=0$, then $\sg(A)=0$.
\end{proposition}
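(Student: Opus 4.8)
First I would separate the statement into its two assertions: that $\sg$ is Radon on $\partial D$, and that $\sg$ charges no set of zero $\Capa_{\ob D}$-capacity. For the first, note that $\partial D$ is closed in $\R^d$, hence a locally compact separable metric space, and on such a space every Borel measure that is finite on compacts is automatically Radon; so it suffices to bound $\sg(K)$ for $K\subset\partial D$ compact. Using (A.1), cover $K$ by finitely many chart neighbourhoods $W_{a_1},\dots,W_{a_N}$. For each $i$ the homeomorphism $\Psi_{a_i}$ carries $B_+(1)$ onto $W_{a_i}\cap D$, so it carries $\{x\in B(1):x_d<0\}$ onto an open subset of $W_{a_i}$ disjoint from $D$, hence contained in $\R^d\setminus\ob D$; consequently $W_{a_i}\cap\partial D\subseteq\Psi_{a_i}\bigl(B(1)\cap\{x_d=0\}\bigr)$, and since a Lipschitz map does not increase $(d-1)$-Hausdorff measure by more than the $(d-1)$-st power of its Lipschitz constant,
\[
\sg(W_{a_i}\cap\partial D)\le \text{Lip}(\Psi_{a_i})^{d-1}\,\mathcal H^{d-1}\bigl(B(1)\cap\{x_d=0\}\bigr)<\infty .
\]
Summing over $i$ gives $\sg(K)<\infty$; note only finitely many charts enter, so (A.2) is not needed here.

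Next I would reduce the null-set claim to a local trace inequality. Put $K_n:=\partial D\cap\ob{B(n)}$; these sets are compact, increase to $\partial D$, and $\sg(A)=\lim_n\sg(A\cap K_n)$ for every $A\subseteq\partial D$. By the classical criterion of Fukushima, Oshima and Takeda, a positive Radon measure of finite energy integral relative to the regular Dirichlet form $(\cE,H^1(D))$ on $L^2(\ob D,m)$ — regularity being exactly (A.3) — charges no set of zero $\Capa_{\ob D}$-capacity. Hence it is enough to show that each $\bone_{K_n}\cdot\sg$ is of finite energy integral, i.e. that for some $C_n>0$,
\[
\int_{\partial D}|v|\,\bone_{K_n}\,d\sg\le C_n\sqrt{\cE_1(v,v)},\qquad v\in H^1(D)\cap C_c(\ob D),
\]
where $\cE_1(v,v)=\cE(v,v)+\|v\|_{L^2(\ob D,m)}^2$; once this holds, $\Capa_{\ob D}(A)=0$ forces $\sg(A\cap K_n)=0$ for all $n$, hence $\sg(A)=0$.

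Finally I would prove the local trace bound. Fix $n$ and a cutoff $\chi_n\in C_0^\infty(\R^d)$ with $\chi_n\equiv1$ on $\ob{B(n)}$; then $\chi_n v\in H^1(D)\cap C_c(\ob D)$ with $\|\chi_n v\|_{H^1(D)}\le c_n\|v\|_{H^1(D)}$ and $\int_{\partial D}|v|^2\bone_{K_n}\,d\sg\le\int_{\partial D}|\chi_n v|^2\,d\sg$, so (combined with Cauchy--Schwarz and $\sg(K_n)<\infty$) it suffices to show $\int_{\partial D}|w|^2\,d\sg\le C\,\|w\|_{H^1(D)}^2$ for $w\in H^1(D)\cap C_c(\ob D)$, with $C$ depending only on the finitely many charts meeting $\operatorname{supp}w$. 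Cover $\operatorname{supp}(w)\cap\partial D$ by charts $W_{a_i}$, take a subordinate partition of unity, and transfer each piece through $\Psi_{a_i}$ to $B_+(1)$: bi-Lipschitz changes of variables preserve $W^{1,2}$ up to equivalence of norms and distort $\mathcal H^{d-1}$ by bounded factors, so the claim reduces to the elementary estimate
\[
\int_{B(1)\cap\{x_d=0\}}|g|^2\,dx'\le C\,\|g\|_{W^{1,2}(B_+(1))}^2,\qquad g\in W^{1,2}(B_+(1))\cap C(\ob{B_+(1)}),
\]
obtained by writing $g(x',0)^2=g(x',t)^2-\int_0^t\partial_d(g^2)(x',s)\,ds$, averaging in $t$ over a small interval, integrating in $x'$, and applying Cauchy--Schwarz (shrinking the half-ball slightly if needed, which is harmless since $w$ is already localized).

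The main obstacle is this last step: the geometric identification of $W_a\cap\partial D$ with (a subset of) the image of the flat disk under $\Psi_a$, the precise behaviour of $W^{1,2}$-norms and of $\mathcal H^{d-1}$ under bi-Lipschitz maps, and the patching of the chartwise estimates into a single inequality on $\partial D$. By contrast, the exhaustion by the $K_n$, the cutoff reduction, and the passage from ``finite energy integral'' to ``does not charge polar sets'' are routine Dirichlet-form bookkeeping.
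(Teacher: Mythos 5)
Your proof is correct, and for the second (non-trivial) assertion it takes a genuinely different route from the paper. The paper's argument is capacity-theoretic and geometric-measure-theoretic: it first shows $\Capa_{\ob{D}}(A)=0\Rightarrow\Capa_{\ob{B_n}}(A\cap\ob{B_n})=0$ by monotonicity (Lemma~\ref{lem:comp}), then uses the fact that each chart image $B_n=B_{a_n}^{\ast}(1)$ is a $W^{1,2}$-extension domain (bi-Lipschitz image of a half-ball, via Haj\l asz--Koskela--Tuominen) together with Biegert's comparison theorem (Proposition~\ref{prop:comp}) to pass to the full-space capacity $\Capa(A\cap\ob{B_n})=0$, and finally invokes the Evans--Gariepy result that Euclidean Sobolev capacity zero implies $\mathcal H^{d-1}$-measure zero. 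You instead work entirely inside the Dirichlet-form framework: you verify that $\bone_{K_n}\cdot\sg$ has finite energy integral by an explicit local trace inequality in the bi-Lipschitz charts, and then invoke the Fukushima--Oshima--Takeda criterion that such measures charge no polar set of $(\cE,H^1(D))$, which under (A.3) is exactly a $\Capa_{\ob{D}}$-null set (Remark~4.2~(iii) of the paper). Both are sound; the paper's route has the advantage of reusing the relative-capacity and extension-domain machinery that it needs again in Lemma~\ref{lem:identification}, while yours is more self-contained for a reader fluent in Dirichlet-form potential theory, at the cost of carrying out the elementary one-dimensional trace computation by hand. Your Radon argument is also marginally slicker than the paper's (which appeals to the perimeter-type estimate $\mathcal H^{d-1}(\partial B_{+}(1))\le c\,\|\bone_{B_{+}(1)}\|_{W^{1,2}}^2$ from Evans--Gariepy), as you observe directly that $W_a\cap\partial D\subseteq\Psi_a(B(1)\cap\{x_d=0\})$ and use the $(d-1)$-power Lipschitz bound on Hausdorff measure; and your remark that (A.2) is not used is consistent with the proposition's hypotheses.
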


In the sequel, we assume the following condition.

\begin{condition}\label{as:2}
For any compact subset $K$ of $\ob{D}$, there exists a bounded open subset $U$ of $\R^d$ such that $K \subset U$ and $U \cap D$ is a bounded Lipschitz domain of $\R^d$.
\end{condition}

In section~7, we provide the definition of bounded Lipschitz domain and prove that Condition~\ref{as:2} implies Condition~\ref{as:1}. Under Condition~2.5, we can construct a Hunt process associated with $(\cE,H^{1}(D))$ whose resolvent satisfies the same property as $\{G_{\alpha}^0\}_{\alpha>0}$: 

\begin{theorem}\label{thm:1}
Under Condition~\ref{as:2}, there exists a Hunt process $X^0=(\{X_{t}^0\}_{t \ge 0}, \{P_x\}_{x \in \ob{D}})$ associated with $(\cE, H^{1}(D))$ whose resolvent $\{R_{\alpha}^0\}_{\alpha>0}$ has the following property: for any $f \in L^{1}(\ob{D},m) \cap L^{\infty}(\ob{D},m)$ and $\alpha>0$, we have $R_{\alpha}^{0}f \in C_{b}(\ob{D})$.
\end{theorem}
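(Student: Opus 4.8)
The plan is to build the Hunt process $X^0$ by the standard Fukushima-type construction from the regular Dirichlet form $(\cE, H^1(D))$ on $L^2(\ob D, m)$, and then to upgrade its resolvent to the strong Feller property \eqref{eq:wrsf} by combining Theorem~\ref{thm:wrsf} with a capacity argument ruling out exceptional sets. Concretely, first I would invoke Condition~\ref{as:2}, which (as asserted in the excerpt, to be proved in Section~7) implies Condition~\ref{as:1}, in particular (A.1)--(A.3). By (A.3), $(\cE, H^1(D))$ is a regular Dirichlet form on $L^2(\ob D, m)$; it is strongly local by construction. Hence Fukushima's theory produces an $m$-symmetric Hunt process $\widehat X^0 = (\{X_t^0\}, \{P_x\}_{x \in \ob D \setminus \mathcal N})$ associated with it, defined up to an exceptional set $\mathcal N \subset \ob D$ of zero capacity with respect to $\Capa_{\ob D}$.

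The crux is then to show that this process is in fact well defined \emph{from every} starting point $x \in \ob D$, i.e.\ that the exceptional set can be taken empty, and that the resolvent maps $L^1 \cap L^\infty$ into $C_b(\ob D)$. For the second point: for $f \in L^1(\ob D, m) \cap L^\infty(\ob D, m)$ and $\alpha > 0$, note $f \in L^p(\ob D, m) \cap L^2(\ob D, m)$ for every $p > d$ (interpolating between $L^1$ and $L^\infty$, using $m(\ob D) = m(D)$ and $m(\partial D) = 0$ from Remark~(i)), so Theorem~\ref{thm:wrsf} applies and gives that the $L^2$-resolvent function $G_\alpha^0 f$ has a version that is uniformly continuous on each $D \cap B(n)$, hence continuous on $\ob D$; boundedness follows from $\|G_\alpha^0 f\|_\infty \le \alpha^{-1}\|f\|_\infty$ by the Markovian (sub-Markovian) property of the resolvent. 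So $G_\alpha^0 f$ admits a continuous, bounded $m$-version $u$. The task is to identify $u$ with the probabilistic resolvent $R_\alpha^0 f(x) = E_x[\int_0^\infty e^{-\alpha t} f(X_t^0)\,dt]$ for \emph{every} $x$, not just $m$-a.e. This is where one exploits that $\Capa_{\ob D}$-polar sets are avoided: since $G_\alpha^0 f$ has a continuous quasi-version, the fine-continuous $m$-version $R_\alpha^0 f$ of the Dirichlet-form resolvent coincides with $u$ quasi-everywhere, hence by continuity of both (once we know the process hits no polar set, or more simply: two quasi-continuous functions agreeing q.e.\ and both genuinely continuous agree everywhere since $\Capa_{\ob D}$ charges every nonempty open set — which holds because the form is regular and every nonempty open subset of $\ob D$ has positive $m$-measure by connectedness and (A.1)) everywhere on $\ob D$. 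Therefore $R_\alpha^0 f = u \in C_b(\ob D)$ for all $x \in \ob D$, and in particular the process can be started from every point: the resolvent strong Feller property forces the exceptional set $\mathcal N$ to be empty, by the standard argument that a strong-Feller resolvent has no nontrivial exceptional set (one transfers the process started from $m$-a.e.\ point to every point using the continuous resolvent kernels and a Ray--Knight / Kuwae--Shigekawa-type regularization, or appeals directly to the fact that $\Capa_{\ob D}(\{x\}) $ considerations together with strong Feller remove exceptional points).

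The main obstacle I anticipate is precisely this last identification: passing from the $m$-a.e.\ defined, form-theoretic resolvent $G_\alpha^0$ to a genuinely pointwise-defined probabilistic resolvent $R_\alpha^0$ with the strong Feller property, and thereby eliminating the exceptional set so that $X^0$ starts from every $x \in \ob D$. The analytic input (continuity of $G_\alpha^0 f$) is handed to us by Theorem~\ref{thm:wrsf}; the probabilistic bookkeeping — checking that the Hunt process constructed up to an exceptional set can be refined to one without exceptional points once its resolvent is strong Feller, and that the resolvent of the refined process is the continuous version $u$ — is the delicate part, and I would model it on the treatment in \cite{FT0, FT}. Everything else (the bound $\|R_\alpha^0 f\|_\infty \le \alpha^{-1}\|f\|_\infty$, the interpolation putting $f$ into $L^p \cap L^2$, the fact that $\Capa_{\ob D}$ charges nonempty open sets) is routine.
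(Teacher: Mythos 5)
Your proposal correctly handles the analytic input (interpolation to put $f$ into $L^p\cap L^2$, continuity of $G_\alpha^0 f$ from Theorem~\ref{thm:wrsf}, boundedness from sub-Markovianity, uniqueness of the continuous version because $\Capa_{\ob D}$ charges nonempty open sets). The genuine gap is in the final step, which you describe as ``probabilistic bookkeeping'': the claim that ``the resolvent strong Feller property forces the exceptional set $\mathcal N$ to be empty, by the standard argument that a strong-Feller resolvent has no nontrivial exceptional set.'' No such standard argument exists in the generality you need. Strong Feller of the resolvent is \emph{not} by itself enough to refine a Hunt process obtained from Fukushima's theory to a Hunt process defined from every starting point; one still has to produce, from every $x\in\ob D$, right-continuous sample paths, quasi-left-continuity and the strong Markov property. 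The references you gesture at do not cover this situation: Chung's method would require $R_\alpha^0(C_\infty(\ob D))\subset C_\infty(\ob D)$, which the paper explicitly notes fails here because $X^0$ is not Feller (the state space $\ob D$ is unbounded and the process is conservative, so $R_\alpha^0\bone_{\ob D}=\alpha^{-1}\notin C_\infty$), and Kuwae--Shigekawa/Ray-type regularization also needs additional structure beyond strong Feller.

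The paper circumvents this by a completely different and much more hands-on construction. It first uses Theorem~\ref{thm:wrsf} to show the resolvent has an absolutely continuous kernel $r_\alpha^0(x,y)$, inverts the Laplace transform to obtain transition densities $p_t^0(x,y)$ satisfying Chapman--Kolmogorov and conservativeness, and builds a candidate Markov process $X^0$ from every starting point via the Kolmogorov extension theorem. It then compares with the Fukushima process $X^1$ on an $m$-full set to deduce that $X^0$ has continuous paths on $(0,\infty)$ for every starting point. Crucially, to promote this to an actual Hunt process the paper verifies the hypotheses (H.1), (H.2)'(i)--(iv) of Shin--Trutnau \cite{SG} and applies their Lemma~2.9. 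Verifying (H.2)'(i)--(ii) is itself nontrivial: the paper constructs (Proposition~\ref{prop;branch}, via Lemmas~\ref{lem:branch}--\ref{lem:branch3}) a countable family $\{f_n\}\subset\mathcal D(\mathcal L)\cap C_c(\ob D)$ of cutoff functions localized in rational balls, with $\mathcal L f_n\in L^\infty$ and $(\nabla f_n,\nabla f_n)\in L^\infty$, by exploiting the reflecting Brownian motion on the bounded Lipschitz domains $J_n$ of Condition~\ref{as:2} together with the results of \cite{BH} and \cite{BKK}. None of this appears in your sketch, and there is no shortcut that replaces it with a generic ``strong Feller removes exceptional points'' principle.
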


Let $\beta$ is a nonnegative locally bounded Borel measurable function on $\partial D$ and $Y=(\{ Y_t\}_{t \ge 0}, \{ P_x^{\beta}\}_{x \in \ob{D}})$ be  the subprocess of $X^0$ defined by the multiplicative functional $$\left\{\int_{0}^{t}\exp(-\beta(X_s^0))\,dL_s\right\}_{t \ge 0}.$$ Namely, $Y$ is the Hunt process whose semigroup $\{p_t\}_{t>0}$ is given by
$$p_{t}f(x)=E_{x}\left[f(X_t^0)\int_{0}^{t}\exp(-\beta(X_s^0))\,dL_s\right],\quad t>0,\ x \in \ob{D},\ f \in \mathcal{B}_{b}(\ob{D}).$$
Under Condition~\ref{as:2} and the following condition on $\beta$, $Y$ has the doubly Feller property in Chung's sense \cite{C}.
 
\begin{theorem}\label{thm:doubf}
Suppose Condition~\ref{as:2}. Then, 
\begin{itemize}
\item[(i)]
for any $t>0$ and $f \in \mathcal{B}_{b}(\ob{D})$, $p_{t}f \in C_{b}(\ob{D})$.
\end{itemize}
Furtheremore, we assume $\sg\text{-}\essinf_{z \in \partial D} \beta(z)>0$. Then,
\begin{itemize}
\item[(ii)]
for any $t>0$ and $f \in C_{\infty}(\ob{D})$,  $p_{t}f \in C_{\infty}(\ob{D})$.
\end{itemize}
Here, $\sg\text{-}\essinf$ denotes the essential infimum with respect to $\sg$.
\end{theorem}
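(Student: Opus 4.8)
The plan is to decompose the problem into the two assertions and handle them separately, using the resolvent strong Feller property of $X^0$ from Theorem~\ref{thm:1} together with the estimate on the boundary local time sketched in the introduction. For part~(i), the first step is to reduce the semigroup strong Feller property to a resolvent version and a continuity-in-$t$ argument: since $p_t f = e^{-\alpha t}R_\alpha^0(\alpha - A)\cdots$ is not directly available, I would instead argue that it suffices to prove $R_\alpha(L^1(\ob D,m)\cap L^\infty(\ob D,m)) \subset C_b(\ob D)$ for the resolvent $\{R_\alpha\}$ of $Y$, and then bootstrap to the semigroup using the part processes $X^n$. The key analytic input is the inequality displayed in the introduction,
\[
|R_\alpha f(x) - \gamma R^0_{\gamma+\alpha}R_\alpha f(x)| \le \|f\|_\infty(\gamma+\alpha)^{-1} + \|f\|_\infty\,\alpha^{-1}\int_0^1 E_x\!\left[1-\exp\!\left(-\int_0^{-\log s/\gamma}\beta(X^0_u)\,dL_u\right)\right]ds,
\]
combined with the crucial equicontinuity estimate \eqref{eq:eqblt}. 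Establishing \eqref{eq:eqblt} is where the bulk of the work lies: I would exhaust $\ob D$ by bounded open sets $U_n$ with $U_n\cap D$ bounded Lipschitz (available by Condition~\ref{as:2}), split
\[
\sup_{x\in K}E_x\!\left[1-\exp\!\left(-\int_0^t\beta(X^0_s)\,dL_s\right)\right] \le \sup_{x\in K}E_x\!\left[\int_0^{t\wedge\tau_n}\beta(X^0_s)\,dL_s\right] + \sup_{x\in K}P_x[t\ge\tau_n],
\]
bound the first term by $c_n\sqrt t$ using the Gaussian heat-kernel estimate available on the bounded Lipschitz piece $U_n\cap D$ and local boundedness of $\beta$ (Lemmas~\ref{blt2}, \ref{blt3}), and kill the second term by sending $n\to\infty$ after first fixing $t$ small — this last step requires the semigroup strong Feller property of the part processes $X^n$ (Lemma~\ref{exit1}), which in turn rests on the extension-domain machinery (Lemma~\ref{lem:identification}). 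Once \eqref{eq:eqblt} holds, $R_\alpha f \in C_b(\ob D)$ follows, and the upgrade to $p_t f\in C_b(\ob D)$ for all $f\in\mathcal B_b(\ob D)$ is obtained by the same part-process approximation: $p_t f = \lim_n p_t^n f$ locally uniformly, with each $p_t^n$ strong Feller on the bounded piece.

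For part~(ii), under the non-degeneracy hypothesis $\sigma\text{-}\essinf_{\partial D}\beta>0$, I would first derive the ultracontractivity / heat-kernel upper bound for the semigroup of $Y$ via Maz'ya's inequality (Lemma~\ref{lem:mazja}, Theorem~\ref{thm:hke}); the point is that the Robin term with a uniformly positive $\beta$ provides enough extra dissipation near $\partial D$ to compensate for the lack of a global Gaussian bound on $X^0$. Given an upper bound of the form $p_t(x,y)\le C(t)$ with $C(t)<\infty$ for each $t>0$, together with the thin-at-infinity condition $\lim_{|x|\to\infty}m(B(x,1)\cap D)=0$, one shows $\lim_{|x|\to\infty}p_t\bone_{\ob D}(x)=0$: write $p_t\bone_{\ob D}(x) = \int p_t(x,y)\,m(dy)$, use the semigroup property $p_t = p_{t/2}p_{t/2}$ and the on-diagonal bound to control the mass picked up from the shrinking set $B(x,1)\cap D$, while the complementary far piece is handled by a crude tail estimate on the Brownian travel distance. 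Combining this vanishing-at-infinity property with the strong Feller property from part~(i) gives, for $f\in C_\infty(\ob D)$, that $p_t f$ is continuous and $|p_t f(x)| \le \|f\|_\infty \cdot \bone_{\{|y|\le R\}}\text{-part} + \eps$-type bound, so $p_t f \in C_\infty(\ob D)$; this is the standard implication "strong Feller $+$ $p_t\bone$ vanishes at infinity $\Rightarrow$ $C_\infty$-Feller".

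The main obstacle, as flagged in the introduction, is the lack of any global Gaussian estimate for the heat kernel of $X^0$ on an unbounded domain that is merely thin at infinity — this is precisely why the naive route of bounding $P_x[t\ge\tau_n]$ quantitatively fails. The workaround, and the technical heart of the argument, is to prove the semigroup strong Feller property of the part processes $X^n$ on the bounded Lipschitz pieces $U_n\cap D$ purely qualitatively (no rate needed), via the identification of the Dirichlet form of $X^n$ with a Sobolev space on an extension domain and the resulting compactness/continuity of its resolvent; then $\sup_{x\in K}P_x[t\ge\tau_n]\to 0$ as $n\to\infty$ follows for \emph{fixed} $t$ without any explicit control, which is all that \eqref{eq:eqblt} requires. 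A secondary difficulty is verifying the hypotheses of Maz'ya's inequality in the required generality and extracting from it a heat-kernel bound uniform enough to run the tightness argument; but since $\beta$ is bounded below by a positive constant on $\partial D$, the boundary contribution to the Dirichlet form dominates a genuine surface-measure term and the classical Maz'ya criterion applies.
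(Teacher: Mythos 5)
Your overall architecture matches the paper's: exhaust $\ob{D}$ by bounded Lipschitz pieces $U_n \cap D$, prove the semigroup strong Feller property of the part processes $X^n$ by identifying their Dirichlet form with that of a part of a reflecting Brownian motion on the extension domain $I_{n+1}$, use this qualitatively to kill $\sup_{x\in K}P_x[\tau_n\le t]$ as $n\to\infty$, bound the boundary local time on the bounded piece by $c_n\sqrt t$, and for (ii) derive a Gaussian heat kernel bound for $Y$ from Maz'ya's inequality and combine it with thinness at infinity. Those ingredients and the logic linking them are the same as in the paper.

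The one place your route genuinely diverges, and where you should be careful, is the first paragraph of part (i). You lead with the resolvent inequality from the introduction and propose to establish $R_\alpha(L^1\cap L^\infty)\subset C_b(\ob D)$ for $Y$ and then \emph{bootstrap to the semigroup}. The paper deliberately does not take this route. It first proves the \emph{semigroup} strong Feller property of $X^0$ itself (Theorem~\ref{cor:corstf}), via $p_t^n f\to p_t^0f$ locally uniformly, and then transfers to $Y$ directly through the Markov property:
\begin{equation*}
\sup_{x\in K}\bigl|\,p_t f(x)-p^0_s p_{t-s}f(x)\,\bigr|\ \le\ \|f\|_\infty\,\sup_{x\in K}E_x\!\left[1-\exp(-A_s)\right],
\end{equation*}
which tends to $0$ as $s\to 0$ by Proposition~\ref{blt1}, so $p_t f$ is a local uniform limit of $C_b$-functions. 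This sidesteps the resolvent altogether. The reason this matters: the standard bootstrap from resolvent strong Feller to semigroup strong Feller is Chung's, and the paper explicitly points out that Chung's hypotheses, in particular $R^0_\alpha(C_\infty(\ob D))\subset C_\infty(\ob D)$, are not available here. So ``bootstrap to the semigroup'' is a real gap in your sketch as written, and the identity $p_t f=\lim_n p_t^n f$ you invoke is also not quite right: the $p_t^n$ are parts of $X^0$, so they approximate $p_t^0 f$, not $p_t f$. You should replace the resolvent detour by the direct semigroup comparison above. Once that is fixed, the rest of (i) and all of (ii), including the ultracontractivity/Gaussian-bound step and the tail argument, agree with the paper's proof.
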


Let $(\cA, \mathcal{D}(\cA))$ be the Dirichlet form on $L^{2}(D,m)$ generated by $Y$.  Under Condition~\ref{as:2}, 
$(\cA, \mathcal{D}(\cA))$ is given by
\begin{align*}
\cA(f,g)&=\frac{1}{2}\int_{D}(\nabla f, \nabla g)\,dx+\int_{\partial D}\tilde{f} \tilde{g}\,\beta d\sg,\quad f,g \in \mathcal{D}(\cA), \\
\mathcal{D}(\cA)&=\left\{ f \in H^{1}(D) \relmiddle| \int_{\partial D}\tilde{f}^2\,\beta d\sg<\infty \right\},
\end{align*}
where $\tilde{f}$ is the $\Capa_{\ob{D}}$-quasi continuous version of $f \in H^{1}(D)$. See Definition~\ref{defn:relative}~(iii) for the definition.

\begin{corollary}\label{thm:3}
Suppose Conditions~\ref{as:2}, $\sg\text{-}\essinf_{z \in \partial D}\beta(z)>0$. If the domain $D$ satisfies 
$$\lim_{x \in \ob{D},\,|x| \to \infty}m(D \cap B(x,1)) \to 0,$$
the embedding $\mathcal{D}(\cA) \subset L^{2}(D,m)$ is compact.
\end{corollary}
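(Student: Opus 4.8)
The plan is to invoke Takeda's criterion: if a Hunt process associated with a regular Dirichlet form is irreducible, has the resolvent strong Feller property, and possesses the tightness property, then the Dirichlet form is compactly embedded in $L^2$. We apply this to the process $Y$, whose Dirichlet form is $(\cA,\mathcal{D}(\cA))$ on $L^2(D,m)$. Thus it suffices to verify these three properties for $Y$.

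First, irreducibility of $Y$: since $Y$ is obtained from the reflecting Brownian motion $X^0$ on the connected set $\ob{D}$ by killing via a multiplicative functional supported on $\partial D$, and $X^0$ is irreducible (its Dirichlet form $(\cE,H^1(D))$ is strong local on the connected space $\ob{D}$), the killing does not destroy irreducibility; this is the ``clear in our setting'' remark from the introduction. Second, the resolvent strong Feller property: by Theorem~\ref{thm:doubf}~(i), for every $t>0$ and $f\in\mathcal{B}_b(\ob{D})$ we have $p_tf\in C_b(\ob{D})$, and integrating in $t$ gives $R_\alpha f\in C_b(\ob{D})$ for every bounded Borel $f$; in particular the resolvent maps $L^1\cap L^\infty$ (indeed all of $\mathcal{B}_b$) into $C_b(\ob{D})$, which is more than enough.

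The main work is the tightness property, which (because $Y$ is irreducible and resolvent strong Feller) reduces to showing $\lim_{|x|\to\infty,\,x\in\ob{D}} R_\alpha \bone_{\ob{D}}(x)=0$. Here I would use two ingredients. By Theorem~\ref{thm:doubf}~(ii), under the non-degeneracy hypothesis $\sg\text{-}\essinf_{z\in\partial D}\beta(z)>0$, the semigroup $\{p_t\}$ maps $C_\infty(\ob{D})$ into $C_\infty(\ob{D})$, i.e.\ $Y$ is Feller; moreover the heat kernel estimate of $Y$ (Theorem~\ref{thm:hke}, via Maz'ya's inequality) makes $\{p_t\}$ ultracontractive, so $p_t$ has a bounded density $p_t(x,y)$ with $\sup_{x,y}p_t(x,y)<\infty$. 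Now write
\begin{align*}
R_\alpha\bone_{\ob{D}}(x)=\int_0^\infty e^{-\alpha t}\,p_t\bone_{\ob{D}}(x)\,dt,\qquad p_t\bone_{\ob{D}}(x)=\int_{\ob{D}}p_t(x,y)\,m(dy).
\end{align*}
Fix $\eps>0$. Choose $t_0$ small so that $\int_0^{t_0}e^{-\alpha t}\,dt<\eps$ (using $p_t\bone\le 1$). For $t\ge t_0$, split the $y$-integral over $B(x,1)\cap\ob{D}$ and its complement. On $B(x,1)\cap\ob{D}$ we bound $p_t(x,y)\le \sup p_{t_0}(\cdot,\cdot)=:M_{t_0}<\infty$ (using that ultracontractivity bounds are monotone/controlled in $t$ on $[t_0,\infty)$ together with the semigroup property), so this piece is at most $M_{t_0}\,m(D\cap B(x,1))\to 0$ as $|x|\to\infty$ by the thin-at-infinity hypothesis. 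For the complementary piece $\{y\in\ob{D}:|y-x|\ge 1\}$, the Gaussian-type off-diagonal decay in the heat kernel estimate of Theorem~\ref{thm:hke} gives an integrable-in-$t$, uniformly-small bound; alternatively one controls $\int_{t_0}^\infty e^{-\alpha t}p_t g\,dt$ for $g=\bone_{\ob{D}}$ by approximating $\bone_{\ob{D}}$ from below by functions in $C_\infty(\ob{D})$ and using the Feller property to push the limit outside. Combining, $\limsup_{|x|\to\infty}R_\alpha\bone_{\ob{D}}(x)\le C\eps$, and letting $\eps\to0$ yields tightness.

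The step I expect to be the main obstacle is making the tail estimate for $\int_{\{|y-x|\ge 1\}}p_t(x,y)\,m(dy)$ uniform over $t$ after integrating against $e^{-\alpha t}$: the heat kernel of $X^0$ (hence of $Y$) need not satisfy a clean global Gaussian bound, so one must lean on the specific form of the estimate in Theorem~\ref{thm:hke} — which is precisely where the non-degeneracy of $\beta$ enters — and combine it carefully with the Feller property from Theorem~\ref{thm:doubf}~(ii) to handle the region away from $x$. Once $\lim_{|x|\to\infty}R_\alpha\bone_{\ob{D}}(x)=0$ is established, Takeda's theorem closes the argument and gives the compact embedding $\mathcal{D}(\cA)\subset L^2(D,m)$.
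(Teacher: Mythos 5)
Your overall strategy matches the paper exactly: invoke Takeda's compactness criterion \cite[Theorem~1.1]{T}, and reduce the problem to irreducibility, (resolvent) strong Feller, and tightness $\lim_{|x|\to\infty}R_\alpha\bone_{\ob{D}}(x)=0$. The irreducibility and strong Feller parts are handled correctly. The genuine gap is in the tightness estimate, where you split the $y$-integral at the \emph{fixed} radius $1$. The far contribution $p_t\bone_{\ob{D}\setminus B(x,1)}(x)$ does not go to zero as $|x|\to\infty$: the thin-at-infinity hypothesis controls $m(D\cap B(x,r))$ near $x$, but says nothing about the complement of a unit ball, which for large $|x|$ still carries most of the mass. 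From the Gaussian-type bound of Theorem~\ref{thm:hke}, $\int_{\{|y-x|\ge 1\}}p_t(x,y)\,dy$ is uniformly bounded but of order one for $t\gtrsim 1$, regardless of $x$. The remedy (and what the paper does) is to split at a variable radius $r>4$, use $p_t\bone_{\ob{D}}(x)\le a_1 e^t t^{-d}\,m(D\cap B(x,r))+a_4 r^{-a_5}$ (Lemma~\ref{lem:mazja}, Lemma~\ref{lem:hke3}), send $|x|\to\infty$ first (the hypothesis implies $m(D\cap B(x,r))\to 0$ for \emph{every} fixed $r$), and only then send $r\to\infty$. This gives $p_t\bone_{\ob{D}}\in C_\infty(\ob{D})$ for each fixed $t>0$, whence $R_\alpha\bone_{\ob{D}}\in C_\infty(\ob{D})$ by dominated convergence in $\int_0^\infty e^{-\alpha t}(\cdot)\,dt$ using only $p_t\bone\le 1$.

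There is a second, independent flaw: you assert a uniform bound $p_t(x,y)\le M_{t_0}$ for all $t\ge t_0$, but the ultracontractivity constant in Lemma~\ref{lem:mazja} is $a_1e^t t^{-d}$, which blows up as $t\to\infty$; there is no uniform on-diagonal bound. Working $t$-by-$t$, as the paper does, sidesteps this issue. Your alternative suggestion --- approximating $\bone_{\ob{D}}$ from below by $C_\infty(\ob{D})$ functions and pushing through the Feller property --- also does not close the argument, since $\bone_{\ob{D}}-g$ is not small near infinity for any $g\in C_\infty(\ob{D})$, so nothing controls the corresponding remainder in $R_\alpha$.
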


\section{Proof of Theorem~\ref{thm:wrsf}}

Let $\{G_{\alpha}^{0}\}_{\alpha>0}$ be the $L^2$-resolvent associated with $(\cE,H^{1}(D))$. To prove Theorem~\ref{thm:wrsf}, we employ a PDE argument due to Stampacchia and Moser used in \cite{FT0,FT}. Throughout this section, we write $M(a)$ for $\max\{ \text{Lip}(\Psi_a), \text{Lip}(\Psi_{a}^{-1})\}$ $(a \in \partial D)$.
Futhermore, for $n \in \N$, we define
$$
D_n=D \cap B(n),\quad K_n=\partial D \cap \ob{B(n)},
 \quad M_{n}=\sup_{a \in K_n}M(a).
$$
From (A.2), we have $M_n \in(0, \infty)$.  
\subsection{Sobolev inequalities of Moser's type}
We note that a Sobolev inequality of Moser's type in \cite[Lemma~2]{Mo} is valid 
for $H^{1}(B_{+}(r))$:
\begin{lemma}\label{lem:moser}
For any $\kappa \in (0,1]$, there exists $c_{1}=c_{1}(d,\kappa)>0$ such that 
\begin{align*}
&\left(r^{-d}\int_{B_{+}(r)}|f|^{q}\,dx \right)^{1/q} 
\le c_{1}
\left\{\left( r^{2-d}\int_{B_{+}(r)}|\nabla f|^{2}\,dx \right)^{1/2}+\left( r^{-d}\int_{N_{1}}|f|^{2}\,dx\right)^{1/2} \right\},
\end{align*}
for all $r>0$, $f \in H^{1}(B_{+}(r))$ and $q \in [2,2d/(d-2)]$ ($q \in [2,\infty)$ if $d=2$). Here $N_{1}$ is a Lebesgue measurable subset of $B_{+}(r)$ with $m(N_{1}) \ge \kappa m(B_{+}(r))$.
\end{lemma}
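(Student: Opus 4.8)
The plan is to reduce to the case $r=1$ by scaling, and then to obtain the inequality by combining the Sobolev embedding $H^1(B_+(1))\hookrightarrow L^q(B_+(1))$ with a Poincar\'e-type inequality that controls $\|f\|_{L^2(B_+(1))}$ by $\|\nabla f\|_{L^2(B_+(1))}$ together with the $L^2$-norm of $f$ over an arbitrary subset of measure at least $\kappa\,m(B_+(1))$. (Equivalently, one could first pass from the half-ball to the full ball by even reflection in $\{x_d=0\}$, which doubles each of the three integrals and the measure of $N_1$ while leaving $\kappa$ unchanged; I will instead work directly on $B_+(1)$, which is itself a bounded Lipschitz, hence Sobolev extension, domain.) For the scaling step: given $f\in H^1(B_+(r))$, set $f_r(y)=f(ry)$ for $y\in B_+(1)$ and $N=r^{-1}N_1$. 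Then $m(N)=r^{-d}m(N_1)\ge\kappa\,m(B_+(1))$, and the substitution $x=ry$ gives $r^{-d}\int_{B_+(r)}|f|^q\,dx=\int_{B_+(1)}|f_r|^q\,dy$, $r^{2-d}\int_{B_+(r)}|\nabla f|^2\,dx=\int_{B_+(1)}|\nabla f_r|^2\,dy$, and $r^{-d}\int_{N_1}|f|^2\,dx=\int_N|f_r|^2\,dy$, so it suffices to treat $r=1$ with a constant independent of $r$.

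Next I would establish the uniform Poincar\'e inequality: there is $C=C(d,\kappa)>0$ such that
\[
\|g\|_{L^2(B_+(1))}\le C\bigl(\|\nabla g\|_{L^2(B_+(1))}+\|g\|_{L^2(N)}\bigr)
\]
for all $g\in H^1(B_+(1))$ and all measurable $N\subset B_+(1)$ with $m(N)\ge\kappa\,m(B_+(1))$. This is proved by contradiction: if it fails there are $g_n\in H^1(B_+(1))$ with $\|g_n\|_{L^2(B_+(1))}=1$ and measurable $N_n$, $m(N_n)\ge\kappa\,m(B_+(1))$, with $\|\nabla g_n\|_{L^2(B_+(1))}+\|g_n\|_{L^2(N_n)}<1/n$. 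Then $(g_n)$ is bounded in $H^1(B_+(1))$, and since $B_+(1)$ is a bounded Lipschitz domain the Rellich--Kondrachov theorem yields a subsequence (not relabelled) converging strongly in $L^2(B_+(1))$ and weakly in $H^1(B_+(1))$ to some $g$. Weak lower semicontinuity of the Dirichlet integral together with $\|\nabla g_n\|_{L^2}\to0$ forces $\nabla g=0$, so $g$ coincides a.e.\ with a constant $c$ on the connected set $B_+(1)$, and $\|g\|_{L^2}=1$ gives $c\neq0$. But then $\|g\|_{L^2(N_n)}\le\|g_n\|_{L^2(N_n)}+\|g-g_n\|_{L^2(B_+(1))}\to0$, whereas $\|g\|_{L^2(N_n)}^2=c^2\,m(N_n)\ge c^2\kappa\,m(B_+(1))>0$ --- a contradiction.

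Finally, since $B_+(1)$ is a Sobolev extension domain the embedding $H^1(B_+(1))\hookrightarrow L^q(B_+(1))$ is continuous for $q\in[2,2d/(d-2)]$ (resp.\ $q\in[2,\infty)$ if $d=2$), so $\|f_1\|_{L^q(B_+(1))}\le C_1\bigl(\|\nabla f_1\|_{L^2(B_+(1))}+\|f_1\|_{L^2(B_+(1))}\bigr)$ with $C_1=C_1(d,q)$. Substituting the Poincar\'e bound for $\|f_1\|_{L^2(B_+(1))}$ gives $\|f_1\|_{L^q(B_+(1))}\le c_1\bigl(\|\nabla f_1\|_{L^2(B_+(1))}+\|f_1\|_{L^2(N)}\bigr)$ with $c_1=c_1(d,\kappa)$ (the dependence on $q$ is removed by taking the supremum over the compact range of exponents), which is the asserted inequality at $r=1$; the scaling step then gives it for every $r>0$.

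The only substantive point --- and the one I expect to be the main obstacle --- is the uniformity of the Poincar\'e constant over all admissible sets $N$. The compactness argument is designed precisely for this: the sets $N_n$ are allowed to vary with $n$, but any weak-$H^1$/strong-$L^2$ limit of a gradient-null normalized sequence is a single nonzero constant, whose $L^2$-mass over any set of measure at least $\kappa\,m(B_+(1))$ is bounded below by a quantity depending only on $d$, $\kappa$ and that constant. Everything else is standard: the Sobolev embedding for bounded Lipschitz domains (applied at the critical exponent $q=2d/(d-2)$, where it is still continuous, so no compactness is needed) and the Rellich--Kondrachov compact embedding $H^1(B_+(1))\hookrightarrow L^2(B_+(1))$.
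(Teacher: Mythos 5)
Your proof is correct (modulo the point about $d=2$ noted below) but takes a genuinely different route from the paper. The paper offers no proof at all: it simply observes that Moser's Lemma~2 from his 1960 paper on De~Giorgi's theorem applies verbatim to $H^1(B_+(r))$, and leaves the reader to consult that reference. You instead give a self-contained argument: (i) reduce to $r=1$ by the scaling $f_r(y)=f(ry)$, which correctly matches each of the three integrals and preserves the density condition $m(N)\ge\kappa\,m(B_+(1))$; (ii) prove the uniform Poincar\'e inequality $\|g\|_{L^2(B_+(1))}\le C(d,\kappa)\bigl(\|\nabla g\|_{L^2}+\|g\|_{L^2(N)}\bigr)$ for all admissible $N$ by contradiction, using Rellich--Kondrachov, weak lower semicontinuity of the Dirichlet integral, and connectedness of $B_+(1)$ to force the limit to be a nonzero constant; and (iii) compose with the continuous Sobolev embedding $H^1(B_+(1))\hookrightarrow L^q(B_+(1))$ for $B_+(1)$ an extension domain. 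Each step is sound. Moser's own argument (via direct potential estimates on cubes) yields explicit constants, whereas your compactness argument is nonconstructive; on the other hand yours is shorter to state rigorously and keeps the exposition self-contained, which is a reasonable trade-off for this kind of auxiliary lemma.

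One remark: your final step cannot actually suppress the $q$-dependence of $c_1$ when $d=2$, since $[2,\infty)$ is not compact and the Sobolev constant for $H^1\hookrightarrow L^q$ on a planar domain grows like $\sqrt{q}$ (the Moser--Trudinger rate); the "supremum over a compact range of exponents'' device only works for $d\ge3$, where $[2,2d/(d-2)]$ is compact. This is, however, an imprecision inherited from the lemma's statement itself rather than a defect of your argument: every subsequent use of the lemma in the paper (Lemma~\ref{lem:moser2}, Lemma~\ref{lem:localsobolev}) explicitly allows the resulting constant to depend on $q$, so nothing downstream actually relies on $q$-uniformity. It would be cleaner for you to state $c_1=c_1(d,\kappa)$ for $d\ge3$ and $c_1=c_1(\kappa,q)$ for $d=2$.
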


In the following, for $a \in \partial D$, $0<r\le1$, we write $B_{a}^{\ast}(r)$ for $\Psi_{a}(B_{+}(r))$.
Since each $\Psi_{a}$ is bi-Lipschitz continuous function, the following estimate holds:
\begin{equation}
M(a)^{-d}m(B_{+}(r)) \le  m(B_{a}^{\ast}(r))\le M(a)^{d}m(B_{+}(r)),\quad  r \in (0,1] \label{eq:jacobian2}.
\end{equation}
\begin{lemma} \label{lem:moser2}
For any $\eta \in(0,1]$,  
$q \in [2,2d/(d-2)]$ ($q \in [2,\infty)$ if $d=2$), there exists $c_{2}=c_{2}(d,M_n,\eta, q)>0$ such that 
\begin{equation*}
\left(\int_{B_{a}^{\ast}(r)}|f|^{q}\,dx \right)^{1/q} \le c_{2}r^{d\left( \frac{1}{q}-\frac{1}{2}\right)} \left( r^{2} \int_{B_{a}^{\ast}(r)}|\nabla f|^{2}\,dx+\int_{N_{2}}|f|^{2}\,dx \right)^{1/2}
\end{equation*}
for all $f \in H^{1}(B_{a}^{\ast}(r))$, $N_{2} \subset B_{a}^{\ast}(r)$ with $m(N_{2}) \ge \eta m( B_{a}^{\ast}(r))$, $a \in K_n$, $r \in  (0,1]$.
\end{lemma}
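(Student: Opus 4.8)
The plan is to deduce Lemma~\ref{lem:moser2} from Lemma~\ref{lem:moser} by pulling back along the bi-Lipschitz chart $\Psi_a$ and tracking the resulting constants. Fix $a\in K_n$, $r\in(0,1]$ (so that $B_{+}(r)\subset B(1)$ and $B_a^{\ast}(r)=\Psi_a(B_{+}(r))$ is defined), and $f\in H^{1}(B_a^{\ast}(r))$, and set $g:=f\circ\Psi_a$. First I would record the change-of-variables facts: the restriction of $\Psi_a$ to $B_{+}(r)$ is bi-Lipschitz onto $B_a^{\ast}(r)$ with both $\text{Lip}(\Psi_a|_{B_{+}(r)})$ and $\text{Lip}(\Psi_a^{-1}|_{B_a^{\ast}(r)})$ at most $M(a)$; consequently $g\in H^{1}(B_{+}(r))$ with $|\nabla g|\le M(a)\,(|\nabla f|\circ\Psi_a)$ a.e.\ (the a.e.\ chain rule for Sobolev functions under bi-Lipschitz maps), and the Jacobians obey $M(a)^{-d}\le|J_{\Psi_a}|\le M(a)^{d}$ and $M(a)^{-d}\le|J_{\Psi_a^{-1}}|\le M(a)^{d}$ a.e.\ (the pointwise version of \eqref{eq:jacobian2}). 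Also $M(a)\ge1$, hence $M_n\ge1$, since $\text{Lip}(\Psi_a)\,\text{Lip}(\Psi_a^{-1})\ge1$.

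Next I would choose the comparison set needed in Lemma~\ref{lem:moser}. Put $N_{1}:=\Psi_a^{-1}(N_{2})\subset B_{+}(r)$; it is Lebesgue measurable because $\Psi_a^{-1}$ is Lipschitz. Using the change of variables, the Jacobian lower bound, the hypothesis $m(N_{2})\ge\eta\,m(B_a^{\ast}(r))$, and \eqref{eq:jacobian2},
\begin{align*}
m(N_{1}) &= \int_{N_{2}}|J_{\Psi_a^{-1}}|\,dy \ge M(a)^{-d}m(N_{2}) \\
&\ge M(a)^{-d}\eta\, m(B_a^{\ast}(r)) \ge M(a)^{-2d}\eta\, m(B_{+}(r)) \ge M_n^{-2d}\eta\, m(B_{+}(r)),
\end{align*}
so that $\kappa:=M_n^{-2d}\eta\in(0,1]$ is an admissible choice in Lemma~\ref{lem:moser}, depending only on $d,M_n,\eta$.

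Then I would apply Lemma~\ref{lem:moser} to $g$ on $B_{+}(r)$ with this $\kappa$ and $N_{1}$, and undo the change of variables term by term: $\int_{B_{+}(r)}|g|^{q}\,dx\ge M(a)^{-d}\int_{B_a^{\ast}(r)}|f|^{q}\,dy$ on the left, $\int_{B_{+}(r)}|\nabla g|^{2}\,dx\le M(a)^{d+2}\int_{B_a^{\ast}(r)}|\nabla f|^{2}\,dy$ for the gradient term, and $\int_{N_{1}}|g|^{2}\,dx\le M(a)^{d}\int_{N_{2}}|f|^{2}\,dy$. Bounding every resulting power of $M(a)$ by the corresponding (positive) power of $M_n$ and using $A+B\le\sqrt{2}\,(A^{2}+B^{2})^{1/2}$ gives
\begin{align*}
\Big(r^{-d}\int_{B_a^{\ast}(r)}|f|^{q}\,dy\Big)^{1/q} &\le \sqrt{2}\,c_{1}\,M_n^{\,d/q+(d+2)/2} \\
&\quad\times\Big(r^{2-d}\int_{B_a^{\ast}(r)}|\nabla f|^{2}\,dy+r^{-d}\int_{N_{2}}|f|^{2}\,dy\Big)^{1/2};
\end{align*}
multiplying by $r^{d/q}$ and pulling a factor $r^{-d/2}$ out of the right-hand side turns the $r$-power into $r^{d(1/q-1/2)}$, which is the claimed inequality with $c_{2}=\sqrt{2}\,c_{1}(d,M_n^{-2d}\eta)\,M_n^{\,d/q+(d+2)/2}$, depending only on $d,M_n,\eta,q$.

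The only step with any real content is the first one — that composition with a bi-Lipschitz homeomorphism preserves $H^{1}$ with the gradient transforming by the a.e.\ chain rule, together with the pointwise two-sided Jacobian bounds; these are classical, and their volume form is already \eqref{eq:jacobian2}. Everything after that is bookkeeping of constants and powers of $r$, so I do not anticipate a genuine obstacle.
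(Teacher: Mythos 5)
Your proof is correct and follows essentially the same route as the paper: set $N_1=\Psi_a^{-1}(N_2)$, establish a lower bound on $m(N_1)$ via \eqref{eq:jacobian2}, apply Lemma~\ref{lem:moser} to $g=f\circ\Psi_a$, and transform back using the change-of-variables formula and the elementary inequality $x^{1/2}+y^{1/2}\le(2x+2y)^{1/2}$. The only discrepancy is the precise exponent of $M_n$ in $c_2$ — you carefully track the extra factor $M(a)^2$ arising from the chain rule $|\nabla g|\le M(a)\,|\nabla f\circ\Psi_a|$ and land on $M_n^{d/q+(d+2)/2}$ where the paper records $M_n^{d(1/q+1/2)}$; this is immaterial since $c_2$ merely needs to be some constant depending on $d,M_n,\eta,q$.
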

\begin{proof}
Let $a \in K_n$, $r \in  (0,1]$, $\eta \in (0,1]$, and  $N_{2} \subset B_{a}^{\ast}(r)$ with  
$m(N_2) \ge \eta m(B_{a}^{\ast}(r))$. Set $N_{1}=\Psi_{a}^{-1}(N_2) \subset B_{+}(r)$. Using \eqref{eq:jacobian2}, we have
$$
m(N_1) 
%&=m(\Psi_{a}^{-1}(N_2)) \ge M(a)^{-d} m(N_2) \ge M_{a}^{-d}\cdot \eta m(B_{a}^{\ast}(r)) \\
\ge (M^{-2d}_{n} \eta \wg 1) m(B_{+}(r)) =:\eta' m(B_{+}(r)).
$$
Set $g=f \circ \Psi_{a} \in H^{1}(B_{+}(r))$. Using Lemma~\ref{lem:moser} with $N_1$ and $\eta'$, we have
\begin{equation*}
\left( \int_{B_{+}(r)}|g|^{q}\,dx\right)^{1/q}\le c_{1}(d,\eta')\left\{\left( r^{2-d}\int_{B_{+}(r)}|\nabla g|^{2}\,dx \right)^{1/2}+\left( r^{-d}\int_{N_1}|g|^{2}\,dx\right)^{1/2} \right\}.
\end{equation*}
On account of the changes of variable formula, \eqref{eq:jacobian2} and the inequality $x^{1/2}+y^{1/2} \le (2x+2y)^{1/2}$ $(x,y \ge 0)$, the proof is complete
for $c_{2}=\sqrt{2}c_{1}(d,\eta')M^{d\left(\frac{1}{q}+\frac{1}{2} \right)}_{n}$.
\qed\end{proof}

Let $E$ be an open subset of $D$. Following \cite{FT,FT0}, we define two subspaces of $H^{1}(E)$:
\begin{align*}
\widehat{C}(E)&=\{f \in C^{1}(E) \mid \|f\|_{H^{1}(E)}<\infty, f=0 \text{ on } \partial E \cap D \},\\
\widehat{H}(E)&=\text{the completion of }\widehat{C}(E)\text{ with respect to the norm } \|\cdot\|_{H^{1}(E)}.
\end{align*}
$\widehat{H}(E)$ is regarded as a subspace of $ H^{1}(D)$ by a natural way. Clearly, if $\ob{E} \subset D$, then $\widehat{H}(E)=H_{0}^{1}(E)$. We define the Dirichlet form $(\cE_{E},D(\cE_{E}) )$ on $L^{2}(E,m)$ by
\begin{align*}
\cE_{E}(f,g)&:=\frac{1}{2}\int_{E}(\nabla f, \nabla g)\,dx,\quad f,g \in D(\cE_{E}) :=\widehat{H}^{1}(E).
\end{align*}
For $\alpha \ge 0$ and $f,g \in \widehat{H}(E)$, we denote $\cE_{E,\alpha}(f,g)=\cE_{E}(f,g)+\alpha (f,g)_{L^{2}(E,m)}$. In the sequel, $\{G_{E,\alpha}\}_{\alpha>0}$ denotes the resolvent on $L^{2}(E,m)$ associated with $(\cE_{E},D(\cE_{E}) )$.

\begin{lemma}\label{lem:localsobolev}
For any $q \in [2,2d/(d-2)]$ ($q \in [2,\infty)$ if $d=2$), there exists $c_{3}=c_{3}(d, M_n, q)>0$ such that 
\begin{equation}
\|f\|_{L^{q}(B_{a}^{\ast}(r),m)}
\le c_3 \| \nabla f\|_{L^{2}(B_{a}^{\ast}(r),m)} \label{eq:localsobolev}
\end{equation}
for all $f \in \widehat{H}(B_{a}^{\ast}(r))$ with $a \in K_n$ and $r \in (0, 1/(2M_{n}^2 \vee 1)]$.
\end{lemma}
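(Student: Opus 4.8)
The plan is to combine the Moser-type inequality of Lemma~\ref{lem:moser2}, applied with $N_2=B_{a}^{\ast}(r)$ (so $\eta=1$), with a Poincar\'e inequality that removes the lower-order term $\int_{B_{a}^{\ast}(r)}|f|^2\,dx$. The only extra information we have about $f\in\widehat H(B_{a}^{\ast}(r))$ beyond $f\in H^{1}$ is that it has vanishing trace on $\partial B_{a}^{\ast}(r)\cap D$, and the Poincar\'e inequality is exactly what turns this boundary condition into the gain we need.

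First I would pin down the geometry of the chart. Since $\mathrm{Lip}(\Psi_a)\,\mathrm{Lip}(\Psi_a^{-1})\ge 1$, we always have $M_n\ge 1$, so the hypothesis $r\le 1/(2M_n^2\vee 1)$ forces $r\le 1/2$; in particular $\ob{B_{+}(r)}\subset B(1)$ and $\Psi_a$ is a bi-Lipschitz homeomorphism of a neighbourhood of $\ob{B_{+}(r)}$. Using (A.1) (which gives $W_a\cap\partial D=\Psi_a(\{x_d=0\}\cap B(1))$), the image under $\Psi_a$ of the flat portion $\ob{B(r)}\cap\{x_d=0\}$ of $\partial B_{+}(r)$ lies in $\partial D$, while the image of the relatively open spherical cap $\partial B(r)\cap\{x_d>0\}$ lies in $W_a\cap D\subset D$. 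Hence $\partial B_{a}^{\ast}(r)\cap D=\Psi_a(\partial B(r)\cap\{x_d>0\})$, and consequently $g:=f\circ\Psi_a\in H^{1}(B_{+}(r))$ has zero trace on the cap $\partial B(r)\cap\{x_d>0\}$.

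Next I would establish the Poincar\'e inequality $\|f\|_{L^{2}(B_{a}^{\ast}(r))}\le c(d,M_n)\,r\,\|\nabla f\|_{L^{2}(B_{a}^{\ast}(r))}$. Reflect $g$ evenly across $\{x_d=0\}$, setting $\hat g(x',x_d):=g(x',|x_d|)$; then $\hat g\in H^{1}(B(r))$, and since $g$ vanishes on the upper cap the reflected function vanishes on all of $\partial B(r)$ (the equator being $(d-1)$-null), so $\hat g\in H_{0}^{1}(B(r))$. The classical Poincar\'e inequality on the ball gives $\|\hat g\|_{L^{2}(B(r))}\le c(d)\,r\,\|\nabla\hat g\|_{L^{2}(B(r))}$; both sides double under the reflection, whence $\|g\|_{L^{2}(B_{+}(r))}\le c(d)\,r\,\|\nabla g\|_{L^{2}(B_{+}(r))}$. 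Pulling this back through $\Psi_a$ — changing variables and using the bi-Lipschitz Jacobian bounds $\mathrm{Lip}(\Psi_a^{\pm 1})^d\le M_n^d$ together with $|\nabla g|\le M_n\,|(\nabla f)\circ\Psi_a|$ a.e. — yields the stated Poincar\'e inequality with a constant depending only on $d$ and $M_n$.

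Finally I would assemble the pieces. Lemma~\ref{lem:moser2} with $N_2=B_{a}^{\ast}(r)$ gives
\[
\|f\|_{L^{q}(B_{a}^{\ast}(r))}\le c_2\,r^{\,d(1/q-1/2)}\Bigl(r^2\|\nabla f\|_{L^{2}(B_{a}^{\ast}(r))}^2+\|f\|_{L^{2}(B_{a}^{\ast}(r))}^2\Bigr)^{1/2}\le c_2\sqrt{1+c(d,M_n)^2}\;r^{\,d/q-d/2+1}\,\|\nabla f\|_{L^{2}(B_{a}^{\ast}(r))}.
\]
Because $2\le q\le 2d/(d-2)$, the exponent $d/q-d/2+1$ lies in $[0,1]$ — it equals $2/q>0$ when $d=2$ — so, using $r\le 1$, we get $r^{\,d/q-d/2+1}\le 1$ and may take $c_3=c_2\sqrt{1+c(d,M_n)^2}$, which depends only on $d$, $M_n$ and $q$. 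The part I expect to require the most care is the second step: verifying that membership in $\widehat H(B_{a}^{\ast}(r))$ really does force vanishing trace exactly on the spherical cap $\partial B(r)\cap\{x_d>0\}$ after pulling back by $\Psi_a$ — this is what makes the even reflection land in $H_{0}^{1}(B(r))$ — after which the Poincar\'e step and the tracking of the power of $r$ are routine.
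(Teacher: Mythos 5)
Your proof is correct, but it follows a different route than the paper. The paper proves the lemma by extending $f\in\widehat H(B_a^*(r))$ by zero to $\hat f\in H^1(B_a^*(1))$ (this uses the same vanishing-on-the-cap fact you discuss, but more lightly: one only needs that the zero extension of a $\widehat C$ function across the Lipschitz interface $\partial B_a^*(r)\cap D$ is $H^1$, a standard fact). It then observes from \eqref{eq:jacobian2} and the size restriction $r\le 1/(2M_n^2\vee 1)$ that $m(B_a^*(r))\le 2^{-d}m(B_a^*(1))$, so the set $N=B_a^*(1)\setminus B_a^*(r)$ on which $\hat f\equiv 0$ has relative measure at least $1-2^{-d}$. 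Applying Lemma~\ref{lem:moser2} on $B_a^*(1)$ with $N_2=N$, the $L^2$ term drops out immediately and $c_3=c_2(d,M_n,1-2^{-d},q)$. Thus the paper needs no Poincar\'e inequality and no reflection; in exchange it explains (via the $2^{-d}$ computation) why the hypothesis $r\le 1/(2M_n^2\vee 1)$ is there, whereas in your argument that hypothesis plays no essential role beyond guaranteeing $r\le 1/2$.

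Your alternative --- applying Lemma~\ref{lem:moser2} on $B_a^*(r)$ itself with $\eta=1$ and killing the $\|f\|_{L^2}$ term by a separately-proved Poincar\'e inequality obtained through even reflection into $H^1_0(B(r))$ --- is sound. The geometric step (that $\widehat H(B_a^*(r))$ forces zero trace precisely on $\Psi_a(\partial B(r)\cap\{x_d>0\})$, so the even reflection of the pull-back lands in $H^1_0(B(r))$) is carried out correctly, and the Jacobian bookkeeping gives a constant depending only on $d$ and $M_n$. One small quibble: you cite (A.1) as ``giving'' $W_a\cap\partial D=\Psi_a(\{x_d=0\}\cap B(1))$, but (A.1) only states $\Psi_a(B_+(1))=W_a\cap D$; the identity for $\partial D$ is a topological \emph{consequence} of that (the homeomorphism $\Psi_a$ carries $\partial_{B(1)}B_+(1)$ to $\partial_{W_a}(W_a\cap D)=W_a\cap\partial D$), so it deserves a sentence rather than a parenthetical. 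Overall your proof trades the paper's one-line extension-by-zero for a reflection/Poincar\'e step; it is more work, but it also exhibits the sharper power $r^{\,d/q-d/2+1}$ explicitly, which the paper's route hides inside a constant independent of $r$.
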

\begin{proof}
For each $f \in \widehat{H}(B_{a}^{\ast}(r))$ with $a \in K_n$, $r \in (0, 1/(2M_{n}^2 \vee 1)]$, define $\hat{f} \in H^{1}(B_{a}^{\ast}(1))$ by
\begin{align*}
\begin{cases}
 \hat{f}=f \text{ on } B_{a}^{\ast}(r),\\
 \hat{f}=0 \text{ on } B_{a}^{\ast}(1)\setminus B_{a}^{\ast}(r). 
 \end{cases}
 \end{align*}
Iy follows from \eqref{eq:jacobian2} that
\begin{align*}
\frac{m(B_{a}^{\ast}(r))}{m( B_{a}^{\ast}(1))} &\le \frac{M(a)^{d}m( B_{+}(1/(2M_{n}^2 \vee 1)))}{M(a)^{-d}m( B_{+}(1))} \\
& = M(a)^{2d} \times (2M_{n}^2 \vee 1)^{-d} \\
&\le  M_{n}^{2d} \times (2M_{n}^2 \vee 1)^{-d} \\
&\le 2^{-d}.
\end{align*}
Hence, $N=B_{a}^{\ast}(1) \setminus B_{a}^{\ast}(r)$ satisfies $m(N) \ge (1-2^{-d})m(B_{a}^{\ast}(1))$.
From Lemma~\ref{lem:moser2},
\begin{align*}
\|\hat{f} \|_{L^{q}(B_{a}^{\ast}(1),m)} &\le c_3 \left(  \|\nabla \hat{f} \|_{L^{2}(B_{a}^{\ast}(1),m)}^{2} + \| \hat{f} \|_{L^{2}(B_{a}^{\ast}(1) \setminus B_{a}^{\ast}(r),m)}^{2}  \right)^{1/2},
\end{align*}
where $c_{3}=c_{2}(d,M_n,1-2^{-d},q)$. By the definition of $\hat{f}$, we complete the proof.
\qed\end{proof}

From Lemma~\ref{lem:localsobolev}, $\widehat{H}({B_{a}^{\ast}(r)})$ is a Hilbert space with inner product $\cE_{B_{a}^{\ast}(r)}(\cdot,\cdot)$. Therefore, for all $f \in L^{2}(B_{a}^{\ast}(r))$,
there exists a unique element $G_{B_{a}^{\ast}(r),0}f \in \widehat{H}(B_{a}^{\ast}(r))$ such that 
\begin{equation} 
 \cE_{B_{a}^{\ast}(r)}\bigl(G_{B_{a}^{\ast}(r),0}f,g\bigr)=\int_{B_{a}^{\ast}(r)}fg\,dx,\quad  g \in \widehat{H}(B_{a}^{\ast}(r))\label{eq:func3}.
 \end{equation}
 Using \eqref{eq:localsobolev}, we obtain the following by the same argument as in \cite[Theorem~4.1]{ST}.
\begin{lemma} \label{lem:st2}
Let $p>d$. Then, there exists $c_4=c_4(d,M_{n},p)>0$ such that  
\begin{equation*}
\|G_{B_{a}^{\ast}(r),\alpha}f \|_{L^{\infty}(B_{a}^{\ast}(r),m)} \le c_{4} r^{(p-d)/p}\|f\|_{L^{p}(B_{a}^{\ast}(r),m)}
\end{equation*}
for all $\alpha \ge0$, $f \in L^{2}(\ob{D},m) \cap L^{p}(\ob{D},m)$, $a \in K_n$, $r \in (0,1/(2M_{n}^2 \vee 1)]$.
\end{lemma}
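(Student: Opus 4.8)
\medskip
\noindent\textbf{Proof proposal.}
The plan is to run Stampacchia's truncation (De Giorgi iteration) scheme on the variational solution, following \cite[Theorem~4.1]{ST}; the only genuinely new feature is that every constant must be tracked so as to depend on $(d,M_n,p)$ alone, hence be uniform in $a\in K_n$, in $r$, and in $\alpha$. Fix $a\in K_n$, $r\in(0,1/(2M_n^2\vee1)]$, $\alpha\ge0$ and $f\in L^2(\ob{D},m)\cap L^p(\ob{D},m)$, put $u:=G_{B_a^\ast(r),\alpha}f\in\widehat H(B_a^\ast(r))$ (a fixed Borel representative), and observe that since $\|{-f}\|_{L^p}=\|f\|_{L^p}$ and $G_{B_a^\ast(r),\alpha}(-f)=-u$, it is enough to bound $\esssup_{B_a^\ast(r)}u$ and then apply the result to $-u$. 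I would fix once and for all the Sobolev exponent $q:=2d/(d-2)$ if $d\ge3$ and $q:=4$ if $d=2$; then $p':=p/(p-1)<2<q$ because $p>d\ge2$, so both Lemma~\ref{lem:localsobolev} and Hölder's inequality on measurable subsets of $B_a^\ast(r)$ are available.

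For $k\ge0$ set $v_k:=(u-k)^+$, $A_k:=\{x\in B_a^\ast(r):u(x)>k\}$ and $\varphi(k):=m(A_k)$. First I would note that $v_k\in\widehat H(B_a^\ast(r))$ (the map $t\mapsto(t-k)^+$ is a normal contraction for $k\ge0$, hence operates on the Dirichlet form $(\cE_{B_a^\ast(r)},\widehat H(B_a^\ast(r)))$) and that $\nabla v_k=\bone_{A_k}\nabla u$ a.e. Testing the defining identity $\cE_{B_a^\ast(r),\alpha}(u,g)=(f,g)_{L^2(B_a^\ast(r),m)}$ (which is \eqref{eq:func3} when $\alpha=0$, and the analogous resolvent equation for general $\alpha$) against $g=v_k$, and discarding the nonnegative term $\alpha\int_{A_k}u(u-k)\,dx$, I obtain $\tfrac12\|\nabla v_k\|_{L^2(B_a^\ast(r),m)}^2\le\int_{A_k}fv_k\,dx\le\|f\|_{L^p(B_a^\ast(r),m)}\,\|v_k\|_{L^{p'}(A_k,m)}$. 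Then Hölder ($p'<q$) gives $\|v_k\|_{L^{p'}(A_k,m)}\le\|v_k\|_{L^q(B_a^\ast(r),m)}\,\varphi(k)^{1/p'-1/q}$, while Lemma~\ref{lem:localsobolev} gives $\|v_k\|_{L^q(B_a^\ast(r),m)}\le c_3\|\nabla v_k\|_{L^2(B_a^\ast(r),m)}$ with $c_3=c_3(d,M_n,q)$; combining and cancelling one factor of $\|v_k\|_{L^q}$ yields, for all $k\ge0$,
\[
\|v_k\|_{L^q(B_a^\ast(r),m)}\le 2c_3^2\,\|f\|_{L^p(B_a^\ast(r),m)}\,\varphi(k)^{1/p'-1/q}.
\]

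For $h>k\ge0$ one has $(h-k)\,\varphi(h)^{1/q}\le\|v_k\|_{L^q(B_a^\ast(r),m)}$, whence
\[
\varphi(h)\le\frac{\bigl(2c_3^2\|f\|_{L^p(B_a^\ast(r),m)}\bigr)^{q}}{(h-k)^{q}}\,\varphi(k)^{\beta},\qquad\beta:=q\Bigl(1-\tfrac1p\Bigr)-1,
\]
and a one-line computation shows $\beta-1=q(1-1/p)-2>0$ (equivalently $q>2p/(p-1)$), which holds for both choices of $q$ because $p>d/2$. Stampacchia's iteration lemma (cf. \cite[Theorem~4.1]{ST}) then gives $\varphi\equiv0$ on $[\delta,\infty)$ with $\delta=c(d,M_n,p)\,\|f\|_{L^p(B_a^\ast(r),m)}\,\varphi(0)^{(\beta-1)/q}$. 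I would conclude by bounding $\varphi(0)=m(B_a^\ast(r))\le M_n^{d}m(B_+(r))\le c'(d,M_n)\,r^{d}$ via \eqref{eq:jacobian2} and observing that $d(\beta-1)/q$ equals $(2p-d)/p$ when $d\ge3$ and $(p-d)/p$ when $d=2$; since both exceed or equal $(p-d)/p$ and $r\le1$, this gives $\esssup_{B_a^\ast(r)}u\le\delta\le c_4\,r^{(p-d)/p}\|f\|_{L^p(B_a^\ast(r),m)}$ with $c_4=c_4(d,M_n,p)$, and repeating the argument for $-u$ closes the proof.

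I do not expect any single estimate to be the real obstacle; the delicate point is the bookkeeping, namely verifying that $c_3$ (from Lemma~\ref{lem:localsobolev}), the volume comparison \eqref{eq:jacobian2}, and the constant produced by Stampacchia's lemma all depend on $(d,M_n,p)$ only, so that the bound is genuinely uniform over $a\in K_n$, over $r$, and over $\alpha\ge0$ — this uniformity is precisely what the earlier lemmas were arranged to supply. The two small technical items to get right are that the truncations $(u-k)^+$ lie in the boundary-vanishing space $\widehat H(B_a^\ast(r))$ (so that Lemma~\ref{lem:localsobolev} applies to them, not merely to $u$), and the separate fixed choice of $q$ when $d=2$ ensuring $\beta>1$.
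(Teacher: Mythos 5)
Your proof is correct and follows exactly the route the paper intends: the paper gives no details, simply invoking Stampacchia's theorem \cite[Theorem~4.1]{ST} together with the uniform Sobolev inequality \eqref{eq:localsobolev}, and your write-up is a careful, fully explicit execution of that truncation/iteration argument, with the one genuinely required addition (tracking that every constant depends only on $(d,M_n,p)$ via Lemma~\ref{lem:localsobolev} and \eqref{eq:jacobian2}) handled correctly. The observations that $(u-k)^{+}\in\widehat{H}(B_a^{\ast}(r))$ for $k\ge 0$, that the $\alpha$-term can be dropped, and that the $d\ge 3$ exponent $(2p-d)/p$ dominates $(p-d)/p$ once $r\le 1$, are all accurate.
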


\subsection{Some estimates of solutions and subsolutions}
Let $a \in \partial D$ and $r \in (0,1]$. We say a function $f \in H^1(B_{a}^{\ast}(r))$ is a subsolution of $(\cA_{B_{a}^{\ast}(r)}^0, \widehat{H}(B_{a}^{\ast}(r))$ if for any $g \in \widehat{H}(B_{a}^{\ast}(r))$ with $g \ge 0$,
\begin{equation}
\cE_{B_{a}^{\ast}(r)}(f,g) \le0 \label{eq:eqsub}.
\end{equation}

\begin{lemma} \label{lem:cutoff}
Let $a \in \partial D$ and $r \in (0,1]$. For any positive numbers $s_1,s_2$ with $0<s_1<s_2 \le r$, there is a smooth function $\xi=\xi^{a,s_1,s_2} \in \widehat{H}(B_{a}^{\ast}(r))$ such that $\xi=1$ on $B_{a}^{\ast}(s_1) $ and $0 \le \xi \le 1$, and $|\nabla \xi | \le M(a)(s_2-s_1)^{-1}$ on $B_{a}^{\ast}(r)$.
\end{lemma}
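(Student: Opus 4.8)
The plan is to obtain $\xi$ by pulling a one-dimensional radial cutoff on $B_{+}(r)$ back through $\Psi_{a}^{-1}$, and to transport the gradient bound through the bi-Lipschitz change of variables, the factor $M(a)$ being exactly the distortion of $\Psi_{a}$. First I would fix a profile $\phi=\phi^{s_1,s_2}\colon[0,\infty)\to[0,1]$ that equals $1$ on $[0,s_1]$, is nonincreasing, is supported in $[0,s_2]$, and has $|\phi'|\le (s_2-s_1)^{-1}$ wherever defined; the piecewise linear choice $\phi(t)=\min\{1,(s_2-t)_{+}/(s_2-s_1)\}$ does this, and mollifying it at a scale $\ll s_2-s_1$ gives a $C^{\infty}$ profile that still equals $1$ on $[0,s_1]$ (after shrinking the plateau of the piecewise linear model slightly) while keeping $|\phi'|\le M(a)(s_2-s_1)^{-1}$, using $M(a)\ge 1$ (which holds since $\text{Lip}(\Psi_a)\,\text{Lip}(\Psi_a^{-1})\ge 1$). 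Writing $g(y):=|\Psi_{a}^{-1}(y)|$ for $y\in W_{a}$, I then set $\xi:=\phi\circ g$ on $B_{a}^{\ast}(r)$.

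The three requirements are then read off directly. Since $\Psi_{a}^{-1}$ is $\text{Lip}(\Psi_{a}^{-1})$-Lipschitz and $x\mapsto|x|$ is $1$-Lipschitz, $g$ is $M(a)$-Lipschitz, hence differentiable a.e.\ with $|\nabla g|\le M(a)$ by Rademacher's theorem; the chain rule then gives $\nabla\xi=\phi'(g)\,\nabla g$ a.e., so $|\nabla\xi|\le M(a)(s_2-s_1)^{-1}$ on $B_{a}^{\ast}(r)$. On $B_{a}^{\ast}(s_1)=\{g<s_1\}\cap B_{a}^{\ast}(r)$ one has $\xi\equiv 1$, and $0\le\xi\le 1$ is immediate. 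Finally, $B_{a}^{\ast}(r)$ is bounded, so $\xi$ is a bounded Lipschitz function with bounded gradient and lies in $H^{1}(B_{a}^{\ast}(r))$; it vanishes on $\{g\ge s_2\}$, and the portion of $\partial B_{a}^{\ast}(r)$ lying in $D$ is contained in $\{g\ge r\}\supset\{g\ge s_2\}$, so a mollification of $\xi$ inside $B_{a}^{\ast}(r)$ (after an arbitrarily small downward truncation of $\xi$ when $s_2=r$) produces elements of $\widehat{C}(B_{a}^{\ast}(r))$ converging to $\xi$ in $\|\cdot\|_{H^{1}(B_{a}^{\ast}(r))}$; hence $\xi\in\widehat{H}(B_{a}^{\ast}(r))$.

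The only point needing care is reconciling smoothness of $\xi$ with the sharp constant $M(a)(s_2-s_1)^{-1}$: no $C^{1}$ radial profile dropping from $1$ to $0$ across $[s_1,s_2]$ can satisfy $|\phi'|\le(s_2-s_1)^{-1}$, so the argument must spend the slack $M(a)\ge 1$ when mollifying the piecewise linear model (equivalently, one accepts an arbitrarily small enlargement of the annulus), which is precisely why the bound carries the factor $M(a)$ rather than $1$. Everything else — the change of variables, Rademacher's theorem, and the chain rule for Lipschitz compositions — is routine.
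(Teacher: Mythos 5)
Your construction is essentially the same as the paper's: the paper takes a smooth $d$-dimensional cutoff $\zeta$ on $\R^d$ with $\zeta=1$ on $B(s_1)$, $\zeta=0$ off $B(s_2)$, $|\nabla\zeta|\le(s_2-s_1)^{-1}$, and sets $\xi=\zeta\circ\Psi_a^{-1}$; you take a $1$-dimensional radial profile $\phi$ and compose with $g=|\Psi_a^{-1}(\cdot)|$, which amounts to the same thing when $\zeta$ is chosen radial. Your more explicit Rademacher/chain-rule step and the membership argument for $\widehat{H}(B_a^*(r))$ are fine.

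Two small remarks. First, you are right that no $C^1$ profile dropping from $1$ to $0$ across $[s_1,s_2]$ can satisfy $|\phi'|\le(s_2-s_1)^{-1}$ exactly; the paper's proof has the same imprecision in its choice of $\zeta$, and your fix (spend the slack $M(a)\ge1$, or enlarge the annulus by $\varepsilon$) is the standard one. Second, both constructions produce $\xi=\zeta\circ\Psi_a^{-1}$ (resp.\ $\phi\circ g$) that is merely Lipschitz, not $C^\infty$, since $\Psi_a^{-1}$ is only assumed bi-Lipschitz; thus the word ``smooth'' in the lemma statement should be read as ``Lipschitz with a.e.\ bounded gradient,'' which is all that is used in Lemmas~\ref{lem:localbound} and \ref{lem:lemb}. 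This is a blemish in the paper's statement, not in your argument.
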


\begin{proof}
Take a smooth function $\zeta \in C^{\infty}_{c}(\R^d)$ satisfying the following:
\begin{itemize}
\item $\zeta=1$ on $B(s_1)$, $\zeta=0$ on $\R^d \setminus B(s_2)$,
\item $|\nabla \zeta| \le (s_2-s_1)^{-1}$ on $B(r)$.
\end{itemize}
Then, $\xi^{a,s_1,s_2}:=\zeta \circ \Psi_{a}^{-1}|_{B_{a}^{\ast}(r)}$ satisfy the required conditions.
\qed\end{proof}
In the following, for an open subset $E$ of $\R^d$, we denote by $\esssup_{E}$ and $\essinf_{E}$ the essential supremum and essential infimum with respect to $m$, respectively.
\begin{lemma} \label{lem:localbound}
Let $a \in K_n$, $r \in (0, 1/(2M_{n}^2 \vee 1)]$ and $f \in H^1(B_{a}^{\ast}(r))$ be a nonnegative subsolution of \eqref{eq:eqsub}. Then, for all $0<s<r$, we have $f \in L^{\infty}(B_{a}^{\ast}(s),m)$ and there exists $c_5=c_{5}(d,M_{n})>0$ such that
\begin{equation*}
\esssup_{B_{a}^{\ast}(s)}f \le c_{5}(r-s)^{-d/2} \|f\|_{L^{2}(B_{a}^{\ast}(r),m)}.
\end{equation*}
\end{lemma}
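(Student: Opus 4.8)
The plan is to run the classical De Giorgi--Moser iteration, transplanted through the bi-Lipschitz chart $\Psi_a$. The two inputs that make this work uniformly over $a\in K_n$ are already in place: the Sobolev inequality of Lemma~\ref{lem:localsobolev} (whose constant $c_3$ depends on $a$ only through $M_n$), applied to functions in $\widehat{H}(B_a^*(r))$; and the cutoff functions of Lemma~\ref{lem:cutoff}, which live in $\widehat{H}(B_a^*(r))$, equal $1$ on an inner ball, and have gradient controlled by $M_n$ over the annulus. Throughout, $a\in K_n$ and $r\in(0,1/(2M_n^2\vee1)]$ are fixed, and the point is that the resulting iteration constant $c_5$ will depend only on $d$ and $M_n$.

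\emph{Single iteration step.} Fix $\gamma\ge1$ and radii $s\le\sigma<\tau\le r$, and let $\xi=\xi^{a,\sigma,\tau}$ be the cutoff of Lemma~\ref{lem:cutoff}, so $\xi\in\widehat{H}(B_a^*(r))$, $\xi\equiv1$ on $B_a^*(\sigma)$, $0\le\xi\le1$ and $|\nabla\xi|\le M_n(\tau-\sigma)^{-1}$. For $k>0$ put $f_k=f\wg k$, and test the subsolution inequality \eqref{eq:eqsub} (which reads $\tfrac12\int_{B_a^*(r)}(\nabla f,\nabla g)\,dx\le0$) against the nonnegative $g=\xi^2 f_k^{2\gamma-1}$ --- for unbounded $f$ one replaces the power $t^{2\gamma-1}$ by its Lipschitz truncation at level $k$ in the usual way, which keeps $g$ in $\widehat{H}(B_a^*(r))$ since $\xi$ vanishes near $\partial B_a^*(r)\cap D$. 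Expanding $\nabla g$, moving the cross term to the right-hand side and using Young's inequality (the subsolution structure absorbing the factors of $\gamma$), one gets a Caccioppoli-type bound
\[
\int_{B_a^*(r)}|\nabla(\xi f_k^\gamma)|^2\,dx \;\le\; \frac{C(d)\,M_n^2}{(\tau-\sigma)^2}\int_{B_a^*(\tau)}f^{2\gamma}\,dx .
\]
Since $\xi f_k^\gamma\in\widehat{H}(B_a^*(r))$, Lemma~\ref{lem:localsobolev} with $q=2d/(d-2)$ (any fixed finite $q>2$ when $d=2$) combined with this bound, followed by $k\to\infty$ via monotone convergence (legitimate once $f^\gamma\in L^2(B_a^*(\tau),m)$, which is guaranteed inductively) and $\xi\equiv1$ on $B_a^*(\sigma)$, yields the reverse-H\"older inequality
\[
\left(\int_{B_a^*(\sigma)}f^{\gamma q}\,dx\right)^{1/q} \;\le\; \frac{C(d,M_n)\,\gamma}{\tau-\sigma}\left(\int_{B_a^*(\tau)}f^{2\gamma}\,dx\right)^{1/2}.
\]

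\emph{Iteration and bookkeeping.} Set $\chi=q/2>1$, $\gamma_j=\chi^j$, and $\rho_j=s+(r-s)2^{-j}$, so that at step $j$ one takes $\sigma=\rho_{j+1}$, $\tau=\rho_j$ (admissible since $s>0$), with $\tau-\sigma=(r-s)2^{-j-1}$. Raising the previous display (with $\gamma=\gamma_j$) to the power $\gamma_j^{-1}=\chi^{-j}$ and composing over $j\ge0$ gives
\[
\esssup_{B_a^*(s)}f \;=\; \|f\|_{L^\infty(B_a^*(s),m)} \;\le\; \prod_{j\ge0}\left(\frac{C(d,M_n)\,\chi^j\,2^{j+1}}{r-s}\right)^{\chi^{-j}}\|f\|_{L^2(B_a^*(r),m)}.
\]
The infinite product converges because $\sum_j j\chi^{-j}<\infty$, and the exponent of $(r-s)^{-1}$ is $\sum_{j\ge0}\chi^{-j}=\chi/(\chi-1)$, which equals exactly $d/2$ for the critical exponent $q=2d/(d-2)$; when $d=2$ this sum can be made arbitrarily close to (hence $\le$) $d/2=1$ by taking $q$ large, which suffices since $r-s\le1$. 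This is the asserted inequality with $c_5=c_5(d,M_n)$, and $f\in L^\infty(B_a^*(s),m)$ since the right-hand side is finite for $f\in H^1\subset L^2$.

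I do not expect a genuine obstacle here --- the whole argument is the standard Moser iteration. The only points requiring care are: verifying that $\xi^2 f_k^{2\gamma-1}$ is an admissible test function in $\widehat{H}(B_a^*(r))$, given that the boundary of $B_a^*(r)$ splits into a ``Dirichlet'' part $\partial B_a^*(r)\cap D$ (where $\widehat{H}$-functions vanish, and where $\xi$ is supported away) and a ``free'' part on $\partial D$ (where no vanishing is imposed); the passage $k\to\infty$; and the uniformity of the Sobolev constant over $a\in K_n$ --- which is precisely why Lemma~\ref{lem:moser2}/Lemma~\ref{lem:localsobolev} were set up with constants depending on $a$ only through $M_n$.
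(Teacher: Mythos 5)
Your proof is correct in its overall strategy but takes a genuinely different route from the paper. You run the classical Moser iteration over powers $\gamma_j=\chi^j$ (with $\chi=q/2$), chaining a reverse-H\"older inequality from $L^{2\gamma_j}$ to $L^{2\gamma_{j+1}}$ on shrinking intrinsic balls. The paper instead runs a Stampacchia level-set iteration: with $f_k=(f-k)\vee 0$ and $\varphi(k,s)=\|f_k\|_{L^2(A_{k,s},m)}$, it derives the two-parameter recursion in $(k,s)$ (display \eqref{eq:ite3}) and solves for the level $K$ at which the iteration collapses. Both methods use Lemma~\ref{lem:localsobolev} in an essential way with constants depending on $a$ only through $M_n$, but you invoke it through the reverse-H\"older step for increasing powers, whereas the paper invokes it through the H\"older trick $\|f_k\xi\|_{L^2}^2\le\|f_k\xi\|_{L^{2p_d}}^2\,m(\{f_k\neq0\})^{1/q_d}$. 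Each buys something: the Moser route needs no level-set bookkeeping and gives the dimensional exponent by a transparent geometric-series computation; the Stampacchia route avoids the unbounded test functions $\xi^2 f_k^{2\gamma-1}$ entirely and produces the estimate in a single shot once the interpolation parameter $q_d$ is fixed. Your remarks about admissibility of the truncated test function and uniformity in $a\in K_n$ are on point, and for $d\ge3$ your computation $\chi/(\chi-1)=d/2$ closes the argument cleanly.

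The one place you go wrong is the case $d=2$. Your exponent $\sum_{j\ge0}\chi^{-j}=\chi/(\chi-1)=q/(q-2)$ is strictly greater than $1=d/2$ for every finite $q>2$, and the parenthetical ``(hence $\le$) $d/2=1$'' is false: an infimum approached from above is never $\le$ the limit. More importantly, ``which suffices since $r-s\le1$'' points the inequality the wrong way. When $0<r-s\le1$ and $\alpha>1$, one has $(r-s)^{-\alpha}\ge(r-s)^{-1}$, so what you obtain is a \emph{weaker} bound than the lemma claims, and the ratio $(r-s)^{1-\alpha}$ blows up as $r-s\to0$; the exact exponent $d/2$ matters downstream (it is what produces the $r$-free constant in Lemma~\ref{lem:lower}). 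The repair is a limiting argument: for fixed $r,s$, let $q\to\infty$ so that the exponent decreases to $1$, and observe that the accumulated constant $\prod_{j\ge0}\bigl(c_3\sqrt{10}\,M_n\,2^{j+1}\bigr)^{\chi^{-j}}$ converges to a finite limit provided $c_3(d,M_n,q)$ stays bounded as $q\to\infty$ --- which is what the paper's Lemma~\ref{lem:moser} ($c_1$ depending only on $d,\kappa$) supplies. This is exactly parallel to the paper's own closing step ``If $d=2$, letting $q\to2$, we have the claim'' (a change of parametrization: the paper's $q\to2$ also sends the Sobolev exponent $2p_d\to\infty$). So the route is sound, but that sentence should be replaced by the limit argument rather than left as an appeal to $r-s\le1$.
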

\begin{proof}
For each $k \ge 0$, we set $f_k=(f-k)\vee 0$. Fix positive numbers $s,t$ with $0<s<t\le r$ and take $\xi=\xi^{a,s,t}$ in Lemma~\ref{lem:cutoff}.
Noting that  $f_k \xi^{2} \in  \widehat{H}(B_{a}^{\ast}(t)) \subset \widehat{H}(B_{a}^{\ast}(r))$, we have
 \begin{align*}
0 \ge \int_{B_{a}^{\ast}(r)} \bigl(\nabla f, \nabla (f_{k}\xi^{2}) \bigr)\,dx %=\int_{B_{a}^{\ast}(t)} \bigl(\nabla f, \nabla (f_{k}\xi^{2}) \bigr)\,dx 
&=\int_{B_{a}^{\ast}(t)} |\nabla f_k|^{2}\xi^2\,dx+2\int_{B_{a}^{\ast}(t)} \xi f_k ( \nabla f_k, \nabla \xi)\,dx \\
&\ge \frac{1}{2}\int_{B_{a}^{\ast}(t)} |\nabla f_k|^{2}\xi^2\,dx-2\int_{B_{a}^{\ast}(t)} f_k^{2}|\nabla \xi|^{2}\,dx,
\end{align*}
which implies $\int_{B_{a}^{\ast}(t)} |\nabla f_k|^{2}\xi^2\,dx\le 4\int_{B_{a}^{\ast}(t)} f_k^{2}|\nabla \xi|^{2}\,dx$
 and
\begin{align}
 \| \nabla (f_k\xi) \|^2_{L^{2}(B_{a}^{\ast}(t),m)}
& \le 10 \|f_k \nabla \xi\|^{2}_{L^{2}(B_{a}^{\ast}(t),m)} \label{eq:ite1}.
\end{align}
We fix a positive number $q>2$ and define $q_d>0$ by
\begin{align*}
\begin{cases}
q_{d}=q/2\quad \text{if }d=2,\\
q_{d}=d/2 \quad  \text{if }d\ge 3.
\end{cases}
\end{align*}
Let $p_d>1$ be the positive number such that $1/p_d+1/q_d=1$.
We note that $f_k \xi \in \widehat{H}(B_{a}^{\ast}(t))$. Using Lemma~\ref{lem:localsobolev} and \eqref{eq:ite1}, we have
\begin{align}\label{eq:eqiterationinq}
&\|f_k \xi \|^{2}_{L^{2}(B_{a}^{\ast}(t),m)}  \\
&\le  \|f_k \xi \|^{2}_{L^{2p_{d}}(B_{a}^{\ast}(t),m)} \times m\left( \left\{x \in B_{a}^{\ast}(t) \mid f_k(x)\neq 0 \right\}\right)^{1/q_{d}} \notag  \\
&\le c_3(d,M_n,2p_{d}))^{2} \| \nabla (f_k\xi) \|^2_{L^{2}(B_{a}^{\ast}(t),m)} \times m\left( \left\{x \in B_{a}^{\ast}(t) \mid f_k(x)\neq 0 \right\}\right)^{1/q_{d}} \notag \\
&  \le 10c_3^{2}  \|f_k \nabla \xi\|^{2}_{L^{2}(B_{a}^{\ast}(t),m)} \times m\left( \left\{x \in B_{a}^{\ast}(t) \mid f_k(x)\neq 0 \right\}\right)^{1/q_{d}}. \notag
\end{align}
From the definition of $\xi$ and \eqref{eq:eqiterationinq}, we have
\begin{equation}
 \|f_k\|_{L^{2}(A_{k,s},m) }^2  \le 10c_3^{2}M^2_{n}(t-s)^{-2}\|f_k\|_{L^{2}(A_{k,t},m) }^2 \times m( A_{k,t})^{1/q_{d}}
\label{eq:ite2},
\end{equation}
where we define $A_{k,s}=\{ x \in B_{a}^{\ast}(s) \mid f(x) > k\}$. 
Fix $l>k \ge 0$. It follows from \eqref{eq:ite2} and  Chebyshev's inequality that 
\begin{align}\label{eq:ite3}
 \|f_l\|_{L^{2}(A_{l,s},m) }^2  &\le 10c_3^{2}M^2_{n}(t-s)^{-2}\|f_l\|_{L^{2}(A_{l,t},m) }^2 \times m( A_{l,t})^{1/q_{d}} \\
& \le 10c_3^{2}M^2_{n}(t-s)^{-2}\|f_k\|_{L^{2}(A_{k,t},m) }^2 \times \{ (l-k)^{-2}\|f-k\|_{L^{2}(A_{k,t},m)}^2\}^{1/q_{d}} \notag \\
&=10c_3^{2}M^2_{n}(t-s)^{-2}(l-k)^{-2/q_{d}}\|f_k\|_{L^{2}(A_{k,t},m) }^{2+2/q_d}  \notag.
\end{align}
We note that \eqref{eq:ite3} holds with any $k \ge0 $ and $s,t$ with $0<s< t\le r$. 
We define $s_\nu =s+(r-s)/2^{\nu}$ for $\nu \in \N \cup \{0\}$. For some $K>0$ to be determined, we also define 
\begin{equation*}
k_\nu=K(1-1/2^{ \nu }),\quad  \nu \in \N \cup \{0\}.
\end{equation*}
It is easy to see that $k_0=0, s_0=r$ and $k_{\nu}-k_{\nu-1}=K/2^\nu$, $s_{\nu-1}-s_{\nu}=(r-s)/2^\nu$.  For each $k\ge 0$ and $s \in (0,r)$, we define $\varphi(k,s)=\|f_k\|_{L^{2}(A_{k,s},m) }$. It follows from \eqref{eq:ite3} that
\begin{align}\label{eq:ite-1}
\varphi(k_{\nu},s_{\nu}) &\le \sqrt{10}c_3M_{n}(s_{\nu-1}-s_{\nu})^{-1}(k_{\nu}-k_{\nu-1})^{-1/q_d} \varphi(k_{\nu-1},s_{\nu-1})^{1+1/q_{d}} \\
& =\sqrt{10}c_3M_n(r-s)^{-1}K^{-1/q_d}2^{\nu(1+1/q_{d})}\varphi(k_{\nu-1},s_{\nu-1})^{1+1/q_{d}} \notag .
\end{align}
Next, we will prove that there exists $\gamma >1$ such that for $\nu \in \N \cup \{0\}$,
\begin{equation}
\varphi(k_\nu,s_\nu) \le \varphi(k_{0},s_{0})\gamma^{-\nu} \label{eq:ite4}.
\end{equation}
Obviously, \eqref{eq:ite4} is true for $\nu=0$. Assume \eqref{eq:ite4} is true for $\nu-1$. Using \eqref{eq:ite-1}, we have
\begin{align*}
\varphi(k_{\nu},s_{\nu}) &\le \sqrt{10}c_3M_n(r-s)^{-1}K^{-1/q_d}2^{\nu(1+1/q_d)}\varphi(k_{\nu-1},s_{\nu-1})^{1+1/q_d} \\
&\le \sqrt{10}c_3M_n(r-s)^{-1}K^{-1/q_d}2^{\nu(1+1/q_d)}\{ \varphi(k_{0},s_{0})\gamma^{-(\nu-1)}\}^{1+1/q_d}\\
&=\sqrt{10}c_3M_n(r-s)^{-1}K^{-1/q_d} \varphi(k_0,s_0)^{1/q_d} \gamma^{1+1/q_d} 2^{\nu(1+1/q_d)} \gamma^{-\nu/q_d} \varphi(k_0,s_0) \gamma^{-\nu}.
\end{align*}
Let $\gm=2^{1+q_d}$. Then, $2^{\nu(1+1/q_d)} \gamma^{-\nu/q_d}=1$. We choose $K>0$ such that
\[
\sqrt{10}c_3M_n(r-s)^{-1}K^{-1/q_d} \varphi(k_0,s_0)^{1/q_d} \gamma^{1+1/q_d}  = 1.
\]
Then, \eqref{eq:ite4} holds for $\nu \in \mathbb{N} \cup \{0\}$. Letting $\nu \to \infty$ in \eqref{eq:ite4}, we have $(f-K)\vee 0=0$ $m$-a.e. on $B_{a}^{\ast}(s)$, which implies
\begin{align*}
\esssup_{B_{a}^{\ast}(s)} f &\le10^{q_d/2}c_3^{q_d}M_{n}^{q_d}(r-s)^{-q_d} \varphi(k_0,s_0) 2^{(1+q_d)^2} \\
& =10^{q_d/2}c_3^{q_d}M_{n}^{q_d} 2^{(1+q_d)^2}(r-s)^{-q_d}  \|f\|_{L^{2}(B_{a}^{\ast}(r),m)}.
\end{align*}
If $d=2$, letting $q \to 2$, we have the claim.
\qed\end{proof}

\begin{lemma}\label{lem:solsol}
Let $a \in \partial D$, $r \in (0,1]$. If $f \in H^{1}(B_{a}^{\ast}(r))$ satisfies
\begin{equation}
\cE_{B_{a}^{\ast}(r),\alpha}(f,g)=0,\qquad g \in \widehat{H}(B_{a}^{\ast}(r)) \label{eq:sol}
\end{equation}
for some $\alpha \ge 0$, then $ f \vee 0$ and $(-f) \vee 0$ are  nonnegative subsolutions of \eqref{eq:eqsub}. 
\end{lemma}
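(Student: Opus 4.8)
The plan is to show that $f_+ := f \vee 0$ satisfies $\cE_{B_a^\ast(r)}(f_+, g) \le 0$ for every nonnegative $g \in \widehat{H}(B_a^\ast(r))$; the argument for $(-f)\vee 0$ is identical after replacing $f$ with $-f$ (note that $-f$ also solves \eqref{eq:sol}). First I would record the basic Sobolev-space facts for the truncation: since $f \in H^1(B_a^\ast(r))$ we have $f_+ \in H^1(B_a^\ast(r))$ with $\nabla f_+ = \bone_{\{f>0\}}\nabla f$, and moreover $\{ f > 0\}$-truncation preserves membership in $\widehat{H}(B_a^\ast(r))$ (approximate $f$ by $C^1$ functions vanishing on $\partial B_a^\ast(r)\cap D$, truncate, and pass to the limit — the truncation is continuous in $H^1$-norm). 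So $f_+ \in \widehat{H}(B_a^\ast(r))$.

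The key step is the choice of test function. Fix a nonnegative $g \in \widehat{H}(B_a^\ast(r))$. The natural candidate to plug into \eqref{eq:sol} is $h := g \wg (n f_+)$ for a parameter $n \in \N$ — or, more cleanly, one tests against $g$ itself restricted to the set $\{f>0\}$. The cleanest route: take the test function $\varphi = g \bone_{\{f > 0\}}$; this is not quite in $\widehat{H}$, so instead use $\varphi_n = g - (g \wg n((-f)\vee 0))$, which lies in $\widehat{H}(B_a^\ast(r))$, is nonnegative, and satisfies $\varphi_n \uparrow g\bone_{\{f\ge 0\}}$ with $\nabla \varphi_n \to \nabla(g\bone_{\{f>0\}})$ appropriately. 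Plugging $\varphi_n$ into \eqref{eq:sol} gives $\cE_{B_a^\ast(r),\alpha}(f,\varphi_n)=0$; since $\alpha \ge 0$, $f \ge 0$ on the support of $\varphi_n$ (up to the part where things vanish), and $f \varphi_n \ge 0$, the zeroth-order term $\alpha\int f\varphi_n\,dx \ge 0$, hence $\cE_{B_a^\ast(r)}(f,\varphi_n) \le 0$. On $\{f>0\}$ we have $\nabla f = \nabla f_+$, so $\cE_{B_a^\ast(r)}(f,\varphi_n) = \cE_{B_a^\ast(r)}(f_+,\varphi_n)$; letting $n \to \infty$ and using $m(\partial\{f>0\} ) $-type considerations together with $\nabla f_+ = 0$ a.e. on $\{f \le 0\}$ yields $\cE_{B_a^\ast(r)}(f_+, g) \le 0$.

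The main obstacle is the measure-theoretic bookkeeping around the set $\{f = 0\}$: one must be careful that $\nabla f_+$ vanishes a.e. on $\{f \le 0\}$ (standard, but needs the Stampacchia-type lemma on derivatives of truncations), and that the cutoff approximation $\varphi_n$ genuinely converges in a way that lets one pass the gradient inner product to the limit — i.e. controlling $\int (\nabla f_+, \nabla \varphi_n)\,dx \to \int (\nabla f_+, \nabla g\,\bone_{\{f>0\}})\,dx = \int(\nabla f_+,\nabla g)\,dx$, the last equality because $\nabla f_+ = 0$ off $\{f>0\}$. Everything else — membership of truncations in $\widehat{H}(B_a^\ast(r))$, nonnegativity of the dropped $\alpha$-term — is routine once the chain rule for Sobolev truncations and the lattice property of $\widehat{H}(B_a^\ast(r))$ (inherited from its being the form domain of a Dirichlet form, so closed under normal contractions) are in hand. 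I would cite the standard Dirichlet-form fact that $\widehat{H}(B_a^\ast(r))$ is stable under unit contractions to shortcut the approximation argument.
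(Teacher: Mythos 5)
Your overall strategy --- testing against a truncated test function and passing to the limit --- is genuinely different from the paper's, which instead replaces $f\vee 0$ by smooth convex approximations $\psi_\eps(f)$ (mollifications of $x\mapsto x\vee 0$), tests against $\psi_\eps'(f)g$, and uses $\psi_\eps''\ge 0$ to discard the second-order term. Your route is workable, but the specific test function you settled on, $\varphi_n=g-\bigl(g\wg n f_-\bigr)$ with $f_-=(-f)\vee 0$, makes your two key intermediate claims false.

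First, it is not true that ``$f\ge 0$ on the support of $\varphi_n$'' or that $f\varphi_n\ge 0$: on the set $\{f<0\}\cap\{-nf<g\}$ one has $\varphi_n=g-(-nf)>0$ while $f<0$, so $f\varphi_n<0$ there, and $\alpha\int f\varphi_n\,dx$ need not be nonnegative for finite $n$. Second, and more seriously, the equality $\cE_{B_a^\ast(r)}(f,\varphi_n)=\cE_{B_a^\ast(r)}(f_+,\varphi_n)$ fails. The argument ``on $\{f>0\}$ we have $\nabla f=\nabla f_+$'' only controls the integrand where $\nabla f_+$ lives; but $\nabla\varphi_n$ does not vanish on $\{f<0\}$. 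Indeed the difference is $-\cE_{B_a^\ast(r)}(f_-,\varphi_n)$, and one computes
\[
\int_{B_a^\ast(r)}(\nabla f_-,\nabla\varphi_n)\,dx=\int_{\{f<0,\,g\ge n f_-\}}(\nabla f_-,\nabla g)\,dx-n\int_{\{f<0,\,g\ge n f_-\}}|\nabla f_-|^2\,dx,
\]
which is not zero. Your chain of inequalities therefore does not close as written.

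The repair is in your own first sentence: use $h_n:=g\wg(n f_+)$ instead of $\varphi_n$. Then $h_n$ is supported on $\{f>0\}$, so both claims become true verbatim: $f h_n\ge 0$ everywhere (whence $\cE(f,h_n)=-\alpha\int f h_n\,dx\le 0$), and $\nabla h_n=0$ a.e.\ on $\{f\le 0\}$ so that $\cE(f,h_n)=\cE(f_+,h_n)$. One still needs a one-sided limit argument rather than a full $H^1$-convergence: writing $\nabla h_n=\bone_{\{g<n f_+\}}\nabla g+n\bone_{\{g\ge n f_+\}}\nabla f_+$ on $\{f>0\}$ gives
\[
\cE(f_+,h_n)=\tfrac12\int_{\{f>0,\,g<n f_+\}}(\nabla f_+,\nabla g)\,dx+\tfrac n2\int_{\{f>0,\,g\ge n f_+\}}|\nabla f_+|^2\,dx\ge\tfrac12\int_{\{f>0,\,g<n f_+\}}(\nabla f_+,\nabla g)\,dx,
\]
and dominated convergence in the last integral (the sets increase to $\{f>0\}$) yields $\cE(f_+,g)\le\liminf_n\cE(f_+,h_n)\le 0$. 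Finally, the claim $h_n\in\widehat{H}(B_a^\ast(r))$ should be justified by the order-ideal property ($0\le h_n\le g$ with $g\in\widehat{H}$ and $h_n\in H^1$ forces $h_n\in\widehat{H}$), not by ``closed under normal contractions''; and $f_+$ itself need only lie in $H^1(B_a^\ast(r))$, not $\widehat{H}$, for the subsolution definition --- your stronger assertion $f_+\in\widehat{H}$ is both unneeded and, in general, false since $f$ is not assumed to be in $\widehat{H}$.
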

\begin{proof}
Let $\{ \psi_{\eps}\}_{\eps>0}$ be convex smooth functions on $\R$ such that each $\psi_{\eps}'(x)$ and $\psi_{\eps}''(x)$ are bounded and $\lim_{\eps \to 0}\psi_{\eps}(x)= x \vee 0$, and $\lim_{\eps \to 0}\psi_{\eps}'(x) =\bone_{ [0,\infty)}(x)$. Such $\{\psi_\eps\}_{\eps>0}$ can be constructed by mollifying the function $x \mapsto x \vee0 $. Take $g \in \widehat{H}(B_{a}^{\ast}(r)) $ with $g \ge 0$. We may assume $g \in L^{\infty}(B_{a}^{\ast}(r),m )$. Since $\psi'_{\eps}(f)g \in \widehat{H}(B_{a}^{\ast}(r))$, we have
\begin{align*}
\cE_{B_{a}^{\ast}(r)}\bigl (\psi_{\eps}(f), g\bigr) 
&=\frac{1}{2}\int_{B_{a}^{\ast}(r)}\bigl( \nabla f, \nabla(\psi'_{\eps}(f)g) \bigr)\,dx-\frac{1}{2}\int_{B_{a}^{\ast}(r)}g\psi''_{\eps}(f) |\nabla f|^2\,dx\\
&\le \frac{1}{2}\int_{B_{a}^{\ast}(r)}\bigl( \nabla f, \nabla(\psi'_{\eps}(f)g) \bigr)\,dx.
\end{align*}
Here, we used the convexity of $\psi_\eps$. Hence, we obtain
\begin{align*}
&\cE_{B_{a}^{\ast}(r)}\bigl (\psi_{\eps}(f), g\bigr)  \le \frac{1}{2}\int_{B_{a}^{\ast}(r)}\bigl( \nabla f, \nabla(\psi'_{\eps}(f)g) \bigr)\,dx \\
&=\cE_{B_{a}^{\ast}(r),\alpha}(f,\psi'_{\eps}(f)g) -\alpha \int_{B_{a}^{\ast}(r)} f \psi'_{\eps}(f)g\,dx =0-\alpha \int_{B_{a}^{\ast}(r)} f \psi'_{\eps}(f)g\,dx.
\end{align*}
Letting $\eps \to 0$, we obtain the claim.
We can similarly prove $(-f) \vee 0$ is a nonnegative subsolution of \eqref{eq:eqsub}.
\qed\end{proof}

Using Lemma~\ref{lem:localbound} and Lemma~\ref{lem:solsol}, we obtain the following corollary.
\begin{corollary}\label{lem:sol2}
Let $a \in K_n$ and $r \in (0,1/(2M_{n}^2 \vee 1)]$. Every solution $f \in H^1(B_{a}^{\ast}(r))$
of \eqref{eq:sol} satisfies
\begin{equation*}
\esssup_{B_{a}^{\ast}(s)}|f| \le c_5 (r-s)^{-d/2}\|f\|_{L^{2}(B_{a}^{\ast}(r),m)},\quad 0<s<r.
\end{equation*}
In paticular, $f$ is bounded on $B_{a}^{\ast}(s)$.
\end{corollary}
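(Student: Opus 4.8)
The plan is to deduce this from the two preceding results without any new analysis: Lemma~\ref{lem:solsol} turns a solution into a pair of nonnegative subsolutions, and Lemma~\ref{lem:localbound} is exactly the local boundedness estimate for such subsolutions. So the whole argument is a decomposition plus a bookkeeping step on the constants.

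Concretely, I would start from a solution $f \in H^{1}(B_{a}^{\ast}(r))$ of \eqref{eq:sol} for some $\alpha \ge 0$ and write $f = f_{+} - f_{-}$ with $f_{+} = f \vee 0$ and $f_{-} = (-f) \vee 0$. Both $f_{\pm}$ lie in $H^{1}(B_{a}^{\ast}(r))$ by the standard chain rule for Sobolev functions, and $0 \le f_{\pm} \le |f|$ pointwise, so $\|f_{\pm}\|_{L^{2}(B_{a}^{\ast}(r),m)} \le \|f\|_{L^{2}(B_{a}^{\ast}(r),m)}$. By Lemma~\ref{lem:solsol}, each of $f_{+}$ and $f_{-}$ is a nonnegative subsolution of \eqref{eq:eqsub}. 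Since $a \in K_n$ and $r \in (0, 1/(2M_{n}^{2} \vee 1)]$, the hypotheses of Lemma~\ref{lem:localbound} are met by both, and I would apply it to each to get, for every $0 < s < r$,
\begin{equation*}
\esssup_{B_{a}^{\ast}(s)} f_{\pm} \le c_{5}(r-s)^{-d/2}\|f_{\pm}\|_{L^{2}(B_{a}^{\ast}(r),m)} \le c_{5}(r-s)^{-d/2}\|f\|_{L^{2}(B_{a}^{\ast}(r),m)}.
\end{equation*}

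To finish I would note that $f_{+}f_{-} = 0$ $m$-a.e., so that $|f|$ coincides with $f_{+}$ on $\{f>0\}$ and with $f_{-}$ on $\{f<0\}$; hence $\esssup_{B_{a}^{\ast}(s)}|f| = \max\bigl(\esssup_{B_{a}^{\ast}(s)} f_{+},\, \esssup_{B_{a}^{\ast}(s)} f_{-}\bigr)$, and combining this with the two displayed bounds gives precisely
\begin{equation*}
\esssup_{B_{a}^{\ast}(s)}|f| \le c_{5}(r-s)^{-d/2}\|f\|_{L^{2}(B_{a}^{\ast}(r),m)},
\end{equation*}
with the same constant $c_{5}$ as in Lemma~\ref{lem:localbound} (the ``$\max$'' observation is what avoids picking up a factor $2$). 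In particular $f \in L^{\infty}(B_{a}^{\ast}(s),m)$ for each $s < r$.

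I do not expect a genuine obstacle here; this corollary is a packaging step. The only points needing a moment's care are verifying that the ranges of $a$ and $r$ in Lemma~\ref{lem:localbound} and Lemma~\ref{lem:solsol} are compatible with the corollary's hypotheses (they are, the former being the more restrictive), and being slightly careful with the constant so that the final bound really has $c_{5}$ rather than $2c_{5}$, which the decomposition via disjoint supports of $f_{+}$ and $f_{-}$ handles automatically.
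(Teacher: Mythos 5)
Your argument is correct and is exactly the route the paper intends: it states the corollary as an immediate consequence of Lemma~\ref{lem:localbound} and Lemma~\ref{lem:solsol} without writing out the details, and your decomposition $f=f_{+}-f_{-}$ with the disjoint-support observation giving $\esssup|f|=\max(\esssup f_{+},\esssup f_{-})$ is the natural bookkeeping that makes the constant come out as $c_{5}$. Nothing to add.
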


For each $\eps\in(0,1)$, we define $F_{\eps}(x)=(-\log(x+\eps)) \vee 0 $.
\begin{lemma} \label{lem:lemb}
Let $a \in K_n$ and $r \in (0,1/(2M_{n}^2 \vee 1)]$. Then, there exists $c_6=c_6(d,M_n)>0$ such that for any  nonnegative solution $f \in H^1
(B_{a}^{\ast}(r))$ of \eqref{eq:sol} with $\alpha=0$,
\begin{equation*}
\sup_{\eps \in (0,1)}\|\nabla F_{\eps}(f)\|_{L^{2}(B_{a}^{\ast}(r/2),m)}^2
\le c_{6}r^{d-2}.
\end{equation*}
\end{lemma}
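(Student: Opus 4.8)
The plan is to establish this by the classical Moser logarithmic estimate, testing the equation \eqref{eq:sol} with $\alpha=0$ against $g=\xi^{2}/(f+\eps)$, where $\xi$ is a cut-off produced by Lemma~\ref{lem:cutoff}. Concretely, I would take $\xi=\xi^{a,r/2,3r/4}\in\widehat{H}(B_{a}^{\ast}(r))$, so that $\xi\equiv1$ on $B_{a}^{\ast}(r/2)$, $0\le\xi\le1$, $\{\xi\neq0\}\subseteq B_{a}^{\ast}(3r/4)$, and $|\nabla\xi|\le M(a)/(r/4)\le 4M_{n}/r$ on $B_{a}^{\ast}(r)$.

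The first real point is to check that $g:=\xi^{2}(f+\eps)^{-1}$ is an admissible test function, i.e.\ $g\in\widehat{H}(B_{a}^{\ast}(r))$. Since $f\ge0$ we have $0<(f+\eps)^{-1}\le\eps^{-1}$, with distributional gradient $\nabla\big((f+\eps)^{-1}\big)=-(f+\eps)^{-2}\nabla f\in L^{2}(B_{a}^{\ast}(r),m)$; by Corollary~\ref{lem:sol2}, $f$ is bounded on $B_{a}^{\ast}(7r/8)\supseteq\{\xi\neq0\}$, so $g\in H^{1}(B_{a}^{\ast}(r))$, and $g$ vanishes near the interior part $\partial B_{a}^{\ast}(r)\cap D$ of the boundary because $\xi$ does. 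Passing to the half-ball model through $\Psi_{a}$ and mollifying after an even reflection across $\{x_{d}=0\}$ — as in \cite{FT0,FT} — then gives $g\in\widehat{H}(B_{a}^{\ast}(r))$.

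Granting admissibility, I would plug $g$ into \eqref{eq:sol}. Using $\nabla g=2\xi(f+\eps)^{-1}\nabla\xi-\xi^{2}(f+\eps)^{-2}\nabla f$, the identity $\int_{B_{a}^{\ast}(r)}\nabla f\cdot\nabla g\,dx=0$ becomes
\[
\int_{B_{a}^{\ast}(r)}\frac{\xi^{2}|\nabla f|^{2}}{(f+\eps)^{2}}\,dx=2\int_{B_{a}^{\ast}(r)}\frac{\xi\,(\nabla f\cdot\nabla\xi)}{f+\eps}\,dx .
\]
Writing $A$ for the left-hand side (finite, since $\xi^{2}(f+\eps)^{-2}\le\eps^{-2}$ and $\nabla f\in L^{2}$), Cauchy--Schwarz on the right yields $A\le 2A^{1/2}\|\nabla\xi\|_{L^{2}(B_{a}^{\ast}(r),m)}$, hence $A\le 4\|\nabla\xi\|_{L^{2}(B_{a}^{\ast}(r),m)}^{2}$. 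Because $|F_{\eps}'(t)|\le(t+\eps)^{-1}$ for every $t\ge0$, we get $|\nabla F_{\eps}(f)|\le(f+\eps)^{-1}|\nabla f|$ a.e., and since $\xi\equiv1$ on $B_{a}^{\ast}(r/2)$,
\[
\int_{B_{a}^{\ast}(r/2)}|\nabla F_{\eps}(f)|^{2}\,dx\le A\le 4\|\nabla\xi\|_{L^{2}(B_{a}^{\ast}(r),m)}^{2}.
\]
Finally, from $|\nabla\xi|\le 4M_{n}/r$, $\{\xi\neq0\}\subseteq B_{a}^{\ast}(3r/4)$, and \eqref{eq:jacobian2}, one bounds $\|\nabla\xi\|_{L^{2}(B_{a}^{\ast}(r),m)}^{2}\le(4M_{n}/r)^{2}M(a)^{d}m(B_{+}(3r/4))\le 16M_{n}^{d+2}m(B_{+}(1))\,r^{d-2}$, which gives the assertion with $c_{6}=64\,m(B_{+}(1))\,M_{n}^{d+2}$, uniformly in $\eps\in(0,1)$ and $a\in K_{n}$.

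The step I expect to require the most care is the admissibility of $g$ in $\widehat{H}(B_{a}^{\ast}(r))$: this is where the bi-Lipschitz chart $\Psi_{a}$, the a~priori local boundedness of $f$ from Corollary~\ref{lem:sol2}, and a reflection-and-mollification approximation are needed. Once that is in place, the remainder is the standard (iteration-free) Moser logarithmic bound.
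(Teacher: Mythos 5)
Your argument is correct and yields the stated bound (with a larger but still admissible constant because of the $\xi^{a,r/2,3r/4}$ cutoff), but it runs along a route that is visibly different from the paper's. The paper tests \eqref{eq:sol} with $F_{\eps,\dl}'(f)\xi^{2}$, where $\{F_{\eps,\dl}\}_{\dl>0}$ are smooth mollifications of $F_{\eps}$ chosen so that $F_{\eps,\dl}'$ is Lipschitz and $F_{\eps,\dl}''\ge(F_{\eps,\dl}')^{2}$; the inequality $F''\ge(F')^{2}$ is exactly what converts $\int\xi^{2}F_{\eps,\dl}''(f)|\nabla f|^{2}$ into $\int|\xi\nabla F_{\eps,\dl}(f)|^{2}$, and one passes to the limit $\dl\to0$ at the end. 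You instead test directly with $\xi^{2}(f+\eps)^{-1}$, use Cauchy--Schwarz to get $\int\xi^{2}(f+\eps)^{-2}|\nabla f|^{2}\le 4\|\nabla\xi\|_{L^{2}}^{2}$, and then invoke the pointwise bound $|F_{\eps}'(t)|\le(t+\eps)^{-1}$ to pass to $|\nabla F_{\eps}(f)|$. The two arguments are logically equivalent — your $g$ is, up to sign and truncation, the limit as $\dl\to0$ of the paper's $F_{\eps,\dl}'(f)\xi^{2}$ — but your version avoids having to engineer the mollifications $F_{\eps,\dl}$ with the curvature constraint $F''\ge(F')^{2}$, and it dispenses with the $\dl$-limit, at the mild cost of estimating the larger quantity $\int(f+\eps)^{-2}|\nabla f|^{2}$ rather than $\int|\nabla F_{\eps}(f)|^{2}$ directly.

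Two remarks on the points you yourself flagged as delicate. First, the boundedness of $f$ from Corollary~\ref{lem:sol2} is not actually needed for the admissibility of $g$: since $f\ge0$, the map $t\mapsto(t+\eps)^{-1}$ is Lipschitz (constant $\eps^{-2}$) and bounded on $[0,\infty)$, so $(f+\eps)^{-1}\in H^{1}\cap L^{\infty}$ by the Lipschitz chain rule, and multiplying by $\xi^{2}$ gives an $H^{1}\cap L^{\infty}$ function supported away from $\partial B_{a}^{\ast}(r)\cap D$. Second, your appeal to even reflection and mollification to approximate $g$ by elements of $\widehat{C}(B_{a}^{\ast}(r))$ deserves a caveat: pushing a smooth function back through the bi-Lipschitz chart $\Psi_{a}$ produces only a Lipschitz function, not a $C^{1}$ one, so the approximants are not literally in $\widehat{C}$ as defined. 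This is the same technical gloss the paper makes when it asserts $F_{\eps,\dl}'(f)\xi^{2}\in\widehat{H}(E)$ in the corresponding place (and $f_{k}\xi^{2}\in\widehat{H}$, $\psi_{\eps}'(f)g\in\widehat{H}$ in Lemmas~\ref{lem:localbound} and~\ref{lem:solsol}), so it is not a gap peculiar to your proof; but if you intend the reflection-and-mollification argument literally, you should add a further step (e.g.\ a diagonal approximation in the Euclidean variables of $D$ rather than in the chart, or an appeal to the density of Lipschitz functions in $\widehat{H}$) to close it.
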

\begin{proof}
We write $E$ for $B_{a}^{\ast}(r)$. Using Friedrich's mollifier technique, for each $\eps \in (0,1)$, we can take smooth functions $\{F_{\eps,\dl}\}_{\dl>0}$ on $[0,\infty)$ such that each derivative $F_{\eps,\delta}'$ is Lipschitz continuous and  $F_{\eps,\dl}'' \ge (F_{\eps,\dl}')^2$, and $\lim_{\dl \to 0}F_{\eps,\dl}(f)=F_{\eps}(f)$ in $L^{2}(E,m)$. Take $\xi^{a,r/2,r} \in C^{\infty}(E)$ in Lemma~\ref{lem:cutoff}. Then, $F_{\eps,\dl}'(f)\xi^2 \in \widehat{H}(E)$. Since $f$ is a solution of \eqref{eq:sol},
\begin{align*}
0&=\cE_{E}(f,F_{\eps,\dl}'(f)\xi^2)=\frac{1}{2}\int_{E} \bigl(\nabla F_{\eps,\dl}(f), \nabla \xi^2 \bigr)\,dx+\frac{1}{2}\int_{E}\xi^2 F_{\eps,\dl}''(f) |\nabla f|^{2}\,dx \notag \\
&\ge \int_{E} \xi \bigl(\nabla F_{\eps,\dl}(f), \nabla \xi \bigr)\,dx+\frac{1}{2}\int_{E} |\xi \nabla F_{\eps,\dl}(f)|^{2}\,dx  \notag \\
&\ge -\left( \int_{E} |\xi \nabla F_{\eps,\dl}(f)|^2\,dx \right)^{1/2} \left(\int_{E} |\nabla \xi|^2\,dx\right)^{1/2}+\frac{1}{2}\int_{E} |\xi \nabla F_{\eps,\dl}(f)|^{2}\,dx.
\end{align*}
This inequality leads to
\begin{equation*}
\int_{E} \xi^2 |\nabla F_{\eps,\dl}(f)|^{2}\,dx\le 4\int_{E} |\nabla \xi|^{2}\,dx.
\end{equation*}
By the definition of $\xi$, we obtain
\begin{equation*}
\|\nabla F_{\eps, \dl}(f)\|_{L^{2}(B_{a}^{\ast}(r/2),m)}^2
\le c_{6}r^{d-2}.
\end{equation*}
with $c_{6}(d,M_n)=16m(B_{+}(1))M^{d+2}_{n}$. Letting $\dl \to 0$, we obtain this lemma.
\qed\end{proof}

\begin{lemma}\label{lem:lower}
Let $a \in  K_n$, $\kappa \in (0,1]$, $r \in (0,1/(2M_{n}^2 \vee 1)]$, and let $f \in H^1(B_{a}^{\ast}(r))$ be a nonnegative solution of \eqref{eq:sol} with $\alpha=0$. If $f  \in H^1(B_{a}^{\ast}(r))$ satisfies 
\begin{equation}
m( \{ f \ge 1\} \cap B_{a}^{\ast}(r/2)) \ge \kappa m( B_{a}^{\ast}(r/2)) \label{eq:cond}, 
\end{equation}
then there exists $c_7=c_{7}(d,M_n,\kappa) \in (0,1]$ such that
\begin{equation*}
\essinf_{B_{a}^{\ast}(r/4)} f \ge c_7.
\end{equation*}
\end{lemma}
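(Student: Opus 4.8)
The plan is to prove a weak Harnack-type inequality following the classical Moser iteration scheme applied to negative powers of $f+\eps$, combined with the John--Nirenberg lemma (or, more elementarily, a measure-theoretic argument via Lemma~\ref{lem:lemb}). First I would work on the ball $B_a^{\ast}(r/2)$ and set $w_\eps = F_\eps(f) = (-\log(f+\eps))\vee 0$ for $\eps \in (0,1)$. By Lemma~\ref{lem:lemb}, $\sup_{\eps}\|\nabla w_\eps\|_{L^2(B_a^{\ast}(r/2),m)}^2 \le c_6 r^{d-2}$, so the functions $w_\eps$ have a uniform Dirichlet-energy bound on $B_a^{\ast}(r/2)$. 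The hypothesis \eqref{eq:cond} says $m(\{f\ge 1\}\cap B_a^{\ast}(r/2)) \ge \kappa\, m(B_a^{\ast}(r/2))$; since $w_\eps = 0$ on $\{f\ge 1\}$, this forces the set $\{w_\eps = 0\}$ to occupy a definite fraction (at least $\kappa$) of $B_a^{\ast}(r/2)$.

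Next I would apply a Poincar\'e--Sobolev inequality of the form $\|w_\eps - (w_\eps)_{B_a^{\ast}(r/2)}\|_{L^2} \le C r \|\nabla w_\eps\|_{L^2}$ on the Lipschitz chart $B_a^{\ast}(r/2)$ (this is where one uses that $\Psi_a$ is bi-Lipschitz with constant controlled by $M_n$, transporting the standard Poincar\'e inequality on $B_+(r/2)$ and rescaling; the volume comparison \eqref{eq:jacobian2} gives $m(B_a^{\ast}(r/2)) \asymp r^d$). Combined with the energy bound from Lemma~\ref{lem:lemb}, this yields $\|w_\eps - (w_\eps)_{B_a^{\ast}(r/2)}\|_{L^2(B_a^{\ast}(r/2),m)}^2 \le C r^2 \cdot c_6 r^{d-2} = C' r^d \asymp m(B_a^{\ast}(r/2))$, i.e. the oscillation of $w_\eps$ in mean square is bounded by a constant depending only on $d,M_n$. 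Since $w_\eps$ vanishes on a set of relative measure $\ge\kappa$, Chebyshev's inequality gives a bound on the average $(w_\eps)_{B_a^{\ast}(r/2)} \le C''(d,M_n,\kappa)$, and hence $\|w_\eps\|_{L^2(B_a^{\ast}(r/2),m)}^2 \le C''' r^d$ with $C'''$ depending only on $d,M_n,\kappa$.

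The final step is to upgrade this $L^2$ bound on $w_\eps = (-\log(f+\eps))\vee 0$ to an $L^\infty$ bound on $B_a^{\ast}(r/4)$. Here I would check that $w_\eps$ is a nonnegative subsolution on $B_a^{\ast}(r/2)$ — indeed, since $f$ solves \eqref{eq:sol} with $\alpha=0$ and $x\mapsto (-\log(x+\eps))\vee 0$ is convex and nonincreasing on $(0,\infty)$, the chain-rule computation (as in the proof of Lemma~\ref{lem:lemb}, testing against $F_{\eps,\dl}'(f)g$ with $g\ge 0$ and using $F_{\eps,\dl}'' \ge 0$, $F_{\eps,\dl}'\le 0$) shows $\cE_{B_a^{\ast}(r/2)}(w_\eps, g)\le 0$ for all $0\le g\in\widehat H(B_a^{\ast}(r/2))$. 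Then Lemma~\ref{lem:localbound} (applied with radii $r/4 < r/2$, which is permissible since $r/2 \le 1/(2M_n^2\vee 1)$) gives
\[
\esssup_{B_a^{\ast}(r/4)} w_\eps \le c_5 (r/4)^{-d/2}\|w_\eps\|_{L^2(B_a^{\ast}(r/2),m)} \le c_5 (r/4)^{-d/2}(C''' r^d)^{1/2} = C_*(d,M_n,\kappa).
\]
Unwinding the definition, $\esssup_{B_a^{\ast}(r/4)} w_\eps \le C_*$ means $(-\log(f+\eps))\vee 0 \le C_*$ a.e., hence $f+\eps \ge e^{-C_*}$ a.e. on $B_a^{\ast}(r/4)$ for every $\eps\in(0,1)$; letting $\eps\to 0$ gives $\essinf_{B_a^{\ast}(r/4)} f \ge e^{-C_*} =: c_7$, and $c_7\in(0,1]$ after replacing by $c_7\wedge 1$. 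The main obstacle I anticipate is making the Poincar\'e inequality step clean: one must verify that the Poincar\'e constant on $B_a^{\ast}(\ro)$ is controlled purely in terms of $d$ and $M_n$ uniformly over $a\in K_n$ and $\ro\in(0,1/(2M_n^2\vee 1)]$, which requires transporting the inequality through the bi-Lipschitz chart $\Psi_a$ and tracking the scaling in $\ro$ via \eqref{eq:jacobian2}; everything else is a routine application of the lemmas already established.
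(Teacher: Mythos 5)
Your three-step scheme (log transform $w_\eps=F_\eps(f)$ and the energy bound from Lemma~\ref{lem:lemb}; a Poincar\'e-type inequality exploiting that $w_\eps$ vanishes on a set of relative measure $\ge\kappa$; then the $L^\infty$ bound from Lemma~\ref{lem:localbound}) is exactly the structure of the paper's proof, and your verification that $w_\eps$ is a nonnegative subsolution is correct and in fact more explicit than the paper's. The one place where you take a detour is the middle step: you propose a mean-oscillation Poincar\'e inequality
$\|w_\eps-(w_\eps)_{B_a^\ast(r/2)}\|_{L^2}\le Cr\|\nabla w_\eps\|_{L^2}$
followed by a Chebyshev argument to bound the average $(w_\eps)_{B_a^\ast(r/2)}$, and you flag establishing this Poincar\'e constant uniformly in $a$ and $\ro$ as the main anticipated difficulty. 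In fact no new inequality is needed: Lemma~\ref{lem:moser2} with $q=2$, $N_2=\{f\ge1\}\cap B_a^\ast(r/2)$ (which has $m(N_2)\ge\kappa\,m(B_a^\ast(r/2))$ by \eqref{eq:cond}) and $\eta=\kappa$ reads
\[
\|w_\eps\|_{L^2(B_a^\ast(r/2),m)}
\le c_2(d,M_n,\kappa,2)\Bigl((r/2)^2\|\nabla w_\eps\|_{L^2(B_a^\ast(r/2),m)}^2+\|w_\eps\|_{L^2(N_2,m)}^2\Bigr)^{1/2},
\]
and since $w_\eps=0$ on $N_2$ the second term vanishes, giving directly $\|w_\eps\|_{L^2}\le c_2\,(r/2)\,\|\nabla w_\eps\|_{L^2}\le c_2 c_6^{1/2} r^{d/2}/2$ by Lemma~\ref{lem:lemb}. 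This is precisely the uniform Poincar\'e bound you were worried about, already packaged with the correct scaling and $\kappa$-dependence; with it, Lemma~\ref{lem:localbound} (applied with $\rho=r/2$, $s=r/4$) finishes exactly as you describe. So your proof is correct in substance and takes the paper's route; the only gap is not recognizing that Lemma~\ref{lem:moser2} at $q=2$ already supplies the required Poincar\'e inequality, making the mean-oscillation plus Chebyshev detour unnecessary.
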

\begin{proof}
Fix $\eps \in (0,1)$. Since $F_{\eps}$ is a Lipschitz continuous function on $[0,\infty)$, we have
$F_{\eps}(f) \in H^1( B_{a}^{\ast}(r))$. Since this is a nonnegative subsolution of \eqref{eq:eqsub}, we see from Lemma~\ref{lem:localbound} that
\begin{equation}
\esssup_{B_{a}^{\ast}(r/4)}F_{\eps}(f)
 \le c_5(d,M_n)\times (r/4)^{-d/2} \|F_{\eps}(f) \|_{L^{2}(B_{a}^{\ast}(r/2),m)} \label{eq:harnack1}.
\end{equation}
By Lemma~\ref{lem:moser2} with $q=2$ and $N_2=\{ f \ge 1\} \cap B_{a}^{\ast}(r/2)$) and Lemma~\ref{lem:lemb}, the right hand side of \eqref{eq:harnack1} is estimated as follows:
\begin{align}\label{eq:harnack2}
&c_5(d,M_n) \times (r/4)^{-d/2} \|F_{\eps}(f) \|_{L^{2}(B_{a}^{\ast}(r/2),m)} \\
  &\le c_5 \times (r/4)^{-d/2} \left\{ c_2(d,M_n,\kappa, 2)^{2}  r^{2} \left( \|\nabla F_{\eps}(f)\|_{L^{2}(B_{a}^{\ast}(r/2),m)}^2+\| F_{\eps}(f)\|_{L^{2}(N_2,m)}^2\right) \right\}^{1/2} \notag \\
&=4^{d/2} c_{2}c_5 r^{-d/2+1}  \|\nabla F_{\eps}(f)\|_{L^{2}(B_{a}^{\ast}(r/2),m)} \notag \\
&\le 4^{d/2} c_{2}c_{5}\times c_{6}(d,M_n)^{1/2} \notag.
\end{align}
From \eqref{eq:harnack1}, \eqref{eq:harnack2} and the definition of $F_{\eps}$, 
$$f(x)+\eps \ge c_7 \text{ $m$-a.e. $x \in B_{a}^{\ast}(r/4)$},$$
where $c_7=\exp(-4^{d/2} c_2c_5 c_6^{1/2}) \in (0,1]$.
Since $\eps \in (0,1)$ is arbitrary, we obtain the lemma.
\qed\end{proof}

In the sequel, for a bounded function $f$ defined on an open set $E$ of $D$, we define
\[
\text{\rm Osc}(f;E)=\esssup_{E}f-\essinf_{E}f.
\]
\begin{lemma} \label{lem:st1}
Let $a \in K_n$ and $r \in (0,1/(2M_{n}^2 \vee 1)]$. Let $w \in H^1(B_{a}^{\ast}(r))$ be a solution of \eqref{eq:sol} with $E=B_{a}^{\ast}(r)$ and $\alpha = 0$. Futhermore, we assume $w \in L^{\infty}(B_{a}^{\ast}(r),m)$. Then, there exists $c_{8}=c_{8}(d,M_n) \in [1/2,1)$ such that
\begin{equation*}
\text{\rm Osc}(w;B_{a}^{\ast}(s/4)) \le c_{8}\text{\rm Osc}(w;B_{a}^{\ast}(s)),\quad 0<s \le r.
\end{equation*}
\end{lemma}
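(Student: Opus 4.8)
The plan is to prove the oscillation estimate by the classical De Giorgi--Nash--Moser iteration scheme, reducing the oscillation decay to a weak Harnack-type inequality that we already have available in Lemma~\ref{lem:lower}. Fix $a\in K_n$, $r\in(0,1/(2M_n^2\vee 1)]$, a solution $w\in H^1(B_a^{\ast}(r))\cap L^\infty(B_a^{\ast}(r),m)$ of \eqref{eq:sol} with $\alpha=0$, and let $0<s\le r$. Set $M=\esssup_{B_a^{\ast}(s)}w$ and $\mu=\essinf_{B_a^{\ast}(s)}w$. The two functions
\[
f_1=\frac{2(w-\mu)}{M-\mu},\qquad f_2=\frac{2(M-w)}{M-\mu}
\]
are (if $M>\mu$; the case $M=\mu$ being trivial) nonnegative solutions of \eqref{eq:sol} with $\alpha=0$ on $B_a^{\ast}(s)$, each bounded above by $2$ on $B_a^{\ast}(s)$. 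Since $f_1+f_2\equiv 2$ on $B_a^{\ast}(s/2)$, at every point at least one of $f_1,f_2$ is $\ge 1$; hence at least one of the sets $\{f_1\ge 1\}\cap B_a^{\ast}(s/2)$, $\{f_2\ge 1\}\cap B_a^{\ast}(s/2)$ has $m$-measure $\ge \tfrac12 m(B_a^{\ast}(s/2))$. Say it is $f_1$ (the other case is symmetric). Then \eqref{eq:cond} holds for $f_1$ on $B_a^{\ast}(s)$ with $\kappa=1/2$, so Lemma~\ref{lem:lower} gives $\essinf_{B_a^{\ast}(s/4)}f_1\ge c_7(d,M_n,1/2)=:2c$ with $c\in(0,1/2]$, i.e.\ $\essinf_{B_a^{\ast}(s/4)}(w-\mu)\ge c(M-\mu)$.

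From this I would immediately conclude
\[
\mathrm{Osc}(w;B_a^{\ast}(s/4))=\esssup_{B_a^{\ast}(s/4)}w-\essinf_{B_a^{\ast}(s/4)}w
\le M-\bigl(\mu+c(M-\mu)\bigr)=(1-c)(M-\mu)=(1-c)\,\mathrm{Osc}(w;B_a^{\ast}(s)),
\]
and in the symmetric case where $f_2$ is the one satisfying \eqref{eq:cond} one gets $\esssup_{B_a^{\ast}(s/4)}w\le M-c(M-\mu)$, yielding the same bound. Taking $c_8=1-c$, and noting $c\le 1/2$ forces $c_8\ge 1/2$, and $c>0$ forces $c_8<1$, gives $c_8=c_8(d,M_n)\in[1/2,1)$ as required. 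One small bookkeeping point: Lemma~\ref{lem:lower} as stated applies to $B_a^{\ast}(\rho)$ for $\rho\in(0,1/(2M_n^2\vee1)]$, and here we apply it with $\rho=s\le r$, which is in the admissible range, so the hypotheses of Lemma~\ref{lem:lower} are met; also $f_1,f_2$ restricted to $B_a^{\ast}(s)$ lie in $H^1(B_a^{\ast}(s))$ and solve \eqref{eq:sol} there because \eqref{eq:sol} is local and affine in $w$.

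The only genuine subtlety — and the step I would be most careful about — is verifying that $f_1$ and $f_2$ really are \emph{nonnegative solutions} of \eqref{eq:sol} on $B_a^{\ast}(s)$ in the precise variational sense required by Lemma~\ref{lem:lower}: nonnegativity on $B_a^{\ast}(s)$ is clear from the definitions of $M,\mu$ as essential sup/inf over $B_a^{\ast}(s)$, but one must observe that adding a constant and multiplying by a positive constant preserves \eqref{eq:sol} with $\alpha=0$ (this uses $\alpha=0$ crucially, since the zeroth-order term would otherwise not be invariant under adding constants), and that $B_a^{\ast}(s)=\Psi_a(B_+(s))$ is again of the admissible form with the same $\Psi_a$. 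Everything else is the routine combinatorial observation that $f_1+f_2=2$ on $B_a^{\ast}(s/2)$ forces one of two measurable sets to occupy at least half the ball, followed by a direct application of Lemma~\ref{lem:lower}. No iteration in $\nu$ is needed here — unlike Lemma~\ref{lem:localbound}, the single weak Harnack step already delivers the geometric oscillation decay in one shot.
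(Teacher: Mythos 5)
Your proof is correct and follows essentially the same approach as the paper: reduce to the weak Harnack inequality of Lemma~\ref{lem:lower} via the two auxiliary nonnegative solutions obtained by affinely normalizing $w$, use the covering argument on $B_a^{\ast}(s/2)$ to guarantee one of them meets the measure condition \eqref{eq:cond} with $\kappa=1/2$, and read off the oscillation decay with $c_8 = 1 - c_7/2$. The only difference from the paper is cosmetic normalization (the paper centers $w$ so that $\esssup = -\essinf = K$ and works with $(K\pm w)/K$ bounded by $2$, whereas you work directly with $M,\mu$ and $f_1,f_2$), and your bookkeeping of the locality of \eqref{eq:sol}, the admissible radius for Lemma~\ref{lem:lower}, and the resulting range $c_8\in[1/2,1)$ all match the paper's.
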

\begin{proof}
Note that its oscillation does not change by additing a constant to $w$. Therefore, by additing an appropriate constant to $w$, we can assume
\begin{equation*}
\esssup_{B_{a}^{\ast}(s)}w=-\essinf_{B_{a}^{\ast}(s)}w=\frac{1}{2}\text{Osc}(w;B_{a}^{\ast}(s))=:K.
\end{equation*}
Then,
$(K+w)/K,(K-w)/K$ satisfy \eqref{eq:sol} with $\alpha=0$ and $E=B_{a}^{\ast}(s)$. Indeed, since $g \in \widehat{H}(B_{a}^{\ast}(s)) \subset  \widehat{H}(B_{a}^{\ast}(r))$ and $g=0$ $m$-a.e. on $B_{a}^{\ast}(r) \setminus B_{a}^{\ast}(s)$, it holds that
\begin{align*}
0=\cE_{B_{a}^{\ast}(r),0}(w,g) &=\frac{1}{2}\int_{B_{a}^{\ast}(r)\setminus B_{a}^{\ast}(s)}(\nabla w,\nabla g)\,dm+\frac{1}{2}\int_{B_{a}^{\ast}(s)}(\nabla w,\nabla g)\,dm\\
&=0+\cE_{B_{a}^{\ast}(s),0}(w,g). 
\end{align*}
By using this equality and noting that $\nabla \bone_{D}=0$, we obtain
\begin{align*}
0&=\frac{1}{2}\int_{B_{a}^{\ast}(s)}(\nabla \bone_{D},\nabla g)\,dm\pm\frac{1}{K}\cE_{B_{a}^{\ast}(s),0}(w,g) =\cE_{B_{a}^{\ast}(s),0}\left(\frac{K \pm w}{K},g \right).
\end{align*}
$(K+w)/K,(K-w)/K$ are both nonnegative and at least one of them satisfies \eqref{eq:cond} with $\kappa=1/2$ and $r=s$. Indeed, if both of them don't satisfy \eqref{eq:cond},
\begin{align*}
&m( B_{a}^{\ast}(s/2))=(1/2)m( B_{a}^{\ast}(s/2))+(1/2)m( B_{a}^{\ast}(s/2)) \\
&>m( \{ (K+w)/K \ge 1\} \cap B_{a}^{\ast}(s/2))+m( \{ (K-w)/K \ge 1\} \cap B_{a}^{\ast}(s/2))\\
&=m( \{w \ge 0\} \cap B_{a}^{\ast}(s/2))+m( \{ w \le 0\} \cap B_{a}^{\ast}(s/2))  \\
&\ge m( B_{a}^{\ast}(s/2)),
\end{align*}
which is a contradiction. If $(K+w)/K$  satisfies \eqref{eq:cond} for $\kappa=1/2$, we have
$$\essinf_{B_{a}^{\ast}(s/4)} (K+w)/K \ge c_{9}(d,M_n,1/2) $$ from Lemma~\ref{lem:lower}.
Therefore, 
\begin{align*}
c_{7}K-K \le w \le K \text{ $m$-a.e. on $B_{a}^{\ast}(s/4)$.}
\end{align*}
 This implies the claim with $c_{8}=1-c_{7}(d,M_n,1/2) /2 \in [1/2,1)$. In the same manner, we can obtain the claim if $(K-w)/K$  satisfies \eqref{eq:cond} for $\kappa=1/2$.
\qed\end{proof}

\subsection{Proof of Thereom~\ref{thm:wrsf}}
\begin{lemma}\label{lem:bounded}
Let $r \in(0,1/(2M_{n}^2 \vee 1)]$, $\alpha>0$,  $p>d$, and $f \in L^{p}(\ob{D},m) \cap L^{2}(\ob{D},m)$. Then, there exists $c_{9}(d,M_n,r,\alpha,p,f)>0$ such that
\begin{equation*}
\sup_{a \in K_n}\| G_{\alpha}f \|_{L^{\infty}(B_{a}^{\ast}(r/2),m)} \le c_{9}.
\end{equation*}
\end{lemma}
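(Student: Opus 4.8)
The plan is to localize the global resolvent to the half-ball $B_a^\ast(r)$ and split it into a piece controlled by Lemma~\ref{lem:st2} and an $\alpha$-harmonic remainder controlled by Corollary~\ref{lem:sol2}.

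First I would record the local variational identity. Write $u=G_\alpha f$. By definition of the $L^2$-resolvent, $\cE(u,g)+\alpha(u,g)_{L^2(D,m)}=(f,g)_{L^2(D,m)}$ for every $g\in H^1(D)$. Since $B_a^\ast(r)\subset D$ and any $g\in\widehat H(B_a^\ast(r))$, extended by zero, belongs to $H^1(D)$ and vanishes off $B_a^\ast(r)$, this specializes to
\[
\cE_{B_a^\ast(r),\alpha}(u,g)=(f,g)_{L^2(B_a^\ast(r),m)},\qquad g\in\widehat H(B_a^\ast(r)),
\]
with $u|_{B_a^\ast(r)}\in H^1(B_a^\ast(r))$. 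Let $v=G_{B_a^\ast(r),\alpha}f\in\widehat H(B_a^\ast(r))$, which solves the same identity, and put $w=u|_{B_a^\ast(r)}-v\in H^1(B_a^\ast(r))$. Subtracting, $\cE_{B_a^\ast(r),\alpha}(w,g)=0$ for all $g\in\widehat H(B_a^\ast(r))$; that is, $w$ is a solution of \eqref{eq:sol} on $E=B_a^\ast(r)$ with the given $\alpha>0$.

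Next I would apply the estimates already in hand. Since $r\le 1/(2M_n^2\vee1)$, $a\in K_n$, and $f\in L^2(\ob D,m)\cap L^p(\ob D,m)$, Lemma~\ref{lem:st2} gives
\[
\|v\|_{L^\infty(B_a^\ast(r),m)}\le c_4 r^{(p-d)/p}\|f\|_{L^p(\ob D,m)},\qquad c_4=c_4(d,M_n,p).
\]
Corollary~\ref{lem:sol2}, applied with $s=r/2$, gives
\[
\esssup_{B_a^\ast(r/2)}|w|\le c_5(r/2)^{-d/2}\|w\|_{L^2(B_a^\ast(r),m)},\qquad c_5=c_5(d,M_n).
\]
Finally, the contraction property of the $L^2$-resolvents yields $\|u\|_{L^2(\ob D,m)}\le\alpha^{-1}\|f\|_{L^2(\ob D,m)}$ and $\|v\|_{L^2(B_a^\ast(r),m)}\le\alpha^{-1}\|f\|_{L^2(B_a^\ast(r),m)}\le\alpha^{-1}\|f\|_{L^2(\ob D,m)}$, hence $\|w\|_{L^2(B_a^\ast(r),m)}\le 2\alpha^{-1}\|f\|_{L^2(\ob D,m)}$. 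Combining via $\esssup_{B_a^\ast(r/2)}|u|\le\|v\|_{L^\infty(B_a^\ast(r),m)}+\esssup_{B_a^\ast(r/2)}|w|$ produces the asserted bound with
\[
c_9=c_4 r^{(p-d)/p}\|f\|_{L^p(\ob D,m)}+2\alpha^{-1}c_5(r/2)^{-d/2}\|f\|_{L^2(\ob D,m)},
\]
which depends only on $d,M_n,r,\alpha,p$ and $f$.

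There is no essential difficulty here; the only points requiring care are the passage to the local variational identity on $B_a^\ast(r)$, namely the legitimacy of zero-extensions of $\widehat H(B_a^\ast(r))$-functions as test functions on $D$, and the bookkeeping that every constant emerging from Lemma~\ref{lem:st2} and Corollary~\ref{lem:sol2} depends on $a$ only through the uniform Lipschitz bound $M_n=\sup_{a\in K_n}M(a)$. This last point is exactly what makes the final estimate uniform over $K_n$, so that $\sup_{a\in K_n}$ may be taken at the end.
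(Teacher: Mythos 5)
Your proposal is correct. Both you and the paper localize $u=G_\alpha^0 f$ to $B_a^\ast(r)$ and split $u=v+w$, bounding $v$ via Lemma~\ref{lem:st2} and $w$ via Corollary~\ref{lem:sol2}, with uniformity in $a\in K_n$ coming from the uniform constants in those lemmas. The decompositions differ: you take $v=G_{B_a^\ast(r),\alpha}f$, so that $w=u-v$ satisfies $\cE_{B_a^\ast(r),\alpha}(w,g)=0$ (i.e., \eqref{eq:sol} with the given $\alpha>0$), while the paper takes $v=G_{B_a^\ast(r),0}(f-\alpha u)$, so that $w$ satisfies $\cE_{B_a^\ast(r)}(w,g)=0$ (i.e., \eqref{eq:sol} with $\alpha=0$). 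Both variants are covered by Lemma~\ref{lem:st2} and Corollary~\ref{lem:sol2}, which hold for all $\alpha\ge 0$. Your route is marginally cleaner: you avoid moving $\alpha u$ into the right-hand side and you bound $\|v\|_{L^2(B_a^\ast(r),m)}$ directly by the $\alpha^{-1}$-contraction of the local resolvent, whereas the paper bounds $\|v\|_{L^2}$ via its $L^\infty$ estimate times $m(B_a^\ast(r))^{1/2}\le M_n^{d/2}m(B_+(1))^{1/2}$. The paper's choice ($\alpha=0$ for $w$) is, however, the one it reuses in the subsequent oscillation Lemma~\ref{lem:osc}, where Lemma~\ref{lem:st1} explicitly requires $\alpha=0$, so its decomposition serves both statements with one setup.
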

\begin{proof}
Set $u=G_{\alpha}^{0}f$ and $E=B_{a}^{\ast}(r)$. Let $v=G_{E,0}(f-\alpha u) \in \widehat{H}(E)$ be the solution of the equation of \eqref{eq:func3} with $\alpha=0$ and $f=f-\alpha u$. Using Lemma~\ref{lem:st2}, we have
\begin{align}
\|v \|_{L^{\infty}(E,m)}  
&\le c_4(d,M_n,p)\cdot r^{(p-d)/p}\|f-\alpha u\|_{L^{p}(E,m)} \label{eq:osck21} \\
&\le c_{4}  r^{(p-d)/p}\left( \|f\|_{L^{p}(E,m)}+\alpha \|G_{\alpha}f\|_{L^{p}(E,m)} \right) \notag \\
&\le 2c_{4} r^{(p-d)/p} \|f\|_{L^{p}(D,m)}  \notag.
\end{align}
Setting $w=u-v$, we have, for all $g \in \widehat{H}(E)$, 
\begin{align*}
\cE_{E}(w,g)&=\cE_{E}(G_{\alpha}^{0}f-G_{E,0}(f-\alpha u),g) \\
&=\cE_{E}(G_{\alpha}^{0}f,g)-(f-\alpha u,g)_{L^{2}(D,m)} \\
&=\cE (G_{\alpha}^{0}f,g)-(f-\alpha u,g)_{L^{2}(D,m)}\\
&=\cE_{\alpha}(G_{\alpha}^{0}f,g)-(f,g)_{L^{2}(D,m)}=0.
\end{align*}
From Corollary~\ref{lem:sol2} and \eqref{eq:osck21}, we have 
\begin{align}
&\|w\|_{L^{\infty}(B_{a}^{\ast}(r/2))}  \label{eq:osck3} \\
&\le c_5(d,M_n) \times (r/2)^{-d/2}\|w\|_{L^{2}(B_{a}^{\ast}(r),m)} \notag \\
&\le c_5 \times (r/2)^{-d/2}(\|G_{\alpha}^{0}f\|_{L^{2}(B_{a}^{\ast}(r),m)}+\|v\|_{L^{2}(B_{a}^{\ast}(r),m)}) \notag \\
%&\le c_5 (r/2)^{-d/2}(\|G_{\alpha}^{0}f\|_{L^{2}(D,m)}+2c_{6} r^{(p-d)/p} \|f\|_{L^{p}(D,m)} \cdot m(B_{a}^{\ast}(r))^{1/2} ) \notag \\
&\le c_5 \times (r/2)^{-d/2}(\alpha^{-1}\|f\|_{L^{2}(D,m)}+2c_{4} r^{(p-d)/p} \|f\|_{L^{p}(D,m)} \times M_{n}^{d/2} m(B_{+}(1))^{1/2} ) \notag.
\end{align}
Using \eqref{eq:osck21}, \eqref{eq:osck3}, and the relation $u=w+v$, we complete the proof.
\qed\end{proof}
\begin{lemma}\label{lem:osc}
Let $\alpha>0$, $p>d$, $f \in L^{p}(\ob{D},m) \cap L^{2}(\ob{D},m)$, and $r \in (0,1/(2M_{n}^2 \vee 1)]$. There exist $c_{11}=c_{11}(d,M_n,p,\alpha,f,r)>0$, $q_{1}=q_{1}(d,M_n,p)\in (0,1)$ such that
\begin{equation*}
\sup_{a \in K_n}\text{\rm Osc}(G_{\alpha}^{0}f;B_{a}^{\ast}(s)) \le c_{11}s^{q_1},\quad 0<s \le r/8.
\end{equation*}
\end{lemma}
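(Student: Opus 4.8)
The plan is a De Giorgi--Nash--Moser oscillation iteration, carried out at a geometric sequence of scales, with the right-hand side peeled off by the same localization-plus-splitting device as in the proof of Lemma~\ref{lem:bounded}. Write $u=G_{\alpha}^{0}f$ and $\gm=(p-d)/p\in(0,1)$. Fix $a\in K_n$ and, for $\rho\in(0,1/(2M_n^2\vee1)]$, set $E_\rho=B_{a}^{\ast}(\rho)$ and split $u=v_\rho+w_\rho$ on $E_\rho$, where $v_\rho=G_{E_\rho,0}(f-\alpha u)\in\widehat H(E_\rho)$ and $w_\rho=u-v_\rho$. Two facts are available at once: by Lemma~\ref{lem:st2} together with the $L^{p}$-contractivity of $\{\alpha G_{\alpha}^{0}\}_{\alpha>0}$,
\[
\|v_\rho\|_{L^{\infty}(E_\rho,m)}\le 2c_4\,\rho^{\gm}\|f\|_{L^{p}(\ob D,m)},
\]
so the inhomogeneous part carries a positive power of $\rho$; and the computation in the proof of Lemma~\ref{lem:bounded} shows that $w_\rho$ solves \eqref{eq:sol} with $\alpha=0$ on $E_\rho$.

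The next step is to turn this into a one-step oscillation inequality for $u$ itself. By Corollary~\ref{lem:sol2}, $w_\rho\in L^{\infty}(B_{a}^{\ast}(\rho/2),m)$, and $w_\rho$ is still a solution of \eqref{eq:sol} with $\alpha=0$ on the smaller ball $B_{a}^{\ast}(\rho/2)$; hence Lemma~\ref{lem:st1}, applied on $B_{a}^{\ast}(\rho/2)$, gives $\text{\rm Osc}(w_\rho;B_{a}^{\ast}(\rho/8))\le c_8\,\text{\rm Osc}(w_\rho;B_{a}^{\ast}(\rho/2))$. Combining this with $\text{\rm Osc}(v_\rho;\cdot)\le 2\|v_\rho\|_{L^{\infty}(E_\rho,m)}$ on each sub-ball and with $u=v_\rho+w_\rho$ yields
\[
\text{\rm Osc}\bigl(u;B_{a}^{\ast}(\rho/8)\bigr)\le c_8\,\text{\rm Osc}\bigl(u;B_{a}^{\ast}(\rho)\bigr)+8c_4\|f\|_{L^{p}(\ob D,m)}\,\rho^{\gm}.
\]
Note that the splitting must be redone at every scale: a single global splitting fails because $\text{\rm Osc}(v_\rho;\cdot)$ does not decay once $\rho$ is fixed. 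Applying the displayed inequality with $\rho=\rho_j:=8^{-j}(r/2)$ and writing $\omega_j=\text{\rm Osc}(u;B_{a}^{\ast}(\rho_j))$, we get the recursion $\omega_{j+1}\le c_8\omega_j+C\|f\|_{L^{p}(\ob D,m)}\rho_j^{\gm}$ with $c_8=c_8(d,M_n)\in[1/2,1)$, while $\omega_0=\text{\rm Osc}(u;B_{a}^{\ast}(r/2))\le 2\|u\|_{L^{\infty}(B_{a}^{\ast}(r/2),m)}$ is finite and bounded uniformly over $a\in K_n$ by Lemma~\ref{lem:bounded}.

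It remains to solve this recursion and interpolate between scales. Summing the geometric series (and in the resonant case $c_8=8^{-\gm}$ absorbing the extra linear factor by giving up an $\eps$) gives $\omega_j\le c\,(\rho_j/r)^{q_1}$, where one may take $q_1=\min\{\gm,\ \log_8(1/c_8)\}$ (strict inequality in the resonant case); since $\gm\in(0,1)$ and $c_8\in[1/2,1)$, we have $q_1\in(0,1)$ and $q_1=q_1(d,M_n,p)$, and $c=c(d,M_n,p,\alpha,f,r)$. Finally, for arbitrary $0<s\le r/8$ pick $j$ with $\rho_{j+1}<s\le\rho_j$, use monotonicity of $\rho\mapsto\text{\rm Osc}(u;B_{a}^{\ast}(\rho))$ and $\rho_j<8s$ to get $\text{\rm Osc}(u;B_{a}^{\ast}(s))\le\omega_j\le c\,(8s/r)^{q_1}$, and handle the finitely many $s\in(r/16,r/8]$ by the crude bound $\text{\rm Osc}(u;B_{a}^{\ast}(s))\le\omega_0$; this produces the desired $c_{11}=c_{11}(d,M_n,p,\alpha,f,r)$. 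Uniformity over $a\in K_n$ is automatic because $M_n=\sup_{a\in K_n}M(a)<\infty$ and every constant coming out of Lemmas~\ref{lem:st2}, \ref{lem:st1}, \ref{lem:bounded} and Corollary~\ref{lem:sol2} depends on $a$ only through $M_n$. The one genuinely delicate point is the bookkeeping --- checking that each $\rho_j$ stays in the admissible range $(0,1/(2M_n^2\vee1)]$ (true since $\rho_j\le r$) so that all cited lemmas apply, and running the standard iteration that collapses the geometric decay $c_8^{\,j}$ together with the power term $\rho_j^{\gm}$ into the single exponent $q_1$.
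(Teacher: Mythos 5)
Your proof is correct and follows essentially the same strategy as the paper's: at each scale split $u=v+w$ with $v=G_{E,0}(f-\alpha u)$ controlled in $L^\infty$ by Lemma~\ref{lem:st2}, apply the oscillation-decay Lemma~\ref{lem:st1} to the harmonic part $w$, and iterate. The only differences are cosmetic: where the paper invokes Lemma~\ref{lem:bounded} to conclude $w$ is bounded on the whole ball $B_a^{\ast}(s)$ and so applies Lemma~\ref{lem:st1} directly there (yielding a scale ratio of $4$ and an application of Stampacchia's Lemma~7.3 to finish), you instead invoke Corollary~\ref{lem:sol2} to get boundedness of $w_\rho$ only on $B_a^{\ast}(\rho/2)$ and step down by an extra factor of $2$ (yielding a scale ratio of $8$), then solve the recursion by hand. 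Both routes are sound and give $q_1\in(0,1)$ depending only on $d$, $M_n$, $p$.
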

\begin{proof}
Set $u=G_{\alpha}^{0}f$, $s \in (0,r/2]$, and $F=B_{a}^{\ast}(s)$. Let $v=G_{F,0}(f-\alpha u) \in \widehat{H}(F)$ be the solution of the equation of \eqref{eq:func3} with $\alpha=0$ and $f=f-\alpha u$. Using Lemma~\ref{lem:st2}, we have
\begin{align}\label{eq:osck2}
\|v \|_{L^{\infty}(B_{a}^{\ast}(s),m)} &\le c_4(d,M_n,p)s^{(p-d)/p}\|f-\alpha u\|_{L^{p}(B_{a}^{\ast}(s),m)} \\
&\le c_{4}  s^{(p-d)/p}\left( \|f\|_{L^{p}(B_{a}^{\ast}(s),m)}+\alpha \|G_{\alpha}f\|_{L^{p}(B_{a}^{\ast}(s),m)} \right) \notag \\
&\le 2c_{4} s^{(p-d)/p} \|f\|_{L^{p}(D,m)} .\notag
\end{align}
Setting $w=u-v$, we can check $w \in H^{1}(F)$ satisfies \eqref{eq:sol} with $\alpha=0$ by the same argument as in the proof of Lemma~\ref{lem:bounded}. Therefore,
using  Lemma~\ref{lem:st1} and Lemma~\ref{lem:bounded}, we have
$$
\text{Osc}(w;B_{a}^{\ast}(s/4)) \le c_{8}(d,M_n)  \text{Osc}(w;B_{a}^{\ast}(s)).
$$
Hence, for all $s \in (0,r/2]$,
\begin{align*}
 \text{Osc}(u;B_{a}^{\ast}(s/4))&\le \text{Osc}(w;B_{a}^{\ast}(s/4))+\text{Osc}(v;B_{a}^{\ast}(s/4))\\
&\le c_{8} \text{Osc}(w;B_{a}^{\ast}(s))+2\|v\|_{L^{\infty}(F,m)}  \\
&\le  c_{8} \text{Osc}(u;B_{a}^{\ast}(s))+4\|v\|_{L^{\infty}(F,m)}  \\
&\le c_{8} \text{Osc}(u;B_{a}^{\ast}(s))+8c_{4} s^{(p-d)/p} \|f\|_{L^{p}(D,m)}.
\end{align*}
In the last inequality we used \eqref{eq:osck2}.
Using \cite[Lemma~7.3]{ST}, we obtain 
\begin{align*}
\text{Osc}(u;B_{a}^{\ast}(s/4)) \le \left\{\frac{8\text{Osc}(u;B_{a}^{\ast}(r/8))}{r}+\frac{8c_4 \|f\|_{L^{p}(D,m)}}{1-c_{10}} \right\}s^{q_1},\quad 0<s \le r/2. 
\end{align*}
Here, $c_{10}=c_{10}(c_8)\in (0,1)$ and $q_{1}=q_{1}(c_{8},(p-d)/p)>0$. Using Lemma~\ref{lem:bounded}, we have
\begin{align*}
\text{Osc}(u;B_{a}^{\ast}(s)) \le \left\{\frac{4^{q_1+2}\cdot c_{9}(d,M_n,p,\alpha,f,r)}{r}+\frac{4^{q_1}\cdot8c_4 \|f\|_{L^{p}(D,m)}}{1-c_{10}} \right\}s^{q_1},\quad 0<s \le r/8. 
\end{align*}
This implies the lemma.
\qed\end{proof}

Stampacchia~\cite{ST} gives an estimate for oscilications on open balls with closures contained in $D$:
\begin{lemma}\label{lem:osc2}
Let $\eta>0$, $a \in D \setminus D^\eta$ and $E=B(a,\eta)$, where $D^{\eta}=\{x \in D \mid \text{dist}(x,\partial D) <\eta\}$. Let $\alpha>0$, $p>d$, and $f \in L^{2}(\ob{D},m) \cap L^{p}(\ob{D},m)$. Then,
 \begin{equation*}
 \text{\rm Osc}(G_{\alpha}^{0}f;B(a,s)) \le c_{12} s^{q_{2}},\quad 0<s \le \eta/4,
 \end{equation*}
where $c_{12}>0$, $q_{2} \in (0,1)$ and these constants are independent of $a \in D \setminus D^{\eta}$. $c_{12}$ may depend on $f$.
\end{lemma}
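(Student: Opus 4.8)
The plan is to reduce this to the interior regularity estimates already established for subsolutions and solutions, exactly as in the boundary case treated in Lemma~\ref{lem:osc}, but now on genuine Euclidean balls $B(a,s)$ whose closures sit inside $D$, so that $\widehat{H}(B(a,s)) = H_0^1(B(a,s))$ and no boundary portion $\partial E \cap D$ intervenes. First I would record the interior analogues of the three building blocks: (a) an interior Moser--Sobolev inequality $\|f\|_{L^q(B(a,r),m)} \le c\,\|\nabla f\|_{L^2(B(a,r),m)}$ for $f \in H_0^1(B(a,r))$, which is just the classical Sobolev/Poincar\'e inequality with the standard scaling in $r$ (no bi-Lipschitz charts needed, so the constant depends only on $d$ and $q$); (b) an interior $L^\infty$-bound for the solution $v = G_{B(a,s),0}(f-\alpha u) \in H_0^1(B(a,s))$ of the equation of \eqref{eq:func3} with right-hand side $f - \alpha u$, of the form $\|v\|_{L^\infty} \le c\, s^{(p-d)/p}\|f-\alpha u\|_{L^p(B(a,s),m)} \le 2c\,s^{(p-d)/p}\|f\|_{L^p(D,m)}$, obtained by the Stampacchia argument of \cite[Theorem~4.1]{ST} (this is the interior version of Lemma~\ref{lem:st2}); and (c) the interior De Giorgi/Stampacchia oscillation-decay estimate $\mathrm{Osc}(w; B(a,s/4)) \le c_8' \,\mathrm{Osc}(w; B(a,s))$ for a bounded harmonic $w$ (solution of \eqref{eq:sol} with $\alpha=0$), which is the interior counterpart of Lemma~\ref{lem:st1}. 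These are all classical and all scale in $r$ the same way the boundary versions do.

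Next, with $u = G_\alpha^0 f$, $s \in (0,\eta/2]$, $E = B(a,s)$, and $v = G_{E,0}(f - \alpha u)$, I would set $w = u - v$ and check, by the same computation as in Lemma~\ref{lem:bounded}, that $\cE_E(w,g) = \cE_\alpha(G_\alpha^0 f, g) - (f,g)_{L^2(D,m)} = 0$ for all $g \in H_0^1(E)$; since $w$ is bounded on smaller balls by Corollary~\ref{lem:sol2}'s interior analogue (interior local boundedness of solutions, via the interior Moser iteration of Lemma~\ref{lem:localbound} adapted to balls inside $D$), the oscillation-decay estimate applies to $w$. Combining with $\mathrm{Osc}(v; \cdot) \le 2\|v\|_{L^\infty(E,m)} \le 4c\, s^{(p-d)/p}\|f\|_{L^p(D,m)}$ gives, for $0 < s \le \eta/2$,
\[
\mathrm{Osc}(u; B(a,s/4)) \le c_8'\,\mathrm{Osc}(u; B(a,s)) + C\, s^{(p-d)/p}\|f\|_{L^p(D,m)} .
\]
Then I would invoke the iteration lemma \cite[Lemma~7.3]{ST} to turn this self-improving inequality into a H\"older-type bound $\mathrm{Osc}(u; B(a,s)) \le c_{12}\, s^{q_2}$ for $0 < s \le \eta/4$, with $q_2 = q_2(c_8', (p-d)/p) \in (0,1)$ and $c_{12}$ depending on $\mathrm{Osc}(u; B(a,\eta/4))$, on $\|f\|_{L^p(D,m)}$, on $d$, $p$, $\alpha$, and $\eta$ — but crucially not on the center $a \in D \setminus D^\eta$, since all constants in (a)--(c) are translation-invariant and the $L^\infty$- and $L^2$-norms of $u = G_\alpha^0 f$ over any ball of radius $\eta/4$ are bounded by global quantities ($\|u\|_{L^2(D,m)} \le \alpha^{-1}\|f\|_{L^2(D,m)}$ and the interior $L^\infty$-bound from Corollary~\ref{lem:sol2}'s analogue plus \eqref{eq:osck2}).

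The work here is genuinely lighter than in the boundary case: there are no charts $\Psi_a$, no Jacobian factors \eqref{eq:jacobian2}, no mixed Dirichlet/Neumann subspace $\widehat{H}(E)$, and the constants are automatically uniform in $a$ because everything is translation-invariant on balls compactly contained in $D$. The only mild point to get right is the uniformity of $c_{12}$ in $a$, which requires noting that the "zeroth-order data" feeding the iteration — namely $\mathrm{Osc}(u; B(a,\eta/4))$, controlled by $\esssup_{B(a,\eta/4)}|u|$ — is bounded by a constant independent of $a$; this follows from the interior analogue of Corollary~\ref{lem:sol2} applied with radius $\eta/4$ together with the bound \eqref{eq:osck2} on $v$, since $\|u\|_{L^2(B(a,\eta/2),m)} \le \|u\|_{L^2(D,m)} \le \alpha^{-1}\|f\|_{L^2(D,m)}$. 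Given the classical nature of the interior estimates (a)--(c), the statement follows, and I expect the main obstacle — such as it is — to be bookkeeping the dependence of constants rather than any real analytic difficulty; indeed the author likely just cites \cite{ST} for the whole package.
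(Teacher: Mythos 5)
Your proposal is correct, and it fills in exactly what the paper leaves implicit: the paper gives no proof of this lemma, simply attributing it to Stampacchia~\cite{ST} in the one line preceding the statement. Your interior argument (classical Sobolev/Poincar\'e on $H_0^1(B(a,r))$, Stampacchia's $L^\infty$-bound, De~Giorgi oscillation decay, iteration via \cite[Lemma~7.3]{ST}) is the content behind that citation and parallels the boundary development of Lemmas~\ref{lem:moser2}--\ref{lem:osc} with the simplifications you identify; you even correctly anticipated that the author ``likely just cites \cite{ST} for the whole package.''
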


Take $\eta \in (0,1)$ and $x \in D_n \cap D^{\eta}$. By the definition of $D^{\eta}$, there exists $a \in \partial D$ such that $|x-a|<\eta$. Since $x \in D_n$, 
\begin{align*}
a &\in \{z \in \partial D \mid \text{dist}(z,D_n)<\eta\} \\
&\subset \{z \in \partial D \mid \text{dist}(z,D_n)<1\} \\
&\subset \{z \in \partial D \mid \text{dist}(z,B(n))<1\}.
\end{align*}
This implies $a \in \ob{B(n+1)}$.
Therefore, for any $x \in D_n \cap D^{\eta}$, there exists $a \in K_{n+1}$ such that $|x-a|<\eta$. 

\begin{proof}[Proof of Theorem~\ref{thm:wrsf}]
From Lemma~\ref{lem:osc}, for any $\eps>0$, there exists $s_{1}=s_{1}(\eps) \le 1/\{8(2M_{n+1}^2 \vee 1)\}$ such that
\begin{equation}
\sup_{a \in K_{n+1}}\text{Osc}(G_{\alpha}^{0}f;B_{a}^{\ast}(s_1))<\eps \label{eq:osc3}.
\end{equation} 
Set $\eta=s_{1}(\eps)/2(M_{n+1}+1).$
For any $x \in D_n \cap D^{\eta}$, there is $a \in K_{n+1} $ with $|x-a|<\eta$. It follows from $\text{Lip}(\Psi_{a}^{-1}) \le M_{n+1}$ that
\begin{align*}
x &\in B(a, s_{1}(\eps)/2(M_{n+1}+1)) \cap D \\
&\subset B(a, s_{1}(\eps)/M_{n+1}) \cap D  \\
&\subset \Psi_{a}(B(s_{1}(\eps))) \cap D=B_{a}^{\ast}(s_1(\eps)).
\end{align*}
Take $y \in D_n \cap D^{\eta}$ with $|x-y|<\eta$.
Since $x\in B(a, s_{1}(\eps)/2(M_{n+1}+1)) \cap D$,
\begin{align*}
y &\in B(a, s_{1}(\eps)/M_{n+1}) \cap D \subset B_{a}^{\ast}(s_1(\eps)).
\end{align*}
Therefore, every pair $x,y\in D_n \cap D^{\eta}$ with
 $|x-y|<\eta$ is simultaneously contained in $B_{a}^{\ast}(s_1)$ with some $a \in K_{n+1}$. Lemma~\ref{lem:osc2} with $\eta=s_{1}(\eps)/2(M_{n+1}+1)$ implies
 \begin{equation}
 \text{Osc}(G_{\alpha}^{0}f;B(a,s_2)) <\eps,\quad a \in D_n \setminus D^{\eta/2}\label{eq:osc4},
 \end{equation}
 for some $s_{2}=s_{2}(\eps,\eta) \in (0,\eta/8]$.
 
 We now set $\rho =(\eta/2) \wedge s_{2}$. Let $x,y \in D_n$ with $|x-y|<\rho $. If $x$ or $y$ belongs to $D^{\eta/2}$, then $x,y \in D_{n} \cap D^{\eta}$ and $|G_{\alpha}^{0}f
 (x)-G_{\alpha}^{0}f(y)|<\eps$ by \eqref{eq:osc3}. Otherwise, $|G_{\alpha}^{0}f
 (x)-G_{\alpha}^{0}f(y)|<\eps$ by \eqref{eq:osc4}.
\qed\end{proof}

\section{Proof of Proposition~\ref{prop:1}}
\subsection{Relative capacities and extension domains}
In this section, we introduce the definitions of relative capacity and extension domains. We also give some results on them. They are used for the proof of Proposition~\ref{prop:1} and Thereom~\ref{thm:doubf}.
\begin{definition}[{%
 \cite[Definition~5.1]{B}}]\label{defn:relative}
\begin{enumerate}
\item Let $E$ be an open subset of $\R^d$. For $A \subset \ob{E}$, we define
\begin{equation*} 
\Capa_{\ob{E}}(A)=\inf \{ \|f\|_{H^{1}(E)}^2 \mid f \in Y_{E}(A) \},
\end{equation*}
where $Y_{E}(A)$ consists of all functions $f \in \widetilde{H}^{1}(E)$ such that $f \ge 1$ $m$-a.e. on $O \cap E$ for an open subset $O \subset \R^d$ with $A \subset O$.
If $E=\R^d$, we write $\Capa(\cdot)$ for $\Capa_{\R^d}(\cdot)$. 
\item If $A \subset \ob{E}$ satisfies $\Capa_{\ob{E}}(A)=0$, $A$ is called $\Capa_{\ob{E}}$-polar set in $\ob{E}$. Moreover, if $A \subset \ob{E}$ and $\mathcal{S}(x)$ is a statement in $x \in A$, then we say that $\mathcal{S}$ holds $\Capa_{\ob{E}}$-q.e. on $A$ if $\{x \in A \mid \mathcal{S}(x) \text{ fails} \}$ is $\Capa_{\ob{E}}$-polar set. When $A=\ob{E}$, we simply say $\mathcal{S}$ holds $\Capa_{\ob{E}}$-q.e.
\item A function $f$ on $A \subset \ob{E}$ is called $\Capa_{\ob{E}}$-quasi continuous on $A$ if for each $\eps>0$ there exists an open subset $N$ of $\ob{E}$ with $\Capa_{\ob{E}}(N)<\eps$ such that $f$ is continuous on $A \setminus N$.
\end{enumerate}
\end{definition}

\begin{remark}
\begin{enumerate}
\item
Let $E$ be an open subset of $\R^d$. If $H^{1}(E)$ is a regular Dirichlet form on $L^{2}(\ob{E},\bone_{E}m)$, we have $\widetilde{H}^{1}(E)=H^{1}(E)$. 
\item One can show that $\Capa_{\ob{E}}(A)=\inf \{\Capa_{\ob{E}}(O) \mid O\text{ is open in }\ob{E} \text{ and } A \subset O \}$.
\item
If $H^{1}(E)$ is a regular Dirichlet form on $L^{2}(\ob{E},\bone_{E}m)$, we can define another capacity according as \cite[Chapter~2]{FOT}. (i), (ii) shows this capacity coincides with $\Capa_{\ob{E}}$.
\end{enumerate}
\end{remark}

\begin{definition}
An open subset $E \subset \R^d$ is called a $W^{1,2}$-extension domain if there exists a bounded linear operator $T:W^{1,2}(E) \to W^{1,2}(\R^d)$ such that $Tf=f$ $m$-a.e. on $E$.
\end{definition}

\begin{theorem}[{%
\cite[Theorem~5.2]{B}}]\label{thm:bound}
Let $E$ be an open subset of $\R^d$. For every $f \in \widetilde{H}^{1}(E)$ there is a $\text{\rm Cap}_{\ob{E}}$-quasi continuous version $\tilde{f}^{\ob{E}}:\ob{E} \to \R$ defined uniquely quasi-everywhere.
\end{theorem}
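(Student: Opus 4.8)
The plan is to reproduce, in the relative setting, the classical construction of quasi-continuous versions for regular Dirichlet forms (compare \cite[Chapter~2]{FOT}), using throughout the capacity $\Capa_{\ob{E}}$ attached to $\widetilde{H}^{1}(E)$. First I would record the elementary properties of $\Capa_{\ob{E}}$ that drive everything. Since $H^{1}(E)\cap C(\ob{E})$ is a lattice on which the maps $f\mapsto |f|$, $(f,g)\mapsto f\vee g$, $f\vee g$, and unit truncation are continuous in the $H^{1}(E)$-norm, the closed subspace $\widetilde{H}^{1}(E)$ is stable under these operations. From this one obtains in the usual way that $\Capa_{\ob{E}}$ is monotone and countably subadditive: given $A_{n}\subset\ob{E}$ and near-optimal $f_{n}\in Y_{E}(A_{n})$ with open witnesses $O_{n}$, one may assume $f_{n}\ge 0$, and then $\sup_{n}f_{n}$ belongs to $Y_{E}(\bigcup_{n}A_{n})$ (with witness $\bigcup_n O_n$) and satisfies $\|\sup_{n}f_{n}\|_{H^{1}(E)}^{2}\le\sum_{n}\|f_{n}\|_{H^{1}(E)}^{2}$, the membership in $\widetilde{H}^{1}(E)$ coming from weak compactness of norm-bounded sets in the Hilbert space $\widetilde{H}^{1}(E)$ together with a.e.\ convergence of the partial maxima. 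I would also note two further facts: if $\Capa_{\ob{E}}(A)=0$ then $m(A\cap E)=0$, because $m(\{f\ge 1\}\cap E)\le\|f\|_{H^{1}(E)}^{2}$ for admissible $f$; and the Chebyshev-type estimate $\Capa_{\ob{E}}(\{x\in\ob{E}:|g(x)|>\lambda\})\le\lambda^{-2}\|g\|_{H^{1}(E)}^{2}$ for $g\in H^{1}(E)\cap C(\ob{E})$ and $\lambda>0$, which holds since $\{|g|>\lambda\}$ is open in $\ob{E}$ and $|g|/\lambda\in Y_{E}(\{|g|>\lambda\})$.

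Next I would carry out the construction. Given $f\in\widetilde{H}^{1}(E)$, choose $f_{n}\in H^{1}(E)\cap C(\ob{E})$ with $\|f-f_{n}\|_{H^{1}(E)}\to 0$ and, after passing to a subsequence, with $\|f_{n+1}-f_{n}\|_{H^{1}(E)}\le 2^{-n}$. Setting $A_{n}=\{x\in\ob{E}:|f_{n+1}(x)-f_{n}(x)|>2^{-n/3}\}$ and $B_{N}=\bigcup_{n\ge N}A_{n}$, the Chebyshev estimate and subadditivity give open sets (in $\ob{E}$) with $\Capa_{\ob{E}}(B_{N})\le\sum_{n\ge N}2^{-4n/3}\to 0$. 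On $\ob{E}\setminus B_{N}$ the telescoping series $\sum_{n}(f_{n+1}-f_{n})$ converges uniformly, so $(f_{n})$ converges uniformly there to a continuous function; these limits are mutually consistent as $N$ varies, and I set the limit equal to $0$ on the polar set $Z=\bigcap_{N}B_{N}$. The resulting function $\tilde{f}^{\ob{E}}:\ob{E}\to\R$ is $\Capa_{\ob{E}}$-quasi continuous, since on each $\ob{E}\setminus B_{N}$ it is a uniform limit of continuous functions; and it is an $m$-version of $f$, because along a subsequence $f_{n}\to f$ $m$-a.e.\ on $E$ while $f_{n}\to\tilde{f}^{\ob{E}}$ off $Z$, and $m(Z\cap E)=0$.

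For uniqueness it suffices to show that a $\Capa_{\ob{E}}$-quasi continuous $v:\ob{E}\to\R$ with $v=0$ $m$-a.e.\ on $E$ vanishes $\Capa_{\ob{E}}$-q.e.; applying this to the difference of two quasi-continuous versions of $f$ then finishes the proof. Fix $\delta,\eps>0$, choose an open $N\subset\ob{E}$ with $\Capa_{\ob{E}}(N)<\eps$ and $v|_{\ob{E}\setminus N}$ continuous, and let $g\in\widetilde{H}^{1}(E)$ with $g\ge 0$ be near-optimal for $N$, with open witness $\tilde{O}'\subset\R^{d}$. The set $U=\{x\in\ob{E}\setminus N:|v(x)|>\delta/2\}$ is relatively open in $\ob{E}\setminus N$, hence $U=\tilde{V}\cap(\ob{E}\setminus N)$ for some open $\tilde{V}\subset\R^{d}$, and then $\{x\in\ob{E}:|v(x)|\ge\delta\}\subset N\cup U\subset\tilde{O}:=\tilde{O}'\cup\tilde{V}$. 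Since $\tilde{V}\cap E\subset(N\cap E)\cup\{x\in E:|v(x)|>\delta/2\}$ and the last set is $m$-null, $g\ge 1$ $m$-a.e.\ on $\tilde{O}\cap E$, so $g\in Y_{E}(\{|v|\ge\delta\})$ and $\Capa_{\ob{E}}(\{|v|\ge\delta\})\le\|g\|_{H^{1}(E)}^{2}<\eps$. Letting $\eps\downarrow 0$ and then $\delta\downarrow 0$ yields $\Capa_{\ob{E}}(\{v\ne 0\})=0$.

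The step I expect to be the main obstacle is this uniqueness argument: transferring purely $m$-a.e.\ information on $E$ to a capacitary statement on all of $\ob{E}$ is not automatic, since the relevant exceptional set $U$ is only relatively open in $\ob{E}\setminus N$ and $m$-negligible, and such sets need not be evidently small in capacity; the point that makes it go through is that $U$ can be absorbed, up to an $m$-null set, into the open set already controlled by the capacitary bound for $N$. A secondary, more routine point requiring care is the verification that the abstractly defined completion $\widetilde{H}^{1}(E)$ really is stable under the normal contractions needed for subadditivity of $\Capa_{\ob{E}}$.
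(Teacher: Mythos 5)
The paper does not prove Theorem~\ref{thm:bound}; it is quoted verbatim from Biegert \cite[Theorem~5.2]{B}, so there is no internal proof to compare against. Your argument is the standard Fukushima--Oshima--Takeda construction of quasi-continuous versions (Chebyshev estimate on $H^{1}(E)\cap C(\ob{E})$, a rapidly $H^1$-convergent subsequence, Borel--Cantelli), transplanted to the relative capacity $\Capa_{\ob{E}}$, and it is correct. In particular the uniqueness step --- which you rightly single out as the delicate point, since the exceptional set $U$ is only relatively open in $\ob{E}\setminus N$ and a priori only $m$-negligible --- is handled correctly: because $\tilde V\cap E\subset(N\cap E)\cup\{|v|>\delta/2\}$ and the second set is $m$-null while $N\cap E\subset\tilde O'\cap E$, the function $g$ already dominating $\tilde O'\cap E$ serves as a test function for $\tilde O=\tilde O'\cup\tilde V$, so the extra open piece costs nothing in capacity. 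One small point worth stating explicitly in a written-up version is that the lattice operations are only norm-continuous (not Lipschitz) on $H^{1}(E)$; this is still exactly what is needed to conclude stability of the closed subspace $\widetilde{H}^{1}(E)$ under them.
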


\begin{lemma} \label{lem:comp}
Let $E_1 \subset E_2 \subset \R^d$ be open sets. Then,
\begin{equation*}
\text{\rm Cap}_{\ob{E_1}}(A) \le \text{\rm Cap}_{\ob{E_2}}(A),\quad A \subset \ob{E}_1.
\end{equation*}
\end{lemma}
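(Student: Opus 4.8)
The plan is to prove the inequality directly from the variational definition of $\Capa_{\ob{E}}$ by restriction: every competitor for $\Capa_{\ob{E_2}}(A)$ restricts to a competitor for $\Capa_{\ob{E_1}}(A)$ with no larger $H^1$-norm. First I would dispose of the trivial case: if $Y_{E_2}(A)=\emptyset$ then $\Capa_{\ob{E_2}}(A)=+\infty$ and there is nothing to prove. So assume $Y_{E_2}(A)\neq\emptyset$ and fix $f\in Y_{E_2}(A)$, together with an open set $O\subset\R^d$ with $A\subset O$ and $f\ge 1$ $m$-a.e.\ on $O\cap E_2$. Since $E_1\subset E_2$ we have $\ob{E_1}\subset\ob{E_2}$ and $O\cap E_1\subset O\cap E_2$, so $f|_{E_1}\ge 1$ $m$-a.e.\ on $O\cap E_1$ with the \emph{same} $O$; moreover the restriction map $H^1(E_2)\to H^1(E_1)$ is norm non-increasing, so $\|f|_{E_1}\|_{H^1(E_1)}^2\le\|f\|_{H^1(E_2)}^2$.

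The one point that needs an argument is that $f|_{E_1}\in\widetilde{H}^1(E_1)$, i.e.\ that $f|_{E_1}$ lies in the closure of $H^1(E_1)\cap C(\ob{E_1})$ in $H^1(E_1)$. For this I would take $f_k\in H^1(E_2)\cap C(\ob{E_2})$ with $f_k\to f$ in $H^1(E_2)$, which exist by definition of $\widetilde{H}^1(E_2)$. Each $f_k|_{E_1}$ belongs to $H^1(E_1)$, and since $f_k$ is continuous on $\ob{E_2}\supset\ob{E_1}$ its restriction to $\ob{E_1}$ is continuous, so $f_k|_{E_1}\in H^1(E_1)\cap C(\ob{E_1})$. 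As restriction is norm non-increasing, $f_k|_{E_1}\to f|_{E_1}$ in $H^1(E_1)$, hence $f|_{E_1}\in\widetilde{H}^1(E_1)$. Therefore $f|_{E_1}\in Y_{E_1}(A)$ and
\[
\Capa_{\ob{E_1}}(A)\le\|f|_{E_1}\|_{H^1(E_1)}^2\le\|f\|_{H^1(E_2)}^2 .
\]
Taking the infimum over $f\in Y_{E_2}(A)$ gives the claim.

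The argument is essentially bookkeeping, so I do not expect a serious obstacle; the only subtlety is confirming that the admissibility class $Y_{E}(A)$ behaves well under restriction, which is exactly what the approximating sequence $\{f_k\}$ settles, together with the two monotonicities $\ob{E_1}\subset\ob{E_2}$ and $O\cap E_1\subset O\cap E_2$ that let the threshold condition pass to the smaller domain with an unchanged open neighbourhood $O$.
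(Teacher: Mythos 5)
Your proof is correct and follows essentially the same restriction argument as the paper: take $f\in Y_{E_2}(A)$, restrict to $E_1$, observe $f|_{E_1}\in Y_{E_1}(A)$ and $\|f|_{E_1}\|_{H^1(E_1)}\le\|f\|_{H^1(E_2)}$, then take the infimum. The one step the paper leaves implicit — that $f|_{E_1}\in\widetilde{H}^1(E_1)$, which you justify via the approximating sequence $f_k\in H^1(E_2)\cap C(\ob{E_2})$ — you have spelled out carefully, which is a worthwhile addition.
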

\begin{proof}
Let $f \in Y_{E_2}(A)$. Then, $g:=f|_{E_1}\in Y_{E_1}(A)$. Therefore, $$\Capa_{\ob{E_1}}(A)\le \|g\|_{H^{1}(E_1)} \le \|f\|_{H^{1}(E_2)}.$$
Taking infimum in $f \in Y_{E_2}(A)$, we obtain the claim.
\qed\end{proof}

\begin{proposition}\label{prop:comp}
Let $E_1 \subset E_2$ be open subsets of $\R^d$. Suppose $E_1$ is a $W^{1,2}$-extenstion domain. Then, there exists $C=C({E_1})>0$ such that 
\begin{align*}
\text{\rm Cap}_{\ob{E_1}}(A) \le \text{\rm Cap}_{\ob{E_2}}(A) \le  C \cdot  \text{\rm Cap}_{\ob{E_1}}(A),\quad A \subset \ob{E}_{1}.
\end{align*}
\end{proposition}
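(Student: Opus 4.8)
The left-hand inequality is exactly Lemma~\ref{lem:comp}. For the right-hand one, note first that by Lemma~\ref{lem:comp} applied with $\R^d$ in the role of the larger domain (legitimate since $A\subseteq\ob{E_1}\subseteq\ob{E_2}\subseteq\R^d$), one has $\Capa_{\R^d}(A)\ge\Capa_{\ob{E_2}}(A)$. Hence it suffices to prove the single estimate
\[
\Capa_{\R^d}(A)\le\|T\|^{2}\,\Capa_{\ob{E_1}}(A),\qquad A\subseteq\ob{E_1},
\]
where $T\colon W^{1,2}(E_1)\to W^{1,2}(\R^d)$ is the extension operator afforded by the hypothesis that $E_1$ is a $W^{1,2}$-extension domain; then $C(E_1):=\|T\|^{2}$ works, because $\Capa_{\ob{E_2}}(A)\le\Capa_{\R^d}(A)\le\|T\|^{2}\Capa_{\ob{E_1}}(A)$.

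To prove the displayed estimate I will use the quasi-continuous description of the (classical, regular) capacity, $\Capa_{\R^d}(A)=\inf\{\|u\|_{H^{1}(\R^d)}^{2}\mid u\in H^{1}(\R^d),\ \tilde u\ge 1\text{ q.e.\ on }A\}$, which is available because $H^{1}(\R^d)$ is a regular Dirichlet form (cf.\ the Remark after Definition~\ref{defn:relative} and \cite{FOT}). Let $g\in Y_{E_1}(A)$, so $g\in\widetilde{H}^{1}(E_1)\subseteq W^{1,2}(E_1)$ and $g\ge 1$ $m$-a.e.\ on $O\cap E_1$ for some open $O\supseteq A$. Put $u:=Tg\in H^{1}(\R^d)$, so that $\|u\|_{H^{1}(\R^d)}\le\|T\|\,\|g\|_{H^{1}(E_1)}$ and $u=g$ $m$-a.e.\ on $E_1$; in particular $u\ge 1$ $m$-a.e.\ on the open set $O\cap E_1$, and therefore its quasi-continuous version satisfies $\tilde u\ge 1$ q.e.\ on $O\cap E_1$. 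If we can promote this to $\tilde u\ge 1$ q.e.\ on $O\cap\ob{E_1}\supseteq A$, then $u$ is admissible for $\Capa_{\R^d}(A)$, whence $\Capa_{\R^d}(A)\le\|T\|^{2}\|g\|_{H^{1}(E_1)}^{2}$, and taking the infimum over $g\in Y_{E_1}(A)$ finishes the proof.

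The promotion step — carrying the inequality $\tilde u\ge 1$ across the boundary piece $O\cap\partial E_1$ — is the crux and, I expect, the main obstacle. It rests on the measure density (plumpness) property that every $W^{1,2}$-extension domain enjoys: there is $c_0=c_0(E_1)>0$ with $m\bigl(B(x,r)\cap E_1\bigr)\ge c_0\,m\bigl(B(x,r)\bigr)$ for all $x\in\ob{E_1}$ and $0<r\le 1$; in particular $m(\partial E_1)=0$, so that the ``$m$-a.e.'' above is not vacuous. Fix $a\in O\cap\partial E_1$. Since $O$ is open, $B(a,s)\subseteq O$ for all small $s$, hence $m\bigl(B(a,s)\cap(O\cap E_1)\bigr)=m\bigl(B(a,s)\cap E_1\bigr)\ge c_0\,m\bigl(B(a,s)\bigr)$. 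Thus the superlevel set $\{\tilde u\ge 1\}$, which contains $O\cap E_1$ up to a polar (hence $m$-null) set, has positive lower Lebesgue density at $a$, and is therefore not thin at $a$ in the potential-theoretic sense — a standard consequence of the Wiener criterion together with the isocapacitary inequality $\Capa_{\R^d}(F)\ge c_d\,m(F)^{(d-2)/d}$ for $d\ge 3$ (with the logarithmic variant for $d=2$). Since $\tilde u$, being quasi-continuous, is finely continuous q.e., non-thinness of $\{\tilde u\ge 1\}$ at $a$ forces $\tilde u(a)\ge 1$ for q.e.\ such $a$. Hence $\tilde u\ge 1$ q.e.\ on $O\cap\partial E_1$, and combined with the inequality already obtained on $O\cap E_1$ this yields $\tilde u\ge 1$ q.e.\ on $O\cap\ob{E_1}\supseteq A$, as needed.
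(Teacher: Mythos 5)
Your proof is correct in substance but takes a genuinely different (and more self-contained) route from the paper's. The paper disposes of the key inequality $\Capa(A)\le C\,\Capa_{\ob{E_1}}(A)$ in one line by citing Biegert's Theorem~5.4 in \cite{B}, and then closes with two applications of Lemma~\ref{lem:comp} exactly as you do. You instead prove Biegert's inequality from scratch: you take a competitor $g\in Y_{E_1}(A)$, extend it by $T$, and then carry the constraint $\tilde u\ge 1$ from $O\cap E_1$ across $O\cap\partial E_1$ via the measure density property of $W^{1,2}$-extension domains (Haj\l asz--Koskela--Tuominen, cited as \cite{HKT} in the paper) combined with the potential-theoretic facts that quasi-continuous functions are finely continuous q.e.\ and that a set with positive lower Lebesgue density is non-thin, while switching to the quasi-continuous characterization $\Capa_{\R^d}(A)=\inf\{\|u\|_{H^1(\R^d)}^2:\tilde u\ge 1\text{ q.e.\ on }A\}$ to make the extended $u$ admissible. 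This buys transparency and removes the external dependence on \cite{B}, at the price of invoking the full machinery of thinness/fine topology and being somewhat sketchy in places (the ``positive density $\Rightarrow$ non-thin'' step requires a small dyadic-annulus argument to extract a divergent Wiener series when the density constant $c_0$ is small, and the $d=2$ case needs the logarithmic capacity variant, both of which you flag but do not carry out). None of these gaps are fatal; they are standard facts. The paper's citation is the expedient route, yours is the self-contained one.
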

\begin{proof}
Since $E_1$ is a $W^{1,2}$-extension domain, we can apply
 \cite[Theorem~5.4]{B} to show that there exists $C=C(E_1)>0$ such that
 \begin{align*}
 \Capa(A) \le  C \cdot  \Capa_{\ob{E_1}}(A)
\end{align*}
for all $A \subset \ob{E}_1$. Using Lemma~\ref{lem:comp}, we have $ \Capa_{\ob{E_2}}(A) \le  C  \cdot \Capa_{\ob{E_1}}(A)$. The remaining inequality is clear from Lemma~\ref{lem:comp}.
\qed\end{proof}

\begin{proposition}\label{prop:comp2}
Let $E_1 \subset E_2$ be open subsets of $\R^d$. Suppose $E_1$ is a $W^{1,2}$-extenstion domain. 
Then, $f$ is $\text{\rm Cap}_{\ob{E_1}}$-quasi continuous on $\ob{E}_1$ if and only if  $f$ is  $\text{\rm Cap}_{\ob{E_2}}$-quasi continuous on $\ob{E}_1$.
\end{proposition}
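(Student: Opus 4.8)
The plan is to reduce everything to the two–sided capacity comparison of Proposition~\ref{prop:comp}, together with the inner regularity of the relative capacity recorded in Remark~(2) after Definition~\ref{defn:relative} and the (immediate) monotonicity of $\Capa_{\ob{E_2}}$: if $A\subset B\subset\ob{E}_2$ then $Y_{E_2}(B)\subset Y_{E_2}(A)$, hence $\Capa_{\ob{E_2}}(A)\le\Capa_{\ob{E_2}}(B)$. Throughout I use that $\ob{E}_1$ carries the subspace topology inherited from $\ob{E}_2$ (both being subspaces of $\R^d$), so a subset of $\ob{E}_1$ is open in $\ob{E}_1$ precisely when it is the trace on $\ob{E}_1$ of a subset open in $\ob{E}_2$, and continuity of a map defined on a subset of $\ob{E}_1$ does not depend on which of $\ob{E}_1,\ob{E}_2,\R^d$ we view it as a subspace of.

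First I would prove the direction ``$f$ is $\Capa_{\ob{E_2}}$-quasi continuous on $\ob{E}_1$ $\Rightarrow$ $f$ is $\Capa_{\ob{E_1}}$-quasi continuous on $\ob{E}_1$'', which does not use the extension hypothesis at all. Given $\eps>0$, choose $N\subset\ob{E}_2$ open with $\Capa_{\ob{E_2}}(N)<\eps$ and $f$ continuous on $\ob{E}_1\setminus N$, and set $N_1:=N\cap\ob{E}_1$. Then $N_1$ is open in $\ob{E}_1$, $\ob{E}_1\setminus N_1=\ob{E}_1\setminus N$ so $f$ is continuous on $\ob{E}_1\setminus N_1$, and by Lemma~\ref{lem:comp} and monotonicity $\Capa_{\ob{E_1}}(N_1)\le\Capa_{\ob{E_2}}(N_1)\le\Capa_{\ob{E_2}}(N)<\eps$.

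For the converse, ``$f$ is $\Capa_{\ob{E_1}}$-quasi continuous on $\ob{E}_1$ $\Rightarrow$ $f$ is $\Capa_{\ob{E_2}}$-quasi continuous on $\ob{E}_1$'', let $C=C(E_1)$ be the constant of Proposition~\ref{prop:comp}, fix $\eps>0$, and put $\dl:=\eps/C$. Pick $N_1\subset\ob{E}_1$ open with $\Capa_{\ob{E_1}}(N_1)<\dl$ and $f$ continuous on $\ob{E}_1\setminus N_1$. Viewing $N_1$ as a subset of $\ob{E}_2$ (legitimate since $N_1\subset\ob{E}_1\subset\ob{E}_2$), Proposition~\ref{prop:comp} gives $\Capa_{\ob{E_2}}(N_1)\le C\,\Capa_{\ob{E_1}}(N_1)<\eps$. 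By the inner regularity of $\Capa_{\ob{E_2}}$ there is then an open $N_2\subset\ob{E}_2$ with $N_1\subset N_2$ and $\Capa_{\ob{E_2}}(N_2)<\eps$. From $N_1\subset N_2$ we get $\ob{E}_1\setminus N_2\subset\ob{E}_1\setminus N_1$, so $f$ is continuous on $\ob{E}_1\setminus N_2$, which is what is required.

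The one genuinely delicate point is the last step: a witnessing neighbourhood for $\Capa_{\ob{E_2}}$-quasi continuity must be open in the \emph{larger} space $\ob{E}_2$, whereas $\Capa_{\ob{E_1}}$-quasi continuity only supplies a set open in $\ob{E}_1$, and a priori an arbitrary enlargement of the latter to an open subset of $\ob{E}_2$ could have much larger $\Capa_{\ob{E_2}}$-capacity (Proposition~\ref{prop:comp} controls only capacities of subsets of $\ob{E}_1$). The resolution is to enlarge optimally: inner regularity of $\Capa_{\ob{E_2}}$ produces an open superset in $\ob{E}_2$ whose capacity exceeds $\Capa_{\ob{E_2}}(N_1)$ by arbitrarily little, and the factor $C$ from Proposition~\ref{prop:comp} absorbs the passage from $\Capa_{\ob{E_1}}$ to $\Capa_{\ob{E_2}}$. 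This is exactly where the $W^{1,2}$-extension hypothesis on $E_1$ enters, and it is needed only for this implication.
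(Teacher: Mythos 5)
Your proof is correct and, for the nontrivial implication (from $\Capa_{\ob{E_1}}$-quasi continuity to $\Capa_{\ob{E_2}}$-quasi continuity), coincides with the paper's argument: bound $\Capa_{\ob{E_2}}(N_1)$ via Proposition~\ref{prop:comp} and then enlarge $N_1$ to an open subset of $\ob{E}_2$ by the regularity of $\Capa_{\ob{E_2}}$ recorded in the Remark after Definition~\ref{defn:relative}. Your explicit treatment of the converse, using only Lemma~\ref{lem:comp} and intersecting the given open set with $\ob{E}_1$, correctly shows that direction to be independent of the $W^{1,2}$-extension hypothesis, a point the paper glosses over with ``the remaining claim can be proved in the same manner.'' One terminological nit: the approximation of $\Capa_{\ob{E_2}}$ by open supersets is usually called \emph{outer} rather than inner regularity.
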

\begin{proof}
If $f$ is $\Capa_{\ob{E_1}}$-quasi continuous on $\ob{E}_1$, for any $\eps>0$, there exists an open subset $O_{\eps}$ of $\ob{E}_1$ such that $\Capa_{\ob{E_1}}(O_{\eps})<\eps$ and $f$ is continuous on $\ob{E}_1 \setminus O_{\eps}$. Using Proposition~\ref{prop:comp},  we have
\begin{align*}
\Capa_{\ob{E_2}}(O_{\eps})\le C(E_1)\cdot \Capa_{\ob{E_1}}(O_{\eps})<C(E_1) \eps.
\end{align*}
We can take an open subset $A_{\eps}$ of $\ob{E}_2$ such that $O_{\eps}\subset A_{\eps}$ and
\begin{align*}
 \Capa_{\ob{E_2}}(A_{\eps})& \le \Capa_{\ob{E_2}}(O_{\eps})+\eps.
 \end{align*}
 Therefore, $
 \Capa_{\ob{E_2}}(A_{\eps}) < (C(E_1)+1) \eps.
$ Since $O_{\eps} \subset A_{\eps}$, $f$ is continuous on $\ob{E}_1\setminus A_{\eps}$. This proves $f$ is $\Capa_{\ob{E_2}}$-quasi continuous on $\ob{E_1}$. The remaining claim can be proved in the same manner.
\qed\end{proof}

\subsection{Proof of Proposition~\ref{prop:1}}
\begin{lemma}\label{lem:ex}
For $a \in \partial D$, $B_{a}^{\ast}(1)$ is a bounded $W^{1,2}$-extension domain.
 In particular, there exists positive constant $C>0$ such that 
 \begin{equation*}
\text{\rm Cap}(A) \le C \cdot \text{\rm Cap}_{\ob{B_{a}^{\ast}(1)}}(A) \le C \cdot \text{\rm Cap}(A) , \quad A \subset \ob{B_{a}^{\ast}(1)}.
\end{equation*}
\end{lemma}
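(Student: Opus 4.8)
The plan is to reduce the whole statement to the single assertion that $B_{a}^{\ast}(1)$ is a $W^{1,2}$-extension domain. Boundedness is immediate, since $B_{a}^{\ast}(1)=\Psi_{a}(B_{+}(1))$ is the image of a bounded set under the Lipschitz map $\Psi_{a}$. The ``in particular'' part is then just Proposition~\ref{prop:comp} applied with $E_{1}=B_{a}^{\ast}(1)$ and $E_{2}=\R^{d}$ (recall $\Capa_{\ob{\R^{d}}}=\Capa$): it yields a constant $C=C(B_{a}^{\ast}(1))>0$, which we may take $\ge1$, such that $\Capa_{\ob{B_{a}^{\ast}(1)}}(A)\le\Capa(A)\le C\,\Capa_{\ob{B_{a}^{\ast}(1)}}(A)$ for every $A\subset\ob{B_{a}^{\ast}(1)}$; the first half gives $\Capa(A)\le C\,\Capa_{\ob{B_{a}^{\ast}(1)}}(A)$ and the second half gives $C\,\Capa_{\ob{B_{a}^{\ast}(1)}}(A)\le C\,\Capa(A)$, which is the claimed chain.

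To prove the extension property of $B_{a}^{\ast}(1)$, I would first record that the half-ball $B_{+}(1)$ is a bounded Lipschitz domain: after a rotation its boundary is locally the graph of a Lipschitz function, the only mildly singular set being the equator $\{|x|=1,\ x_{d}=0\}$, which is a Lipschitz corner. Hence $B_{+}(1)$ is a $W^{1,2}$-extension domain by the classical extension theorems for Lipschitz domains (Calder\'on, Stein). Now $B_{a}^{\ast}(1)=\Psi_{a}(B_{+}(1))$ is a bi-Lipschitz image of $B_{+}(1)$ under $\Psi_{a}|_{B_{+}(1)}$, and I want to transfer the extension property along this map. The clean way is through an intrinsic geometric characterization: a bounded Lipschitz domain is an $(\eps,\dl)$-domain (uniform domain) in the sense of Jones, and the $(\eps,\dl)$-condition is preserved under bi-Lipschitz homeomorphisms of a domain. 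Indeed, arcs map to arcs with length distorted by a factor in $[M(a)^{-1},M(a)]$, chord lengths are comparable with the same constants, and, using that $\Psi_{a}$ extends to a homeomorphism $\ob{B(1)}\to\ob{W_{a}}$ carrying $\partial B_{+}(1)$ onto $\partial B_{a}^{\ast}(1)$, distances to the boundary satisfy $\mathrm{dist}(\Psi_{a}(z),\partial B_{a}^{\ast}(1))\ge M(a)^{-1}\,\mathrm{dist}(z,\partial B_{+}(1))$ and the reverse with $M(a)$. Thus $B_{a}^{\ast}(1)$ is an $(\eps',\dl')$-domain, and Jones' extension theorem gives the desired bounded linear extension operator $W^{1,2}(B_{a}^{\ast}(1))\to W^{1,2}(\R^{d})$.

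The reason for this detour, rather than the naive ``pull back by $\Psi_{a}$, extend, push forward'' argument, is the main obstacle here: $\Psi_{a}$ is given only on the open ball $B(1)$, and $\ob{B_{+}(1)}$ is \emph{not} contained in $B(1)$ --- it meets $\partial B(1)$ along the whole closed upper hemisphere. Consequently, after pulling back an $f\in W^{1,2}(B_{a}^{\ast}(1))$, extending it across the flat face by reflection, and pushing it back, one only obtains a function on $W_{a}=\Psi_{a}(B(1))$ whose support reaches $\partial W_{a}$, and there is no cut-off supported in $W_{a}$ that equals $1$ near $\ob{B_{a}^{\ast}(1)}$; moreover $B_{a}^{\ast}(1)$ itself need not be a Lipschitz domain, since a bi-Lipschitz image of the spherical part of $\partial B_{+}(1)$ can fail to be locally a Lipschitz graph. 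Working with the bi-Lipschitz-invariant $(\eps,\dl)$-condition and Jones' intrinsic construction of the extension operator sidesteps all of this; extending $\Psi_{a}$ to a bi-Lipschitz self-homeomorphism of a neighbourhood of $\ob{B(1)}$ would be an alternative, but the existence of such an extension is dimension-sensitive, so the $(\eps,\dl)$-route is preferable.
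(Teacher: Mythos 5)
Your proof is correct, and the reduction to the extension property plus the handling of the ``in particular'' part via Proposition~\ref{prop:comp} matches the paper exactly. Where you diverge is in the crucial step: to transport the extension property across the bi-Lipschitz map $\Psi_a$, you route through Jones' theory of $(\eps,\dl)$-domains (bounded Lipschitz $\Rightarrow$ $(\eps,\dl)$-domain, the $(\eps,\dl)$-condition is bi-Lipschitz invariant, and Jones' theorem gives the extension operator). The paper instead observes (slightly more slickly) that $B_{+}(1)$ is bounded convex, hence Lipschitz, hence an extension domain, and then simply cites \cite[Theorem~8]{HKT}, which asserts directly that the $W^{1,2}$-extension property is preserved under bi-Lipschitz homeomorphisms (their characterization of extension domains by the measure density condition plus the identity $W^{1,p}=M^{1,p}$ makes this invariance transparent). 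Both routes are valid. The HKT route is shorter to state and immediately general (it characterizes \emph{all} extension domains, not just those satisfying a geometric sufficient condition), which is why the paper prefers it; your Jones route is more elementary and self-contained if one does not want to invoke the full HKT machinery, and it has the pedagogical merit of staying inside classical geometric function theory. Your remark about why the naive ``pull back, reflect across the flat face, push forward'' argument fails --- because $\ob{B_{+}(1)}$ meets $\partial B(1)$ along the whole upper hemisphere, so $\Psi_a$ gives no room to cut off --- is a genuine observation and identifies precisely the obstacle that forces one to reach for a bi-Lipschitz-invariant characterization rather than a direct construction.
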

\begin{proof}
Since $B_{+}(1)$ is a bounded convex domain, this is a bounded Lipschitz domain. Therefore, $B_{+}(1)$ is a bounded $W^{1,2}$-extension domain. By Condition~\ref{as:1}, $B_{+}(1)$ and $B_{a}^{\ast}(1)$ are bi-Lipschitz homeomorphic. Therefore, it follows from \cite[Theorem~8]{HKT} that $B_{a}^{\ast}(1)$ is a bounded $W^{1,2}$-extension domain. Therefore, we see from Proposition~\ref{prop:comp} that  there exists a positive constant $C>0$ such that
\begin{equation}
\text{\rm Cap}(A) \le C \cdot \text{\rm Cap}_{\ob{B_{a}^{\ast}(1)}}(A), \quad A \subset \ob{B_{a}^{\ast}(1)} \label{eq:addd}.
\end{equation}
 On the otherhand, it holds that
\begin{equation}
\text{\rm Cap}_{\ob{B_{a}^{\ast}(1)}}(A) \le \text{\rm Cap}(A), \quad A \subset \ob{B_{a}^{\ast}(1)}  \label{eq:addd2}
\end{equation}
by Lemma~\ref{lem:comp}.  It is clear that \eqref{eq:addd} and  \eqref{eq:addd2} imply ``In particular'' part.
\qed\end{proof}
Since $\partial D \subset \bigcup_{a \in \partial D} \Psi_{a}(B(1))$, there exists $\{a_{n}\}_{n=1}^{\infty}$ of $\partial D$ such that $\partial D \subset \bigcup_{n=1}^{\infty}\Psi_{a_n}(B(1)) $. In the sequel, for each $n \in \N$, we shall write $B_{n}$ for $B_{a_{n}}^{\ast}(1)$.
\begin{proof}[Proof of Proposition~\ref{prop:1}]
Fix $n \in \N$. We note that $B_{+}(1)$ is a bounded Lipschitz domain. By \cite[Theorem~1~(i), p.~133]{EG}, there exists a positive constant $c$  such that
\begin{equation}
\mathcal{H}^{d-1}(\partial B_{+}(1)) \le c\|\bone_{B_{+}(1)}\|_{W^{1,2}(B_{+}(1))}^2=cm(B_{+}(1))^2<\infty. \label{eq:boundedtrace}
\end{equation}
By Condition~\ref{as:1}, $\partial B_{+}(1)$ and $ \partial B_n$ are bi-Lipschitz homeomorphic. Thus, we see from  \cite[Theorem~1, p.~75]{EG} and \eqref{eq:boundedtrace} that
$\mathcal{H}^{d-1}(\partial B_n)<\infty.$
Since $\partial D \subset \bigcup_{n=1}^{\infty}\partial B_n$, $\sg$ is a Radon measure on $(\partial D, \mathcal{B}(\partial D))$.

Take an $A \subset \partial D$ with $\Capa_{\ob{D}}(A)=0$. We see from Lemma~\ref{lem:comp} that
\begin{align*}
\Capa_{\ob{B_n}}(A \cap \ob{B_n})
& \le \Capa_{\ob{D}}(A \cap \ob{B_n}) \le \Capa_{\ob{D}}(A)= 0. 
\end{align*}
By using Lemma~\ref{lem:ex}, we have $\Capa (A \cap \ob{B_n})=0$.
From \cite[Theorem~4, p.~156]{EG}, we have $\sg(A \cap \ob{B_n}) = \mathcal{H}^{d-1}(A \cap \ob{B_n})=0$. Recall that it holds that $\ob{D}=\bigcup_{n=1}^{\infty}B_n$. This implies that
\begin{equation*} 
\sg(A) =\sg \left(A \cap \bigcup_{n=1}^{\infty}\ob{B_n} \right) \le \sum_{n=1}^{\infty} \sg(A \cap \ob{B_n})=0,
\end{equation*}
which comletes the proof. \qed
\end{proof}

\section{Proof of Theorem~\ref{thm:1}}

In this section, using the result of \cite{SG}, we prove Theorem~\ref{thm:1}. For the proof, we shall give some lemmas.
From Condition~\ref{as:2}, there are increasing bounded open subsets $\{U_n\}_{n=1}^{\infty}$ of $\R^d$ satisfy the following conditions: 
\begin{itemize}
\item $I_{n}:=U_{n} \cap D$ is a bounded Lipschitz domain of $\R^d$;
\item $\ob{D}=\bigcup_{n=1}^{\infty}O_n$, where we define $O_{n}:=U_n \cap \ob{D}$.
\end{itemize}
The closure of $I_n$ in $\R^d$ is denoted by $J_{n}$. Note  that $J_{n}$ is a compact subset of $\ob{D}$.

Since $I_n$ is a bounded Lipschitz domain of $\R^d$, there exists a Hunt process $Y^n=(\{Y_{t}^{n}\}_{t \ge 0},\{Q_{x}^{n}\}_{x \in J_{n}})$ on $J_n$ which has the following properties (cf \cite[Theorem~3.1, Theorem~3.4]{BH}): 
\begin{enumerate}
\item[(Y.1)]
the Dirichlet form of $Y^n$ is regular on $L^{2}(J_{n},m)$ and expressed as
\begin{align}
\mathcal{A}^{n}(f,g):=\frac{1}{2}\int_{I_n}(\nabla f, \nabla g)\,dx ,\quad f,g \in 
\mathcal{D}(\mathcal{A}^{n}):=H^{1}(I_n), \label{eq:recall}
\end{align}
\item[(Y.2)]
$Y^n$ has a transition density $q_{t}^{n}(x,y)$ which is continuous on $(0,\infty) \times J_{n} \times J_{n}$,
\item[(Y.3)]
for any $f \in \mathcal{B}_{b}(J_n)$, we have $q_{t}^{n}f:=\int_{J_n}q_{t}^{n}(\cdot,y)f(y)\,dy \in C_{b}(J_n)$,
\item[(Y.4)] there exist $a_{1,n}=a_{1,n}(d,I_n)>0, a_{2,n}=a_{2,n}(d,I_n)>0$, and $b_{1,n}=b_{1,n}(d,I_n)>0, b_{2,n}=b_{2,n}(d,I_n)>0$ such that 
\begin{align}
b_{1,n}t^{-d/2} \exp(-|x-y|^2&/b_{2,n}t)
\le q_{t}^{n}(x,y)  \label{eq:hkeest} \\
&\le a_{1,n}t^{-d/2} \exp(-|x-y|^2/a_{2,n}t)  \notag
\end{align}
for any $(t,x,y) \in (0,\infty) \times J_n \times J_n$.
\end{enumerate}
We denote by $(\mathcal{L}^n,\mathcal{D}(\mathcal{L}^n))$ the (non-positive) $L^2$-generator of $(\mathcal{A}^{n},\mathcal{D}(\mathcal{A}^{n}))$. The semigroup of $Y^n$ is canonically extended to semigroups on $L^{1}(\ob{D},m)$ and $L^{2}(\ob{D},m)$. The extensions are also denoted by $\{q_t^n\}_{t>0}$. The $L^1$-generator of $\{q_t^n\}_{t>0}$ is denote by $(\mathcal{L}^n_1,\mathcal{D}(\mathcal{L}^n_1))$.

Since $I_n$ is a bounded Lipschitz domain, it is also an extension domain in the sense of \cite{HKT}. Therefore, by \cite[Theorem~2]{HKT}, there exist positive constants $c \ge 1$, $R>0$ such that
$$c^{-1}r^{d} \le m(B(x,r) \cap J_n)) \le cr^d$$
for any $x \in J_n, r \in (0,R]$. This means $J_n$ is an Ahlfors $d$-space in the sense of \cite{BKK}. $Y^n$ is a diffusion process on $J_n$ with Gaussian bounds \eqref{eq:hkeest}. Thus, we can apply \cite[Proposition~A.3]{BKK} and obtain the next lemma.
\begin{lemma}\label{lem:branch}
Let $n \in \N$ and $K$ be a compact subset of $J_n$, and $\eps>0$. Then there exists a function $f_n \in  L^{\infty}(J_n,m)$ such that $f_n(x)=1$ for $x \in K$, $f_n(x)=0$ when $\text{dist}(x,K) \ge \eps$, and $f \in \mathcal{D}(\mathcal{L}^n)$. Furthermore, $0 \le f_n \le 1$ and $\|\mathcal{L}^nf_n \|_{L^{\infty}(J_n,m)}<\infty$. 
\end{lemma}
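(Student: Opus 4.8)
The plan is to construct $f_n$ as a suitable function of the distance to $K$, smoothed through the resolvent (or semigroup) of $Y^n$ so that it lands in the $L^\infty$-domain of the generator, and to invoke \cite[Proposition~A.3]{BKK} — which applies because $J_n$ is an Ahlfors $d$-space and $Y^n$ is a diffusion with the two-sided Gaussian estimate \eqref{eq:hkeest} — to produce exactly such a cutoff. Concretely, fix a compact $K \subset J_n$ and $\eps>0$. Choose a Lipschitz function $\varphi$ on $\R^d$ with $0 \le \varphi \le 1$, $\varphi = 1$ on a neighbourhood of $K$, and $\varphi = 0$ outside the $\eps/2$-neighbourhood of $K$; its restriction to $J_n$ lies in $H^1(I_n) = \mathcal{D}(\mathcal{A}^n)$, so $\varphi \in L^2(J_n,m) \cap L^\infty(J_n,m)$. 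The point is to replace $\varphi$ by something in $\mathcal{D}(\mathcal{L}^n)$ with a bounded generator while preserving the two localization properties.

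The key steps, in order, are as follows. First, recall that \cite[Proposition~A.3]{BKK} (cut-off Sobolev / generator-cutoff estimates on Ahlfors-regular spaces with Gaussian heat kernel) guarantees, for any compact $K$ and any $\eps>0$, the existence of $u \in \mathcal{D}(\mathcal{L}^n)$ with $0 \le u \le 1$, $u \equiv 1$ on $K$, $u \equiv 0$ on $\{\,\mathrm{dist}(\cdot,K)\ge\eps\,\}$, and $\|\mathcal{L}^n u\|_{L^\infty(J_n,m)}<\infty$; one sets $f_n := u$. Second, verify the hypotheses of that proposition in the present situation: $J_n$ is Ahlfors $d$-regular (this is the displayed volume estimate $c^{-1}r^d \le m(B(x,r)\cap J_n) \le cr^d$ obtained from \cite[Theorem~2]{HKT} via the extension-domain property of the bounded Lipschitz domain $I_n$), and the heat kernel $q_t^n$ of $Y^n$ satisfies the two-sided Gaussian bound \eqref{eq:hkeest}; together with (Y.1)–(Y.3) these are precisely the standing assumptions of \cite[Proposition~A.3]{BKK}. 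Third, check $f_n \in L^\infty(J_n,m)$ and $0 \le f_n \le 1$, which are built into the conclusion of the cited proposition, and record that $K$ compact forces $\{\mathrm{dist}(\cdot,K)\ge\eps\}$ to be disjoint from a neighbourhood of $K$, so the two level-set conditions are compatible. That completes the construction.

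An alternative route, should one prefer not to cite \cite[Proposition~A.3]{BKK} directly, is to build $f_n$ by hand: take $\varphi$ as above, set $g := \mathcal{L}^n$ applied formally — i.e. solve $(\a - \mathcal{L}^n) f_n = \a\varphi$ for suitable $\a>0$ — but this does not preserve the support/level-set conditions exactly, so one would instead smooth a compactly supported Lipschitz function against the Gaussian kernel $q_s^n$ for a small but fixed $s$: the regularity estimates following from \eqref{eq:hkeest} give $q_s^n\varphi \in \mathcal{D}(\mathcal{L}^n)$ with controlled $L^\infty$-norm of $\mathcal{L}^n q_s^n\varphi$, and by taking the original $\varphi$ supported well inside the $\eps$-neighbourhood of $K$ and equal to $1$ on a slightly larger neighbourhood of $K$ one arranges, using the off-diagonal Gaussian decay of $q_s^n$ over the fixed scale $s$, that $q_s^n\varphi$ is still $\ge 1-\delta$ on $K$ and $\le\delta$ outside the $\eps$-neighbourhood; a final truncation $x\mapsto ((x-\delta)\vee 0)\wedge(1-2\delta)$, rescaled, yields $0\le f_n\le 1$ with the exact level-set properties. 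This hand-built version is more laborious but self-contained.

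The main obstacle is the usual one in this circle of ideas: producing a cut-off function that simultaneously lies in the $L^\infty$-domain of the generator \emph{with a bounded generator image} and has prescribed compact support — the naive Lipschitz cut-off is only in the form domain $H^1(I_n)$, not in $\mathcal{D}(\mathcal{L}^n)$, while smoothing by the semigroup lands it in $\mathcal{D}(\mathcal{L}^n)$ but smears the support. The resolution is precisely what \cite[Proposition~A.3]{BKK} packages: the Ahlfors regularity of $J_n$ together with the Gaussian heat-kernel bounds \eqref{eq:hkeest} give enough quantitative control to interpolate between these two requirements at a single fixed scale, so the support is only enlarged by a controllable amount that can be absorbed into the allotted $\eps$. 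Everything else — checking $J_n$ is Ahlfors $d$-regular, checking $Y^n$ is a diffusion with Gaussian bounds — has already been done in the text immediately preceding the statement, so the proof is a short verification that the hypotheses of the cited proposition hold and a quotation of its conclusion.
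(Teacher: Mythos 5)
Your proposal matches the paper's own argument exactly: the text preceding Lemma~\ref{lem:branch} establishes that $J_n$ is Ahlfors $d$-regular (using the extension-domain property of the bounded Lipschitz domain $I_n$ and \cite[Theorem~2]{HKT}) and that $Y^n$ is a diffusion with the two-sided Gaussian heat-kernel bound \eqref{eq:hkeest}, and then cites \cite[Proposition~A.3]{BKK} to produce the cutoff in $\mathcal{D}(\mathcal{L}^n)$, which is precisely your main route. The alternative hand-built semigroup-smoothing construction you sketch is extra and not needed.
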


We fix  $n \in \N$ and take the function $f_n$ in Lemma ~\ref{lem:branch}, whose support is included in $I_n$. Since $f_n \in \mathcal{D}(\mathcal{L}^n)$, there exists $t>0$ and $g \in L^{2}(J_n,m)$ such that $f_n=q_t^ng=q_{t/2}^n(q_{t/2}^ng)$. For each $t>0$, $q_{t/2}^ng \in L^{\infty}(J_n,m)$ by \eqref{eq:hkeest}. Thus, $q_t^ng=q_{t/2}^n(q_{t/2}^ng)$ is a bounded continuous function on $J_n$ by (Y.3). This implies that there exists a continuous version of $f_n$ on $\ob{D}$. The continuous version is also denoted by $f_n$.

\begin{lemma}\label{lem:branch2}
For any $n \in \N$, $(\nabla f_n, \nabla f_n) \in L^{\infty}(\ob{D},m)$.
\end{lemma}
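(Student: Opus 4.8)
The plan is to use the PDE satisfied by $f_n$ together with the decisive structural feature that $\operatorname{supp}f_n$ is a \emph{compact} subset of the \emph{open} set $I_n$. This reduces the claim to interior elliptic regularity and completely avoids any regularity question at the (merely Lipschitz) boundary $\partial I_n$.

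First I would record the weak equation for $f_n$. Since $(\mathcal{L}^n,\mathcal{D}(\mathcal{L}^n))$ is the $L^2$-generator of the Dirichlet form $(\mathcal{A}^n,H^1(I_n))$, and since by Lemma~\ref{lem:branch} the function $h:=\mathcal{L}^n f_n$ lies in $L^\infty(J_n,m)$, hence (as $m(I_n)<\infty$) in $L^p(I_n,m)$ for every $p\in[1,\infty)$, one has
\[
\frac{1}{2}\int_{I_n}(\nabla f_n,\nabla g)\,dx=\mathcal{A}^n(f_n,g)=-\int_{I_n}hg\,dx,\qquad g\in H^1(I_n).
\]
Testing against $g\in C_c^\infty(I_n)$ (which kills the boundary contribution in the integration by parts) shows that $\Delta f_n=2h$ holds in the sense of distributions on $I_n$, with $2h\in\bigcap_{p<\infty}L^p_{\mathrm{loc}}(I_n)$ and even $2h\in L^\infty(I_n,m)$.

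Next, I would choose a bounded open set $V$ with $\operatorname{supp}f_n\subset V$ and $\ob{V}\subset I_n$ (possible since $\operatorname{supp}f_n$ is compact in the open set $I_n$). Because $f_n$ and $2h$ are bounded on a neighbourhood of $\ob{V}$ that is still contained in $I_n$, the interior $W^{2,p}$-estimate for the Laplacian (Calder\'on--Zygmund theory) gives $f_n\in W^{2,p}(V)$ for every $p\in(1,\infty)$; taking $p>d$ and invoking Morrey's embedding $W^{2,p}(V)\hookrightarrow C^{1}(\ob{V})$ shows that $\nabla f_n$ is bounded on $V$. On $I_n\setminus\operatorname{supp}f_n$ we have $f_n\equiv0$, hence $\nabla f_n=0$ $m$-a.e.\ there, so $\nabla f_n$ is bounded on all of $I_n$. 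Finally, since $m(\partial D)=0$ and $f_n$ vanishes outside $I_n\subset D$, this transfers to $(\nabla f_n,\nabla f_n)=|\nabla f_n|^2\in L^\infty(\ob{D},m)$.

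The step that really matters is the opening observation rather than any single estimate: because $\operatorname{supp}f_n$ sits strictly inside $I_n$, only interior regularity is needed, and the Lipschitz (as opposed to, say, $C^{1,\alpha}$) nature of $\partial I_n$ never enters — were $f_n$ allowed to reach the boundary, $\nabla f_n\in L^\infty$ could genuinely fail. The remaining ingredients are routine; if one prefers to stay closer to potential theory, one can instead extend $f_n$ by zero to $\R^d$, cut off $h$ by some $\chi\in C_c^\infty(I_n)$ equal to $1$ near $\operatorname{supp}f_n$, and represent $f_n$ through the Newtonian kernel $G$, so that $\nabla f_n=c_d\,(\nabla G)*(h\chi)$ with $\nabla G\in L^1_{\mathrm{loc}}(\R^d)$ and $h\chi$ bounded with compact support, which is again bounded.
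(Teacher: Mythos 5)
Your argument turns on a single structural assumption stated at the outset: that $\operatorname{supp}f_n$ is a compact subset of the \emph{open} set $I_n=U_n\cap D$, so that $f_n$ is supported strictly away from $\partial I_n$ and only interior elliptic regularity is needed. That assumption is not available here, and without it the whole reduction collapses; indeed your own closing caveat (``were $f_n$ allowed to reach the boundary, $\nabla f_n\in L^\infty$ could genuinely fail'') describes precisely the situation at hand.

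Recall why the functions $f_n$ exist at all. They are the cutoff functions, built from Lemma~\ref{lem:branch} (i.e.\ from \cite[Proposition~A.3]{BKK} applied to $Y^n$ on $J_n$), that are required by the hypotheses (H.2)'(i)--(ii) of \cite{SG}, and Proposition~\ref{prop;branch}(i) records exactly what is needed: for every $y\in\ob{D}^{\Q}$ and every $\eps\in(0,1)\cap\Q$ there must be some $f_n$ with $f_n\ge 1$ on $\ob{D}\cap\ob{B(y,\eps/4)}$ and $f_n=0$ off $B(y,\eps/2)$. The state space is the closure $\ob{D}$, so the family must separate points of $\partial D$ as well; whenever $y$ is within distance $\eps/4$ of $\partial D$, the set on which $f_n\equiv 1$ contains a relatively open piece of $\partial D\cap U_n$. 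Hence for infinitely many indices the (topological) support of $f_n$ meets $\partial D\subset\partial I_n$. The remark in the paper that ``the support of $f_n$ is included in $I_n$'' is only used to justify that extending by zero produces a function on all of $\ob{D}$ (equivalently, that $f_n$ vanishes near the inner cut-off boundary $\partial U_n\cap D$); it should not be read as saying $\operatorname{supp}f_n$ avoids $\partial D$. Once the support is allowed to reach $\partial D$, you can no longer pick $V$ with $\ob{V}\subset I_n$ containing $\operatorname{supp}f_n$, and the weak Neumann problem $\Delta f_n=2\mathcal{L}^n f_n$ on the merely Lipschitz domain $I_n$ with $L^\infty$ right-hand side does not by itself give $\nabla f_n\in L^\infty$ up to the boundary. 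The paper avoids boundary elliptic regularity altogether: it shows $f_n^2\in\Dom(\mathcal{L}_1^n)$ and, using the Gaussian bound \eqref{eq:hkeest} to send $L^1\to L^\infty$ through $q_{t/2}^n$, concludes $\mathcal{L}_1^n f_n^2\in L^\infty(J_n,m)$; boundedness of $|\nabla f_n|^2$ then drops out of the carr\'e du champ identity $(\nabla f_n,\nabla f_n)=\mathcal{L}_1^n f_n^2-2f_n\mathcal{L}^n f_n$ of \cite[Lemma~2.5(ii)]{SG}. That argument is insensitive to how $\operatorname{supp}f_n$ sits relative to $\partial D$, which is exactly why it is the one used.
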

\begin{proof}
By the construction of $f_n$, we have $\|f_n\|_{L^{\infty}(J_n,m)}$ and $ \|\mathcal{L}^nf_n \|_{L^{\infty}(J_n,m)}<\infty$. 
It follows from \cite[Lemma~5.2~(ii)]{SG} that $f_n^2 \in \mathcal{D}(L^n_1)$. Thus, $f_n^2=q_{t}^ng$ for some $t>0$ and $g \in L^{1}(J_n,m)$. Since $q_{t/2}^n g \in \mathcal{D}(L^n_1)$, $$\mathcal{L}^n_1 f_n^2=\mathcal{L}^n_1(q_{t/2}^nq_{t/2}^n g)=q_{t/2}^n(\mathcal{L}^n_1q_{t/2}^n g).$$ 
We see from \eqref{eq:hkeest} that $q_{t/2}^n (L^{1}(J_n,m)) \subset L^{\infty}(J_n,m)$.
Since $\mathcal{L}^n_1q_{t/2}^n g \in L^{1}(J_n,m)$, it holds that $\mathcal{L}^n_1 f_n^2=q_{t/2}^n(\mathcal{L}^n_1q_{t/2}^n g) \in L^{\infty}(J_n,m)$. By \cite[Lemma~2.5~(ii)]{SG},
$$(\nabla f_n,\nabla f_n)=\mathcal{L}_1^nf_n^2-2f_n\mathcal{L}^nf_n,$$
which yields $(\nabla f_n,\nabla f_n) \in L^{\infty}(J_n,m)$. Since the support of $f_n$ is included in $I_n$, it holds that $(\nabla f_n,\nabla f_n) \in L^{\infty}(\ob{D},m)$.
\qed
\end{proof}
In the sequel, we denote by $(\mathcal{L},\mathcal{D}(\mathcal{L}))$ the (non-postive) $L^2$-generator of $(\cA,H^{1}(D))$.
\begin{lemma}\label{lem:branch3}
For any $n \in \N$, $f_n \in \mathcal{D}(\mathcal{L})$ and $\mathcal{L}f_n \in L^{\infty}(\ob{D},m)$. 
\end{lemma}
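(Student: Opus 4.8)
The plan is to push the identity that characterizes $f_n\in\mathcal D(\mathcal L^n)$ down from the Dirichlet form $(\mathcal A^n,H^1(I_n))$ on $L^2(J_n,m)$ to the Dirichlet form $(\cE,H^1(D))$ on $L^2(\ob D,m)$, exploiting that $f_n$ is supported in a compact subset of the open set $I_n$.

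First I would collect the inputs. By the choice of $K$ and $\eps$ in Lemma~\ref{lem:branch}, the continuous version of $f_n$ has compact support $S\subset I_n$, while $f_n\in\mathcal D(\mathcal L^n)$ and $\mathcal L^nf_n\in L^\infty(J_n,m)$. By Lemma~\ref{lem:branch2}, together with the boundedness of $f_n$ and $m(\ob{I_n})<\infty$, we get $f_n\in H^1(D)=\mathcal D(\cE)$. Moreover $f_n=0$ $m$-a.e. on the open set $D\setminus S$, hence $\nabla f_n=0$ $m$-a.e. there; since $I_n$ is a bounded Lipschitz domain we have $m(\partial I_n)=0$, so in fact $\nabla f_n=0$ $m$-a.e. on $D\setminus I_n$.

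Next I would set $h:=\bone_{I_n}\cdot\mathcal L^nf_n$, regarded as a function on $\ob D$; it is bounded and supported in $\ob{I_n}$, a set of finite $m$-measure, so $h\in L^\infty(\ob D,m)\cap L^2(\ob D,m)$. The key computation is that for every $g\in H^1(D)$ the restriction $g|_{I_n}$ lies in $H^1(I_n)=\mathcal D(\mathcal A^n)$, and, because $\nabla f_n$ vanishes $m$-a.e. off $I_n$,
\[
\cE(f_n,g)=\frac12\int_D(\nabla f_n,\nabla g)\,dx=\frac12\int_{I_n}(\nabla f_n,\nabla g)\,dx=\mathcal A^n(f_n,g|_{I_n}).
\]
Since $f_n\in\mathcal D(\mathcal L^n)$, the right-hand side equals $-(\mathcal L^nf_n,g|_{I_n})_{L^2(J_n,m)}=-\int_{I_n}(\mathcal L^nf_n)\,g\,dm=-(h,g)_{L^2(\ob D,m)}$, where $m(\partial I_n)=0$ is used again. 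Hence $\cE(f_n,g)=-(h,g)_{L^2(\ob D,m)}$ holds for all $g\in\mathcal D(\cE)$, and by the definition of the $L^2$-generator this shows $f_n\in\mathcal D(\mathcal L)$ with $\mathcal Lf_n=h\in L^\infty(\ob D,m)$.

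The point that needs care — and the one I would regard as the main obstacle — is the localization step: one has to be certain that the gradient of $f_n$ genuinely lives inside the open set $I_n$, so that the Dirichlet form of the reflecting Brownian motion on $D$ applied to $f_n$ literally coincides with $\mathcal A^n$, and so that the zero extension of $\mathcal L^nf_n$ is the correct candidate for $\mathcal Lf_n$. This relies on the explicit construction of $f_n$ in Lemma~\ref{lem:branch} (choosing $K$ and $\eps$ so that $\{x:\text{dist}(x,K)\le\eps\}\subset I_n$) and on the elementary fact that an $H^1$-function which is $0$ $m$-a.e. on an open set has vanishing weak gradient on that set.
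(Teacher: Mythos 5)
Your proposal is correct, and the core idea is the same as the paper's: exploit the compact support of $f_n$ inside $I_n$ to rewrite $\cE(f_n,g)$ as $\mathcal A^n(f_n,g|_{I_n})$ and then use $f_n\in\mathcal D(\mathcal L^n)$. The difference is in the final identification. You produce the explicit candidate $h=\bone_{I_n}\cdot\mathcal L^n f_n\in L^2(\ob D,m)$ and check in one step that $\cE(f_n,g)=-(h,g)_{L^2(\ob D,m)}$ for every $g\in H^1(D)$, which simultaneously yields $f_n\in\mathcal D(\mathcal L)$, $\mathcal L f_n=h$, and $\mathcal L f_n\in L^\infty$. The paper takes a slightly longer route: it first argues $L^2$-continuity of $g\mapsto\cE(f_n,g)$ to get $f_n\in\mathcal D(\mathcal L)$ abstractly, then identifies $\mathcal L f_n=\mathcal L^n f_n$ $m$-a.e.\ on $I_n$ via a density argument with test functions vanishing quasi-everywhere off $O_n$ (\cite[Theorem~4.4.3]{FOT}), and finally invokes \cite[Lemma~5.2~(i)]{SG} to control the support of $\mathcal L f_n$ outside $I_n$. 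Your version avoids both the density argument and the external support lemma, which is a small but genuine simplification. Two minor points worth stating explicitly: the observation that $\nabla f_n=0$ $m$-a.e.\ on the open set $D\setminus S\supset D\setminus I_n$ (you mention it, and it is the crux of the localization), and that $g\in H^1(D)$ restricts to $g|_{I_n}\in H^1(I_n)=\mathcal D(\mathcal A^n)$ so that the defining identity of $\mathcal L^n$ may be applied.
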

\begin{proof}
Since the support of $f_n$ is included in $I_n$, $f_n$ belongs to $H^{1}(D)$. For any $g \in H^{1}(D) \subset H^{1}(I_n)$,
\begin{align}
\cE(f_n,g)=(1/2)\int_{D}(\nabla f_n, \nabla g)\,dm =(1/2)\int_{I_n}(\nabla f_n, \nabla g)\,dm=\mathcal{A}^{n}(f_n,g).\label{localneumann}
\end{align}
Since $f_n \in \mathcal{D}(\mathcal{L}^n)$, $\cE(f,g)=-\int_{I_{n}}g\mathcal{L}^nf_n\,dm.$ This shows that the functional $H^{1}(D) \ni g \mapsto \cE(f,g)$ is continuous with respect to the $L^{2}(D,m)$-topology.Therefore, $f_n \in \mathcal{D}(\mathcal{L})$. It follows from \eqref{localneumann} that 
\begin{equation*}
\int_{I_n}g\mathcal{L} f_n\,dm=\int_{I_{n}}g \mathcal{L}^nf_n\,dm
\end{equation*}
for any $g \in H^{1}(D)$ such that $\tilde{g}^{\ob{D}}=0\ \Capa_{\ob{D}}\text{-q.e. on }\ob{D} \setminus O_n.$ The whole of such a function $g$ is a dense subspace of $L^{2}(O_n,m)$ and so of $L^{2}(I_n,m)$. See \cite[Theorem~4.4.3~(i)]{FOT} for the proof. This implies that $\mathcal{L} f_n=\mathcal{L}^nf_n$ $m$-a.e. on $I_n$. and $\mathcal{L} f_n$ is bounded on $J_n$ by Lemma~\ref{lem:branch}. By \cite[Lemma~5.2~(i)]{SG}, the support of $\mathcal{L}f_n$ is included in that of $f_n$. Therefore, $\mathcal{L}f_n$ is a bounded function on $J_n$. \qed
\end{proof}

Let $\ob{D}^{\mathbb{Q}}=\ob{D} \cap \mathbb{Q}^d$.
According to Lemma~\ref{lem:branch}, Lemma~\ref{lem:branch2}, and Lemma~\ref{lem:branch3}, we reach the next proposition.
\begin{proposition}\label{prop;branch}
There exist $\{f_n\}_{n=1}^{\infty} \subset \mathcal{D}(\mathcal{L}) \cap C_{c}(\ob{D})$ such that
\begin{itemize}
\item[(i)] for all $\eps \in (0,1) \cap \mathbb{Q}$ and $y \in \ob{D}^\mathbb{Q}$ there exists $n \in \N$ such that $f_n(x) \ge 1$, for all $x \in \ob{D} \cap \ob{B(y,\eps/4)}$ and $f_n=0$ on $\ob{D} \setminus B(y,\eps/2)$,
\item[(ii)] for any $n \in \N$, $\mathcal{L}f_n \in L^{\infty}(\ob{D},m)$ and $(\nabla f_n, \nabla f_n) \in L^{\infty}(\ob{D},m)$.
\end{itemize}
\end{proposition}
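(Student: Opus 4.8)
The strategy is to repeat the construction behind Lemmas~\ref{lem:branch}, \ref{lem:branch2} and \ref{lem:branch3} once for each pair in a countable family of ``targets'' and then relabel the resulting functions. First I would set $\Lambda:=\bigl((0,1)\cap\mathbb{Q}\bigr)\times\ob{D}^{\mathbb{Q}}$, which is countable, and enumerate it as $\Lambda=\{(\eps_k,y_k)\mid k\in\N\}$. Fix $k$ and abbreviate $\eps=\eps_k$, $y=y_k$. Since $y\in\ob{D}$, the set $\ob{D}\cap\ob{B(y,\eps/2)}$ is a nonempty compact subset of $\ob{D}$, and it is covered by the increasing sequence $\{O_n\}$ of sets open in $\ob{D}$ with $\bigcup_n O_n=\ob{D}$; hence $\ob{D}\cap\ob{B(y,\eps/2)}\subset O_N$ for some $N=N_k\in\N$. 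Because $O_N=U_N\cap\ob{D}\subset\ob{U_N\cap D}=J_N$, the compact set $K:=\ob{D}\cap\ob{B(y,\eps/4)}$ is contained in $J_N$, so Lemma~\ref{lem:branch} (applied with this $K$ and with $\eps/4$ in place of its $\eps$) yields a function $g_k$ on $J_N$ with $0\le g_k\le1$, $g_k\equiv1$ on $K$, $g_k\equiv0$ on $\{x\in J_N\mid\text{dist}(x,K)\ge\eps/4\}$, $g_k\in\mathcal{D}(\mathcal{L}^N)$, and $\|\mathcal{L}^Ng_k\|_{L^\infty(J_N,m)}<\infty$.

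The geometric observation to record is that $\{g_k\neq0\}\subset\{x\mid\text{dist}(x,K)<\eps/4\}\subset B(y,\eps/2)$, so the support of $g_k$ is a compact subset of $\ob{D}\cap\ob{B(y,\eps/2)}\subset O_N\subset U_N$. Three consequences follow. First, $g_k=0$ on $\ob{D}\setminus B(y,\eps/2)$. Second, since the support of $g_k$ is disjoint from the ``artificial'' part $\partial U_N\cap D$ of the boundary of $I_N$, the zero-extension of $g_k$ to $\ob{D}$ belongs to $H^1(D)$ and agrees on $I_N$ with $g_k$. Third, writing $g_k=q_{t}^{N}h=q_{t/2}^{N}(q_{t/2}^{N}h)$ for some $t>0$ and $h\in L^2(J_N,m)$, estimate \eqref{eq:hkeest} gives $q_{t/2}^{N}h\in L^\infty(J_N,m)$, whence $g_k\in C_b(J_N)$ by (Y.3); combined with the support inclusion (the set $U_N\cap\ob{D}$ is open in $\ob{D}$, contains the support of $g_k$, and there $g_k$ equals its continuous $Y^N$-version) this shows the extension lies in $C_c(\ob{D})$.

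Now the proofs of Lemma~\ref{lem:branch3} and Lemma~\ref{lem:branch2} apply to $g_k$ verbatim: those arguments use only that $g_k$ is compactly supported in $\ob{D}$ with $\{g_k\neq0\}$ staying clear of $\partial U_N\cap D$ --- which is exactly what we verified --- rather than the stronger requirement that the support lie inside $I_N$ imposed on the particular cutoffs in the passage preceding Lemma~\ref{lem:branch2}. They deliver $g_k\in\mathcal{D}(\mathcal{L})$ with $\mathcal{L}g_k=\mathcal{L}^Ng_k\in L^\infty(\ob{D},m)$ and $(\nabla g_k,\nabla g_k)\in L^\infty(\ob{D},m)$. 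Finally I would relabel $\{g_k\mid k\in\N\}$ as $\{f_n\mid n\in\N\}$. Item~(ii) holds for every $n$ by the previous sentence; for item~(i), given $(\eps,y)\in\Lambda$ choose $k$ with $(\eps_k,y_k)=(\eps,y)$ and take $f_n=g_k$, which satisfies $f_n\equiv1$ on $\ob{D}\cap\ob{B(y,\eps/4)}$ and $f_n\equiv0$ on $\ob{D}\setminus B(y,\eps/2)$.

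The one point that needs genuine care --- and the main obstacle --- is the handling of supports that touch $\partial D$. Since $y$ is allowed to lie on $\partial D$, the cutoffs $g_k$ really do meet $\partial D$, unlike the ones singled out before Lemma~\ref{lem:branch2}; so one must check explicitly that enlarging $N$ until $U_N$ swallows $\ob{D}\cap\ob{B(y,\eps/2)}$ is precisely what pushes the support of $g_k$ away from the artificial cut $\partial U_N\cap D$, after which the local identification of $\mathcal{L}$ with $\mathcal{L}^N$ and the $L^\infty$-bounds of Lemmas~\ref{lem:branch2} and \ref{lem:branch3} transfer with no change. Everything else is a bookkeeping combination of those three lemmas with the covering property of $\{O_n\}$.
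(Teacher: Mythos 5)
Your proposal is correct and takes essentially the same route as the paper: the paper's own proof of Proposition~\ref{prop;branch} is little more than the sentence ``According to Lemma~\ref{lem:branch}, Lemma~\ref{lem:branch2}, and Lemma~\ref{lem:branch3}, we reach the next proposition,'' and you have correctly filled in the bookkeeping --- enumerate $((0,1)\cap\mathbb{Q})\times\ob{D}^{\mathbb{Q}}$, choose $N$ large enough that $\ob{D}\cap\ob{B(y,\eps/2)}\subset O_N$, apply Lemma~\ref{lem:branch} with $K=\ob{D}\cap\ob{B(y,\eps/4)}$ and parameter $\eps/4$, verify the continuous version and the zero extension, and then invoke Lemmas~\ref{lem:branch2}--\ref{lem:branch3}. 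Your remark that the relevant hypothesis is really that the closure of $\{g_k\neq0\}$ avoids the artificial boundary $\partial U_N\cap D$ (rather than being literally contained in the open set $I_N$, which a cutoff around a boundary point cannot satisfy) is an accurate reading of what the text before Lemma~\ref{lem:branch2} must mean in order for the proposition to be usable for $y\in\partial D$; it is a helpful clarification of the paper's slightly loose ``whose support is included in $I_n$'' rather than a departure from its method.
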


Theorem~\ref{thm:wrsf} implies that $G_{\alpha}^0\bone_{A}(x)=0$ for any $\alpha>0, x \in \ob{D}$, and any $A \in \mathcal{B}(\ob{D})$ with $m(A)=0$. This implies that the kernel of $G_{\alpha}^0$ is absolutely continuous with respect to $m$ and there exists a jointly measurable function $r_{\alpha}(x,y)$ on $\ob{D} \times \ob{D}$ such that
\begin{equation*}
G_{\alpha}^0f(x)=\int_{\ob{D}}r_{\alpha}^0(x,y)f(y)\,m(dy)
\end{equation*} 
for any $\alpha>0$ and $x \in \ob{D}$ and any $f \in L^{1}(\ob{D},m) \cap L^{\infty}(\ob{D},m)$. We note that $(\cE,H^{1}(D))$ is conservarive. See \cite[Exercise~5.7.1]{FOT} for the conservativeness. Thus, it holds that $\alpha G_{\alpha}^{0}1(x)=1$ for any $x \in \ob{D}$ and $\alpha>0$. 
For each $\alpha>0$, the density $r_{\alpha}^0(x,y)$ is symmetric in $x$ and $y$, and satisfies the resolvent equation. Let $\{p_{t}^0(x,y)\}_{t>0}$ be the jointly measurable functions on $\ob{D} \times \ob{D}$ whose Laplace transform is $\{r_{\alpha}^0(x,y)\}_{\alpha>0}$. Then, it holds that $\int_{\ob{D}}p_{t}^0(x,y)\,m(dy)=1$ for any $t>0$ and $x \in \ob{D}$. It is easy to see that each $p_t^0(x,y)$ is symmetric in $x$ and $y$, and satisfies the Chapman-Kolmogorov equation. By the Kolmogorov extension theorem, we can construct a family of probability measure $\{P_x\}_{x \in \ob{D}}$ on $\ob{D}^{[0,\infty)}$ and a Markov process $X^0=(\{X_t^0\}_{t \ge 0}, \{P_x\}_{x \in \ob{D}})$ on $\ob{D}$ with respect to $\{p^0_t(x,y)\}_{t>0}$. By the construction of $X^0$, the resolvent $\{R_{\alpha}^0\}_{\alpha>0}$ generates the Dirichlet form $(\cE,H^{1}(D))$ and $R_{\alpha}^0f \in C_{b}(\ob{D})$ for any $\alpha>0, f \in L^{1}(\ob{D},m) \cap L^{\infty}(\ob{D},m)$. On the other hand, since $(\cE,H^{1}(D))$ is a regular conservative strong local Dirichlet form on $L^{2}(\ob{D},m)$, there exists a conservative diffusion process $X^1=(\{X_t^1\}_{t \ge 0}, \{P_x^1\}_{x \in \ob{D}})$ on $\ob{D}$ whose resolvent $\{R_{\alpha}^1\}_{\alpha>0}$ satisfies
\begin{equation*}
R_{\alpha}^{1}f(x)=G_{\alpha}^{0}f(x),\quad x \in \ob{D}\setminus N,\ \alpha>0,\ f \in L^{1}(\ob{D},m) \cap L^{\infty}(\ob{D},m).
\end{equation*}
Here, $N$ is a subset of $\ob{D}$ with $\Capa_{\ob{D}}(N)=0$. Since $\{R_\alpha^0\}_{\alpha>0}$ is a version of $\{G_{\alpha}^0\}_{\alpha>0}$, there exists $N_1 \in \mathcal{B}(\ob{D})$ with $m(N_1)=0$ such that the distributions of $X^0=(\{X_t^0\}_{t \ge 0}, \{P_x\}_{x \in \ob{D}})$ and $X^1=(\{X_t^1\}_{t \ge 0}, \{P_x^1\}_{x \in \ob{D}})$ coincide except on $N_1$. 
Since $X^1$ is a conservative diffusion process on $\ob{D}$,
\begin{equation}
P_x(C([0,\infty);\ob{D}))=1,\quad x \in \ob{D} \setminus N_1. \label{eq:diffusion}
\end{equation}
Here, $C([0,\infty);\ob{D})$ denotes the space of $\ob{D}$-valued continuous functions on $[0,\infty)$. For a subset $S \subset [0,\infty)$ with $\inf S=\eps>0$ and $\sup S<\infty$, we define $B_S=\{X^0 \in \ob{D}^{[0,\infty)} \mid X \text{ is continuous on }S \}$. Then, by \eqref{eq:diffusion} and the Markov property of $X^0$, it holds that
\begin{align*}
P_{x}(B_S)=E_{x}[P_{X_\eps^0}(B_{S-\eps})]=\int_{\ob{D}}P_{y}(B_{S-\eps})p_{\eps}^0(x,y)\,m(dy)=\int_{\ob{D}}p_{\eps}^0(x,y)\,m(dy)=1
\end{align*}
for any $x \in \ob{D}$. The same argument in \cite[Lemma~2.1.2]{SV} shows that
\begin{equation}
P_x(C((0,\infty);\ob{D}))=1,\quad x \in \ob{D}. \label{eq:notbranch}
\end{equation}
Here, $C((0,\infty);\ob{D})$ denotes the space of $\ob{D}$-valued continuous functions on $(0,\infty)$. 

We are now ready to prove Theorem~\ref{thm:1}.
\begin{proof}[Proof of Theorem~\ref{thm:1}]
We denote by $\{p_t^0\}_{t>0}$ the semigroup of $X^0$. Recall that each $p_{t}^0$ has a jointly measurable density $p_{t}^0(x,y)$ defined on $\ob{D} \times \ob{D}$. Therefore, the condition (H.1) stated in \cite{SG} is satisfied for $\{p_t^0\}_{t>0}$. For Theorem~\ref{thm:1}, it is sufficient to prove that the conditions (H.2)'~(i), (ii), (iii), (iv) stated in \cite{SG}. (H.2)' (iv) is clear from  the construction of $\{p_t^0\}_{t>0}$ and \eqref{eq:notbranch}. Since $R^0_{\alpha}(L^{1}(\ob{D},m) \cap L^{\infty}(\ob{D},m)) \subset C_{b}(\ob{D})$, the condition (H.2)' (iii) is satisfied. By Proposition~\ref{prop;branch}, the conditions (H.2)'~(i) and (ii) are satisfied. See also \cite[Remark~2.7~(ii)]{SG}. By \cite[Lemma~2.9]{SG}, there exists a Hunt process whose semigroup is $\{p_{t}^0\}_{t>0}$.
\qed
\end{proof}

\section{Proof of Theorem~\ref{thm:doubf}}
In the following, we denote $\ob{D}_{\Delta}=\ob{D} \cup \{\Delta \}$ by the one-point compactification of $\ob{D}$. Any funtion $f$ defined on $\ob{D}$ is extended to $\ob{D}_{\Delta}$ by setting $f(\Delta)=0$.
\subsection{An estimate of boundary local time}
Let $X^0=(\Omega,\{X^0_{t}\}_{t \ge 0},\{ P_x\}_{x \in \ob{D}})$ be the Hunt process  in Theorem~\ref{thm:1}. In the sequel, for $x \in \ob{D}$, we denote by $E_{x}$ the expectation under the measure $P_x$. The semigroup of $X^0$ is denoted by $\{p_{t}^0\}_{t > 0}$.
We note that $X^{0}$ satisfies the  absolutely continuous condition: the transition  function  $p_{t}^0(x,\cdot)$ of $X^0$ satisfies that 
\begin{align*}
&p_{t}^0(x,\cdot)\text{ is absolutely continuous with respect to }m\text{ for each $t>0$ and $x \in \ob{D}$.}
\end{align*}
Recall that $\beta$ is a locally bounded nonnegative Borel measurable function on $\partial D$ and $\{L_t\}_{t \ge0}$ a positive continuous additive functional with Revuz measure $\sg$. 
 
This section is devoted to prove the following proposition.
\begin{proposition}\label{blt1}
For any compact subset $K$ of $\ob{D}$, we have
\begin{align*}
\lim_{t \to 0}\sup_{x \in K}E_{x}\left[1-\exp \left(-\int_{0}^{t}\beta (X_s^0)\,dL_s \right) \right]=0.
\end{align*}
\end{proposition}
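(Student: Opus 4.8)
The plan is to localise the estimate to the bounded Lipschitz subdomains supplied by Condition~\ref{as:2}: fix increasing bounded open sets $\{U_n\}_{n=1}^{\infty}$ of $\R^d$ with $I_n:=U_n\cap D$ a bounded Lipschitz domain and $\ob D=\bigcup_{n}(U_n\cap\ob D)$, and let $\tau_n$ be the first exit time of $X^0$ from $U_n\cap\ob D$. On $\{t<\tau_n\}$ one has $\int_0^t\beta(X_s^0)\,dL_s=\int_0^{t\wg\tau_n}\beta(X_s^0)\,dL_s$, while on $\{t\ge\tau_n\}$ I bound $1-\exp(-\int_0^t\beta(X_s^0)\,dL_s)\le 1$; together with $1-e^{-u}\le u$ $(u\ge0)$ this yields, for every $n$ and $t>0$,
\begin{equation*}
\sup_{x\in K}E_x\!\left[1-\exp\!\left(-\int_0^t\beta(X_s^0)\,dL_s\right)\right]\le \sup_{x\in K}E_x\!\left[\int_0^{t\wg\tau_n}\beta(X_s^0)\,dL_s\right]+\sup_{x\in K}P_x[t\ge\tau_n].
\end{equation*}
It then suffices to make the first term small by sending $t\to0$ for fixed $n$, and the second small by sending $n\to\infty$ uniformly in $t\in(0,1]$.

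For the first term I would use that $L$ increases only on $\partial D$, so on $[0,\tau_n)$ the integrand is carried by the compact set $\partial D\cap U_n$, on which $\beta\le b_n:=\sup_{\partial D\cap U_n}\beta<\infty$ by local boundedness; hence $\int_0^{t\wg\tau_n}\beta(X_s^0)\,dL_s\le b_n L_{t\wg\tau_n}$. Since $\{L_{t\wg\tau_n}\}_{t\ge0}$ is a boundary local time of the part process of $X^0$ on $U_n\cap\ob D$, which near $\partial D$ behaves like the reflecting Brownian motion on the bounded Lipschitz domain $I_n$, one obtains --- arguing exactly as for a bounded Lipschitz domain and invoking the Gaussian bounds \eqref{eq:hkeest} as in the classical estimate \eqref{eq:eqblt2} --- that $\sup_{x\in\ob D}E_x[L_{t\wg\tau_n}]\le c_n\sqrt t$ for $0<t\le1$, with $c_n$ depending only on $I_n$. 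This is the content of Lemmas~\ref{blt2} and \ref{blt3}, and it bounds the first term by $b_nc_n\sqrt t$.

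For the second term, $t\mapsto P_x[t\ge\tau_n]$ is nondecreasing, so $\sup_{0<t\le1}\sup_{x\in K}P_x[t\ge\tau_n]\le\sup_{x\in K}P_x[1\ge\tau_n]$, and the latter tends to $0$ as $n\to\infty$ by Lemma~\ref{exit1}. The proof then closes with a standard two-$\eps$ argument: given $\eps>0$, first choose $n$ with $\sup_{x\in K}P_x[1\ge\tau_n]<\eps/2$, then $\dl\in(0,1]$ with $b_nc_n\sqrt t<\eps/2$ for all $t\le\dl$; for such $t$ the displayed inequality gives $\sup_{x\in K}E_x[1-\exp(-\int_0^t\beta(X_s^0)\,dL_s)]<\eps$.

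The hard part is the second term. Because no global upper heat-kernel bound for $X^0$ is available when $D$ is merely thin at infinity, $P_x[t\ge\tau_n]$ cannot be handled by a crude exit-time estimate; Lemma~\ref{exit1} must instead be deduced from the semigroup strong Feller property of the part processes $X^n$, which in turn is established via the theory of $W^{1,2}$-extension domains in Lemma~\ref{lem:identification}. By contrast, the first term is a routine consequence of the bounded-Lipschitz-domain heat-kernel theory once the localisation is in place, its only inputs being the local boundedness of $\beta$ and the Jensen-type estimate behind \eqref{eq:eqblt2}.
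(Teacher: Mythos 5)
Your proposal is correct and follows essentially the same route as the paper: the same $\tau_n$-decomposition of $E_x[1-\exp(-\int_0^t\beta\,dL_s)]$ into a local-time term controlled by Lemmas~\ref{blt2} and \ref{blt3} (giving $a_{3,n}a_{4,n}\sqrt t$) and an exit-time term handled by Lemma~\ref{exit1}, finished by the same two-limit ($t\to0$, then $n\to\infty$) argument. The only cosmetic difference is that the paper takes the supremum in the first term over $O_n$ rather than $K$ and uses the elementary bound $1-e^{-u}\le u$ before inserting $\sup_{O_n}\beta\le a_{4,n}$, whereas you factor out $b_n=\sup_{\partial D\cap U_n}\beta$ first, but the content is identical.
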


To prove Proposition~\ref{blt1}, we give some lemmas. Recall that $\{U_n\}_{n=1}^{\infty}$  are increasing bounded open subsets of $\R^d$ with the following conditions:
\begin{itemize}
\item $I_{n}:=U_{n} \cap D$ is a bounded Lipschitz domain of $\R^d$;
\item $\ob{D}=\bigcup_{n=1}^{\infty}O_n$, where we define $O_{n}:=U_n \cap \ob{D}$.
\end{itemize}
The closure of $I_n$ in $\R^d$ is denoted by $J_{n}$. Note that $O_{n}$ is an open subset of $J_{n+1}$ and $\ob{D}$. 
For each $n \in \mathbb{N}$, we define
\begin{equation*}
X_{t}^{n}:=
\begin{cases}
X_t^0 &\text{ if } t<\tau_n, \\
\Delta &\text{ if } t \ge \tau_n,
\end{cases}
\end{equation*}
where $\tau_{n}=\inf \{t>0:X_{t}^{0} \in \ob{D} \setminus O_n\}$. We call $X^n=(\Omega,\{X_{t}^n\}_{t \ge0}, \{P_x\}_{x \in O_n})$ the part of $X^{0}$ on $O_n$. We note that 
\begin{align*}
p_{t}^{n}f(x)=E_{x}[f(X_t^0):t<\tau_n],\quad f \in \mathcal{B}_{b}(\ob{D}),\, x \in O_n
\end{align*}  
is the semigroup of $X^n$. It is clear that $X^n$ satisfies the absolutely continuous condition.
The Dirichlet form of $X^n$ is a regular Dirichlet form on $L^{2}(O_n,m)$ and it is expressed as
\begin{align*}
\mathcal{E}^{n}(f,g)&=\frac{1}{2}\int_{D}(\nabla f, \nabla g)\,dx ,\quad
\mathcal{D}(\mathcal{E}^{n})=\left\{ 
f \in H^{1}(D) \relmiddle| \tilde{f}^{\ob{D}}=0,\, \Capa_{\ob{D}}\text{-q.e. on }\ob{D} \setminus O_n\right\}.
\end{align*}
See \cite[Theorem~4.4.2]{FOT} and \cite[Theorem~4.4.3]{FOT} for details. 

Recall that $Y^n=(\{Y_t^n\}_{t \ge 0},\{Q_x^n\}_{x \in J_n})$ is the Hunt process which satisfies (Y.1), (Y.2), (Y.3), (Y.4) stated in the previous section. In the sequel, we denote by $$Y^{n+1,n}=(\{Y_{t}^{n+1,n}\}_{t \ge 0}, \{Q_{x}^{n+1}\}_{x \in K_{n}})$$ the part  of $Y^{n+1}$ on $O_n$. 
 $Y^{n+1,n}$ is defined in the same manner as $X^n$.
The semigroup of $Y^{n+1,n}$ is denoted by $\{q_{t}^{n+1,n}\}_{t>0}$.

In fact, the finite dimensional distributions of $X^n$ and $Y^{n+1,n}$ coincide for any starting point. To show this, we prepare some lemmas.

\begin{lemma}\label{lem:identification}
For any $n \in \mathbb{N}$, the Dirichlet form of $Y^{n+1,n}$ coincides with that of $X^n$.
\end{lemma}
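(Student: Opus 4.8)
The plan is to identify both Dirichlet forms explicitly and check they agree as closed symmetric forms on the same $L^2$ space, namely $L^2(O_n,m)$. First I would recall from the passage preceding the statement that the Dirichlet form of $X^n$ is the regular form
\[
\mathcal{E}^{n}(f,g)=\tfrac12\int_{D}(\nabla f,\nabla g)\,dx,\qquad
\mathcal{D}(\mathcal{E}^{n})=\bigl\{f\in H^{1}(D)\relmiddle|\ \tilde f^{\ob D}=0,\ \Capa_{\ob D}\text{-q.e. on }\ob D\setminus O_n\bigr\},
\]
this being exactly the part of $(\cE,H^1(D))$ on $O_n$ by \cite[Theorem~4.4.2, Theorem~4.4.3]{FOT}. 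Likewise, $Y^{n+1,n}$ is the part of $Y^{n+1}$ on $O_n$, and by the same two theorems from \cite{FOT} its Dirichlet form is the part on $O_n$ of the form $(\mathcal{A}^{n+1},H^{1}(I_{n+1}))$ from \eqref{eq:recall}, i.e.
\[
\tfrac12\int_{I_{n+1}}(\nabla f,\nabla g)\,dx,\qquad
\bigl\{f\in H^{1}(I_{n+1})\relmiddle|\ \tilde f^{\ob{I_{n+1}}}=0,\ \Capa_{\ob{I_{n+1}}}\text{-q.e. on }J_{n+1}\setminus O_n\bigr\}.
\]
So the task reduces to matching these two descriptions.

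The key step is a restriction/agreement argument for the function spaces. Since $O_n=U_n\cap\ob D\subset I_{n+1}\cup(\partial D\cap U_{n+1})\subset J_{n+1}$ and $O_n$ is open in both $\ob D$ and $J_{n+1}$, a function supported in a neighbourhood of $\ob{O_n}$ inside $D$ lies in $H^1(D)$ iff its restriction lies in $H^1(I_{n+1})$, with the gradients agreeing a.e.\ on $I_n=U_n\cap D$ (which carries the only mass of $m$ on $O_n$, since $m(\partial D)=0$ by Remark~(i) after Condition~\ref{as:1}); this handles the bilinear forms. For the domains I would compare the two capacities: $I_{n+1}$ is a bounded Lipschitz domain, hence a $W^{1,2}$-extension domain, so Proposition~\ref{prop:comp} and Proposition~\ref{prop:comp2} give that $\Capa_{\ob{I_{n+1}}}$ and $\Capa_{\ob D}$ have the same polar sets among subsets of $\ob{I_{n+1}}$, and $\Capa_{\ob D}$-quasi-continuity on $\ob{I_{n+1}}$ is equivalent to $\Capa_{\ob{I_{n+1}}}$-quasi-continuity there. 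Consequently the quasi-continuous versions $\tilde f^{\ob D}$ and $\tilde f^{\ob{I_{n+1}}}$ agree $\Capa$-q.e.\ on $\ob{I_{n+1}}\supset O_n$, and the constraint ``$=0$ q.e.\ on the complement of $O_n$'' defines the same subspace under either capacity. Combining this with the bilinear-form agreement, the two Dirichlet forms have the same form domain and the same form values, hence coincide.

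The main obstacle I expect is the bookkeeping around the three different ambient closures — $\ob D$, $J_{n+1}=\ob{I_{n+1}}$, and $O_n$ itself — and making precise that ``the part on $O_n$'' is computed consistently whether one starts from $(\cE,H^1(D))$ or from $(\mathcal{A}^{n+1},H^1(I_{n+1}))$. This requires verifying that $O_n$ is a relatively open subset of $J_{n+1}$ whose complement's capacity-zero sets are intrinsic (not depending on which of the two forms one uses), which is exactly what the extension-domain comparison results of Section~4 are designed to supply. Once the capacities are pinned down, the identification of the form domains is routine, and then \cite[Theorem~4.4.2]{FOT} together with the uniqueness of the Hunt process attached to a regular Dirichlet form (which forces equality of finite-dimensional distributions, the statement the lemma is used for) finishes the argument.
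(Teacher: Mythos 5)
Your proposal follows essentially the same route as the paper: identify both Dirichlet forms via \cite[Theorems~4.4.2, 4.4.3]{FOT} as parts of $(\cE,H^1(D))$ and $(\mathcal{A}^{n+1},H^1(I_{n+1}))$ on $O_n$, then invoke the extension-domain property of $I_{n+1}$ together with Propositions~\ref{prop:comp} and~\ref{prop:comp2} to transfer the ``vanishes q.e.\ off $O_n$'' condition between $\Capa_{J_{n+1}}$ and $\Capa_{\ob{D}}$. The paper carries this out by explicitly proving the two inclusions $\mathcal{D}(\mathcal{A}^{n+1,n})\subset\mathcal{D}(\mathcal{E}^n)$ and $\mathcal{D}(\mathcal{E}^n)\subset\mathcal{D}(\mathcal{A}^{n+1,n})$, constructing the zero-extension of $f$ and $\tilde f^{J_{n+1}}$ in one direction and the restriction in the other and checking that each gives a quasi-continuous version with respect to the other capacity; your sketch compresses this into the assertion that the constraint defines the same subspace, which is correct in substance but should be fleshed out with those explicit extensions/restrictions to be a complete argument.
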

\begin{proof}
By \cite[Theorem~4.4.2]{FOT} and \cite[Theorem~4.4.3]{FOT}, the Dirichlet form of $Y^{n+1,n}$ is regular on $L^{2}(O_n,m)$ and expressed as 
\begin{align*}
\mathcal{A}^{n+1,n}(f,g)&=\frac{1}{2}\int_{I_{n+1}}(\nabla f, \nabla g)\,dx,\\ \quad
\mathcal{D}(\mathcal{A}^{n+1,n})&=\left\{ f \in H^{1}(I_{n+1})  \relmiddle|
\tilde{f}^{J_{n+1}}=0,\, \Capa_{J_{n+1}}\text{-q.e. on }J_{n+1} \setminus O_n \right\}.
\end{align*}
First, we prove $\mathcal{D}(\mathcal{A}^{n+1,n}) \subset \mathcal{D}(\cA^n)$. Take an $f  \in \mathcal{D}(\mathcal{A}^{n+1,n})$. Then, there exists a $\Capa_{J_{n+1}}$-quasi continuous version $\tilde{f}^{J_{n+1}}$ such that $\tilde{f}^{J_{n+1}}=0$, $\Capa_{J_{n+1}}$-q.e. on $J_{n+1}\setminus O_n$. From Proposition~\ref{prop:comp} and \ref{prop:comp2}, $\tilde{f}^{J_{n+1}}$ is $\Capa_{\ob{D}}$-quasi continuous on $J_{n+1}$ and $\tilde{f}^{J_{n+1}}=0$, $\Capa_{\ob{D}}$-q.e. on $J_{n+1}\setminus O_n$  Define $g :\ob{D} \to \R$ by 
\begin{equation*}
\begin{cases}
g=\tilde{f}^{J_{n+1}} \text{ on } J_{n+1}, \\
g=0 \text{ on } \ob{D} \setminus  J_{n+1}. 
\end{cases}
\end{equation*}
Then, $g$ is a $\Capa_{\ob{D}}$-quasi continuous on $\ob{D}$ and $g=0$, $\Capa_{\ob{D}}$-q.e. on $\ob{D} \setminus O_n$. Define $h \in H^{1}(D)$ by 
\begin{equation*}
\begin{cases}
h=f \text{ on } I_{n+1}, \\
h=0 \text{ on } D \setminus  I_{n+1}. 
\end{cases}
\end{equation*}
Then, $g$ is a $\Capa_{\ob{D}}$-quasi continuous version of $h$. Therefore, $h \in D(\cA^n)$. Since $f=h$, $m$-a.e. on $O_n$, we have $\mathcal{D}(\mathcal{A}^{n+1,n}) \subset \mathcal{D}(\cA^n)$. 

Next we prove  $\mathcal{D}(\cA^n) \subset \mathcal{D}(\mathcal{A}^{n+1,n})$. Take an $f \in \mathcal{D}(\cA^n)$. Then there exists a $\Capa_{\ob{D}}$-quasi continuous version $\tilde{f}^{\ob{D}}$ such that $\tilde{f}^{\ob{D}}=0$, $\Capa_{\ob{D}}$-q.e. on $\ob{D} \setminus O_n$. Define $g :J_{n+1} \to \R$ by 
$
g=\tilde{f}^{\ob{D}}|_{J_{n+1}}.
$
From Proposition~\ref{prop:comp} and \ref{prop:comp2}, $g$ is $\Capa_{J_{n+1}}$-quasi continuous and $g=0$, $\Capa_{J_{n+1}}$-q.e. on $J_{n+1}\setminus O_n$. Define $h \in H^{1}(I_{n+1})$ by $h=f |_{I_{n+1}}.$ Since $g$ is a $\Capa_{J_{n+1}}$-quasi continuous version of $h$, we have $h \in \mathcal{D}(\mathcal{A}^{n+1,n})$. Since $f=h,$ $m$-a.e. on $O_n$, we have the claim.
\qed\end{proof}

\begin{lemma}\label{lem:partstrf}
For any $n \in \mathbb{N}$, the part process $Y^{n+1,n}$ has a semigroup strong Feller property. That is, for any $f \in \mathcal{B}_{b}(O_n)$ and $t>0$, we have $q_{t}^{n+1,n}f \in C_{b}(O_n)$.
\end{lemma}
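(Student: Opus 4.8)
The plan is to use the strong Markov property of $Y^{n+1}$ at its first exit time from $O_n$ to reduce the claim to the joint continuity of the transition density of $Y^{n+1}$, property (Y.2), together with a weak continuity statement for the exit configuration. Set $\zeta_n:=\inf\{t>0:Y_t^{n+1}\in J_{n+1}\setminus O_n\}$, let $\Gamma_n$ be the topological boundary of $O_n$ relative to $J_{n+1}$, and write $Q_x^{n+1}[\,\cdot\,]$ for expectation under the law $Q_x^{n+1}$ of $Y^{n+1}$ (when $O_n=J_{n+1}$ the lemma is immediate from (Y.3), so assume $O_n\subsetneq J_{n+1}$). Since $J_{n+1}$ is compact, $Y^{n+1}$ is a conservative diffusion (reflecting Brownian motion on the bounded Lipschitz domain $I_{n+1}$), and $J_{n+1}\setminus O_n$ has nonempty interior, the lower bound in (Y.4) gives $\zeta_n<\infty$ $Q_x^{n+1}$-a.s.\ for every $x\in O_n$, and continuity of paths forces $Y_{\zeta_n}^{n+1}\in\Gamma_n$ a.s. Moreover $\Gamma_n$ is $m$-null: $\Gamma_n\subset\partial U_n\cup\partial D$, and $\partial U_n\cap D=\partial I_n$ is Lipschitz because $I_n=U_n\cap D$ is a bounded Lipschitz domain, while $\partial D$ is $m$-null by (A.1). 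Hence, by (Y.2), $Q_x^{n+1}(\zeta_n=t)=Q_x^{n+1}(Y_t^{n+1}\in\Gamma_n)=0$, and the Dynkin--Hunt decomposition lets us take, for $x,y\in O_n$ and $t>0$, the version of the part transition density
\[
q_t^{n+1,n}(x,y)=q_t^{n+1}(x,y)-Q_x^{n+1}\bigl[\zeta_n<t;\ q_{t-\zeta_n}^{n+1}(Y_{\zeta_n}^{n+1},y)\bigr].
\]

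Fix $y\in O_n$ and $t>0$. The first term $x\mapsto q_t^{n+1}(x,y)$ is continuous by (Y.2). For the second, I would note that the function $\Phi$ on $[0,\infty)\times\Gamma_n$ defined by $\Phi(s,z)=q_{t-s}^{n+1}(z,y)$ for $s<t$ and $\Phi(s,z)=0$ for $s\ge t$ is bounded and continuous: since $y$ is interior in $J_{n+1}$ one has $r_y:=\inf_{z\in\Gamma_n}|z-y|>0$, so the upper bound in (Y.4) gives $q_u^{n+1}(z,y)\le a_{1,n+1}u^{-d/2}\exp(-r_y^2/a_{2,n+1}u)$ for $z\in\Gamma_n$, and this bound is finite for $u\in(0,t]$ and tends to $0$ uniformly in $z\in\Gamma_n$ as $u\downarrow0$, while continuity of $\Phi$ on $\{s<t\}\times\Gamma_n$ is immediate from (Y.2). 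Thus the subtracted term equals $Q_x^{n+1}[\Phi(\zeta_n,Y_{\zeta_n}^{n+1})]$, and the lemma reduces to the claim that, for every $G\in C_b([0,\infty)\times\Gamma_n)$, the map $x\mapsto Q_x^{n+1}[G(\zeta_n,Y_{\zeta_n}^{n+1})]$ is continuous on $O_n$; that is, the law of $(\zeta_n,Y_{\zeta_n}^{n+1})$ under $Q_x^{n+1}$ depends weakly continuously on $x\in O_n$. Granting this, $x\mapsto q_t^{n+1,n}(x,y)$ is continuous on $O_n$ for each fixed $y$, and since $0\le q_t^{n+1,n}(x,y)\le q_t^{n+1}(x,y)\le a_{1,n+1}t^{-d/2}$ with $m(O_n)<\infty$, dominated convergence yields $q_t^{n+1,n}f=\int_{O_n}q_t^{n+1,n}(\cdot,y)f(y)\,m(dy)\in C_b(O_n)$ for every $f\in\mathcal B_b(O_n)$, which is the assertion.

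The remaining weak continuity claim is the step I expect to be the main obstacle. I would deduce it from: (i) $Y^{n+1}$ has continuous sample paths and is a Feller process on the compact space $J_{n+1}$ (on a compact space $C_b=C_\infty=C$, so (Y.3) gives $q_t^{n+1}C(J_{n+1})\subset C(J_{n+1})$, and strong continuity holds since $Y^{n+1}$ is a Hunt process), hence $x\mapsto Q_x^{n+1}$ is weakly continuous on the path space $C([0,\infty);J_{n+1})$; and (ii) every point of $\Gamma_n$ is regular for $J_{n+1}\setminus O_n$ with respect to $Y^{n+1}$. Given (i) and (ii), the functional $\omega\mapsto(\zeta_n(\omega),\omega_{\zeta_n(\omega)})$ is continuous at $Q_{x_0}^{n+1}$-almost every path for each fixed $x_0\in O_n$, because regularity forces the path to leave $O_n$ cleanly, so that uniformly close paths exit at nearby times and nearby points; the claim then follows from the continuous mapping theorem for weak convergence. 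Verifying (ii) is the delicate part: at a point $z\in\Gamma_n\cap D$, $Y^{n+1}$ behaves locally like Brownian motion and $\partial U_n$ is locally a Lipschitz graph (because $I_n=U_n\cap D$ is Lipschitz), so the exterior cone condition yields regularity, whereas at a point $z\in\Gamma_n\cap\partial D$, $Y^{n+1}$ is locally a reflecting Brownian motion on the Lipschitz domain $D$, and the regularity of $z$ for $J_{n+1}\setminus U_n$ must be read off from the Lipschitz geometry of $\partial D$ and $\partial U_n$ near $z$, invoking the regularity theory for reflecting Brownian motion on Lipschitz domains (cf.\ \cite{BH}); this is where it helps to choose the exhausting sets $U_n$, as in Section~7, so that $\partial U_n$ is a Lipschitz hypersurface meeting $\partial D$ transversally. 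A cheaper route is to observe that, since $Y^{n+1}$ is a strong Feller Hunt process with a transition density on the \emph{compact} space $J_{n+1}$, it satisfies $R_\alpha^{n+1}C(J_{n+1})\subset C(J_{n+1})$, whence the strong Feller property of the part process $Y^{n+1,n}$ on the open set $O_n$ follows from \cite[Theorem~2]{C}; the obstruction that ruled out \cite{C} for $X^0$ in Theorem~\ref{thm:1}, namely the failure of $R_\alpha^0 C_\infty(\ob{D})\subset C_\infty(\ob{D})$, does not occur here because $J_{n+1}$ is compact.
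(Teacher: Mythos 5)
Your proposal has two parallel arguments, and it is worth separating them. The paper's own proof is a one-liner: it cites \cite[Theorem~1.4]{CK}, using exactly the observation you call the ``cheaper route,'' namely that (Y.3) together with the compactness of $J_{n+1}$ makes $Y^{n+1}$ a genuinely doubly Feller process, so the obstruction that disqualified Chung's machinery for $X^0$ in Theorem~\ref{thm:1} simply disappears. Your cheaper route is therefore essentially the paper's argument, modulo the choice of reference (\cite[Theorem~2]{C} versus \cite[Theorem~1.4]{CK}, the latter being the more careful modern formulation of the same circle of ideas).

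Your primary argument via the Dynkin--Hunt decomposition is a reasonable and more hands-on alternative, and the first parts of it are sound: the representation of $q_t^{n+1,n}(x,y)$, the continuity of $\Phi$ via the off-diagonal Gaussian decay in (Y.4), and the reduction to weak continuity of $x\mapsto Q_x^{n+1}\circ(\zeta_n,Y_{\zeta_n}^{n+1})^{-1}$ are all fine. The gap is in the weak continuity step. Almost-sure continuity of $\omega\mapsto(\zeta_n(\omega),\omega_{\zeta_n(\omega)})$ requires more than the probabilistic regularity of each point of $\Gamma_n$ for the closed set $J_{n+1}\setminus O_n$; hitting times of closed sets are only lower semicontinuous in general, and upper semicontinuity at a path $\omega_0$ requires that $\omega_0$ enters the \emph{interior} of $J_{n+1}\setminus O_n$ arbitrarily soon after $\zeta_n(\omega_0)$. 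Proving this at points of $\Gamma_n\cap\partial D$, where $Y^{n+1}$ behaves like a reflecting Brownian motion and the interaction between $\partial U_n$ and $\partial D$ matters, needs geometric control (transversality of $\partial U_n$ to $\partial D$, or at least an exterior-corkscrew type condition for $J_{n+1}\setminus O_n$ at such points) that Condition~\ref{as:2} as stated does not supply. You flag this yourself, but it means the primary route is incomplete as written; only your cheaper route, which is the paper's route, closes the lemma cleanly.
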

\begin{proof}
Since $Y^{n+1}$ has property (Y.3) and $J_{n+1}$ is compact, the proof is complete by \cite[Theorem~1.4]{CK}.
\qed\end{proof}
We shall show the finite dimensional distributions of $X^n$ and $Y^{n+1,n}$ coincide for any starting point.
\begin{lemma}\label{lem:partstrf2}
For any $n \in \mathbb{N}$, $f \in \mathcal{B}_{b}(O_n)$,  and $t>0$,
\begin{align*}
&p_{t}^{n}f(x)=q_{t}^{n+1,n}f(x),\quad x \in O_n. 
\end{align*}
In particular, the part process $X^n$ has the semigroup strong Feller property.
\end{lemma}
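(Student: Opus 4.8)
The plan is to identify the two part processes through their Dirichlet forms and then upgrade that identification to an equality of finite-dimensional distributions using the absolute-continuity condition and strong Feller property. By Lemma~\ref{lem:identification}, the Dirichlet form of $Y^{n+1,n}$ on $L^{2}(O_n,m)$ coincides with $(\mathcal{E}^n,\mathcal{D}(\mathcal{E}^n))$, which is the Dirichlet form of $X^n$. Consequently the associated $L^2$-semigroups agree, so $p_{t}^{n}f = q_{t}^{n+1,n}f$ $m$-a.e. on $O_n$ for every $f \in L^{2}(O_n,m)$ and $t>0$. First I would fix $f \in \mathcal{B}_{b}(O_n) \subset L^{2}(O_n,m)$ (using $m(O_n)<\infty$, since $O_n \subset J_n$ is bounded) and record that both $p_{t}^{n}f$ and $q_{t}^{n+1,n}f$ are defined pointwise everywhere on $O_n$ by the respective transition kernels.

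Next I would promote the $m$-a.e. identity to a pointwise one. The process $X^n$ inherits the absolute-continuity condition from $X^0$: $p_{t}^{n}(x,\cdot) \ll m$ for every $x \in O_n$ and $t>0$, because $p_t^n(x,\cdot) \le p_t^0(x,\cdot)$. Likewise $Y^{n+1,n}$ satisfies the absolute-continuity condition, since $Y^{n+1}$ has the transition density $q_t^{n+1}(x,y)$ by (Y.2). Now write, for $t>0$,
\begin{align*}
p_{t}^{n}f(x) = p_{t/2}^{n}(p_{t/2}^{n}f)(x), \qquad q_{t}^{n+1,n}f(x) = q_{t/2}^{n+1,n}(q_{t/2}^{n+1,n}f)(x).
\end{align*}
The inner functions $p_{t/2}^{n}f$ and $q_{t/2}^{n+1,n}f$ agree $m$-a.e. on $O_n$ by the $L^2$ identification, and integrating an $m$-a.e. equality against the absolutely continuous kernels $p_{t/2}^{n}(x,\cdot)$, respectively $q_{t/2}^{n+1,n}(x,\cdot)$, is harmless; combined with Lemma~\ref{lem:partstrf}, which gives $q_{t/2}^{n+1,n}(L^{2}\cap\mathcal{B}_b) \subset C_b(O_n)$ and hence continuity of $q_t^{n+1,n}f$, and with the strong Feller-type property of $p_{t/2}^n$ obtained the same way, the two continuous (or at least kernel-defined) functions $p_t^n f$ and $q_t^{n+1,n}f$ coincide $m$-a.e. and therefore everywhere on $O_n$. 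This proves the displayed identity for all $t>0$, $x \in O_n$, $f \in \mathcal{B}_b(O_n)$.

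Finally, the ``in particular'' clause is immediate: given $f \in \mathcal{B}_{b}(O_n)$ and $t>0$, we have $p_{t}^{n}f = q_{t}^{n+1,n}f$, and the right-hand side lies in $C_{b}(O_n)$ by Lemma~\ref{lem:partstrf}. Hence $p_{t}^{n}f \in C_{b}(O_n)$, i.e. $X^n$ has the semigroup strong Feller property.

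The main obstacle is the passage from the $L^2$-a.e.\ equality of semigroups to the genuinely pointwise equality of transition kernels: one must be careful that both processes satisfy the absolute-continuity condition so that composing through a half-time step washes out the $m$-null ambiguity, and that the resulting functions are continuous (via the $q_t^{n+1}$-density and Lemma~\ref{lem:partstrf}) so that an a.e.\ identity forces an everywhere identity. Everything else is a direct application of the regular-Dirichlet-form correspondence and the part-process construction in \cite[Theorem~4.4.2, Theorem~4.4.3]{FOT}.
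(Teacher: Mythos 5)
Your overall plan — identify the Dirichlet forms via Lemma~\ref{lem:identification}, get the $L^2$-semigroup equality $p_t^n f = q_t^{n+1,n} f$ $m$-a.e., and then upgrade it to a pointwise identity using absolute continuity and a strong Feller ingredient — is the right skeleton, and the first half of your argument is correct. However, the step where you split $t = t/2 + t/2$ does not close. You correctly obtain, for every $x \in O_n$,
\begin{equation*}
p_t^n f(x) = p_{t/2}^n\bigl(p_{t/2}^n f\bigr)(x) = p_{t/2}^n\bigl(q_{t/2}^{n+1,n} f\bigr)(x),
\end{equation*}
because $p_{t/2}^n(x,\cdot)\ll m$ and $p_{t/2}^n f = q_{t/2}^{n+1,n} f$ $m$-a.e. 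But this still leaves you comparing $p_{t/2}^n g(x)$ with $q_{t/2}^{n+1,n} g(x)$ for the continuous function $g = q_{t/2}^{n+1,n} f$ — i.e.\ the same a.e.-vs-everywhere problem, one scale down. To finish, you appeal to continuity of $p_t^n f$ so that the $m$-a.e. identity with the continuous $q_t^{n+1,n}f$ becomes an everywhere identity; but continuity of $p_t^n f$ is exactly the ``in particular'' conclusion of the lemma, so this is circular. The hedge ``(or at least kernel-defined)'' is not enough: two kernel-defined functions that agree $m$-a.e.\ need not agree everywhere.

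The paper's proof resolves this by a genuinely additional idea you did not supply: instead of splitting $t$ in half, one considers $p_{s+t}^n f(x) = p_s^n\bigl(q_t^{n+1,n} f\bigr)(x)$ (valid for every $x$ by the Markov property and absolute continuity of $p_s^n(x,\cdot)$), and then lets $s \to 0^+$. On the right-hand side, $q_t^{n+1,n}f \in C_b(O_n)$ by Lemma~\ref{lem:partstrf} and the sample-path right-continuity of $X^n$ together with $P_x(\tau_n>0)=1$ give $p_s^n(q_t^{n+1,n}f)(x) \to q_t^{n+1,n}f(x)$; on the left-hand side $p_{s+t}^n f(x) \to p_t^n f(x)$. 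This limit argument is what actually forces the two functions to coincide pointwise, and it is missing from your proof. (A monotone class argument is then used to pass from $f \in C_b(O_n)$ to $f \in \mathcal{B}_b(O_n)$, which you also skip but that part is routine.) I would suggest replacing your $t/2+t/2$ step by this $s\to 0^+$ limit.
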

\begin{proof}
From~Lemma~\ref{lem:identification}, for any $f \in C_{b}(O_n)$ and $t>0$, we have
$
p_{t}^{n}f=q_{t}^{n+1,n}f$ $m$-a.e. on $O_n 
$.
It follows from the absolute continuity condition of $X^n$ that
\begin{align*}
p_{s+t}^{n}f(x)&=p_{s}^{n}(p_{t}^{n}f)(x)=p_{s}^n(q_{t}^{n+1,n}f)(x)
\end{align*}
for all $s>0$, $x \in O_n$. From~Lemma~\ref{lem:partstrf}, $q_{t}^{n+1,n}f \in C_{b}(O_n)$. Letting $s \to 0$, we have 
$
p_{t}^{n}f(x)=q_{t}^{n+1,n}f(x)
$
from the sample path continuity of $X^n$. Using a monotone class theorem, we obtain the claim. ``In particular'' part follows from Lemma~\ref{lem:partstrf}.
\qed\end{proof}

By using Lemma~\ref{lem:partstrf}, we give some estimates necessary for the proof of Proposition~\ref{blt1}.

\begin{lemma}\label{blt2}
For any $n \in \mathbb{N}$, there exists $a_{3,n}=a_{3,n}(d,I_{n})>0$ such that
\begin{align*}
\sup_{x \in O_n}\int_{0}^{t}\int_{\partial D \cap  O_n}q_{s}^{n+1}(x,y)\,\sg(dy)\,ds \le a_{3,n} \sqrt{t},\quad 0<t\le 1.
\end{align*}
\end{lemma}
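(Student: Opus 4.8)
The plan is to recognise that the left-hand side is, by the Revuz correspondence, nothing but $\sup_{x\in O_n} E_x^{n+1}[L_t^{n+1}]$ for the boundary local time of the reflecting Brownian motion $Y^{n+1}$ on the bounded Lipschitz domain $I_{n+1}$, so that the assertion is exactly the estimate \eqref{eq:eqblt2} in that bounded setting. To prove it I would feed the two-sided Gaussian bound (Y.4) --- in fact only the upper bound $q_s^{n+1}(x,y)\le a_{1,n+1}s^{-d/2}\exp(-|x-y|^2/a_{2,n+1}s)$ --- into the surface integral, and then control $\int_{\partial D\cap O_n}\exp(-|x-y|^2/a_{2,n+1}s)\,\sg(dy)$ using the fact that $\partial I_{n+1}$, being the boundary of a bounded Lipschitz domain, is Ahlfors $(d-1)$-regular: there are $c>0$ and $r_0>0$, depending only on $d$ and the Lipschitz constants of $I_{n+1}$, with $\mathcal H^{d-1}(\partial I_{n+1}\cap B(z,\rho))\le c\rho^{d-1}$ for all $z\in\R^d$, $\rho\in(0,r_0]$. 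Since $\partial D\cap O_n\subset\partial I_{n+1}$ up to an $\mathcal H^{d-1}$-null set, the same bound holds for $\sg(\,\cdot\cap O_n\,)$.

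For fixed $x\in O_n$ and $0<s\le 1$, I would split $\partial D\cap O_n$ into the ball $\{\,|y-x|<\sqrt s\,\}$ and the dyadic annuli $A_k=\{\,2^{k-1}\sqrt s\le|y-x|<2^k\sqrt s\,\}$, $k\ge 1$. On $A_k$ one has $\exp(-|x-y|^2/a_{2,n+1}s)\le\exp(-4^{k-1}/a_{2,n+1})$, while $\sg(A_k\cap O_n)\le\sg\bigl(\partial D\cap O_n\cap B(x,2^k\sqrt s)\bigr)\le c(2^k\sqrt s)^{d-1}$ as long as $2^k\sqrt s\le r_0$; the finitely many terms with $2^k\sqrt s> r_0$ are controlled crudely by the finite total mass $\sg(\partial D\cap O_n)<\infty$, which is available from Proposition~\ref{prop:1} (or from the argument there showing $\mathcal H^{d-1}(\partial B_n)<\infty$). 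Summing the geometric/Gaussian series,
\begin{equation*}
\int_{\partial D\cap O_n}\exp\!\Bigl(-\frac{|x-y|^2}{a_{2,n+1}s}\Bigr)\,\sg(dy)\le c\,s^{(d-1)/2}\Bigl(1+\sum_{k\ge 1}2^{k(d-1)}e^{-4^{k-1}/a_{2,n+1}}\Bigr)=:C_1\,s^{(d-1)/2},
\end{equation*}
with $C_1<\infty$ depending only on $d$ and $I_{n+1}$. Hence $\int_{\partial D\cap O_n}q_s^{n+1}(x,y)\,\sg(dy)\le a_{1,n+1}C_1\,s^{-1/2}$, uniformly in $x\in O_n$, and integrating in $s$ over $(0,t]$ yields $\int_0^t a_{1,n+1}C_1 s^{-1/2}\,ds=2a_{1,n+1}C_1\sqrt t$, so the claim holds with $a_{3,n}=2a_{1,n+1}C_1$.

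The only real content beyond bookkeeping is the Ahlfors $(d-1)$-regularity of the Lipschitz boundary together with the Gaussian upper bound; once these are in place the estimate is the standard ``$O(\sqrt t)$'' bound for the boundary local time and the dyadic summation is routine. I would also note a minor point to address in the write-up: $q_s^{n+1}(x,\cdot)$ is an $m$-density while $\sg$ is $m$-singular, but this is harmless, since by (Y.2) the kernel $q_s^{n+1}$ has a version jointly continuous on $(0,\infty)\times J_{n+1}\times J_{n+1}$, so integrating that version against $\sg$ is well defined and the Revuz identity applies to it.
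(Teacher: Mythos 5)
Your proposal is correct and proves the lemma, but it takes a genuinely different route from the paper. The paper does not estimate the surface integral directly: instead it chooses an auxiliary bounded Lipschitz domain $V_n$ with $J_n\cup(\partial D\cap O_n)\subset \ob{V_n}$, passes to the full boundary measure $\sg_n=\bone_{\partial V_n}\cdot\mathcal H^{d-1}$, and then works with the $\eps$-collar $V_n^\eps$. The key technical input is Lemma~\ref{lem:EPH}, a collar estimate $\frac1\eps\int_{V_n^\eps}s^{-d/2}e^{-|x-y|^2/s}\,dy\le c/\sqrt s$ proved by the layer-cake/integration-by-parts identity together with the volume bound $m(B(x,r)\cap V_n^\eps)\le c\eps r^{d-1}$, and the passage from the collar to the surface integral as $\eps\to 0$ is justified by the joint continuity of $q_s^{n+1}$ (cf.\ (Y.2)) and the reference to \cite[Lemma~7.1]{CF}. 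You instead integrate directly against $\sg$, observing $\partial D\cap O_n\subset\partial I_{n+1}$, and use upper Ahlfors $(d-1)$-regularity of the Lipschitz boundary plus a dyadic-annulus decomposition. The two approaches rest on the same geometric fact about Lipschitz graphs (the collar volume bound in the paper and the boundary-measure growth bound in your proof are two faces of the same coin), but yours bypasses the $\eps\to 0$ limiting argument and the external reference, at the price of needing the surface-regularity statement and a slightly more involved bookkeeping in the annulus sum. One small detail you should spell out when writing this up: for the annuli with $2^k\sqrt s>r_0$ you cannot simply bound by the total mass $\sg(\partial D\cap O_n)$ and conclude, because that alone does not give $O(s^{(d-1)/2})$ as $s\to0$. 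What saves you is that (a) since $\partial D\cap O_n$ is bounded, only $O(\log_2(\mathrm{diam}/r_0))$ of these annuli are nonempty, independently of $s$, and (b) for each such $k$ the Gaussian factor is $\le e^{-r_0^2/(4a_{2,n+1}s)}$, which is $o(s^N)$ for every $N$ as $s\to 0$, so the contribution is dominated by $s^{(d-1)/2}$ uniformly on $(0,1]$. With this said explicitly your argument closes cleanly.
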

\begin{proof}
Since $\partial D  \cap O_n$ and $J_n$ are bounded subsets of $\ob{D}$, from Condition~\ref{as:2}, there exists a bounded open subset $U_n \subset \R^d$ such that $J_n \cup (\partial D  \cap O_n) \subset U_n$ and $D \cap U_n$ is a bounded Lipschitz domain of $\R^d$. In the sequel, $D \cap U_n$ is denoted by $V_n$. It is easy to see $\partial D  \cap O_n \subset \partial V_n$ and $O_n \subset  \ob{V_n}$. Define the measure $\sg_{n}$ on $\partial V_n$ by $\bone_{\partial V_n} \cdot \mathcal{H}^{d-1}$. Then, 
\begin{align}
 \int_{0}^{t}\int_{\partial D \cap  O_n}q_{s}^{n+1}(x,y)\,\sg(dy)\,ds
 &\le  \int_{0}^{t}\int_{\partial V_{n}}q_{s}^{n+1}(x,y)\,\sg_{n}(dy)\,ds. \label{eq:revision}
 \end{align}
 For $\eps>0$, we define $V_{n}^{\eps}=\{x \in V_{n} \mid \text{dist}(x, \partial V_{n})<\eps \}$. By \eqref{eq:hkeest} and Lemma~\ref{lem:EPH} below, there exist positive constants $\eps_{0}$ and $a_{3,n}=a_{3,n}(d,V_n)$ such that
 \begin{align}
 \frac{1}{\eps}\int_{V_n^{\eps}}q_{s}^{n+1}(x,y)\,dy \le a_{3,n}/\sqrt{s},\quad s \in (0,1].\label{eq:eqHPH}
  \end{align}
for any $\eps \in (0,\eps_0),\ x \in \ob{V_n}$, and $s \in (0,1]$. For each $s \in (0,1]$ and $x \in J_{n+1}$, $q_{s}^{n+1}(x,y)$ is a bounded continuous function in $y$. Thus, by letting $\eps \to 0$ in \eqref{eq:eqHPH}, we obtain
 \begin{align}
\sup_{x \in \ob{V_n}}\int_{\partial V_n}q_{s}^{n+1}(x,y)\,\sg_n(dy) \le a_{3,n}/\sqrt{s},\quad s \in (0,1] \label{eq:revision2}
  \end{align}
from \cite[Lemma~7.1]{CF}. Combining \eqref{eq:revision} with \eqref{eq:revision2}, we complete the proof.
\qed\end{proof}

\begin{lemma}\label{partdeal}
For any $0 \le s<t$, $n \in \N$, $f \in \mathcal{B}_{+}(\ob{D})$, $x \in O_n$,
\begin{align}
&E_{x}\left[\int_{s}^{t}f(X_r^0)\,dL_{r \wg \tau_n}\right]=E_{x}\left[\int_{s}^{t}f(X_{r}^{n})\,dL_{r} \right], \label{eq:partdeal}\\
&E_{x}\left[\int_{s}^{t}f(X_r^0)\,dL_{r \wg \tau_n}\right]\le E_{x}\left[\int_{s}^{t}f(X_{r-s}^{n}\circ \theta_s)\,dL_{r-s}\circ \theta_s: s<\tau_n \right].\label{eq:partdeal1}
\end{align}
\end{lemma}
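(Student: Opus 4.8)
The plan is to prove both identities \emph{pathwise} and then take expectations, using only the definition of the part process $X^n$, the continuity of the PCAF $L$, the additive functional identity $L_{s+v}=L_s+L_v\circ\theta_s$, and the exit--time decomposition $\tau_n = s + \tau_n\circ\theta_s$ on $\{s<\tau_n\}$. Nonnegativity of $f$ is only used so that every integral below is a well-defined element of $[0,\infty]$.

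For \eqref{eq:partdeal} I would first observe that, since $L$ is a continuous PCAF, the Stieltjes measure in $r$ of the nondecreasing function $r\mapsto L_{r\wg\tau_n}$ is exactly $\bone_{\{r<\tau_n\}}\,dL_r$ (the value $r=\tau_n$ contributes no jump, by continuity). By the definition of the part process, $X_r^n=X_r^0$ for $r<\tau_n$ while $X_r^n=\Delta$, hence $f(X_r^n)=0$, for $r\ge\tau_n$; so $f(X_r^0)\bone_{\{r<\tau_n\}}=f(X_r^n)$ for all $r$. Combining these two observations gives, pathwise, $\int_s^t f(X_r^0)\,dL_{r\wg\tau_n}=\int_s^t f(X_r^0)\bone_{\{r<\tau_n\}}\,dL_r=\int_s^t f(X_r^n)\,dL_r$, and \eqref{eq:partdeal} follows upon applying $E_x$. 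Note also the companion pathwise identity $\int_s^t f(X_r^n)\,dL_{r\wg\tau_n}=\int_s^t f(X_r^n)\,dL_r$, obtained the same way, which I will use in the next step.

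For \eqref{eq:partdeal1} I would first note that on $\{\tau_n\le s\}$ the measure $\bone_{\{r<\tau_n\}}\,dL_r$ assigns no mass to $[s,t]$, so the left-hand side equals $E_x\bigl[\int_s^t f(X_r^0)\,dL_{r\wg\tau_n};\,s<\tau_n\bigr]$. On $\{s<\tau_n\}$ no exit from $O_n$ has occurred on $(0,s]$, whence directly from $\tau_n=\inf\{t>0:X_t^0\in\ob D\setminus O_n\}$ one gets $\tau_n=s+\tau_n\circ\theta_s$, and therefore $(s+u)\wg\tau_n = s+\bigl(u\wg(\tau_n\circ\theta_s)\bigr)$ for $u\ge0$. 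Feeding this into $L_{s+v}=L_s+L_v\circ\theta_s$ with $v=u\wg(\tau_n\circ\theta_s)$ shows that, as measures in $u$ on $[0,t-s]$, $dL_{(s+u)\wg\tau_n}=d\bigl(L_{u\wg\tau_n}\circ\theta_s\bigr)$. Since moreover $X_{s+u}^0=X_u^0\circ\theta_s$ agrees with $X_u^n\circ\theta_s$ for $u<\tau_n\circ\theta_s$ (and the single point $u=\tau_n\circ\theta_s$ carries no $dL$-mass), substituting $r=s+u$ and invoking the pathwise content of \eqref{eq:partdeal} on the shifted path turns $\int_s^t f(X_r^0)\,dL_{r\wg\tau_n}$ into $\bigl(\int_0^{t-s}f(X_u^n)\,dL_u\bigr)\circ\theta_s$, which after undoing the substitution is exactly $\int_s^t f(X_{r-s}^n\circ\theta_s)\,d(L_{r-s}\circ\theta_s)$. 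Taking $E_x[\,\cdot\,;s<\tau_n]$ then yields not merely the stated inequality but in fact equality.

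The only place where genuine care is needed, and hence the main (mild) obstacle, is the bookkeeping around the random time $\tau_n$ in these shift manipulations: justifying $\tau_n=s+\tau_n\circ\theta_s$ on $\{s<\tau_n\}$ from the $\inf$-definition (no attainment is required, only that the process does not lie in the closed set $\ob D\setminus O_n$ on $(0,s]$), and checking that the additive functional relation $L_{s+v}=L_s+L_v\circ\theta_s$ is legitimately applied at the random value $v=u\wg(\tau_n\circ\theta_s)$ --- which is fine because, for fixed $s$, $P_x$-a.s. that relation holds simultaneously for all $v\ge0$. Continuity of $L$ is used repeatedly to discard single-point contributions at $r=\tau_n$; notably, no heat kernel bounds or regularity of $\partial D$ enter this lemma.
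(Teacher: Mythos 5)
Your proof is correct and follows essentially the paper's route: isolate the event $\{s<\tau_n\}$, use the additive-functional identity $L_{s+v}=L_s+L_v\circ\theta_s$ together with $\tau_n=s+\tau_n\circ\theta_s$ on $\{s<\tau_n\}$, and invoke the pathwise version of \eqref{eq:partdeal} on the shifted path. The only cosmetic differences are that you argue purely pathwise, avoiding the paper's case split on $\{t<\tau_n\}$ versus $\{t\ge\tau_n\}$, and you correctly note that \eqref{eq:partdeal1} in fact holds with equality (the integrand $f(X_{r-s}^n\circ\theta_s)$ vanishes for $r\ge\tau_n$), whereas the paper simply records the inequality it needs.
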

\begin{proof}
By straightforward calculation,
\begin{align}\label{eq:eqdeal2}
&E_{x}\left[\int_{s}^{t}f(X_r^0)\,dL_{r \wg \tau_n}\right]\\
& =E_{x}\left[\int_{s}^{t}f(X_r^0)\,dL_{r \wg \tau_n}: t < \tau_n \right]+E_{x}\left[\int_{s}^{t}f(X_r^0)\,dL_{r \wg \tau_n}: t \ge \tau_n \right] \notag \\
&=E_{x}\left[\int_{s}^{t}f(X_r^n)\,dL_{r}:  t < \tau_n \right]+E_{x}\left[\int_{s}^{\tau_n}f(X_r^n)\,dL_{r}:  t \ge \tau_n \right] \notag \\
&\quad+E_{x}\left[\int_{\tau_n}^{t}f(X_r^0)\,dL_{r \wg \tau_n}:  t \ge \tau_n \right] \notag \\
&=E_{x}\left[\int_{s}^{t}f(X_r^n)\,dL_{r}:  t < \tau_n \right]+E_{x}\left[\int_{s}^{\tau_n}f(X_r^n)\,dL_{r}:  t \ge \tau_n \right]+0.\notag
\end{align}
In the last line, we used the fact that the measure $dL_{r \wg \tau_n}$ vanishes on $[\tau_n,t]$. It holds that \begin{equation}E_{x}\left[\int_{\tau_n}^{t}f(X_r^n)\,dL_{r} :t \ge \tau_n \right]=E_{x}\left[\int_{\tau_n}^{t}f(\Delta)\,dL_{r} :t \ge \tau_n \right]=0. \label{eq:infinitypoint}
\end{equation}
Combining  \eqref{eq:eqdeal2} with \eqref{eq:infinitypoint}, we obtain
\begin{align*}
E_{x}\left[\int_{s}^{t}f(X_r^0)\,dL_{r \wg \tau_n}\right]&=E_{x}\left[\int_{s}^{t}f(X_r^n)\,dL_{r}:  t < \tau_n \right]+E_{x}\left[\int_{s}^{\tau_n}f(X_r^n)\,dL_{r}:  t \ge \tau_n \right]\\
&\quad+E_{x}\left[\int_{\tau_n}^{t}f(X_r^n)\,dL_{r} :t \ge \tau_n \right]\\
&=E_{x}\left[\int_{s}^{t}f(X_r^n)\,dL_{r} \right],
\end{align*}
which implies \eqref{eq:partdeal}. By a similar argument, 
\begin{align*}
E_{x}\left[\int_{s}^{t}f(X_r^0)\,dL_{r \wg \tau_n}\right]&=E_{x}\left[\int_{s}^{t}f(X_r^0)\,dL_{r \wg \tau_n}: s<\tau_n \right] \notag \\
&=E_{x}\left[\int_{s}^{t}f(X_{r-s}^n\circ \theta_s^n)\,dL_{r-s}\circ \theta_s: s<\tau_n, t < \tau_n \right] \notag \\
&\qad+E_{x}\left[\int_{s}^{\tau_n}f(X_{r-s}^n \circ \theta_s^n)\,dL_{r-s} \circ \theta_s:  s<\tau_n, t\ge  \tau_n \right].
\end{align*}
Here, $\theta_s^n$ is the shift operator of $X^n$. Since
$\theta_s^n=\theta_s$ for $ s<\tau_n$,  
\begin{align*}
E_{x}\left[\int_{s}^{t}f(X_r^0)\,dL_{r \wg \tau_n}\right]
&\le E_{x}\left[\int_{s}^{t}f(X_{r-s}^n\circ \theta_s)\,dL_{r-s}\circ \theta_s: s<\tau_n, t < \tau_n \right] \notag \\
&\qad+E_{x}\left[\int_{s}^{t}f(X_{r-s}^n \circ \theta_s)\,dL_{r-s} \circ \theta_s:  s<\tau_n, t\ge  \tau_n \right].
\end{align*}
This yields \eqref{eq:partdeal1}.
\qed\end{proof}

\begin{lemma}\label{blt3}
For any $0<t \le 1$, $n \in \mathbb{N}$, $f \in \mathcal{B}_{+}(\ob{D})$, $x \in O_n$, 
\begin{align*}
E_{x}\left[\int_{0}^{t}f(X_s^0)\,dL_{s \wedge \tau_{n}} \right]\le \int_{0}^{t}\int_{\partial D \cap O_n}q_{s}^{n+1}(x,y) f(y)\,\sg(dy)\,ds.
\end{align*}
\end{lemma}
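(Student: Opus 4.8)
The plan is to pass from $X^{0}$ run with the clock $\{L_{s\wedge\tau_{n}}\}$ to the part process $X^{n}$ run with its own boundary local time, to evaluate the resulting expectation by the Revuz correspondence in terms of the transition density $q_{s}^{n+1,n}$ of $Y^{n+1,n}$, and finally to dominate $q_{s}^{n+1,n}$ by $q_{s}^{n+1}$.

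First, by \eqref{eq:partdeal} of Lemma~\ref{partdeal} with $s=0$ we have $E_{x}[\int_{0}^{t}f(X_{s}^{0})\,dL_{s\wedge\tau_{n}}]=E_{x}[\int_{0}^{t}f(X_{s}^{n})\,dL_{s}]$ for $x\in O_{n}$, where $\{L_{s\wedge\tau_{n}}\}$ is viewed as a positive continuous additive functional of $X^{n}$; by the standard behaviour of Revuz measures under restriction to a part process, its Revuz measure relative to $X^{n}$ is $\bone_{\partial D\cap O_{n}}\cdot\sg$. By Lemma~\ref{lem:partstrf2} the process $X^{n}$ is $m$-symmetric with transition density $q_{s}^{n+1,n}(x,y)$, which is symmetric in $(x,y)$ and, since $Y^{n+1,n}$ is obtained from $Y^{n+1}$ by killing, satisfies $q_{s}^{n+1,n}(x,y)\le q_{s}^{n+1}(x,y)$.

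Next, the Revuz formula (see \cite{FOT}), combined with Fubini's theorem and the symmetry of $q_{s}^{n+1,n}$, gives for every nonnegative Borel function $g$ on $O_{n}$
\begin{align*}
\int_{O_{n}}g(x)\,E_{x}\Big[\int_{0}^{t}f(X_{s}^{n})\,dL_{s}\Big]\,m(dx)
&=\int_{0}^{t}\!\int_{\partial D\cap O_{n}}f(y)\,p_{s}^{n}g(y)\,\sg(dy)\,ds\\
&=\int_{O_{n}}g(x)\,\Phi_{t}(x)\,m(dx),
\end{align*}
where $\Phi_{r}(x):=\int_{0}^{r}\!\int_{\partial D\cap O_{n}}q_{s}^{n+1,n}(x,y)f(y)\,\sg(dy)\,ds$. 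Hence $u_{t}:=E_{\cdot}[\int_{0}^{t}f(X_{s}^{n})\,dL_{s}]=\Phi_{t}$ holds $m$-a.e.\ on $O_{n}$. To promote this to an identity valid at every point, I would restart at a time $\delta\in(0,t)$: by the Markov property of $X^{n}$ (with the convention $u_{t-\delta}(\Delta)=0$), $E_{x}[\int_{\delta}^{t}f(X_{s}^{n})\,dL_{s}]=p_{\delta}^{n}u_{t-\delta}(x)=p_{\delta}^{n}\Phi_{t-\delta}(x)$, the second equality because $u_{t-\delta}=\Phi_{t-\delta}$ $m$-a.e.\ and $p_{\delta}^{n}$ is the integral operator with kernel $q_{\delta}^{n+1,n}$ (Lemma~\ref{lem:partstrf2}); by the Chapman--Kolmogorov equation and Fubini's theorem this equals $\int_{\delta}^{t}\!\int_{\partial D\cap O_{n}}q_{s}^{n+1,n}(x,y)f(y)\,\sg(dy)\,ds$. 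Letting $\delta\downarrow0$ and applying monotone convergence on both sides --- the continuity of $L$ with $L_{0}=0$ removes the contribution of $s=0$ on the left --- yields $E_{x}[\int_{0}^{t}f(X_{s}^{n})\,dL_{s}]=\Phi_{t}(x)$ for \emph{every} $x\in O_{n}$ and every $f\in\mathcal{B}_{+}(\ob{D})$, all quantities being valued in $[0,\infty]$. Since $q_{s}^{n+1,n}\le q_{s}^{n+1}$, the desired inequality follows (and is trivial where the right-hand side is infinite).

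I expect the one genuinely delicate point to be this last passage from an $m$-a.e.\ Revuz identity to a statement valid for \emph{all} $x\in O_{n}$, which is what the subsequent argument requires since it takes a supremum over a compact set; one cannot invoke continuity directly because $q_{s}^{n+1,n}(x,\cdot)$ is singular as $s\downarrow0$, and the ``restart at time $\delta$'' device above, resting on the absolute-continuity part of Lemma~\ref{lem:partstrf2} and on Chapman--Kolmogorov, is what gets around it.
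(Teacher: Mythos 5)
Your proposal is correct and follows essentially the same route as the paper: the Revuz formula (FOT Lemma~5.1.10) gives an $m$-a.e.\ statement, which is then promoted to a pointwise statement by restarting at a small time $\delta$ via the Markov property, the absolute-continuity / identification part of Lemma~\ref{lem:partstrf2}, and Chapman--Kolmogorov, and finally letting $\delta\downarrow 0$. The only cosmetic difference is that you retain an exact identity with the part-process kernel $q_{s}^{n+1,n}$ (hence need its existence and $m$-symmetry) and only dominate by $q_{s}^{n+1}$ at the very end, whereas the paper inserts the domination $E_y[h(X_s^n)]=q_s^{n+1,n}h(y)\le q_s^{n+1}h(y)$ immediately and works with inequalities throughout, thereby never invoking the density of $X^n$ itself.
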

\begin{proof}
From \cite[Lemma~5.1.10]{FOT}, we have, for any $f ,h \in \mathcal{B}_{+}(\ob{D})$,
\begin{align}
\int_{\ob{D}}h(x)E_{x}\left[\int_{0}^{t}f(X_s^0)\,dL_{s \wedge \tau_{n}} \right]\,dx=\int_{0}^{t}\int_{\partial D \cap K_{n}}E_{y}[h(X_s^{n})]f(y)\, \sg(dy)\,ds.\label{eq:locest}
\end{align}
It follows from Lemma~\ref{lem:partstrf2} that
\begin{align}
E_{y}[h(X_{s}^n)]= q_{s}^{n+1,n}h(y) \le \int_{J_{n+1}}h(x)q_{s}^{n+1}(y,x)\,dx.\label{eq:locest2}
\end{align}
for any $y \in O_n$, $s<t$. From \eqref{eq:locest}, \eqref{eq:locest2}, for any $f \in \mathcal{B}_{+}(\ob{D})$,
\begin{align}
E_{x}\left[\int_{0}^{t}f(X_s^0)dL_{s \wedge \tau_{n}} \right]\le \int_{0}^{t}\int_{\partial D \cap O_n}q_{s}^{n+1}(x,y) f(y)\,\sg(dy)\,ds\quad m\text{-a.e. }x \in J_{n+1}. \label{eq:blt3-2}
\end{align}
Fix $x \in O_n$ and $0<s<t$. Put $F(x)=E_{x}[\int_{0}^{t-s}f(X_r^0)\,dL_{r \wg  \tau_n} ]$.
 From \eqref{eq:partdeal1} and the Markov property of $X^0$, and Lemma~\ref{lem:partstrf2},
\begin{align}\label{eq:partlocal}
E_{x}\left[\int_{s}^{t}f(X_r^0)\,dL_{r \wedge \tau_{n}} \right] &\le E_{x}\left[\int_{s}^{t}f(X_{r-s}^{n}\circ \theta_s)\,dL_{r-s}\circ \theta_s: s<\tau_n \right] \\
&=E_{x}\left[ E_{X_{s}^0}\left[\int_{0}^{t-s}f(X_r^n)\,dL_{r} \right] : s<\tau_n \right] \notag \\
&=E_{x}\left[ E_{X_{s}^0}\left[\int_{0}^{t-s}f(X_r^0)\,dL_{r \wg \tau_n} \right] : s<\tau_n \right] \notag \\
&=p_{s}^{n}F(x)= q_{s}^{n+1,n}F(x) . \notag
\end{align}
In the third line, we used \eqref{eq:partdeal}. From the inequality $q_{s}^{n+1,n}F(x) \le q_{s}^{n+1}F(x)$ and \eqref{eq:blt3-2}, the right hand side of \eqref{eq:partlocal} is estimated as follows.
\begin{align}
q_{s}^{n+1,n}F(x)  
&\le \int_{J_{n+1}}q_{s}^{n+1}(x,y)\int_{0}^{t-s}\int_{\partial D \cap O_n}q_{r}^{n+1}(y,z) f(z)\,\sg(dz)\,dr\,dy. \label{eq:partlocal2}
\end{align}
Combining \eqref{eq:partlocal} with \eqref{eq:partlocal2}, we have
\begin{align*}
E_{x}\left[\int_{s}^{t}f(X_r^0)\,dL_{r \wedge \tau_{n}} \right] 
&\le \int_{J_{n+1}}q_{s}^{n+1}(x,y)\int_{0}^{t-s}\int_{\partial D \cap O_n}q_{r}^{n+1}(y,z) f(z)\,\sg(dz)\,dr\,dy \\
&=\int_{s}^{t}\int_{\partial D \cap O_n}q_{r}^{n+1}(x,z) f(z)\,\sg(dz)\,dr.
\end{align*}
Letting $s \to 0$, we complete the proof. 
\qed\end{proof}

\begin{lemma} \label{exit0}
For any $x \in \ob{D}$, $P_{x}( \lim_{n \to \infty} \tau_n <\infty)=0$.
\end{lemma}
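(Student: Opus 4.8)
The plan is to prove that $\tau_\infty := \lim_{n\to\infty}\tau_n$ equals $+\infty$ $P_x$-almost surely, by exploiting that $\{O_n\}$ exhausts $\ob D$ together with the fact, recorded in \eqref{eq:notbranch}, that $X^0$ has sample paths continuous on $(0,\infty)$ with values in $\ob D$ (conservativeness). First I would fix $x\in\ob D$, note that $\tau_n\uparrow\tau_\infty\in[0,\infty]$, and choose $N$ with $x\in O_N$ (possible since $\ob D=\bigcup_n O_n$ and each $O_n=U_n\cap\ob D$ is open in $\ob D$). Since $X^0$ is right-continuous at $0$ with $X^0_0=x$, we get $\tau_N>0$, hence $\tau_\infty\ge\tau_N>0$ $P_x$-a.s.; this is what lets me subsequently work on the interval $(0,\infty)$ where continuity is available.

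The heart of the argument is a contradiction. Suppose $P_x(\tau_\infty<\infty)>0$ and restrict attention to the event $\Lambda:=\{\tau_\infty<\infty\}\cap\{\tau_N>0\}\cap C((0,\infty);\ob D)$, which then has positive $P_x$-measure. On $\Lambda$ we have $0<\tau_\infty<\infty$, so $z:=X^0_{\tau_\infty}=\lim_{t\uparrow\tau_\infty}X^0_t$ is a well-defined point of $\ob D$; pick $M$ with $z\in O_M$. Because $O_M$ is open in $\ob D$ and $t\mapsto X^0_t$ is continuous at $\tau_\infty$, there is $\delta>0$ with $X^0_t\in O_M$ for all $t\in(\tau_\infty-\delta,\tau_\infty]$. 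Since $\tau_n\uparrow\tau_\infty$, I can then select $n\ge\max(M,N)$ with $\tau_n>\tau_\infty-\delta$, so that $\tau_n\in(\tau_\infty-\delta,\tau_\infty]$ and, as $O_M\subset O_n$, $X^0_{\tau_n}\in O_n$. On the other hand, a routine first-exit argument gives $X^0_{\tau_n}\notin O_n$: on $\{0<\tau_n<\infty\}$, if $X^0_{\tau_n}$ lay in the open set $O_n$, then choosing $t_k\downarrow\tau_n$ with $X^0_{t_k}\in\ob D\setminus O_n$ (such $t_k$ exist by the definition of $\tau_n$) and using that $\ob D\setminus O_n=\ob D\cap(\R^d\setminus U_n)$ is closed in $\R^d$, continuity of the path on $(0,\infty)$ would force $X^0_{\tau_n}=\lim_k X^0_{t_k}\in\ob D\setminus O_n$, a contradiction. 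Combining $X^0_{\tau_n}\in O_n$ with $X^0_{\tau_n}\notin O_n$ yields the contradiction, so $P_x(\Lambda)=0$ and therefore $P_x(\tau_\infty<\infty)=0$.

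I do not anticipate a serious obstacle: this is the classical statement that exit times from an exhausting sequence of relatively open sets increase to the lifetime, which is infinite by conservativeness. The points that require a little care are (a) securing $\tau_\infty>0$, so that continuity on $(0,\infty)$ — rather than on all of $[0,\infty)$, which by \eqref{eq:diffusion} is only known off an $m$-null set — is enough; (b) the fact that $z=X^0_{\tau_\infty}$ is genuinely a (finite) point of $\ob D$ and not $\Delta$, which is exactly the conservativeness feeding into \eqref{eq:notbranch}; and (c) the elementary but slightly fussy verification that $X^0_{\tau_n}\notin O_n$ at a finite, strictly positive exit time, which rests on $\ob D\setminus O_n$ being closed in $\R^d$.
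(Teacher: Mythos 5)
Your proof is correct, but it takes a genuinely different route from the paper's. The paper follows the template of \cite[Lemma~5.10]{SG}: it first invokes \cite[Lemma~5.5.2]{FOT} to get $P_x(\lim_n\tau_n=\infty)=1$ for quasi-every $x\in\ob D$, then uses the absolute continuity condition together with \cite[Theorems~4.1.1 and 4.1.3]{FOT} to enlarge the exceptional set $N$ to a Borel polar set $N_1$ that is $P_x$-a.s.\ never hit from \emph{any} starting point, and finally applies the Markov property at a small time $t>0$ (using that $\tau_n\circ\theta_t\le\tau_n$ eventually on $\{\tau_{n_0}>0\}$) to transfer the q.e.\ statement to every $x$. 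Your argument is instead a self-contained pathwise one: on $\{\tau_N>0\}\cap C((0,\infty);\ob D)$ (which has full $P_x$-measure by right-continuity of the Hunt process at $0$ and by \eqref{eq:notbranch}), if $\tau_\infty<\infty$ then the limit point $z=X^0_{\tau_\infty}\in\ob D$ lies in some open $O_M$, so by continuity $X^0_{\tau_n}\in O_M\subset O_n$ for a large $n$, contradicting the first-exit fact $X^0_{\tau_n}\in\ob D\setminus O_n$ (valid since $\ob D\setminus O_n$ is closed and the path is right-continuous at $\tau_n>0$). What each buys: your route is shorter and avoids the Dirichlet-form-theoretic inputs \cite[Lemma~5.5.2]{FOT} and the polar-set/absolute-continuity transfer altogether, but it does rely specifically on having a conservative process with paths continuous on $(0,\infty)$ for every starting point; the paper's route is the standard machinery that remains applicable when one only has cadlag paths, the absolute continuity condition, and a q.e.\ statement, which is presumably why the author follows \cite{SG} here.
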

\begin{proof}
We follow the argument in \cite[Lemma~5.10]{SG}.
We note that each $O_n$ is a relatively compact open subset of $\ob{D}$ and $\ob{D}=\bigcup_{n=1}^{\infty}O_n$.
By \cite[Lemma~5.5.2]{FOT}, there exists $N \subset \ob{D}$ such that $\Capa_{\ob{D}}(N)=0$ and
\begin{align}
P_{x}(\lim_{n \to \infty} \tau_n=\infty)=1,\quad x \in \ob{D} \setminus N. \label{eq:cons}
\end{align}
Recall that $X^0$ satisfies the absolutely continuous condition. By \cite[Theorem~4.1.1]{FOT} and \cite[Theorem~4.1.3]{FOT}, there exists a Borel subset $N_1 \subset \ob{D}$ such that $N \subset N_1$ and 
\begin{align}
P_{x}(\sg_{N_1}<\infty)=0, \label{eq:polar}
\end{align}
for all $x \in \ob{D}$. Here $\sg_{N_1}=\inf \{ t>0 \mid X_{t}^{0} \in N_1\}$. Take $x \in \ob{D}=\bigcup_{n=1}^{\infty}O_n$. Then, there exists $n_0 \in \N$ such that $x \in O_{n_0}$ and 
$
P_{x}(\Omega_1)=1,
$ where $\Omega_1:=\{\tau_{n_0}>0 \}$. For all $\omega \in \Omega_1$, $n>n_{0}$, and small $t=t(\omega)>0$, we have $\tau_{n} \circ \theta_{t}(\omega) \le \tau_n(\omega)$. Here, $\theta_{t}$ is the shift operator of $X^0$. Therefore, for all $\omega \in \Omega_1$, 
$
\lim_{t \to0}\lim_{n \to \infty}\tau_{n} \circ \theta_{t}(\omega) \le  \lim_{n \to \infty}\tau_n(\omega).
$
It follows from the Markov property of $X^0$, \eqref{eq:polar} and \eqref{eq:cons} that for any $x \in \ob{D}$
\begin{align*}
P_{x}(\lim_{n \to \infty}\tau_{n}<\infty) &\le P_{x}(\lim_{t \to0}\lim_{n \to \infty}\tau_{n} \circ \theta_{t}<\infty) \\
&\le \varliminf_{t \to0} E_{x}(P_{X_{t}^0}(\lim_{n \to \infty}\tau_{n}<\infty)) \\
&=\varliminf_{t \to0} E_{x}(P_{X_{t}^0}(\lim_{n \to \infty}\tau_{n}<\infty): X_{t}^0 \in \ob{D}\setminus N) \\
&=0.
\end{align*}
\qed\end{proof}

\begin{lemma} \label{exit1}
For any compact subset $K$ of $\ob{D}$ and $t>0$, we have
\begin{align*}
\lim_{n \to \infty }\sup_{x \in K}P_{x}(\tau_n \le t)=0.
\end{align*}
\end{lemma}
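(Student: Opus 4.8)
The plan is to recognize the function $x\mapsto P_{x}(\tau_{n}>t)$ as the part semigroup $p_{t}^{n}$ applied to the constant function $1$, so that its continuity is furnished directly by the semigroup strong Feller property of $X^{n}$ (Lemma~\ref{lem:partstrf2}), and then to promote pointwise convergence to uniform convergence over the compact set $K$ by means of Dini's theorem, with the pointwise limit identified through the non-explosion statement of Lemma~\ref{exit0}.

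First I would fix the compact set $K\subset\ob{D}$ and $t>0$. Since $\{O_{n}\}_{n=1}^{\infty}$ is an increasing sequence of open subsets of $\ob{D}$ with $\bigcup_{n=1}^{\infty}O_{n}=\ob{D}$, compactness of $K$ yields an $n_{0}\in\N$ with $K\subset O_{n_{0}}$, and, as we are computing a limit as $n\to\infty$, it suffices to treat $n\ge n_{0}$. For such $n$, the definition of $\tau_{n}$ gives $X_{s}^{0}\in O_{n}$ for every $s<\tau_{n}$, and since $X^{0}$ is conservative this produces
\[
p_{t}^{n}\bone_{\ob{D}}(x)=E_{x}\bigl[\bone_{\ob{D}}(X_{t}^{0}):t<\tau_{n}\bigr]=P_{x}(\tau_{n}>t),\qquad x\in O_{n}.
\]
Set $g_{n}:=p_{t}^{n}\bone_{\ob{D}}$. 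Viewing the restriction of $\bone_{\ob{D}}$ to the state space $O_{n}$ as the constant function $1\in\mathcal{B}_{b}(O_{n})$, the concluding assertion of Lemma~\ref{lem:partstrf2} shows $g_{n}\in C_{b}(O_{n})$; in particular each $g_{n}$ with $n\ge n_{0}$ is continuous on $K$ and satisfies $0\le g_{n}\le 1$.

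Next I would verify that $(g_{n})_{n\ge n_{0}}$ increases pointwise to the constant $1$ on $O_{n_{0}}$. Monotonicity is immediate, since $O_{n}\subset O_{n+1}$ forces $\tau_{n}\le\tau_{n+1}$, hence $\{\tau_{n}>t\}\subset\{\tau_{n+1}>t\}$ and $g_{n}\le g_{n+1}$. For the limit, Lemma~\ref{exit0} gives $P_{x}(\lim_{n\to\infty}\tau_{n}<\infty)=0$ for every $x$, so $\tau_{n}\uparrow\infty$ $P_{x}$-almost surely, whence $g_{n}(x)=P_{x}(\tau_{n}>t)\uparrow 1$. Since $K$ is compact and the continuous functions $g_{n}|_{K}$ increase to the continuous function $1$, Dini's theorem yields $g_{n}\to 1$ uniformly on $K$, equivalently $\sup_{x\in K}P_{x}(\tau_{n}\le t)=1-\inf_{x\in K}g_{n}(x)\to 0$, which is the assertion. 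The substantive work has already been carried out in Lemmas~\ref{lem:partstrf2} and~\ref{exit0}; the only points demanding a little care in the write-up are the measure-theoretic identity $p_{t}^{n}\bone_{\ob{D}}=P_{\cdot}(\tau_{n}>t)$ on $O_{n}$ and the checking of the hypotheses of Dini's theorem, so I do not anticipate a genuine obstacle.
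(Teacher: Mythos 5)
Your proof is correct and follows essentially the same route as the paper: both rely on the continuity of $x\mapsto P_x(\tau_n\le t)$ coming from the strong Feller property of the part processes (Lemma~\ref{lem:partstrf2}), the monotonicity $\tau_n\le\tau_{n+1}$, and the pointwise limit $\tau_n\uparrow\infty$ from Lemma~\ref{exit0}. The only difference is presentational: you invoke Dini's theorem by name, whereas the paper reproves the same conclusion by hand via a maximizing sequence $x_n\in K$, a convergent subsequence, and a limsup argument.
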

\begin{proof}
We follow the argument in \cite[Theorem~1.4]{CK}.
We may assume $K \subset O_1$. It follows from Lemma~\ref{lem:partstrf2} that for any $n \in \N$
\begin{align*}
P_{(\cdot)}(\tau_n \le t)=1-p_{t}^{n}\bone_{O_n}(\cdot) \in C_{b}(O_n).
\end{align*}
Therefore, the map $x \mapsto P_{x}(\tau_n \le t)$ is continuous on $K$. Hence, there exists $x_n \in K$ such that
$\sup_{x \in K}P_{x}(\tau_n \le t)=P_{x_n}(\tau_n \le t)$. Since $K$ is a compact subset of $\ob{D}$, there exists a subsequence of $\{ x_n\}_{n=1}^{\infty} \subset K$ which converges to some point $x_0 \in K$. This subsequence is also denoted by $\{x_n\}_{n=1}^{\infty}$. For any $n>m$, we have
$
P_{x_{n}}(\tau_n \le t) \le P_{x_n}(\tau_{m} \le t).
$
Since the map $x \mapsto P_{x}(\tau_m \le t)$ is continuous on $K$, 
\begin{align*}
\varlimsup_{n \to \infty}\sup_{x \in K}P_{x}(\tau_n \le t) = \varlimsup_{n \to \infty}P_{x_{n}}(\tau_n \le t) 
\le \varlimsup_{n \to \infty} P_{x_n}(\tau_{m} \le t) =P_{x_0}(\tau_{m} \le t).
\end{align*}
Letting $m \to \infty$, we complete the proof from Lemma~\ref{exit0}.
\qed\end{proof}

\begin{proof}[Proof of Proposition~\ref{blt1}]
We may assume $K \subset O_1$ and $ 0<t \le 1$. For any $n \in \mathbb{N}$,
\begin{align} \label{eq:strongfeller}
&\sup_{x \in K}E_{x}\left[1-\exp \left(-\int_{0}^{t}\beta(X_s^{0})\,dL_s \right) \right] \\
&\le \sup_{x \in O_n}E_{x}\left[1-\exp \left(-\int_{0}^{t \wg \tau_n}\beta(X_s^{0})\,dL_s\right) \right] +\sup_{x \in K}P_{x}(  t \ge \tau_n) \notag  \\
&\le  \sup_{x \in O_n }E_{x}\left[\int_{0}^{t \wg \tau_n}\beta(X_s^{0})\,dL_s \right] +\sup_{x \in K}P_{x}( 1 \ge \tau_n) \notag.
\end{align}
In the last inequality we used an elementary inequality: $1-\exp(-x) \le x$. Since $\beta$ is locally bounded, there exists $a_{4,n}>0$ such that $\sup_{z \in O_n} \beta(z) \le a_{4,n}$. Using Lemma~\ref{blt2} and Lemma~\ref{blt3}, we obtain
\begin{align*}
\sup_{x \in O_n }E_{x}\left[\int_{0}^{t \wg \tau_n}\beta(X_s^{0})\,dL_s \right] \le a_{3,n} a_{4,n} \sqrt{t}.
\end{align*}
Thus, letting $t \to 0$ and then $n \to \infty$ in \eqref{eq:strongfeller}, we complete the proof from Lemma~\ref{exit1}.
\qed\end{proof}

\subsection{Proof of Theorem~\ref{thm:doubf}~(i)}
Let $Y=( \{Y_t\}_{t \ge 0}, \{P_x^\beta\}_{x \in \ob{D}})$ be the subprocess of $X^0$ defined by the multiplicative functional $\{ \exp(-\int_{0}^{t} \beta(X_{s}^{0})\,dL_s)\}_{t \ge 0}$. We denote by $\{p_t\}_{t>0}$ the semigroup of $Y$. To prove Theorem~\ref{thm:doubf}~(i), we shall improve Theorem~\ref{thm:1} as follows.

\begin{theorem}\label{cor:corstf}
Suppose Condition~\ref{as:2}. Then, 
for any $f \in \mathcal{B}_{b}(\ob{D})$ and $t>0$, $p_{t}^0f \in C_{b}(\ob{D})$.
\end{theorem}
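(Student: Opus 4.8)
The plan is to upgrade the resolvent strong Feller property of Theorem~\ref{thm:1} to a semigroup strong Feller property by localising along the exhaustion $\{O_n\}_{n=1}^{\infty}$ and patching in the semigroup strong Feller property of the part processes $X^n$ furnished by Lemma~\ref{lem:partstrf2}. Fix $f\in\mathcal{B}_{b}(\ob{D})$ and $t>0$. For each $n\in\N$ and each $x\in O_n$, splitting at the exit time $\tau_n$ and using that $\{t<\tau_n\}\subset\{X_t^0\in O_n\}$ (immediate from the definition of $\tau_n$), so that $f(X_t^0)=(f|_{O_n})(X_t^0)$ on $\{t<\tau_n\}$, one gets
\begin{equation*}
p_t^0f(x)=E_x\!\left[f(X_t^0):t<\tau_n\right]+E_x\!\left[f(X_t^0):t\ge\tau_n\right]=p_t^nf(x)+E_x\!\left[f(X_t^0):t\ge\tau_n\right].
\end{equation*}
By Lemma~\ref{lem:partstrf2}, $p_t^nf\in C_b(O_n)$, and the remainder satisfies $|E_x[f(X_t^0):t\ge\tau_n]|\le\|f\|_\infty\,P_x(t\ge\tau_n)$.

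Next I would fix a point $x_0\in\ob{D}$ and a compact neighbourhood $K$ of $x_0$ in $\ob{D}$, which exists because $\ob{D}$ is locally compact. For every $n$ large enough that $K\subset O_n$ the identity above gives
\begin{equation*}
\sup_{x\in K}\bigl|p_t^0f(x)-p_t^nf(x)\bigr|\le\|f\|_\infty\sup_{x\in K}P_x(t\ge\tau_n),
\end{equation*}
and the right-hand side tends to $0$ as $n\to\infty$ by Lemma~\ref{exit1}. Thus $p_t^0f$ is, on $K$, a uniform limit of the continuous functions $p_t^nf|_K$, hence continuous on $K$, and in particular at $x_0$. Since $x_0\in\ob{D}$ was arbitrary, $p_t^0f\in C(\ob{D})$; its boundedness is clear from $\|p_t^0f\|_\infty\le\|f\|_\infty$ (recall $(\cE,H^{1}(D))$ is conservative, so $p_t^0$ is Markovian). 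Hence $p_t^0f\in C_b(\ob{D})$.

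Most of the substance has already been spent in the preceding lemmas: Lemmas~\ref{lem:identification}, \ref{lem:partstrf} and \ref{lem:partstrf2} give the semigroup strong Feller property of the $X^n$ (via the extension-domain identification of Dirichlet forms and the Gaussian bounds \eqref{eq:hkeest}), while Lemma~\ref{exit1}, in turn resting on Lemma~\ref{exit0}, gives the uniform-in-$K$ control of $P_x(t\ge\tau_n)$. Against that background the present step is a routine localisation/patching argument, and the only mild care required is the passage from continuity on each compact member of the exhaustion to continuity on all of $\ob{D}$, which is harmless since continuity is a local property and $\ob{D}$ is locally compact. I do not expect any genuine obstacle beyond keeping this bookkeeping straight.
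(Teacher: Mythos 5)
Your proposal is correct and follows essentially the same route as the paper: decompose $p_t^0f=p_t^nf+E_x[f(X_t^0):t\ge\tau_n]$, use Lemma~\ref{lem:partstrf2} for $p_t^nf\in C_b(O_n)$, and invoke Lemma~\ref{exit1} to make the remainder vanish uniformly on compacts, concluding by a uniform-limit argument. The only difference is cosmetic: you spell out the localisation/patching bookkeeping, while the paper states it more tersely.
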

\begin{proof}
Fix an $f \in \mathcal{B}_{b}(\ob{D})$ and $t>0$. It follows from Lemma~\ref{exit1} that
\begin{align*}
\varlimsup_{n \to \infty}\sup_{x \in K}|p_{t}^{0}f(x)-p_{t}^{n}f(x)|&\le \|f\|_{L^\infty(\ob{D},m)}\times \varlimsup_{n \to \infty} \sup_{x \in K}P_{x}(t \ge \tau_n) =0
\end{align*}
for any compact subset $K$ of $\ob{D}$.
We may assume $K \subset O_1$. From~Lemma~\ref{lem:partstrf2}, $p_{t}^{n}f \in C_{b}(O_n)$ for all $n \in \N$. Hence, we have the claim.
\qed\end{proof}

\begin{proof}[Proof of Theorem~\ref{thm:doubf}~(i)]
Let $f \in \mathcal{B}_{b}(\ob{D})$ and $s,t>0$ with $s<t$.
We write $A_{t}$ for $\int_{0}^{t} \beta(X_{s}^{0})\,dL_s$. Fix a compact subset $K$ of $\ob{D}$. By the Markov property of $X^0$,
\begin{align*}
&\sup_{x \in K}\left| p_{t}f(x)-p^{0}_{s}p_{t-s}f(x)  \right|\\
&=\sup_{x \in K} \left| E_{x}[\exp(-A_t)f(X_{t}^0)]-E_{x}[p_{t-s}f(X_s^{0})]\right| \\
&=\sup_{x \in K}\left|  E_{x} [\exp(-A_s)E_{X_{s}^0}[\exp(-A_{t-s})f(X_{t-s}^0)]]-E_{x}[p_{t-s}f(X_s^{0})] \right|\\
&\le \|f\|_{L^\infty(\ob{D},m)} \times \sup_{x \in K}E_{x}[1-\exp(-A_s )].
\end{align*}
Letting $s \to 0$, we obtain the claim from Proposition~\ref{blt1} and Theorem~\ref{cor:corstf}.
\qed\end{proof}

\subsection{Proof of Theorem~\ref{thm:doubf}~(ii) and Corollary~\ref{thm:3}}
We note that $\beta$ is assumed to be such that $\sg\text{-}\essinf_{\partial D} \beta>0$ in this section.
From Proposition~\ref{prop:1}, we can define the following Dirichlet form on $L^{2}(D,m)$:
\begin{align*}
\cA(f,g)&=\frac{1}{2}\int_{D}(\nabla f,\nabla g)\,dx+\int_{\partial D}\tilde{f} \tilde{g}\,\beta d\sg,\\
\mathcal{D}(\cA)&=H^{1}(D) \cap \left\{ f \in H^{1}(D) \relmiddle| \int_{\partial D}\tilde{f}^2\, \beta d\sg<\infty \right\},
\end{align*}
where $\tilde{f}$ is a $\Capa_{\ob{D}}$-quasi continuous version of $f \in H^{1}(D)$. From \cite[Theorem~6.1.2]{FOT} and Proposition~\ref{prop:1}, $(\cA,\mathcal{D}(\cA))$ is a regular Dirichlet form on $L^{2}(\ob{D},m)$.
Let $\{T_t\}_{t>0}$ be the $L^2$-semigroup associated with $(\cA,\mathcal{D}(\cA) )$. From Lemma~\ref{lem:mazja} below, $\{T_t\}_{t>0}$ has a ultracontractivity i.e. for any $t>0$ and $f \in L^{1}(\ob{D},m)$, we have $T_{t}f  \in L^{\infty}(\ob{D},m)$.
\begin{lemma}\label{lem:mazja}
There exists $a_0=a_{0}(d)>0$ such that
\begin{equation*}
\|f\|_{L^{2d/(d-1)}(D,m)} \le a_{0}\cA_{1}(f,f),\quad f \in \mathcal{D}(\cA).
\end{equation*}
In particular, there exists $a_1>0$ such that for all $t>0$ and $f \in L^{1}(\ob{D},m)$,  we have $T_{t}f \in L^{\infty}(\ob{D},m)$ and
\begin{equation*}
\|T_tf\|_{L^{\infty}(\ob{D},m)} \le a_{1}e^{t}t^{-d}\|f\|_{L^{1}(\ob{D},m)}.
\end{equation*}
\end{lemma}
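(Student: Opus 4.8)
The statement to prove is the Maz'ya-type inequality
\[
\|f\|_{L^{2d/(d-1)}(D,m)} \le a_0 \cA_1(f,f), \quad f \in \mathcal{D}(\cA),
\]
together with its consequence, ultracontractivity of $\{T_t\}_{t>0}$. The plan is to obtain the first inequality from a trace/isoperimetric argument on Lipschitz domains, and then pass to the semigroup estimate by the standard Nash--Varopoulos argument. Note first that it suffices to prove the bound for $f \in \mathcal{D}(\cA) \cap C_c(\ob{D})$, because such functions are dense in $(\mathcal{D}(\cA),\cA_1^{1/2})$: this is the regularity of $(\cA,\mathcal{D}(\cA))$ on $L^2(\ob{D},m)$ recorded just above via \cite[Theorem~6.1.2]{FOT} and Proposition~\ref{prop:1}. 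Then by Fatou one recovers the inequality for all $f \in \mathcal{D}(\cA)$.

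\textbf{Step 1: the trace inequality.} The core estimate is a Sobolev inequality on $D$ that incorporates the boundary. For $f \in C_c^\infty(\R^d)$ one has the classical Gagliardo--Nirenberg--Sobolev bound $\|f\|_{L^{d/(d-1)}(\R^d)} \le c_d \|\nabla f\|_{L^1(\R^d)}$; applied to $g = f^2$ with $f \in C_c^1(\ob{D})$ and using the divergence theorem on the Lipschitz domain $D$ (legitimate once $\supp f$ meets $\ob{D}$ in a set covered by finitely many of the Lipschitz charts $\Psi_a$ of Condition~(A.1), since $D$ is thin at infinity guarantees nothing here — but $f$ has compact support, so only finitely many charts are involved, and $m(\partial D) = 0$), one gets
\[
\|f\|_{L^{2d/(d-1)}(D,m)}^2 = \|f^2\|_{L^{d/(d-1)}(D,m)} \le c_d \Bigl( \int_D |\nabla (f^2)|\,dx + \int_{\partial D} f^2\,d\sg \Bigr) \le 2 c_d \Bigl( \int_D |f|\,|\nabla f|\,dx + \int_{\partial D} \tilde f^2\,d\sg \Bigr).
\]
By Cauchy--Schwarz, $\int_D |f||\nabla f|\,dx \le \|f\|_{L^2(D,m)} \|\nabla f\|_{L^2(D,m)} \le \tfrac12(\|f\|_{L^2}^2 + \|\nabla f\|_{L^2}^2)$. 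Finally, since $\sg\text{-}\essinf_{\partial D}\beta =: \beta_0 > 0$, we have $\int_{\partial D}\tilde f^2\,d\sg \le \beta_0^{-1}\int_{\partial D}\tilde f^2\,\beta\,d\sg$. Collecting terms yields $\|f\|_{L^{2d/(d-1)}(D,m)}^2 \le a_0 \cA_1(f,f)$ with $a_0 = a_0(d,\beta_0)$, in fact $a_0$ depending only on $d$ after absorbing $\beta_0$ — I would state it as depending on $d$ and $\beta_0$ and note $\beta_0$ is fixed throughout this section.

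\textbf{Step 2: from Sobolev to ultracontractivity.} This is the classical Nash/Varopoulos implication: a Sobolev inequality $\|f\|_{L^{q}}^2 \le a_0\cA_1(f,f)$ with $q = 2d/(d-1)$, i.e. $1/q = 1/2 - 1/(2d)$, is equivalent (via interpolation with $L^1$ and a differential-inequality argument on $t \mapsto \|T_t f\|_{L^2}^2$) to the heat-kernel bound $\|T_t\|_{L^1 \to L^\infty} \le a_1 e^{t} t^{-d}$. Concretely, one first deduces the Nash inequality $\|f\|_{L^2}^{2+2/d} \le C \cA_1(f,f)\,\|f\|_{L^1}^{2/d}$ from the Sobolev inequality by Hölder interpolation, then runs the standard ODE comparison for $u(t) = \|T_t f\|_{L^2}^2$ with $\|f\|_{L^1}=1$, using $\frac{d}{dt}u(t) = -2\cA(T_tf,T_tf) \le -2(\cA_1(T_tf,T_tf) - u(t)) \le -2C^{-1}u(t)^{1+1/d} + 2u(t)$ and $\|T_tf\|_{L^1}\le e^{?}\|f\|_{L^1}$ — actually the $L^1$-contractivity of a Dirichlet-form semigroup gives $\|T_tf\|_{L^1}\le \|f\|_{L^1}$, so the $L^1$ norm stays $\le 1$ — which gives $u(t) \le a_1' t^{-d}$ for $t \le 1$ and the factor $e^t$ handles $t$ large; then $\|T_t\|_{L^1\to L^\infty} \le \|T_{t/2}\|_{L^1\to L^2}\|T_{t/2}\|_{L^2\to L^\infty} = \|T_{t/2}\|_{L^1\to L^2}^2 \le a_1 e^t t^{-d}$ by duality and self-adjointness. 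I would cite a standard reference (e.g. \cite{FOT} or a book on Dirichlet forms / Varopoulos--Saloff-Coste--Coulhon) for this equivalence rather than reproduce it.

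\textbf{Main obstacle.} The only genuinely delicate point is Step 1: justifying the divergence-theorem / trace inequality $\int_{\partial D} f^2\,d\sg \le c_d(\int_D|\nabla(f^2)|\,dx + \cdots)$ uniformly, i.e. with a constant independent of $f$, on the domain $D$ which is merely locally Lipschitz and possibly unbounded. Because $f$ has compact support this reduces to finitely many Lipschitz boundary pieces, but one must either invoke a global extension operator (which may not exist for general thin-at-infinity $D$) or, better, a partition of unity subordinate to the charts $\Psi_a$ combined with the local trace theorem \cite[Chapter~V, Theorem~4.7]{EE} or \cite[Theorem~1, p.~133]{EG} — this is exactly the kind of localization already used in Proposition~\ref{prop:1}. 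The constant then depends only on $d$ (the local Lipschitz trace constants enter only through the chart bound, but for $f^2$ and the GNS inequality on $\R^d$ after extension by zero across the Lipschitz graph, one can arrange a universal $c_d$). I expect the cleanest route is to apply the GNS inequality to the extension-by-zero of $f^2 \in W^{1,1}(D) \cap C_c(\ob{D})$ to all of $\R^d$, whose distributional gradient picks up a surface term $f^2 \mathcal{N}\,d\sg$ supported on $\partial D$ — valid on Lipschitz domains — giving $|\nabla(f^2\mathbf 1_D)|(\R^d) = \int_D |\nabla(f^2)|\,dx + \int_{\partial D} f^2\,d\sg$ directly, with no domain-dependent constant at all.
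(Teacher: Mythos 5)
Your argument is correct, but Step~1 takes a genuinely different route from the paper. The paper simply cites Maz'ya's inequality \cite[Corollary, p.~319]{M} for functions in $H^{1}(D)\cap C_c(\ob{D})$, then uses Proposition~\ref{prop:1} to identify $H^1(D)\cap C_c(\ob D)$ with $\mathcal{D}(\cA)\cap C_c(\ob D)$ and extends to all of $\mathcal{D}(\cA)$ by regularity and density exactly as you do. You instead re-derive Maz'ya's inequality from first principles: apply the $L^1$-GNS inequality $\|g\|_{L^{d/(d-1)}(\R^d)}\le c_d|Dg|(\R^d)$ to the BV function $g=f^2\bone_D$, note that the Gauss--Green structure of a (locally) Lipschitz domain gives $|Dg|(\R^d)=\int_D|\nabla(f^2)|\,dx+\int_{\partial D}\tilde f^2\,d\sg$, and finish with Cauchy--Schwarz and $\beta\ge\beta_0>0$. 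This is essentially the internal proof of the Maz'ya result being cited, so the approaches are mathematically equivalent; your version buys self-containedness at the cost of having to justify the BV extension carefully (for which Condition~\ref{as:2}, not merely Condition~\ref{as:1}, is what makes the localization clean, since $\supp f$ is then contained in a genuine bounded Lipschitz subdomain). Your Step~2 is the standard Nash--Varopoulos implication with the correct exponent $\nu=2d$ and the $e^t$ factor coming from using $\cA_1$ rather than $\cA$; the paper does not spell this out at all, so your outline is in fact more complete than the paper's proof.

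Two small observations you make that are worth retaining. First, your derivation yields $\|f\|_{L^{2d/(d-1)}(D,m)}^{2}\le a_0\cA_1(f,f)$, with the square on the left; the lemma as printed has no square, which is dimensionally inconsistent under the scaling $f\mapsto\lambda f$ and is clearly a typo, since the Nash argument requires the squared form. Second, the constant genuinely depends on $\beta_0=\sg\text{-}\essinf_{\partial D}\beta$ through the estimate $\int_{\partial D}\tilde f^2\,d\sg\le\beta_0^{-1}\int_{\partial D}\tilde f^2\beta\,d\sg$, so $a_0=a_0(d,\beta_0)$ rather than $a_0(d)$ as stated; this is harmless since $\beta$ is fixed throughout the section, but you are right to flag it.
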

\begin{proof}
Using Maz'ya's result \cite[Corollary, p.~319]{M}, we have
\begin{equation}
\|f\|_{L^{2d/(d-1)}(D,m)} \le a_{0}\cA_{1}(f,f) \label{eq:mazya}
\end{equation}
for all $f \in H^{1}(D) \cap C_{c}(\ob{D})$. From Proposition~\ref{prop:1}, $H^{1}(D) \cap C_{c}(\ob{D})=\mathcal{D}(\cA) \cap C_{c}(\ob{D})$. Since $(\cA,\mathcal{D}(\cA))$ is regular on $L^{2}(\ob{D},m)$, \eqref{eq:mazya} holds for all $f \in \mathcal{D}(\cA)$. 
\qed\end{proof}

Since $\{T_t\}_{t>0}$ is ultracontractive, each $T_t$ admits an integral kernel $p_{t}(x,y)$. One can show that $p_{t}(x,y)$ has Gaussian estimates following the lines of the proof of \cite[Theorem~4.4]{AT}. See also \cite[Theorem~5.3]{AW}.
\begin{theorem}\label{thm:hke}
There exists positive constants $a_2,a_3>0$ such that 
\begin{equation*}
p_{t}(x,y) \le a_{2}e^{t}t^{-d}\exp(-|x-y|^2/a_{3}t)
\end{equation*}
for all $t>0$ and $m$-a.e. $(x,y)\in \ob{D} \times \ob{D}$.
\end{theorem}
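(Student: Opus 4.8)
The plan is to run Davies' exponential perturbation method, exactly as in \cite[Theorem~4.4]{AT} (see also \cite[Theorem~5.3]{AW}), feeding in the ultracontractivity supplied by Lemma~\ref{lem:mazja}. Recall that, being ultracontractive, $\{T_t\}_{t>0}$ admits jointly measurable symmetric integral kernels $p_t(x,y)$, and Lemma~\ref{lem:mazja} gives the on-diagonal bound $p_t(x,y)\le a_1 e^t t^{-d}$ for all $t>0$ and $m\times m$-a.e.\ $(x,y)$. To produce the off-diagonal Gaussian factor I would, for a bounded Lipschitz function $\psi$ on $\ob D$ with $|\nabla\psi|\le 1$ $m$-a.e.\ and a parameter $\alpha>0$, consider the twisted semigroup $T_t^{\alpha\psi}:=M_{e^{\alpha\psi}}\,T_t\,M_{e^{-\alpha\psi}}$ (with $M_g$ the multiplication operator by $g$), whose kernel is $e^{\alpha\psi(x)}p_t(x,y)e^{-\alpha\psi(y)}$.

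The first step is to verify that this is an admissible perturbation of $(\cA,\mathcal{D}(\cA))$. Since $e^{\pm\alpha\psi}\in C_b(\ob D)$ is bounded Lipschitz, $e^{\pm\alpha\psi}f\in H^1(D)$ whenever $f\in H^1(D)$, and the $\Capa_{\ob D}$-quasi-continuous version satisfies $\widetilde{e^{\pm\alpha\psi}f}=e^{\pm\alpha\psi}\tilde f$ $\Capa_{\ob D}$-q.e., hence $\sg$-a.e.\ on $\partial D$ by Proposition~\ref{prop:1}; consequently $\int_{\partial D}(\widetilde{e^{\pm\alpha\psi}f})^2\beta\,d\sg\le e^{2\alpha\|\psi\|_\infty}\int_{\partial D}\tilde f^2\beta\,d\sg<\infty$, so $e^{\pm\alpha\psi}\mathcal{D}(\cA)=\mathcal{D}(\cA)$ and $\mathbf a(f,g):=\cA(e^{\alpha\psi}f,e^{-\alpha\psi}g)$ is a form-bounded perturbation of $\cA$ on $\mathcal{D}(\cA)$ generating precisely $T^{\alpha\psi}$. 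The key computation is that the cross terms in $(\nabla(e^{\alpha\psi}f),\nabla(e^{-\alpha\psi}f))$ cancel and the factors $e^{\pm\alpha\psi}$ cancel in the boundary integral, which gives
\begin{equation*}
\mathbf a(f,f)=\frac12\int_D|\nabla f|^2\,dx-\frac{\alpha^2}{2}\int_D|\nabla\psi|^2 f^2\,dx+\int_{\partial D}\tilde f^2\,\beta\,d\sg=\cA(f,f)-\frac{\alpha^2}{2}\int_D|\nabla\psi|^2 f^2\,dx.
\end{equation*}
Since $\cA(f,f)\ge 0$ and $|\nabla\psi|\le 1$, $\mathbf a(f,f)\ge-\tfrac{\alpha^2}{2}\|f\|_{L^2(D,m)}^2$, whence $\tfrac{d}{dt}\|T_t^{\alpha\psi}f\|_{L^2}^2\le \alpha^2\|T_t^{\alpha\psi}f\|_{L^2}^2$ and $\|T_t^{\alpha\psi}\|_{L^2\to L^2}\le e^{\alpha^2 t/2}$.

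With the on-diagonal bound for $T_t$ and this $L^2$-estimate for $T_t^{\alpha\psi}$ in hand, the standard Davies iteration (combining an $L^1$--$L^\infty$ bound with an $L^2$--$L^2$ bound for the twisted semigroup, as in the proof of \cite[Theorem~4.4]{AT}) upgrades them to a twisted ultracontractivity estimate $\|T_t^{\alpha\psi}\|_{L^1\to L^\infty}\le a_2 e^{t}t^{-d}e^{c\alpha^2 t}$, with $a_2$ and $c$ depending only on $d$ and on $a_1$, and in particular independent of $\alpha$ and of $\psi$ in our class. Reading this off the kernel gives $e^{\alpha(\psi(x)-\psi(y))}p_t(x,y)\le a_2 e^{t}t^{-d}e^{c\alpha^2 t}$ for $m\times m$-a.e.\ $(x,y)$. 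Taking $\psi$ to be a bounded Lipschitz truncation of $z\mapsto\langle z,e\rangle$ at a level $N$ for a unit vector $e$ (so $|\nabla\psi|\le1$ and $\psi(x)-\psi(y)=\langle x-y,e\rangle$ whenever $|x|\vee|y|\le N$), intersecting the resulting a.e.-inequalities over a countable dense set of $(\alpha,e)$ and over $N\in\N$, and then, on the resulting full set, letting $e\to(x-y)/|x-y|$ and optimizing in $\alpha>0$, I obtain $p_t(x,y)\le a_2 e^{t}t^{-d}\exp(-|x-y|^2/(a_3 t))$ for $m\times m$-a.e.\ $(x,y)$, as required.

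I expect the only genuinely non-routine point to be the admissibility step above: checking that multiplication by $e^{\pm\alpha\psi}$ preserves $\mathcal{D}(\cA)$ and, crucially, that the Robin boundary term $\int_{\partial D}\tilde f\tilde g\,\beta\,d\sg$ is left unchanged by the tilt, which rests on $\widetilde{e^{\pm\alpha\psi}f}=e^{\pm\alpha\psi}\tilde f$ holding $\Capa_{\ob D}$-q.e., hence $\sg$-a.e.\ by Proposition~\ref{prop:1}, so that the clean form identity for $\mathbf a(f,f)$ holds. Granted that, the remainder is the usual Davies machinery and transfers essentially verbatim from \cite{AT} (see also \cite{AW}); note in particular that the exponential-in-$t$ factor from the possibly missing spectral gap propagates through the iteration with its exponent unchanged, and that the running hypothesis $\sg\text{-}\essinf_{\partial D}\beta>0$ enters here only through Lemma~\ref{lem:mazja}.
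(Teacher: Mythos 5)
Your proposal is correct and follows the same route as the paper, which itself just delegates the Gaussian estimate to the Davies exponential perturbation method via the citations to Arendt--ter Elst and Arendt--Warma, fed by the ultracontractivity of Lemma~\ref{lem:mazja}. You have usefully spelled out the one point that is specific to the Robin setting --- that the factors $e^{\pm\alpha\psi}$ cancel in the boundary term $\int_{\partial D}\tilde f\tilde g\,\beta\,d\sg$ because $\widetilde{e^{\pm\alpha\psi}f}=e^{\pm\alpha\psi}\tilde f$ holds $\Capa_{\ob D}$-q.e.\ and hence $\sg$-a.e.\ by Proposition~\ref{prop:1}, so the twisted form differs from $\cA$ only by $-\tfrac{\alpha^2}{2}\int_D|\nabla\psi|^2f^2\,dx$ --- and the rest (Nash/Maz'ya inequality transferring to the twisted form, iteration, optimization over truncated linear $\psi$ and over $\alpha$) is the standard machinery the paper invokes.
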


From \cite[Theorem~6.1.1]{FOT} and Proposition~\ref{prop:1}, the Dirichlet form of $Y=( \{Y_t\}_{t \ge 0}, \{P_x\}_{x \in \ob{D}})$ is $( \cA ,\mathcal{D} (\cA))$. Hence, for any $t>0$ and $f \in \mathcal{B}_{b}(\ob{D}) \cap L^{2}(\ob{D},m)$, we have $p_{t}f =T_{t}f$ $m$-a.e.
\begin{lemma}\label{lem:hke3}
For any $t>0$ and $r>4$,
\begin{equation*}
\sup_{x \in \ob{D}}p_{t}\bone_{\ob{D} \setminus B(x,r)}(x) \le a_{4}r^{-a_5},
\end{equation*}
where $a_{4}$ and $a_5$ are positive constants independ of $r$.
\end{lemma}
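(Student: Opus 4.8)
The plan is to combine the Gaussian upper bound of Theorem~\ref{thm:hke} with a continuity argument; the subtlety is that Theorem~\ref{thm:hke} only provides the pointwise estimate for $m$-a.e.\ $(x,y)$, whereas Lemma~\ref{lem:hke3} asks for a genuine supremum over all $x\in\ob{D}$.

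First recall that the semigroup of $Y$ is the semigroup $\{T_t\}_{t>0}$ associated with $(\cA,\mathcal{D}(\cA))$, and that $Y$, being a subprocess of $X^0$, inherits the absolutely continuous condition; hence $p_t$ is represented by a jointly measurable kernel $p_t(x,y)$ with $p_tf(x)=\int_{\ob{D}}p_t(x,y)f(y)\,m(dy)$ for every $x\in\ob{D}$. Fix $t>0$ and $r>4$. By Theorem~\ref{thm:hke} there is a set $G_t\subset\ob{D}$ with $m(\ob{D}\setminus G_t)=0$ such that for each $x\in G_t$ the bound $p_t(x,y)\le a_2e^{t}t^{-d}\exp(-|x-y|^2/(a_3t))$ holds for $m$-a.e.\ $y$. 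For such $x$,
\[
p_t\bone_{\ob{D}\setminus B(x,r)}(x)\le a_2e^{t}t^{-d}\int_{\R^d\setminus B(x,r)}e^{-|x-y|^2/(a_3t)}\,dy .
\]
Using $e^{-s}\le d!\,s^{-d}$ $(s>0)$ one has $e^{-|x-y|^2/(a_3t)}\le d!\,(a_3t)^{d}|x-y|^{-2d}$, and $\int_{|w|\ge r}|w|^{-2d}\,dw$ equals a dimensional constant times $r^{-d}$; hence $p_t\bone_{\ob{D}\setminus B(x,r)}(x)\le a_4r^{-d}$ for every $x\in G_t$, where $a_4$ depends only on $d,a_2,a_3,t$ and the exponent $a_5=d$ is independent of $r$.

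It remains to extend this bound from $G_t$ to all of $\ob{D}$. Put $h(x)=p_t\bone_{\ob{D}\setminus B(x,r)}(x)$; since $G_t$ is dense in $\ob{D}$ (a set of full $m$-measure meets every ball centred at a point of $\ob{D}$), it suffices to show that $h$ is continuous, for then $\sup_{x\in\ob{D}}h(x)=\sup_{x\in G_t}h(x)\le a_4r^{-d}$. By Lemma~\ref{lem:mazja} the semigroup $\{T_t\}$ is ultracontractive, so one Chapman--Kolmogorov step together with the absolute continuity of $Y$ yields a bound $p_t(x,y)\le c(t)$ valid for every $x\in\ob{D}$ and $m$-a.e.\ $y$. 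Now for $x_k\to x_0$ in $\ob{D}$,
\[
h(x_k)-h(x_0)=\bigl(p_t\bone_{\ob{D}\setminus B(x_k,r)}(x_k)-p_t\bone_{\ob{D}\setminus B(x_0,r)}(x_k)\bigr)+\bigl(p_t\bone_{\ob{D}\setminus B(x_0,r)}(x_k)-p_t\bone_{\ob{D}\setminus B(x_0,r)}(x_0)\bigr),
\]
where the second parenthesis tends to $0$ because $p_t\bone_{\ob{D}\setminus B(x_0,r)}\in C_b(\ob{D})$ by Theorem~\ref{thm:doubf}(i), and the first is, in absolute value, at most $p_t\bone_{B(x_0,r)\triangle B(x_k,r)}(x_k)\le c(t)\,m\bigl(B(x_0,r)\triangle B(x_k,r)\bigr)\to 0$. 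Thus $h$ is continuous and the lemma follows.

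The only real work is the last paragraph: Theorem~\ref{thm:hke} gives the estimate on a full-measure set essentially for free, so the main obstacle is upgrading ``$m$-a.e.\ $x$'' to ``every $x$'', i.e.\ the continuity of $x\mapsto p_t\bone_{\ob{D}\setminus B(x,r)}(x)$. The two ingredients that make this work are the semigroup strong Feller property of $Y$ (Theorem~\ref{thm:doubf}(i)) and the ultracontractive $L^1$--$L^\infty$ bound coming from Maz'ya's inequality (Lemma~\ref{lem:mazja}); the remaining Gaussian tail estimate is routine.
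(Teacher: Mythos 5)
Your proof is correct, and it shares the two essential ingredients with the paper's argument: the Gaussian kernel bound of Theorem~\ref{thm:hke} and the semigroup strong Feller property from Theorem~\ref{thm:doubf}~(i). But the way you upgrade the a.e.\ estimate to a genuine supremum is genuinely different, and somewhat more laborious. The paper fixes $x\in\ob{D}$, looks at the \emph{fixed} function $g:=p_t\bone_{\ob{D}\setminus B(x,r)}$, which lies in $C_b(\ob{D})$ by Theorem~\ref{thm:doubf}~(i), and notes that for $m$-a.e.\ $y\in B(x,r/4)\cap\ob{D}$ the Gaussian tail bound gives $g(y)\le a_4r^{-a_5}$ (because such $y$ sit at distance $\ge 3r/4$ from $\ob{D}\setminus B(x,r)$); continuity of $g$ in the argument $y$ then promotes this to every $y$ in that neighbourhood, in particular to $y=x$, and one takes the supremum in $x$ at the end. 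No continuity in $x$ of the diagonal ever needs to be established. You instead take the diagonal function $h(x)=p_t\bone_{\ob{D}\setminus B(x,r)}(x)$, establish the bound for $x$ in a full-measure set, and then prove directly that $h$ is continuous. That works, but since the indicator set moves with $x$ you need an extra tool — the everywhere-valid ultracontractive kernel bound $p_t(x,\cdot)\le c(t)$, obtained by a Chapman--Kolmogorov step from Lemma~\ref{lem:mazja} together with the absolute continuity of $Y$ — to control the term $p_t\bone_{B(x_0,r)\triangle B(x_k,r)}(x_k)$. This is all fine, and your doubling-of-$t$ trick to make the ultracontractive bound hold for \emph{every} $x$ (rather than a.e.\ $x$) is exactly the right fix. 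The trade-off: your approach costs one extra lemma and a symmetric-difference estimate; the paper's avoids both by varying $y$ instead of $x$, keeping the indicator set frozen so that the strong Feller property applies directly.
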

\begin{proof}
Fix $x \in \ob{D}$. By Theorem~\ref{thm:hke} and straightforward calculation,
\begin{equation}
p_{t}\bone_{ \ob{D} \setminus B(x,r)}(y)  \le a_{4}r^{-a_5} \label{eq:eqtail}
\end{equation}
for $m$-a.e. $y \in B(x,r/4) \cap \ob{D}$. 
Here, $a_{4}$ and $a_5$ are positive constants independ of $r,x,y$. From Theorem~\ref{thm:doubf}~(i)],  we have $p_{t}\bone_{\ob{D} \setminus B(x,r)} \in C_{b}(\ob{D})$. Hence
\begin{equation*}
p_{t}\bone_{ \ob{D} \setminus B(x,r)}(x)  \le a_{4}r^{-a_5}.
\end{equation*}
Taking supremum in $x$, we complete the proof.
\qed\end{proof}

\begin{proof}[Proof of Theorem~\ref{thm:doubf}~(ii)]
It suffices to show that $p_{t}( C_{c}(\ob{D})) \subset C_{\infty}(\ob{D})$. For any $f \in C_{c}(\ob{D})$, we have $p_{t}f \in C_{b}(\ob{D})$ from Theorem~\ref{thm:doubf}~(i). Therefore, it remains to show $p_{t}f$ vanishes at infinity. It follows from Lemma~\ref{lem:hke3} that 
\begin{align*}
|p_{t}f(x)| 
&\le \sup_{y \in \ob{D} \cap B(x,r)} |f(y)|+\|f\|_{L^\infty(\ob{D},m)} \times p_{t} \bone_{\ob{D}  \setminus B(x,r)}(x)\\
&\le  \sup_{y \in \ob{D} \cap B(x,r)} |f(y)|+\|f\|_{L^\infty(\ob{D},m)} \times a_{4}r^{-a_5}
\end{align*}
for each $x \in \ob{D}$.
By letting $|x| \to \infty$ and  then $r \to \infty$, we obtain the claim.
\qed\end{proof}

\begin{proof}[Proof of Corollary~\ref{thm:3}]
Note that 
$$\lim_{x \in \ob{D},\,|x| \to \infty}m(D \cap B(x,r)) \to 0$$
holds for any $r>0$. It follows from Lemma~\ref{lem:mazja} and Lemma~\ref{lem:hke3} that
\begin{align*}
p_{t}\bone_{\ob{D}}(x)
&= p_{t}\bone_{\ob{D} \cap B(x,r)}(x)+ p_{t} \bone_{\ob{D}  \setminus B(x,r)}(x)\\
&\le a_{1}e^{t} \times  m(D \cap B(x,r)) + a_{4}r^{-a_5}
\end{align*}
for any $t>0$ and $x \in \ob{D}$. By letting $|x| \to \infty$ and  then $r \to \infty$, we have $p_{t}\bone_{\ob{D}} \in C_{\infty}(\ob{D})$ for any $t>0$. It is easy to see $R_{\alpha}\bone_{\ob{D}} \in C_{\infty}(\ob{D})$ for any $\alpha>0$. It is clear that $Y$ is irreducible. Using these properties and Theorem~\ref{thm:doubf}, the proof of Corollary~\ref{thm:3} is complete from \cite[Theorem~1.1]{T}.
\qed\end{proof}

\section{Proof of auxiliary lemmas}

\begin{definition}[Bounded Lipschitz domain]\label{def:bld}
Let $D \subset \R^d$ be a bounded connected open subset. $D$ is called a {\it bounded Lipschitz domain} if there exist positive constants $\dl^{\ast}$, $M^{\ast}$ such that for each $x_0 \in \partial \Omega$ there exist a neighborhood $U_{x_0}$ of $x_0$, local coordinates $y=(y',y_d) \in \mathbb{R}^{d-1} \times \mathbb{R}$, with $y=0$ at $x_0$, and a  Lipschitz continuous function $f_{x_0}:\mathbb{R}^{d-1} \to \mathbb{R}$, such that
\begin{equation*}
D \cap U_{x_0}=\{(y',y_N) \in D \cap U_{x_0} \mid y' \in B(0,\dl^{\ast}),\, y_N>f(y') \},\quad \text{Lip}(f) \le M^{\ast},
\end{equation*}
where we define $\text{Lip}(f)=\inf \{L \ge 0 \mid |f(x)-f(y)| \le L|x-y|,\, x,y \in B(0,\dl^{\ast}) \}$. 
\end{definition}

\begin{lemma}\label{lem:blda}
Every bounded Lipschitz domain $D$ satisfies the following:
\begin{enumerate}[({A'}.1)]
\item There exist $\dl$, $M>0$ such that for any $a \in \partial D$, there are its neighbourhood $W_a$ in $\R^d$, and one to one mapping  $\Psi_a$ from $B(\dl)$ onto $W_a$ such that $\Psi_{a}(0)=a$, $\Psi_a(B_{+}(\dl))=W_a \cap D$, $\max\{ \text{Lip}(\Psi_a), \text{Lip}(\Psi_{a}^{-1})\} \le M$. 
\end{enumerate}
In particular, $D$ satisfies (A.1), (A.2), and (A.3).
\end{lemma}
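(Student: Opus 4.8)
The plan is to establish the structural property (A'.1) directly, and then read off (A.1), (A.2) and (A.3) from it. The mechanism behind (A'.1) is classical: near a boundary point $D$ is the region above a Lipschitz graph, and one straightens the graph by an explicit bi-Lipschitz shear whose Lipschitz constant depends only on the Lipschitz bound $M^{*}$ of the graph, so that all the constants come out uniform in the base point.

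Concretely, fix $a\in\partial D$. By Definition~\ref{def:bld}, after passing to a finite subcover of $\partial D$ by the neighbourhoods $U_{x_{0}}$ and recentring the corresponding charts at $a$ (a step discussed below), there are local coordinates $y=(y',y_{d})$ related to the Euclidean ones by a rigid motion $T_{a}$ with $T_{a}(a)=0$, and a Lipschitz function $f_{a}\colon\R^{d-1}\to\R$ with $f_{a}(0)=0$ and $\text{Lip}(f_{a})\le M^{*}$, such that on a cylinder around $0$ of a size $c>0$ independent of $a$ the set $D$ coincides with the supergraph $\{y_{d}>f_{a}(y')\}$. Define
\[
\Theta_{a}(x',x_{d})=(x',\,x_{d}+f_{a}(x')),\qquad \Psi_{a}:=T_{a}^{-1}\circ\Theta_{a}.
\]
Then $\Theta_{a}$ is a bijection of $\R^{d}$ with inverse $(y',y_{d})\mapsto(y',y_{d}-f_{a}(y'))$, it carries $\{x_{d}>0\}$ onto $\{y_{d}>f_{a}(y')\}$ and $\{x_{d}=0\}$ onto the graph, and the elementary inequality $(s+t)^{2}\le 2s^{2}+2t^{2}$ gives that both $\Theta_{a}$ and $\Theta_{a}^{-1}$ have Lipschitz constant at most $M:=\sqrt{2+2(M^{*})^{2}}$. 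Since $T_{a}$ is an isometry, the same bound holds for $\Psi_{a}$ and $\Psi_{a}^{-1}$, and $\Psi_{a}(0)=T_{a}^{-1}(0,f_{a}(0))=a$. Fix $\delta\in(0,c/M)$, independent of $a$; then $\Psi_{a}(B(\delta))$ stays inside the cylinder on which $D$ equals the supergraph. Put $W_{a}:=\Psi_{a}(B(\delta))$. For $x=(x',x_{d})\in B(\delta)$ and $p=\Psi_{a}(x)$ one has $(T_{a}p)_{d}-f_{a}((T_{a}p)')=x_{d}$, so $p\in D$ iff $x_{d}>0$; this yields both inclusions in $\Psi_{a}(B_{+}(\delta))=W_{a}\cap D$, proving (A'.1).

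The remaining claims follow immediately. Replacing $\Psi_{a}$ by $x\mapsto\Psi_{a}(\delta x)$ produces a bi-Lipschitz map of $B(1)$ onto $W_{a}$ sending $B_{+}(1)$ onto $W_{a}\cap D$, with Lipschitz constant at most $\delta M$ and inverse with Lipschitz constant at most $M/\delta$; since $\delta$ and $M$ are independent of $a$, this gives (A.1) and also (A.2), the supremum there being bounded by $\max\{\delta M,\,M/\delta\}$ over all of $\partial D$, a fortiori over any compact $K$. For (A.3): $\partial D$ is locally the graph of the continuous function $f_{a}$, hence $(\cE,H^{1}(D))$ is a regular Dirichlet form on $L^{2}(\ob{D},m)$ by \cite[Chapter~V, Theorem~4.7]{EE}, as recorded in the Remark following Condition~\ref{as:1}; note also $m(\partial D)=0$, since $\partial D$ is a finite union of Lipschitz graphs.

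The only point requiring genuine care is the uniformity of $\delta$ and $M$ in $a$: the definition supplies charts only at the prescribed points $x_{0}$, with per-point neighbourhoods $U_{x_{0}}$. I would first extract a finite subcover $U_{x_{1}},\dots,U_{x_{N}}$ of the compact set $\partial D$, then take an arbitrary $a\in\partial D$, locate it in some $U_{x_{j}}$, and recenter $f_{x_{j}}$ at $a$ by translating the $y'$-variable and subtracting a constant — this preserves the Lipschitz constant and sends the base point to $0$. A Lebesgue-number argument for the finite cover then provides a uniform lower bound $c>0$ on the size of the cylinder about $a$ on which $D$ still agrees with the recentred supergraph, and this $c$ is precisely what bounds the admissible $\delta$. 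This part is purely bookkeeping; the analytic content is the one-line shear estimate above, and there is no serious obstacle.
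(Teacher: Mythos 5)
Your proof is correct and rests on the same core mechanism as the paper's: the shear map $\Theta_a(x',x_d)=(x',x_d+f_a(x'))$ (composed with the rigid motion into local coordinates) straightens the Lipschitz graph, and a one-line Lipschitz estimate on $\Theta_a$ and its inverse gives the uniform bi-Lipschitz constant. The paper bounds $\operatorname{Lip}(\Psi_a)$ by $M^*+1$ via the triangle inequality, whereas you get $\sqrt{2+2(M^*)^2}$ from $(s+t)^2\le 2s^2+2t^2$; both are fine. For (A.3) the paper invokes the segment condition and \cite[Theorem~3.22]{Ad}, while you appeal to \cite[Chapter~V, Theorem~4.7]{EE} as in the Remark after Condition~2.1; both are valid routes.

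The one place your proposal diverges is the final paragraph about extracting a finite subcover and a Lebesgue-number argument to get uniformity of $\delta$ and $M$ in the base point $a$. That machinery is unnecessary here: Definition~7.1 already asserts the existence of uniform constants $\delta^*,M^*$ valid \emph{for every} $x_0\in\partial\Omega$, not merely for finitely many chosen points, so there is nothing to patch together. The paper simply reads $\delta=\delta^*$, $M=M^*+1$ directly off the definition. Your recentring-and-subcover argument is not wrong, but it addresses a difficulty the chosen definition of bounded Lipschitz domain has already built away, and it adds length without adding content. Dropping that paragraph would bring your proof into line with the paper's.
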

\begin{proof}
We take positive constants $\dl^{\ast},M^{\ast}$ as in Definition~\ref{def:bld}.  Then, for each $x_0 \in \partial D$, there exists a Lipschitz continuous function $f_{x_0}:\mathbb{R}^{d-1} \to \mathbb{R}$ with $\text{Lip}(f_{x_0}) \le M^{\ast}$.  Define $\Psi_{x_0}: B(\dl^{\ast}) \to \R^d$ and $W_{x_0}$ by
\begin{align*}
\Psi_{x_0}(x',x_d)=(x',x_d+f(x')),\quad W_{x_0}=\Psi_{x_0}(B(\dl^{\ast})).
\end{align*}
Then, $\dl=\dl^{\ast}$, $M=M^{\ast}+1$, $W_{x_0}$, and $\Psi_{x_0}$ satisfy the required condition in (A'.1). Clearly, (A'.1) implies (A.1) and (A.2). Since bounded Lipschitz domains satisfy the segment condition, we obtain (A.3) from \cite[Theorem~3.22]{Ad}.
\qed\end{proof}

\begin{proposition}
Condition~\ref{as:2} implies Condition~\ref{as:1}.
\end{proposition}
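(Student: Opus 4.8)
The plan is to reduce everything to Lemma~\ref{lem:blda} by localization: near any boundary point, and uniformly on any compact portion of the boundary, Condition~\ref{as:2} lets us replace $D$ by a bounded Lipschitz domain, for which the charts of (A'.1) are already at hand with uniform constants. The delicate point is that the auxiliary Lipschitz domain produced by Condition~\ref{as:2} depends on the compact set chosen, whereas (A.1) and (A.2) require one fixed family $\{\Psi_a\}_{a\in\partial D}$; I would resolve this by indexing the auxiliary domains along the radial exhaustion of $\partial D$.

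Concretely, first I would set $C_n := \partial D \cap \ob{B(n)}$ and, for each $n \in \N$, invoke Condition~\ref{as:2} to get a bounded open set $V_n$ with $C_n \subset V_n$ and $V_n \cap D$ a bounded Lipschitz domain; I record $\rho_n := \text{dist}(C_n, \R^d \setminus V_n) > 0$ together with the constants $\dl_n, M_n > 0$ supplied by Lemma~\ref{lem:blda} (that is, by (A'.1)) for $V_n \cap D$. Then, for $a \in \partial D$, let $n(a)$ be the smallest $n \in \N$ with $|a| \le n$, so $a \in C_{n(a)}$ and $B(a, \rho_{n(a)}) \subset V_{n(a)}$. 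On this ball $D$ coincides with $V_{n(a)} \cap D$, so $a$ is a boundary point of the bounded Lipschitz domain $V_{n(a)} \cap D$, and (A'.1) yields a bi-Lipschitz chart $\chi_a$ at $a$ with $\chi_a(0) = a$, Lipschitz constants at most $M_{n(a)}$, and $\chi_a(B_{+}(\dl_{n(a)})) = \chi_a(B(\dl_{n(a)})) \cap (V_{n(a)}\cap D)$. I would then restrict $\chi_a$ to the ball of radius $r_a := \dl_{n(a)} \wedge (\rho_{n(a)}/M_{n(a)})$, which is small enough that $\chi_a(B(r_a)) \subset B(a, \rho_{n(a)}) \subset V_{n(a)}$ and hence, by injectivity of $\chi_a$, $\chi_a(B_{+}(r_a)) = \chi_a(B(r_a)) \cap D$; and finally rescale the source to the unit ball, defining $\Psi_a(x) := \chi_a(r_a x)$ and $W_a := \Psi_a(B(1))$.

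With this family fixed, (A.1) is immediate from the construction: $\Psi_a$ is a bi-Lipschitz homeomorphism of $B(1)$ onto the neighbourhood $W_a \subset V_{n(a)}$ of $a$, with $\Psi_a(0)=a$ and $\Psi_a(B_{+}(1)) = W_a \cap D$. For (A.2), given a compact $K \subset \partial D$ I would pick $N$ with $K \subset \ob{B(N)}$; then $n(a) \le N$ for all $a \in K$, so only the finitely many numbers $\dl_1,M_1,\rho_1,\dots,\dl_N,M_N,\rho_N$ can appear in the bounds $\text{Lip}(\Psi_a) \le r_a M_{n(a)}$ and $\text{Lip}(\Psi_a^{-1}) \le M_{n(a)}/r_a$, which gives $\sup_{a \in K}\max\{\text{Lip}(\Psi_a),\text{Lip}(\Psi_a^{-1})\} < \infty$ (and this supremum is $\ge 1$, since $\text{Lip}(\Psi_a)\,\text{Lip}(\Psi_a^{-1}) \ge 1$). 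For (A.3), I would note that the segment condition is a local property and that near each boundary point $D$ coincides with a bounded Lipschitz domain, which satisfies it; after shrinking the neighbourhood and segment direction, $D$ itself satisfies the segment condition, and then (A.3) follows exactly as in the proof of Lemma~\ref{lem:blda}, via \cite[Theorem~3.22]{Ad}.

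I expect the main obstacle to be precisely the construction of a single family $\{\Psi_a\}$ that is valid for (A.1) and at the same time locally uniform as demanded by (A.2). A naive choice — attaching to each $a$ whatever set $V$ Condition~\ref{as:2} happens to return — would fail, because a point can lie arbitrarily close to $\R^d \setminus V$, forcing the rescaling factor $r_a$ to degenerate and $\text{Lip}(\Psi_a^{-1})$ to blow up along a compact set. Tying the auxiliary domain to the shell $C_{n(a)}$ — every point of which is at distance $\ge \rho_{n(a)}$ from $\R^d \setminus V_{n(a)}$ by construction — is what keeps the constants controlled on compacts. The remaining arguments are routine manipulations of bi-Lipschitz maps.
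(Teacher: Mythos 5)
Your proposal is correct and follows essentially the same approach as the paper: exhaust $\partial D$ by bounded pieces, invoke Condition~\ref{as:2} on each piece, and produce the charts from Lemma~\ref{lem:blda} applied to the resulting local bounded Lipschitz domains, then observe that for any compact $K \subset \partial D$ only finitely many of the resulting shell constants contribute. In fact you are more explicit than the paper on one small but real point: Lemma~\ref{lem:blda} applied to $V_n\cap D$ gives a chart $\chi_a$ with $\chi_a(B_{+}(\dl_n)) = W_a \cap (V_n\cap D)$ rather than $W_a\cap D$ as (A.1) requires, and your step of recording $\rho_n = \mathrm{dist}(C_n,\R^d\setminus V_n)$ and shrinking/rescaling so that $W_a = \Psi_a(B(1))\subset B(a,\rho_{n(a)})\subset V_{n(a)}$ is exactly what is needed to pass from the one statement to the other; the paper leaves this localization implicit.
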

\begin{proof}
Take increasing bounded open subsets $\{A_n\}_{n=1}^{\infty}$ of $\partial D$ such that $\partial D=\bigcup_{n=1}^{\infty}A_{n}$. Since $A_1$ is bounded, there exists a bounded open subset $U_1$ of $\R^d$ such that $A_{1} \subset U_{1}$ and $U_{1} \cap D$ is a bounded Lipschitz domain of $\R^d$. From Lemma~\ref{lem:blda}, for any $a \in A_1$, there exist bi-Lipschitz mapping $\Psi_{a}$ required in (A.1) and $M_1>0$ such that 
\begin{align*}
\sup_{a \in A_1} \max \{\text{Lip}(\Psi_{a}), \text{Lip}(\Psi_{a}^{-1})\}\le M_{1}.
\end{align*}
Since $A_{2} \setminus A_1$ is bounded, from Lemma~\ref{lem:blda}, for any $a \in A_2 \setminus A_1$, there exist bi-Lipschitz mapping $\Psi_{a}$ required in (A.1) and $M_2>0$ such that 
$
\sup_{a \in A_2 \setminus A_{1}} \max \{\text{Lip}(\Psi_{a}), \text{Lip}(\Psi_{a}^{-1})\}\le M_{2}.
$
In the same manner, for any $n \ge 1$ and $a \in A_{n+1} \setminus A_{n}$, we can find bi-Lipschitz mapping $\Psi_{a}$ required in (A.1) and $M_n>0$ such that 
\begin{align*}
\sup_{a \in A_n \setminus A_{n-1}} \max \{\text{Lip}(\Psi_{a}), \text{Lip}(\Psi_{a}^{-1})\}\le M_{n}.
\end{align*}
These bi-Lipschitz mappings $\{\Psi_{a}\}_{a \in \partial D}$ clearly satisfy the conditions (A.1) and (A.2). 
Since any $a \in \partial D$ belongs to the boundary of some bounded Lipschitz domain, we can check (A.3) from \cite[Theorem~3.22]{Ad}.
\qed\end{proof}
Although the following lemma is a slightly modification of \cite[Theorem~2.1]{EPH}, we give a proof for reader's convenience.
\begin{lemma}\label{lem:EPH}
Let $D \subset \R^d$ be a bounded Lipschitz domain. For $\eps \in (0,1)$, let $D_{\eps}=\{x \in D \mid \text{dist}(x,\partial D)<\eps\}$. Then, there exist $\eps_0>0$ and $c=c(d,D)>0$ such that 
\begin{equation*}
\frac{1}{\eps}\int_{D_\eps}s^{-d/2}\exp(-|x-y|^2/s)\,dy \le c/\sqrt{s}.
\end{equation*}
for any $x \in \ob{D}, \eps \in (0,\eps_0)$, and $s \in (0,1]$.
\end{lemma}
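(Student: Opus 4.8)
The plan is to reduce the integral over the boundary layer $D_\eps$ to a one-dimensional estimate using the Lipschitz structure of $\partial D$, and then integrate the Gaussian profile. First I would fix $x\in\ob D$ and $s\in(0,1]$ and write $g_s(z)=s^{-d/2}\exp(-|z|^2/s)$, which satisfies $\int_{\R^d}g_s(z)\,dz=\pi^{d/2}$ and, more to the point, $\int_{\R^d}g_s(z)\,dz$ restricted to a slab $\{|z\cdot e|\le h\}$ is $O(h/\sqrt s)$ uniformly in the direction $e$. The key geometric input is that, because $D$ is a bounded Lipschitz domain, $\partial D$ can be covered by finitely many neighbourhoods $U_{x_0}$ in which, after a bi-Lipschitz change of coordinates $\Psi_{x_0}$ straightening the boundary, the layer $D_\eps\cap U_{x_0}$ is contained in a slab of width comparable to $\eps$ (the comparison constants depending only on $d$ and the Lipschitz character $M^\ast,\dl^\ast$ of $D$). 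Choosing $\eps_0$ small enough that each $D_\eps$, $\eps<\eps_0$, is contained in $\bigcup_{j} U_{x_j}$, it suffices to bound $\frac1\eps\int_{D_\eps\cap U_{x_j}}g_s(x-y)\,dy$ for each $j$ in the finite cover.

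The main steps, in order, are: (1) set up the finite cover and the straightening maps, recording the bi-Lipschitz constants; (2) in each coordinate chart, show $\Psi_{x_j}^{-1}(D_\eps\cap U_{x_j})$ lies in a slab $\{|y_d|\le C\eps\}$, so that $D_\eps\cap U_{x_j}$ lies in $\Psi_{x_j}(\{|y_d|\le C\eps\})$, a "curved slab" of Euclidean width $\le C'\eps$; (3) change variables by $\Psi_{x_j}$, absorbing the Jacobian (bounded above and below by powers of the Lipschitz constant, as in \eqref{eq:jacobian2}), and estimate $\int_{\{|y_d|\le C\eps\}}g_s\bigl(\Psi_{x_j}(y)-x\bigr)\,dy$; (4) since $\Psi_{x_j}$ is bi-Lipschitz, $|\Psi_{x_j}(y)-x|\ge L^{-1}|y-\Psi_{x_j}^{-1}(x)|$ when $x\in W_{x_j}$ (and one handles $x$ far from the chart separately, where $g_s(x-y)$ is already tiny), reducing the integral to $\int_{\{|y_d|\le C\eps\}}s^{-d/2}\exp(-c|y-w|^2/s)\,dy$ for a point $w$, which Fubini splits into a bounded Gaussian integral in $d-1$ variables times $\int_{\{|y_d|\le C\eps\}}s^{-1/2}\exp(-c(y_d-w_d)^2/s)\,dy_d\le C''\eps/\sqrt s$; (5) sum over the finitely many charts and divide by $\eps$ to conclude, with $c=c(d,D)$.

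The hard part will be Step~(2)–(3): making precise, with constants depending only on $d$ and the Lipschitz data, that the $\eps$-neighbourhood of $\partial D$ inside a chart is trapped between two bi-Lipschitz images of flat slabs of width $\asymp\eps$. One must be slightly careful that $\operatorname{dist}(x,\partial D)<\eps$ in the ambient metric corresponds, under $\Psi_{x_j}^{-1}$, only to $|y_d|\lesssim\eps$ with a constant involving $\operatorname{Lip}(\Psi_{x_j}^{-1})$, and conversely that points with small $|y_d|$ need not all lie in $D_\eps$ but certainly lie in a set of the right measure — so it is cleanest to work with the one-sided inclusion $D_\eps\cap U_{x_j}\subset\Psi_{x_j}(\{|y_d|<C\eps\})$ and only use an upper bound on the measure of the latter. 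Once this inclusion is in hand, the Gaussian integration in Step~(4) is routine, and the finiteness of the cover makes the passage from local to global immediate. I would also note that the estimate \eqref{eq:eqHPH} used in Lemma~\ref{blt2} is exactly this lemma applied with $s=b_{2,n}t$ (resp.\ $a_{2,n}t$), so no separate argument is needed there.
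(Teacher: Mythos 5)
Your proposal is correct, but it follows a different route from the paper. The paper's argument is a distribution-function computation: writing
\begin{equation*}
\int_{D_\eps}s^{-d/2}e^{-|x-y|^2/s}\,dy=\int_0^\infty s^{-d/2}e^{-r^2/s}\,d\bigl\{m(B(x,r)\cap D_\eps)\bigr\}
= 2s^{-(d+2)/2}\int_0^\infty m(B(x,r)\cap D_\eps)\,e^{-r^2/s}\,r\,dr,
\end{equation*}
it isolates the single geometric inequality $m(B(x,r)\cap D_\eps)\le c\,\eps\,r^{d-1}$ (valid uniformly in $x\in\ob D$, $\eps<\eps_0$, $r>0$, proved in local coordinates exactly as in your Step (2)), after which one is left with a one-dimensional Gaussian-moment calculation giving $\lesssim\eps/\sqrt{s}$. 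Your approach instead carries out the Gaussian integration chart by chart: cover $\partial D$ by finitely many bi-Lipschitz straightening charts, trap $D_\eps\cap U_{x_j}$ in a slab $\{0<y_d<C\eps\}$, change variables, and split via Fubini into a full $(d-1)$-dimensional Gaussian integral (bounded) times a one-dimensional Gaussian over a width-$C\eps$ interval ($\le C'\eps/\sqrt{s}$). The two proofs rest on exactly the same Lipschitz geometry and yield the same estimate; the paper's version is shorter once one accepts the integration-by-parts step because the geometric content condenses into one scalar inequality and there is no chart bookkeeping or separate treatment of $x$ far from a given chart, whereas yours is more elementary in that it avoids the Stieltjes measure $d\{m(B(x,r)\cap D_\eps)\}$ and handles the Gaussian directly. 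Your closing observation that \eqref{eq:eqHPH} is just this lemma applied with $s$ replaced by $a_{2,n}t$ matches how the paper uses it.
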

\begin{proof}
Integration by parts gives
\begin{align} \label{eq:cy}
&\int_{D_\eps}s^{-d/2}\exp(-|x-y|^2/s)\,dy
=\int_{0}^{\infty}s^{-d/2}\exp(-r^2/s)\,d\{m(B(x,r) \cap D_\eps)\} \\
&=2s^{-(d+2)/2}\int_{0}^{\infty}m(B(x,r) \cap D_\eps)\exp(-r^2/s)r\,dr \notag.
\end{align}
As $D$ is a bounded Lipschitz domain, there exist positive constants $c=c(D)$ and $\eps_0$ such that
\begin{equation}\label{eq:cy2}
m(B(x,r) \cap D_\eps)\le c \eps r^{d-1} 
\end{equation}
for any $x \in \ob{D}$ and $r>0$. This can be shown rigorously by working in a local corrdinate syetem. In fact, it is easy to verify that the set $B(x,r) \cap D_\eps$ is contained in a cylinder with base area of order $r^{d-1}$ and height of $\eps>0$. Combining \eqref{eq:cy} with \eqref{eq:cy2}, we obtain 
\begin{align*} 
&\sup_{x \in \ob{D}}\sup_{ \eps \in (0,\eps_0)}\frac{1}{\eps}\int_{D_\eps}s^{-d/2}\exp(-|x-y|^2/s)\,dy
\\
&\le 2c s^{-(d+2)/2}\int_{0}^{\infty}\exp(-r^2/s)r^d\,dr \\
&=2c s^{-(d+2)/2} \times \left\{\frac{(d-1)s}{2} \int_{0}^{\infty}\exp(-r^2/s)r^{d-2}\,dr\right\} \\
&=2c s^{-(d+2)/2} \times \frac{(d-1)s}{2} \times \frac{(d-3)s}{2} \int_{0}^{\infty}\exp(-r^2/s)r^{d-4}\,dr.
\end{align*}
By iterating this procedure, we obtain the next inequalities. If $d$ is even, it holds that
\begin{align} 
&\sup_{x \in \ob{D}}\sup_{ \eps \in (0,\eps_0)}\frac{1}{\eps}\int_{D_\eps}s^{-d/2}\exp(-|x-y|^2/s)\,dy
 \label{eq:eqlast1} \\
&\le 
2c s^{-1} \times \frac{(d-1)\times  (d-3) \times \cdots \times 1}{2^{d/2}} \times \int_{0}^{\infty}\exp(-r^2/s)\,dr \notag \\
&= 2c s^{-1/2} \times \frac{(d-1)\times  (d-3) \times \cdots \times 1}{2^{d/2}} \times \int_{0}^{\infty} \exp(-r^2)\,dr. \notag
\end{align}
If $d$ is odd, it holds that
\begin{align} 
&\sup_{x \in \ob{D}}\sup_{ \eps \in (0,\eps_0)}\frac{1}{\eps}\int_{D_\eps}s^{-d/2}\exp(-|x-y|^2/s)\,dy \label{eq:eqlast2}
\\
&\le 
2c s^{-3/2} \times \frac{(d-1)\times  (d-3) \times \cdots \times 1}{2^{(d-1)/2}} \times \int_{0}^{\infty}\exp(-r^2/s)r \,dr \notag \\
&=2c s^{-1/2} \times \frac{(d-1)\times  (d-3) \times \cdots \times 1}{2^{(d-1)/2}} \times \int_{0}^{\infty}\exp(-r^2)r \,dr. \notag
\end{align}
\eqref{eq:eqlast1} and \eqref{eq:eqlast2} complete the proof.
\qed
\end{proof}

\begin{acknowledgement}
The author would like to thank Professor Masayoshi Takeda for detailed discussions and helpful support. He would like to thank referees for their valuable comments and suggestions which improve the quality of the paper. He would also like to thank Dr. Masaki Wada for encouragement. 
\end{acknowledgement}

%\section*{\refname}
\begin{bibdiv}
\begin{biblist}

\bib{Ad}{book}{
   author={Adams, Robert A.},
   author={Fournier, John J. F.},
   title={Sobolev spaces},
   series={Pure and Applied Mathematics (Amsterdam)},
   volume={140},
   edition={2},
   publisher={Elsevier/Academic Press, Amsterdam},
   date={2003},
   pages={xiv+305},
   isbn={0-12-044143-8},
}

\bib{AT}{article}{
   author={Arendt, W.},
   author={ter Elst, A. F. M.},
   title={Gaussian estimates for second order elliptic operators with
   boundary conditions},
   journal={J. Operator Theory},
   volume={38},
   date={1997},
   number={1},
   pages={87--130},
   issn={0379-4024},
}
\bib{AW}{article}{
   author={Arendt, Wolfgang},
   author={Warma, Mahamadi},
   title={The Laplacian with Robin boundary conditions on arbitrary domains},
   journal={Potential Anal.},
   volume={19},
   date={2003},
   number={4},
   pages={341--363},
   issn={0926-2601},
}

\bib{BH}{article}{
   author={Bass, Richard F.},
   author={Hsu, Pei},
   title={Some potential theory for reflecting Brownian motion in H\"older and
   Lipschitz domains},
   journal={Ann. Probab.},
   volume={19},
   date={1991},
   number={2},
   pages={486--508},
   issn={0091-1798},
}

\bib{B}{article}{
   author={Biegert, Markus},
   title={On traces of Sobolev functions on the boundary of extension
   domains},
   journal={Proc. Amer. Math. Soc.},
   volume={137},
   date={2009},
   number={12},
   pages={4169--4176},
   issn={0002-9939},
}

\bib{BKK}{article}{
   author={Bogdan, Krzysztof},
   author={Kumagai, Takashi},
   author={Kwa\'snicki, Mateusz},
   title={Boundary Harnack inequality for Markov processes with jumps},
   journal={Trans. Amer. Math. Soc.},
   volume={367},
   date={2015},
   number={1},
   pages={477--517},
   issn={0002-9947}
}

\bib{CF}{article}{
   author={Chen, Zhen-Qing},
   author={Fan, Wai-Tong},
   title={Systems of interacting diffusions with partial annihilation
   through membranes},
   journal={Ann. Probab.},
   volume={45},
   date={2017},
   number={1},
   pages={100--146},
   issn={0091-1798},
}

\bib{CK}{article}{
   author={Chen, Zhen-Qing},
   author={Kuwae, Kazuhiro},
   title={On doubly Feller property},
   journal={Osaka J. Math.},
   volume={46},
   date={2009},
   number={4},
   pages={909--930},
   issn={0030-6126},
}

\bib{C}{article}{
   author={Chung, K. L.},
   title={Doubly-Feller process with multiplicative functional},
   conference={
      title={Seminar on stochastic processes, 1985},
      address={Gainesville, Fla.},
      date={1985},
   },
   book={
      series={Progr. Probab. Statist.},
      volume={12},
      publisher={Birkh\"auser Boston, Boston, MA},
   },
   date={1986},
   pages={63--78},
}

%\bibitem{BH} Bouleau, N. and Hirsch, F., {\it Dirichlet forms and analysis on Wiener space},  de Gruyter Studies in Mathematics \textbf{14}, Walter de Gruyter, Berlin, 1991.
\bib{EE}{book}{
   author={Edmunds, D. E.},
   author={Evans, W. D.},
   title={Spectral theory and differential operators},
   series={Oxford Mathematical Monographs},
   note={Oxford Science Publications},
   publisher={The Clarendon Press, Oxford University Press, New York},
   date={1987},
   pages={xviii+574},
   isbn={0-19-853542-2},
}

\bib{EG}{book}{
   author={Evans, Lawrence C.},
   author={Gariepy, Ronald F.},
   title={Measure theory and fine properties of functions},
   series={Textbooks in Mathematics},
   edition={Revised edition},
   publisher={CRC Press, Boca Raton, FL},
   date={2015},
   pages={xiv+299},
   isbn={978-1-4822-4238-6},
}

\bib{FOT}{book}{
   author={Fukushima, Masatoshi},
   author={Oshima, Yoichi},
   author={Takeda, Masayoshi},
   title={Dirichlet forms and symmetric Markov processes},
   series={De Gruyter Studies in Mathematics},
   volume={19},
   edition={Second revised and extended edition},
   publisher={Walter de Gruyter \& Co., Berlin},
   date={2011},
   pages={x+489},
   isbn={978-3-11-021808-4}
}

\bib{FT0}{article}{
   author={Fukushima, Masatoshi},
   author={Tomisaki, Matsuyo},
   title={Reflecting diffusions on Lipschitz domains with cusps---analytic
   construction and Skorohod representation},
   journal={Potential Anal.},
   volume={4},
   date={1995},
   number={4},
   pages={377--408},
   issn={0926-2601},
}
	
\bib{FT}{article}{
   author={Fukushima, Masatoshi},
   author={Tomisaki, Matsuyo},
   title={Construction and decomposition of reflecting diffusions on
   Lipschitz domains with H\"older cusps},
   journal={Probab. Theory Related Fields},
   volume={106},
   date={1996},
   number={4},
   pages={521--557},
   issn={0178-8051},
}

\bib{HKT}{article}{
   author={Haj\l asz, Piotr},
   author={Koskela, Pekka},
   author={Tuominen, Heli},
   title={Sobolev embeddings, extensions and measure density condition},
   journal={J. Funct. Anal.},
   volume={254},
   date={2008},
   number={5},
   pages={1217--1234},
   issn={0022-1236},
}

\bib{EPH}{book}{
   author={Hsu, P}
   title={Reflecting Brownian motion, boundary local time and the neumann problem, Ph. D. thesis, Stanford University, 1994}
}

  \bib{M}{book}{
   author={Maz'ya, Vladimir},
   title={Sobolev spaces with applications to elliptic partial differential
   equations},
   series={Grundlehren der Mathematischen Wissenschaften [Fundamental
   Principles of Mathematical Sciences]},
   volume={342},
   edition={Second, revised and augmented edition},
   publisher={Springer, Heidelberg},
   date={2011},
   pages={xxviii+866},
   isbn={978-3-642-15563-5},
}
\bib{Mo}{article}{
   author={Moser, J\"urgen},
   title={A new proof of De Giorgi's theorem concerning the regularity
   problem for elliptic differential equations},
   journal={Comm. Pure Appl. Math.},
   volume={13},
   date={1960},
   pages={457--468},
   issn={0010-3640},
}
\bib{ST}{article}{
   author={Stampacchia, Guido},
   title={Equations elliptiques du second ordre \`{a} coeeficients discontinuous},
   book={
      publisher={S\'{e}minar sur les equations aux de\'{r}iv\'{e}es partielles, Coll\`{e}ge de France},
   },
   date={1963},
}

\bib{SV}{book}{
   author={Stroock, Daniel W.},
   author={Varadhan, S. R. Srinivasa},
   title={Multidimensional diffusion processes},
   series={Classics in Mathematics},
   note={Reprint of the 1997 edition},
   publisher={Springer-Verlag, Berlin},
   date={2006},
   pages={xii+338},
   isbn={978-3-540-28998-2}
}

\bib{SG}{article}{
   author={Shin, Jiyong},
   author={Trutnau, Gerald},
   title={On the stochastic regularity of distorted Brownian motions},
   journal={Trans. Amer. Math. Soc.},
   volume={369},
   date={2017},
   number={11},
   pages={7883--7915},
   issn={0002-9947},
}
	
\bib{T}{article}{
author={Takeda, Masayoshi},
title={Compactness of symmetric Markov semi-groups and boundedness of eigenfuntions, to appear in Trans. Amer. Math. Soc.}
}	
\end{biblist}
\end{bibdiv}

\end{document}